\newtheorem{theorem}{Theorem}[section]
\newtheorem{lemma}[theorem]{Lemma}
\theoremstyle{theorem}
\newtheorem{definition}[theorem]{Definition}
\theoremstyle{remark}
\newtheorem{lem}[theorem]{\sc \bf Lemma}
\newtheorem{cor}[theorem]{\sc \bf Corollary}
\newtheorem{quest}[theorem]{\sc \bf Question}
\newtheorem{proposition}[theorem]{\sc \bf Proposition}
\newtheorem{rem}[theorem]{\sc \bf Remark}
\newtheorem{remark}[theorem]{\sc \bf Remark}
\newtheorem{fact}[theorem]{\sc \bf Fact}
\newcommand{\cA}{\mathcal{A}}
\newcommand{\cH}{\mathcal{H}}
\newcommand{\cF}{\mathcal{F}}
\newcommand{\cM}{\mathcal{M}}
\newcommand{\cP}{\mathcal{P}}
\newcommand{\cN}{\mathcal{N}}
\newcommand{\norm}[1]{\left\lVert#1\right\rVert}
\newcommand{\cB}{\mathcal{B}}
\newcommand{\supp}{\operatorname{supp}}
\numberwithin{equation}{section}
\begin{document}

\frontmatter

\title{Isometric Structure in  Noncommutative Symmetric Spaces}

\author{Kai Fang (Institute for  Advanced Study in  Mathematics of HIT,  Harbin, 150001, China), 
\email{kaifang8.25@gmail.com}\\
Tianbao Guo (Department of Mathematics, Harbin University of Science and Technology, Harbin, 150080, China),
\email{tianbaoguo.edu@gmail.com}\\
 Jinghao Huang (Institute for  Advanced Study in  Mathematics of HIT,  Harbin, 150001, China), \email{jinghao.huang@hit.edu.cn} (the corresponding author)\\ 
Fedor Sukochev (School of Mathematics and Statistics, University of New South Wales, Kensington, 2052, NSW, Australia) \email{f.sukochev@unsw.edu.au}}

\maketitle
\begin{abstract}This is a systematic study of isometries between noncommutative symmetric spaces. 
Let $\cM$ be a semifinite von Neumann algebra (or an atomic  von Neumann algebra with all atoms having the same trace) acting on a   separable  Hilbert space $\cH$ equipped with a semifinite faithful normal trace $\tau$.
We show that
for any noncommutative symmetric space corresponding to a symmetric function space  $E(0,\infty)$  in the sense of  Lindenstrauss--Tzafriri
such that $\norm{\cdot}_E\ne \lambda\norm{\cdot}_{L_2}$, $\lambda\in \mathbb{R}_+$,  any isometry on $E(\cM,\tau)$ is  of elementary form.
This answers a long-standing open question raised in the 1980s in the non-separable setting [Math. Z. 1989], while the case of separable symmetric function spaces was treated in 
  [Huang \& Sukochev, JEMS, 2024]. 
As an application, we obtain a noncommutative Kalton--Randrianantoanina--Zaidenberg Theorem, providing  a characterization of noncommutative $L_p$-spaces over finite von Neumann algebras and a necessary and sufficient condition for an operator on a noncommutative symmetric space to be an isometry.
Having this at hand,  we answer a question posed by Mityagin in 1970 [Uspehi
Mat. Nauk] and its noncommutative counterpart by showing the any symmetric space $E(\cM,\tau)\ne L_p(\cM,\tau)$ over a noncommutative probability is not isometric to a symmetric space over a von Neumann algebra equipped with a semifinite infinite faithful normal trace.
It is also shown that any noncommutative $L_p$-space, $1\le p<\infty$, affiliated with an atomless semifinite von Neumann algebra has a unique symmetric structure up to isometries. 
This contributes to the resolution of an isometric version of Pe\l czy\'nski's problem concerning the uniqueness of the symmetric structure in  noncommutative symmetric spaces. 
\end{abstract}


\keywords{Surjective isometry; hermitian operator;  semifinite von Neumann algebra; noncommutative symmetric   space; uniqueness of symmetric structure.}

\classification[47B15]{46B04, 46L52}

\begin{ack}
The authors would like to thank Professor Mikhail Zaidenberg for his interest and for pointing
us to the existing literature and related problems in this field. We also thank Professor Dmitriy Zanin for helpful discussions. 
\end{ack}

\begin{funding}
Jinghao Huang was supported by the NNSF of China  (No. 12031004, 12301160 and 12471134).
Fedor Sukochev was supported by  the Australian Research Council (DP230100434).
\end{funding}

\tableofcontents
\mainmatter

\chapter{Introduction}

The primary motivation of this paper is to resolve the isometric part of the 
 question
 asked by 
 Mityagin  \cite{Mityagin} in 1970. 
 \begin{quest}\label{question2}\cite[p.99]{Mityagin}
Can a  symmetric space $E$ of measurable functions on $[0, 1]$ other
than $L_p $, $ 1 \leq p < \infty $, be isomorphic (isometric) to any symmetric
space $ F $ on the semi-axis $[0, \infty)$ or the axis $(-\infty, +\infty)$?
\end{quest}
The full answer is given in Theorem  \ref{Mityagin}. 
The proofs for 
 Theorem  \ref{Mityagin}  and 
 its noncommutative counterpart  (see Chapter \ref{S:M}) rely 
 on the description of surjective isometries of commutative/noncommutative symmetric spaces initiated in the 1950s,
 which is the secondary major aim of this work. 
\begin{quest}\label{question1}\cite{CMS,Sukochev}
If $E (0,\infty)$ is a symmetric function on $(0,\infty)$ and if $(\cM,\tau)$ is a semifinite von Neumann algebra (on a separable Hilbert space) with a semifinite faithful normal trace $\tau$, then  how can one describe
the family of all isometries of the associated symmetric operator spaces  $E(\cM,\tau)$?
\end{quest} 
Recently, it was shown in \cite{HS} under the condition that $E(0,\infty )$ is separable  that any such isometry is of elementary form.  
Here, in Theorem \ref{11212},
 we describe all surjective isometries of $E(\cM,\tau)$ for the spaces $E(0,\infty)$ in the sense of Lindenstrauss and Tzafriri, which is not necessarily separable. 

 Our study also yields a pleasant byproduct, 
the uniqueness of symmetric structure of numerous examples of
 noncommutative symmetric spaces (up to isometries).
Such a question concerning the uniqueness was raised by 
 Pe\l czy\'nski in 1979 \cite{Arazy81,Arazy83}. 
 In particular,  we obtain 
 an isometric characterization of noncommutative $L_p$-spaces in the class of noncommutative symmetric spaces, see Theorem \ref{AZ111} below. 
 In the 1960s, such a characterization (in the setting of symmetric spaces on $(0,1)$) was obtained by Semenov~\cite{Semenov} and Abramovich and Zaidenberg~\cite{AZ}. 
 Our result, when specialized to symmetric function spaces yields an extension of the Semenov--Abramovich--Zaidenberg Theorem to the setting of symmetric function spaces on $(0,\infty )$.

\section{Isometries on symmetric function/sequence spaces}
The isometric theory of Banach spaces was born and developed in inseparable connection with other areas of the Banach spaces theory \cite{KK}. 
We shall omit the adjective ``linear" since non-linear isometries are not considered in this paper. The study of isometries on symmetric spaces  has a long history, beginning with  Stefan Banach \cite{Banach}, who showed that isometries of the space  $\ell_p$ preserve disjointness for any  $1\le p<\infty$ (and $p \neq 2$) in the 1930s.  
Banach  remarked that the proof for the description of isometries on  $L_p (0,1)$ will appear in Studia Mathematica IV.
However, the promised pulication never appeared.  
The full proof can be found in the paper by Lamperti \cite{Lamperti},  who
established the characterization of isometries on $L_p$-spaces
and  certain Orlicz function spaces over $\sigma$-finite measure spaces.
The resolution of the (not normed, nor quasi-normed) case when $p=0$ is obtained in \cite{BHS}.

 In the realm of complex function spaces, a method (the so-called hermitian operator method) pioneered by Lumer~\cite{Lumer1,Lumer}, has proven notably efficient. Lumer~\cite{Lumer} applied this approach to explore isometries within reflexive Orlicz spaces, and subsequently, Zaidenberg~\cite{Zaidenberg,Z77} utilized it to investigate isometries in the setting of   general complex symmetric function spaces.
    The case for separable  complex sequence spaces was treated by  Arazy ~\cite{Arazy85} and the non-separable case was addressed by Aminov and Chilin in \cite{AC}.
    Recall that following theorem due to Zaidenberg \cite[Theorem 1]{Zaidenberg} (see also \cite{Z77,FJ}):
\begin{theorem}\label{Z}
Let $E_1(\Omega_1,\Sigma_1,\mu_1)$ and $E_2(\Omega_2,\Sigma_2,\mu_2)$ be complex symmetric (complex) function spaces associated
with atomless $\sigma$-finite measure spaces    $(\Omega_1,\Sigma_1,\mu_1)$ and $(\Omega_1,\Sigma_1, \mu_1)$, respectively.
Assume that   the norm on $E_1(\Omega_1,\Sigma_1,\mu_1)$ is not proportional to $\left\|\cdot\right\|_2$.
Then, for any isometry  $T: E_1(\Omega_1,\Sigma_1,\mu_1) \rightarrow E_2(\Omega_2,\Sigma_2,\mu_2)$, there exists a measurable function $h$ and an invertible measurable transformation $\sigma: \Omega_2 \rightarrow \Omega_1$ such that
\begin{align}\label{Zaidenberg}
(Tf)(t) = h(t)f(\sigma(t)), \quad \text{for all } f \in E_1(\Omega_1,\Sigma_1,\mu_1).
\end{align}
\end{theorem}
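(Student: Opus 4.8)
The plan is to avoid entirely the conjugation $H\mapsto THT^{-1}$ used for \emph{surjective} isometries (which is unavailable here, since $T$ need not be invertible) and instead prove directly that $T$ preserves disjointness of supports. Once this is in hand, the weighted-composition form \eqref{Zaidenberg} is delivered by the classical representation theory of separating (disjointness-preserving) operators between function spaces. Thus I would split the argument into two parts: (I) $T$ carries disjointly supported functions in $E_1$ to disjointly supported functions in $E_2$; and (II) an injective separating isometry necessarily has the form $Tf=h\cdot(f\circ\sigma)$ with $h$ measurable and $\sigma$ an essentially invertible measurable transformation. Only part (I) is sensitive to surjectivity, and the point is that it can be run for into-isometries.

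For (I) I would work with the complex curve $z\mapsto\norm{f+zg}_{E_1}$, $z\in\mathbb{C}$, and call $f,g$ \emph{metrically unlinked} when this curve depends on $z$ only through $\abs{z}$. Two facts drive the proof. First, metric unlinkedness is trivially transported by \emph{any} isometry, surjective or not: since $\norm{Tf+zTg}_{E_2}=\norm{T(f+zg)}_{E_2}=\norm{f+zg}_{E_1}$, the curves for $(f,g)$ and $(Tf,Tg)$ literally coincide, so unlinkedness of $(f,g)$ forces unlinkedness of $(Tf,Tg)$ — it is precisely here that dispensing with $T^{-1}$ costs nothing, in contrast to the hermitian-conjugation route. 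Second, if $f,g$ have disjoint supports then $\abs{f+zg}=\abs{f}+\abs{z}\,\abs{g}$ pointwise, so the distribution of $\abs{f+zg}$, and hence its symmetric norm, depends only on $\abs{z}$; disjoint pairs are therefore always metrically unlinked. Combining the two, $T$ sends disjoint pairs in $E_1$ to metrically unlinked pairs in $E_2$.

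The crux, and the place where the hypothesis $\norm{\cdot}_{E_1}\neq\lambda\norm{\cdot}_2$ is indispensable, is the converse implication: in a complex symmetric space whose norm is not proportional to $\norm{\cdot}_2$, metrically unlinked elements are disjoint. In $L_2$ one has $\norm{f+zg}_2^2=\norm{f}_2^2+2\,\mathrm{Re}(\bar z\langle g,f\rangle)+\abs{z}^2\norm{g}_2^2$, which depends only on $\abs{z}$ exactly when $\langle g,f\rangle=0$, so there unlinkedness detects mere orthogonality and the conclusion genuinely fails; the excluded case is thus sharp. Note first that $E_2$ is itself not proportional to $\norm{\cdot}_2$, for otherwise $E_2$ would be Hilbertian and so would the isometric copy $T(E_1)\cong E_1$, contradicting the hypothesis on $E_1$; hence the converse may legitimately be invoked in $E_2$. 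The difficulty is to exclude ``Euclidean'' false positives, i.e.\ non-disjoint pairs spanning a circular curve. I would do this not pairwise but globally: propagating metric unlinkedness across the abundant supply of sub-bands in an atomless symmetric space forces a genuine parallelogram identity on ever larger subspaces, and rearrangement-invariance then spreads it to all of $E_2$, yielding $\norm{\cdot}_{E_2}=\lambda\norm{\cdot}_2$ unless the original pair was already disjoint. This rigidity step — upgrading a pairwise metric coincidence into a space-wide $L_2$ structure — is the main obstacle, and is exactly what the non-$L_2$ hypothesis is there to contradict.

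Granting (I), $T$ is a bounded, injective, disjointness-preserving linear map between symmetric function spaces over atomless $\sigma$-finite (hence, under the standing separability assumption, standard) measure spaces. The structure theory of separating operators — the complex, into-version of Lamperti's lemma — then produces a measurable weight $h$ on $\Omega_2$ and a regular set isomorphism from the measure algebra of $\Omega_1$ into that of $\Omega_2$, realised by a measurable point transformation $\sigma:\Omega_2\to\Omega_1$, with $Tf=h\cdot(f\circ\sigma)$. Finally, injectivity of $T$ together with the normalisation $\norm{T\cdot}_{E_2}=\norm{\cdot}_{E_1}$ and rearrangement-invariance forces $\sigma$ to be measure-preserving up to the weight $\abs{h}$ and invertible in the stated sense (a measure-algebra isomorphism onto $\Omega_1$), while $\abs{h}$ is pinned down by the isometry condition. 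This yields exactly \eqref{Zaidenberg}.
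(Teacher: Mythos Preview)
The paper does not prove this statement at all; Theorem~\ref{Z} is quoted verbatim from Zaidenberg \cite{Zaidenberg,Z77} as a background result, so there is no in-paper proof to compare against. That said, your outline is essentially Zaidenberg's own strategy: the invariant you call ``metrically unlinked'' (circular dependence of $\norm{f+zg}$ on $|z|$) is precisely the tool he introduced, and the three-step skeleton (disjoint $\Rightarrow$ unlinked; unlinked is preserved by any isometry; unlinked $\Rightarrow$ disjoint when $E\neq L_2$) is the classical route. Your observation that this avoids $THT^{-1}$ and hence works for into-isometries is exactly why Zaidenberg's theorem covers the non-surjective case, whereas the hermitian-operator machinery emphasised elsewhere in the paper does not.

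The one place your proposal is genuinely incomplete is the crux step, and you flag it yourself. The sentence ``propagating metric unlinkedness across the abundant supply of sub-bands forces a genuine parallelogram identity on ever larger subspaces, and rearrangement-invariance then spreads it to all of $E_2$'' is a plausible heuristic but not a proof: you have not said \emph{which} pairs beyond $(Tf,Tg)$ are forced to be unlinked, nor why unlinkedness of some pairs yields the parallelogram law on a subspace. Zaidenberg's actual argument is more concrete and does not go through a parallelogram identity directly; it analyses the norm on finite-dimensional coordinate subspaces spanned by indicators, extracts a functional equation from the circular symmetry, and shows that the only non-disjoint solutions force the fundamental function to be $t^{1/2}$. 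If you want to turn your sketch into a proof you will need to supply this mechanism explicitly; as written, the rigidity step is asserted rather than proved. A smaller point: your claim that $E_2$ Hilbertian forces $E_1=L_2$ is correct, but it relies on the nontrivial fact (Semenov \cite{Semenov}, cf.\ also \cite{Potepun}) that a symmetric function space with Hilbertian norm is $L_2$ up to a constant, which you should cite rather than leave implicit.
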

Note that \eqref{Zaidenberg} is
a necessary but not sufficient   condition for an operator on a symmetric function space  to be an isometry.

We also highlight significant differences between the studies of  isometries on real Banach spaces and those on complex Banach spaces because
  Lumer's approach works only with complex Banach spaces.
  The case for real Banach spaces appears to be more difficult.
Isometries on real symmetric sequence spaces were treated in \cite{BS74,BS75}.
 Kalton and Randrianantoanina~\cite{KR2,Kalton_R93,R,Rthesis} obtained the description of isometries on real symmetric  function spaces on $(0,1)$ in the sense of Lindenstrauss--Tzafriri. Recall the following result by Zaidenberg, and Kalton and Randrianantonina.

\begin{theorem}\label{KRZ1}\cite{KR2,Kalton_R93,R,Z80,Rthesis}
Assume that $E(0, 1)$ and $F(0,1)$ (with $\norm{\chi_{_{(0,1)}}}_E=\norm{\chi_{_{(0,1)}}}_F=1$)
\begin{enumerate}
  \item   either  are real  symmetric function spaces in the sense of Lindenstrauss--Tzafriri\footnote{i.e., 
a symmetric function space has the Fatou property or is minimal (which is the closed linear span of the simple integrable functions) \cite[p.118]{LT2}.
Note that any non-separable minimal symmetric function space $E(0,1)$ 
coincides with $L_\infty (0,1) $ and $E(0,1)$ has the Fatou property. 
};
  \item or     complex symmetric function spaces.
\end{enumerate}
If there exists a surjective isometry $T$ from $E(0,1)$ onto $F(0,1)$, then 
\begin{enumerate}
    \item either $E(0,1) = L_p(0,1)$  up to an equivalent   norm for some $1 \leq p \leq \infty$, and $T$ is a surjective isometry from $L_p(0,1)$ onto $L_p(0,1)$;
    \item or $E(0,1) \neq L_p(0,1)$ and  the  isometry $T$ from $E(0, 1)$ onto $F(0, 1)$ has the
    form:$$a(s)x(\sigma(s)) \quad a.e.,$$ for any $x\in E(0,1)$ where $a$  is a non-vanishing Borel real (respectively, complex) function such that  $|a| = 1$ a.e., and $\sigma : (0, 1) \to (0, 1)$ is an invertible
    Borel measure-preserving map.
\end{enumerate}
\end{theorem}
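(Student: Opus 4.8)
The plan is to reduce the statement of Theorem~\ref{KRZ1} to a combination of the ``soft'' structural dichotomy for $L_p$-preduals/symmetric spaces and the rigidity of disjointness-preserving maps. First I would recall that a symmetric function space on $(0,1)$ in the sense of Lindenstrauss--Tzafriri is, up to renorming, determined by its fundamental function and the Boyd indices, and that the space is $L_p(0,1)$ (up to equivalent norm) precisely when it has enough symmetries of a special kind. So the proof naturally splits into two cases according to whether $E(0,1)=L_p(0,1)$ up to equivalent norm or not. In the former case the classical Banach--Lamperti theory (extended by Lumer and Zaidenberg in the complex case, and by Kalton--Randrianantoanina in the real case) already describes all surjective isometries of $L_p(0,1)$, so there is nothing new to prove; the point is to show that if $E(0,1)\ne L_p(0,1)$ then every surjective isometry is automatically disjointness preserving and hence of the weighted-composition form.

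The core step is therefore: assuming $E(0,1)\ne L_p(0,1)$ for all $p$, show that any surjective isometry $T\colon E(0,1)\to F(0,1)$ preserves disjoint supports. In the complex case this is exactly Zaidenberg's Theorem~\ref{Z}: the hypothesis that the norm is not proportional to $\|\cdot\|_2$ (automatic here once $E\ne L_2$, and $E\ne L_2$ is part of $E\ne L_p$) lets us invoke Lumer's hermitian-operator machinery — one analyses the hermitian projections on $E(0,1)$, shows they are multiplications by characteristic functions, and deduces that $T$ conjugates the multiplication algebra $L_\infty(0,1)$ on $E$ into that on $F$, which forces the weighted composition form \eqref{Zaidenberg} with $|a|=1$ a.e. In the real case one cannot use hermitian operators, and instead I would appeal to the Kalton--Randrianantoanina analysis: one shows that the isometry preserves the lattice of ``bands'' by an argument based on the uniqueness of the maximal $L_p$-type subspace structure and on approximate-$L_p$ characterisations of disjointness via norms of sums $\|x+y\|$ versus $\|x-y\|$; once disjointness is preserved, Lamperti's argument gives the form $a(s)x(\sigma(s))$, and measure-preservation of $\sigma$ plus $|a|=1$ a.e.\ follows from the symmetry (rearrangement-invariance) of both norms, since $T$ and $T^{-1}$ must send equimeasurable functions to equimeasurable functions.

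Once the form is established, the final bookkeeping is: (i) identify when the two alternatives overlap — they do not, because a space admitting a surjective isometry that is \emph{not} a weighted composition is forced to be an $L_p$ (this is the contrapositive of the disjointness-preservation step); (ii) check that in alternative (1), $F(0,1)$ must also be $L_p(0,1)$ up to equivalent norm, because the existence of a surjective isometry transfers the fundamental function and the Boyd indices, and an $L_p$-space is isometrically characterised among symmetric spaces by these plus the validity of the parallelogram-type identities that a surjective isometry must transport; (iii) normalise using $\|\chi_{(0,1)}\|_E=\|\chi_{(0,1)}\|_F=1$ so that the renorming in alternative (1) is an honest equality of norms on the relevant $L_p$. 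The footnote about minimal non-separable symmetric function spaces coinciding with $L_\infty(0,1)$ is used to reconcile the Fatou-property and minimality clauses so that the Lindenstrauss--Tzafriri class is genuinely closed under the operations needed.

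The main obstacle I expect is the disjointness-preservation step in the \emph{real} case: without Lumer's complex-analytic hermitian-operator toolkit one must extract the band structure purely from metric data, and this is precisely the delicate part of the Kalton--Randrianantoanina argument — one needs to rule out ``rotational'' isometries mixing a two-dimensional subspace, which is exactly what can happen for $L_2$ and, more subtly, must be excluded for every $E\ne L_p$ by a careful local analysis of the modulus of convexity / smoothness of $E$ against that of $L_p$. Everything else — transporting the fundamental function, deducing measure-preservation of $\sigma$ from rearrangement-invariance, and the final case split — is comparatively routine given Theorem~\ref{Z} and the cited real-variable results.
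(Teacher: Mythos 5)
The paper does not prove Theorem~\ref{KRZ1}: it is quoted from \cite{KR2,Kalton_R93,R,Z80,Rthesis}, so there is no internal proof to compare against, and your outline must be judged on its own. Its skeleton is right (case split on whether $E=L_p$ up to equivalent norm; Zaidenberg's hermitian-operator route in the complex case; the Kalton--Randrianantoanina band analysis in the real case), but the two steps you treat as routine bookkeeping are exactly the substance of the theorem, and the mechanisms you offer for them would fail. In alternative (1) the conclusion is not merely that $F=L_p$ up to renorming but that $T$ is an isometry for the \emph{exact} $L_p$-norms, although it is only assumed isometric for equivalent symmetric norms; ``parallelogram-type identities'' transported by $T$ give nothing for $p\ne 2$. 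The actual mechanism (Kalton \cite{Kalton}, \cite[Proposition 7.1]{KR2}, whose noncommutative form is Theorem~\ref{NCKR} of this paper) is the Boyd-index estimate $\norm{T}_{L_r\to L_r}\le\norm{T}_{E\to F}$ for every $r$ between the Boyd indices of $E$, applied to both $T$ and $T^{-1}$ with $r=p$. Relatedly, your opening claim that a symmetric space is determined up to renorming by its fundamental function and Boyd indices is false (distinct Lorentz and Marcinkiewicz spaces share both), though that claim is not load-bearing.

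In alternative (2), the passage from the weighted-composition form $h(t)f(\sigma(t))$ supplied by Theorem~\ref{Z} to the conclusion that $|a|=1$ a.e.\ and $\sigma$ is measure-preserving is the genuinely hard point, and your justification --- that rearrangement-invariance forces $T$ and $T^{-1}$ to send equimeasurable functions to equimeasurable functions --- is circular: that is precisely what has to be proved, and it is \emph{false} when $E=L_p$ (Lamperti's example, cited after Theorem~\ref{Mityagin}; see also the Remark following Theorem~\ref{NCKR}, where a renormed $L_1(0,1)$ admitting non-measure-preserving isometries is analysed by hand). The correct argument must show that a non-unimodular weight or a non-measure-preserving $\sigma$ forces the norm of $E$ to satisfy a dilation/functional equation that characterises $L_p$; without that step the dichotomy between alternatives (1) and (2) collapses. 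Your acknowledgement that disjointness-preservation in the real case is delicate is fair, but the same caveat applies with equal force to these two points, so as written the proposal defers all of the theorem's content to assertions that are either circular or rest on incorrect auxiliary claims.
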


In this paper, we exclusively focus on complex Banach spaces and surjective linear isometries.

\section{Isometries on noncommutative $L_p$-spaces}
The first achievement in the field of isometries on noncommutative spaces was due to  Kadison \cite{Kalton} in the 1950s,
who showed that a surjective isometry between two von Neumann algebras can be expressed as a Jordan $*$-isomorphism multiplied by a unitary operator.
This result is often viewed as a noncommutative version of the Banach--Stone Theorem.
The study   of   noncommutative $L_p$-spaces dates back to the 1950s when  Dixmier and Segal introduced noncommutative $L_p$-spaces~\cite{D50,Se}.
Following their works, a number of scholars including Broise~\cite{Broise}, Russo \cite{Russo}, Arazy~\cite{Arazy}, Tam \cite{Tam}, etc, initiated the  study on isometries  on these spaces.
 The complete description of isometries on noncommutative $L_p$-spaces (in the semifinite setting) was   accomplished by Yeadon~\cite{Yeadon}.

 \begin{theorem}\label{Yeadon}
    Every (not necessarily surjective) isometry $T$ from
  a  non-commutative $L_p$-space $L_p(\cM_1,\tau_1)$ into another $L_p(\cM_2,\tau_2)$ (over semifinite von Neumann algebras $\cM_1$ and $\cM_2$)
  has the following form
  $$T(x) =u BJ(x), ~x\in L_p(\cM_1,\tau_1)\cap \cM_1, $$
  where $u$ is  a partial isometry  in $\cM_2$,   $J$ is  a Jordan $*$-isomorphism from $\cM_1$ onto a weakly closed $*$-subalgebra of $\cM_2$, $B$ is a positive self-adjoint operator affiliated with $\cM_2$ such that every spectral projection of $B$ commutes with $J(x)$ for all $x\in \cM_1$ and $\tau_1(x) =\tau_2(B^p J(x))$ for all $0\le x\in \cM_1$.
 \end{theorem}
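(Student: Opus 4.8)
The plan is to reconstruct the triple $(u,B,J)$ from the behaviour of $T$ on projections of finite $\tau_1$-trace; the whole argument is driven by an equality analysis of a Clarkson-type inequality, and this is also the only point at which the hypothesis $p\ne 2$ is used (when $p=2$ each $L_p(\cM_i,\tau_i)$ is a Hilbert space and its isometries are not of this form). First one restricts $T$ to the projection lattice: if $e\in\cM_1$ is a projection with $\tau_1(e)<\infty$ then $\norm{e}_p^p=\tau_1(e)$, so $\norm{T(e)}_p^p=\tau_1(e)$; write the polar decomposition $T(e)=u_e\abs{T(e)}$, with left support $\ell(e)=u_eu_e^*$ and right support $r(e)=u_e^*u_e$. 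Since $\cM_1\cap L_p(\cM_1,\tau_1)$ is the closed linear span of such projections and $T$ is bounded and linear, it suffices to control $T$ on them and pass to the limit.

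The key step is \emph{disjointness preservation}: if $e,f$ are orthogonal finite projections, then $T(e)^*T(f)=0$ and $T(e)T(f)^*=0$, equivalently $\ell(e)\perp\ell(f)$ and $r(e)\perp r(f)$. The input is the elementary identity $\abs{e+\lambda f}=e+\abs{\lambda}f$ for $\lambda\in\mathbb{C}$, giving $\norm{e+\lambda f}_p^p=\tau_1(e)+\abs{\lambda}^p\tau_1(f)$, so that, after applying the isometry,
\[
\norm{T(e)+\lambda T(f)}_p^p=\norm{T(e)}_p^p+\abs{\lambda}^p\,\norm{T(f)}_p^p\qquad(\lambda\in\mathbb{C}).
\]
For $p\ne 2$ this rigid $\abs{\lambda}^p$-scaling is incompatible with any nonzero overlap between the supports of $T(e)$ and $T(f)$: comparing the behaviour as $t\to 0$ of $t\mapsto\norm{T(e)+tT(f)}_p^p$ and of $t\mapsto\norm{T(e)+itT(f)}_p^p$ against the right-hand side, an overlap would contribute a genuine quadratic term in $t$, which $\abs{t}^p$ cannot match (for $p>2$ it is too flat at $0$, for $1\le p<2$ too singular). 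Making this precise is the main obstacle: it requires delicate noncommutative integration estimates, the ranges $1\le p<2$ and $p>2$ must be treated separately, and, because the algebras are noncommutative, one must track the left and the right supports simultaneously.

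Granting disjointness preservation, $e\mapsto r(e)$ is an orthogonally additive, order-preserving, injective map from the finite projections of $\cM_1$ into those of $\cM_2$; working first inside each finite corner $e\cM_1 e$ and then patching via the semifiniteness of $\tau_1$, a projection-lattice argument in the spirit of Dye (the type $I_2$ part requiring a separate direct treatment) extends it to a normal Jordan $*$-isomorphism $J$ of $\cM_1$ onto a weakly closed $*$-subalgebra of $\cM_2$ with $J(e)=r(e)$ on finite projections, while the partial isometries $u_e$ are mutually compatible and combine into a single partial isometry $u\in\cM_2$. Next, the finitely additive functional $e\mapsto\tau_1(e)=\tau_2(\abs{T(e)}^p)$, transported to $J(\cM_1)$, is absolutely continuous with respect to $\tau_2$, so a noncommutative Radon--Nikodym argument produces a positive self-adjoint $B$ affiliated with $\cM_2$ whose spectral projections commute with $J(\cM_1)$ and satisfy $\abs{T(e)}^p=B^pJ(e)$, whence $\tau_1(e)=\tau_2(B^pJ(e))$. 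Assembling the pieces gives $\abs{T(e)}=BJ(e)$ and $u_e=uJ(e)$, hence $T(e)=uBJ(e)$ for every finite projection $e$, and then $T(x)=uBJ(x)$ for all $x\in\cM_1\cap L_p(\cM_1,\tau_1)$ by linearity, density and continuity; the identity $\tau_1(x)=\tau_2(B^pJ(x))$ propagates from projections to all $0\le x\in\cM_1$ by spectral approximation, and the commutation of the spectral projections of $B$ with $J(\cM_1)$ is exactly what makes $x\mapsto uBJ(x)$ a well-defined isometry, confirming that the asserted form is consistent and closing the argument.
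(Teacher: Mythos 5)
The paper does not prove this statement: it is quoted as Yeadon's theorem with a citation to \cite{Yeadon}, so there is no in-paper argument to compare against. Measured against Yeadon's actual proof, your outline has the right architecture --- restriction to finite-trace projections, disjointness preservation as the engine, a Jordan $*$-homomorphism built from the right supports $r(T(e))$, a Radon--Nikodym-type construction of $B$, and assembly of $u$ from the partial isometries $u_e$ --- and you correctly note that the hypothesis $p\ne 2$ (and $p<\infty$), which the paper's statement of the theorem omits, is indispensable.

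However, as written the proposal defers precisely the two steps that constitute the proof. First, the disjointness lemma: your Taylor-expansion heuristic ("an overlap would contribute a genuine quadratic term which $|t|^p$ cannot match") is not an argument in the noncommutative setting, where $t\mapsto \lvert x+ty\rvert$ has no usable expansion and $\|x+ty\|_p^p$ need not be twice differentiable for $1\le p<2$. Yeadon's route is the equality case of a Clarkson-type identity: for $p\ne 2$, $\|x+y\|_p^p+\|x-y\|_p^p=2\|x\|_p^p+2\|y\|_p^p$ holds if and only if $xy^*=x^*y=0$, and proving the "only if" direction is the hard analytic core of the paper; you explicitly leave it as "the main obstacle". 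Second, the passage from the orthoadditive map $e\mapsto r(T(e))$ on finite projections to a normal Jordan $*$-monomorphism is not a direct application of Dye's theorem: that theorem concerns orthoisomorphisms of full projection lattices, whereas here the map is defined only on $\tau_1$-finite projections, is not surjective onto a projection lattice, and its additivity on \emph{non-commuting} projections must be extracted from further use of the isometry (Yeadon does this by hand, also establishing along the way the compatibility $u_f J(e)=u_e$ for $e\le f$ and the commutation of $\lvert T(e)\rvert$ with $J(e)$, which is where the commutation of the spectral projections of $B$ with $J(\cM_1)$ really comes from --- it is not a free output of a Radon--Nikodym theorem). So the proposal is a faithful road map of the known proof rather than a proof; to close it you would need to supply the Clarkson equality lemma and the projection-to-Jordan extension in full.
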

 
 For the case of type $III$ von Neumann algebras, we refer to   \cite{Sherman,Sherman06, Sherman06b, JS,JRS, Watanabe}. 
 In this paper, we work only with semifinite von Neumann algebras. 
Isometries on $\mathscr{C}_p$-spaces in nest algebras were studies in \cite{AnKa}. 
However, the description for isometries of noncommutative Hardy spaces in the general semifinite setting seems to be unknown. 

\section{Isometries on noncommutative symmetric  spaces}
Since the introduction of noncommutative symmetric   spaces in the 1970s, there has been extensive research on  general noncommutative symmetric  spaces on semifinite von Neumann algebras (see, for example, \cite{O,O2,DDP,S87,Kalton_S,DDP93} and references therein).
The initial motivation of the study of isometries on noncommutative symmetric  spaces is the question whether
 these isometries $T$ possess a natural description
  similar to Theorems \ref{Z}, \ref{KRZ1}  and \ref{Yeadon}.
  One of the most significant advancements in this field was made by Sourour~\cite{Sourour}, who provided a full  description of surjective isometries on separable symmetric operator ideals in $B(\cH)$.
  By adopting Sourour's techniques, the fourth author of the present paper \cite{Sukochev} managed to derive the description of
   surjective isometries on separable symmetric operator spaces associated with the
 hyperfinite type $II_1$-factor.
   However,  Sourour's proof   \cite{Sourour} strongly relies on the matrix representation of compact operators on a separable Hilbert space $\cH$, which
    does not seem to  be adaptable  to the  work of general   symmetric operator spaces.
     In the latter setting, several partial results have been achieved.
      For instance, the general form of isometries of Lorentz spaces on a finite von Neumann algebra was obtained in \cite{CMS} (see also \cite{MS2,KM,CHL,CT86}).
      When additional conditions are imposed on the isometries (such as disjointness-preserving, order-preserving, etc.), similar descriptions are available in \cite{MS, JC, JC2, SV,  CMS,HSZ,FJ2,Katavolos2,MZ,R3,Abramovich,Medzhitov,CLS}, offering partial answers to Question~\ref{question1}.

 Recently, the third and the fourth authors of the present paper~\cite{HS} answered Question \ref{question1} in the setting of noncommutative symmetric   spaces $E(\cM,\tau)$ having order continuous norm by showing the all surjective isometries on $E(\cM,\tau)$ are of elementary form, which resolved a question posed in \cite{Sukochev}.
In particular, this implies that the Banach--Mazur  rotation problem
(whether every separable Banach space with transitive group of isometries has to be isometric to a Hilbert space \cite{Banach}, see also \cite[Problem 9.6.2]{Rolewicz})
 has an affirmative   answer in the scope of separable commutative/noncommutative symmetric spaces.

 One of the  goals of the present paper is to treat the case when the norm on $E(\cM,\tau)$ is not necessarily order continuous.
We note that the proof in \cite{HS} applies to noncommutative symmetric spaces generated by  minimal symmetric function  spaces.
The following theorem is the first main result of this paper, which answers Question~\ref{question1} in the case when the norm on $E(\cM,\tau)$ is not necessarily order continuous.
This extends numerous results in the literature.

\begin{theorem}\label{11212}
Let $\cM_1$ and $\cM_2$ be  atomless $\sigma$-finite von Neumann algebras (or $\sigma$-finite atomic von Neumann algebras whose atoms have the same trace) equipped with semifinite faithful normal traces $\tau_1$ and $\tau_2$, respectively.
Let $E(\cM_1,\tau_1)$ and $F(\cM_2,\tau_2)$ be two strongly symmetric operator spaces having the Fatou property, whose norms  are not proportional to  $\norm{\cdot}_2$.
If $T:E(\cM_1,\tau_1)\to F(\cM_2,\tau_2)$ is a surjective isometry, then there exist
two sequences of elements $ A_i \in  F(\cM_2 ,\tau_2),~   i\in I $,  disjointly supported from the left and $ B_i \in F(\cM_2 ,\tau_2), ~i\in I$,    disjointly supported from the  right,   and a  surjective  Jordan $*$-isomorphism $J:\cM_1\to \cM_2$  and a central projection $z\in \cM_2$ such that
$$T(x) =  \sigma(F,F^\times)-  \sum_{i=1}^\infty     J(x)A_i z + B_i J(x) ({\bf 1}-z)  , ~x\in E(\cM_1,\tau_1)\cap \cM_1.$$
 In particular,
if the trace $\tau$ is finite, then there exist   elements $A,B\in F(\cM_2,\tau_2)$ such that
$$T(x) =    J(x)A  z + B  J(x) ({\bf 1}-z) , ~\forall x\in E(\cM_1,\tau_1)\cap \cM_1. $$
\end{theorem}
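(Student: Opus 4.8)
Throughout write $E_1:=E(\cM_1,\tau_1)$ and $F_2:=F(\cM_2,\tau_2)$, and for $a,b$ in a von Neumann algebra $\cM$ let $L_a:x\mapsto ax$ and $R_b:x\mapsto xb$ denote left and right multiplication on a symmetric operator space over $(\cM,\tau)$. The plan is to run Lumer's hermitian operator method, adapted to the non–order-continuous setting through the Fatou property and Köthe duality $E=E^{\times\times}$, and to read off from the outcome a Jordan $*$-isomorphism together with the weights $A_i,B_i$ and the central projection $z$.

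\textbf{Step 1 (Hermitian operators).} Since $T$ is a surjective isometry of complex Banach spaces, $h\mapsto ThT^{-1}$ is a linear bijection between the real spaces of bounded hermitian operators $\operatorname{Herm}(E_1)\to\operatorname{Herm}(F_2)$: indeed $h$ is hermitian precisely when $e^{ith}$ is a surjective isometry for every $t\in\R$, and conjugation by $T$ preserves this property. The crucial structural input — and the only place the hypothesis $\norm{\cdot}_E\ne\lambda\norm{\cdot}_2$ enters — is the identification
$$\operatorname{Herm}(E(\cM,\tau))=\{\,L_a+R_b:\ a=a^*,\ b=b^*\in\cM\,\}$$
for strongly symmetric $E$ with the Fatou property. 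The inclusion $\supseteq$ is immediate, since $e^{it(L_a+R_b)}x=e^{ita}xe^{itb}$ and two-sided multiplication by unitaries is isometric on a symmetric space. For $\subseteq$ one restricts a hermitian $h$ to a maximal abelian $*$-subalgebra $\cA\subset\cM$, which by the structure hypothesis on $\cM$ is isomorphic to $L_\infty(0,\infty)$ (atomless case) or to $\ell_\infty$ (atomic case) and carries an induced commutative symmetric space; Zaidenberg's Theorem~\ref{Z} (respectively its sequence-space counterpart) forces $h$ to act on this commutative slice as a multiplication, and a patching argument over the family of such $\cA$, using compatibility of $h$ with the trace-preserving conditional expectations, yields the global form. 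This is where the Fatou property replaces order continuity, and it occupies its own chapter.

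\textbf{Step 2 (The Jordan $*$-isomorphism and the central projection).} Conjugation by $T$ now maps $\{L_a+R_b\}$ over $\cM_1$ isomorphically onto $\{L_c+R_d\}$ over $\cM_2$. Exploiting the identity $[\,L_a+R_b,\ L_c+R_d\,]=L_{[a,c]}+R_{[d,b]}$ one sees that $T$ sends a maximal abelian family of hermitians to one of the same kind, which pins down a maximal abelian subalgebra $\cA_2\subset\cM_2$; tracking idempotent hermitians and order relations then produces an additive, unital, positive bijection $J:\cM_1\to\cM_2$ with positive inverse that is multiplicative on every commutative subalgebra. Preservation of squares on self-adjoint elements together with linearity makes $J$ a Jordan $*$-isomorphism, and simultaneously one extracts that $T$ intertwines the two-sided module actions, $T(axb)=J(a)\,T(x)\,J(b)$ for $x\in\cM_1\cap E_1$, after the appropriate identification of left with right. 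A Jordan $*$-isomorphism between von Neumann algebras decomposes along a central projection $z\in\cM_2$ into a $*$-isomorphism on $\cM_2 z$ and a $*$-anti-isomorphism on $\cM_2(\mathbf 1-z)$; this is precisely the $z$ of the statement, and it is why left multiplications appear on the $z$-part and right multiplications on its complement.

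\textbf{Step 3 (The weights and the isometric normalisation).} It remains to locate $A_i,B_i$. When $\tau_1$ is finite, $\mathbf 1\in E_1$ and $w:=T(\mathbf 1)\in F_2$; the intertwining relation, a polar decomposition of $w$, and the fact that its central support is $\mathbf 1$ force $T(x)=J(x)(wz)+(w(\mathbf 1-z))J(x)$, which is the asserted form with $A=wz$, $B=w(\mathbf 1-z)$ (absorbing the partial isometry of $w$ into $J$ if necessary). The isometric condition $\norm{T(x)}_F=\norm{x}_E$, read through the singular-value functions that a symmetric norm sees, then yields the Yeadon-type trace identity linking $\tau_1$ with $\tau_2\circ J$ via $|A|$ and $|B|$, in the spirit of Theorem~\ref{Yeadon}. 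For an infinite trace one chooses projections $p_n\uparrow\mathbf 1$ with $\tau_1(p_n)<\infty$, applies the finite-trace analysis to the corners $p_n\cM_1 p_n$, and assembles the increments into elements $A_i,B_i$ that are by construction disjointly supported from the left (respectively the right); the series $\sum_i J(x)A_iz+B_iJ(x)(\mathbf 1-z)$ then converges to $T(x)$ in $\sigma(F_2,F_2^\times)$, because $\cM_1\cap E_1$ is $\sigma(E_1,E_1^\times)$-dense and the Fatou property guarantees the limit lies in $F_2$.

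\textbf{Expected main obstacle.} The essential difficulty is the lack of order continuity: $\cM_1\cap E_1$ is only $\sigma(E_1,E_1^\times)$-dense, not norm-dense, so the density-and-continuity arguments available when $\norm{\cdot}_E$ is order continuous, as in \cite{HS}, do not transfer verbatim. One must (i) re-prove the hermitian-operator description of Step~1 using the Fatou property and $E=E^{\times\times}$ in place of order continuity, and (ii) run all the limiting arguments of Steps~2–3 inside the weak topology $\sigma(F_2,F_2^\times)$, checking at each stage that the reconstructed operators are well defined and that the final elementary expression genuinely agrees with $T$ on $\cM_1\cap E_1$. A tempting shortcut is to write $E=G^*$ with $G=(E^\times)_0$ order continuous, show that $T$ is weak-$*$ continuous, and apply \cite{HS} to the preadjoint; but weak-$*$ continuity of $T$ is itself a nontrivial point — it can fail for general isometries of dual Banach spaces — and here seems to require exactly the rigidity supplied by Step~1.
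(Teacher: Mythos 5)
Your proposal follows essentially the same route as the paper: Lumer's hermitian-operator method, the characterization $\operatorname{Herm}(E(\cM,\tau))=\{L_a+R_b:\ a=a^*,\,b=b^*\in\cM\}$ under the Fatou property, conjugation by $T$ to extract the Jordan $*$-isomorphism and the central projection, and assembly of the weights $A_i,B_i$ from $T$ applied to a sequence of pairwise orthogonal $\tau$-finite projections, with all limits taken in $\sigma(F,F^\times)$ via the weak continuity of $T$ (which the paper obtains from the ball-topology/Rosenthal-set argument). The one notable mechanical difference is in your Step~2: the paper pins down the one-sided form and the central projection not by commutator bookkeeping among maximal abelian families of hermitians, but by observing that both $TL_aT^{-1}$ and its square $TL_{a^2}T^{-1}$ are hermitian, which by Corollary~\ref{4.2} forces $TL_aT^{-1}=L_{J_1(a)z}+R_{J_2(a)({\bf 1}-z)}$ with $z$ independent of $a$.
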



 \section{A noncommutative Kalton--Randrianantoanina--Zaidenberg Theorem}

Note that the main result in  \cite{HS} and Theorem \ref{11212} above do not  provide a sufficient  condition for an operator on a noncommutative symmetric space $E(\cM,\tau)$ to be a surjective  isometry.
Another purpose of this paper is to   provide a necessary and sufficient condition for an operator on $E(\cM,\tau)$ to be an isometry by establishing a noncommutative version of the  Kalton--Randrianantoanina--Zaidenberg Theorem (see~Theorem~\ref{KRZ1}), see below.

\begin{theorem}\label{K-R-Z}
Let $E(0,1  ) $ and $ F(0,1)$ be   symmetric function spaces in the sense of Lindenstrauss and Tzafriri.
Let $\cM_1$ and $ \cM_2 $ be   atomless finite von Neumann algebras  equipped with   faithful normal tracial states  $\tau_1$ and $\tau_2$, respectively.
Let  $E(\cM_1,\tau_1)$ and $F(\cM_2,\tau_2)$ be the noncommutative  symmetric space corresponding to $E(0,1 )$ and $F(0,1)$,  respectively.
Let $T:E(\cM_1,\tau_1) \to F(\cM_2,\tau_2)$ be a surjective isometry.
Then,  \begin{enumerate}
        \item If $E(0,1)$ coincides with $L_p(0,1)$ (as sets)  for some $1\le p\le \infty $, then $T$ is a surjective isometry  from $L_p(\cM_1,\tau_1)$ onto $L_p(\cM_2,\tau_2)$;
        \item If $E(0,1)$ does not coincide with  $L_p(0,1)$ (as sets), then
$$T(x)=u\cdot J(x), ~\forall  x\in E(\cM_1,\tau_1), $$
where $u$ is a unitary operator in $\cM_2$ and $J:S(\cM_1,\tau_1)\to S(\cM_2,\tau_2) $ is a trace-preserving Jordan $*$-isomorphism.
       \end{enumerate}
\end{theorem}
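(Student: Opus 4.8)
The plan is to reduce Theorem \ref{K-R-Z} to the structural description of surjective isometries obtained in Theorem \ref{11212} and then to bootstrap the additional rigidity coming from the finite-trace, function-space-on-$(0,1)$ hypotheses. Since $\cM_1,\cM_2$ are atomless finite von Neumann algebras with faithful normal tracial states, Theorem \ref{11212} already tells us that (provided $\norm{\cdot}_E$ is not proportional to $\norm{\cdot}_{L_2}$) the isometry $T$ has the elementary form $T(x)=J(x)Az + BJ(x)({\bf 1}-z)$ on $E(\cM_1,\tau_1)\cap \cM_1$, for a surjective Jordan $*$-isomorphism $J:\cM_1\to\cM_2$, a central projection $z\in\cM_2$, and $A,B\in F(\cM_2,\tau_2)$. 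The $L_2$ exceptional case has to be handled separately at the end, but there $E(0,1)=L_2(0,1)$ as sets and the Hilbert-space statement in item (1) is classical (surjective isometries between $L_2$-spaces over semifinite algebras preserve the trace up to the unitary factor), so I would dispose of it first and then assume throughout that we are in the elementary-form regime.

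Next I would analyse the multipliers $A$ and $B$. Applying $T$ to ${\bf 1}\in\cM_1$ gives $T({\bf 1})=Az+B({\bf 1}-z)$, and evaluating the isometry condition on $x$ and on $x^*$, together with the fact that $J$ preserves adjoints and that $F(\cM_2,\tau_2)$ comes from a symmetric function space on $(0,1)$ (so $\chi_{(0,1)}$ has norm one and the ambient algebra is finite), forces $|T({\bf 1})|$ — i.e. $(A^*A)^{1/2}$ on $z$ and $(BB^*)^{1/2}$ on ${\bf 1}-z$ — to behave like a norming element. Using that $J$ is trace-preserving up to the weight induced by $A,B$ (the analogue of Yeadon's relation $\tau_1(x)=\tau_2(B^pJ(x))$, which is extracted here by testing the isometry on spectral projections and exploiting the Fatou property), I would show that the "density" part of $A$ and $B$ equals ${\bf 1}$: that is, writing polar decompositions $A=u_A|A|$, $B=|B|u_B$, the positive parts $|A|,|B|$ reduce to the identity on their respective central pieces, so that $A=u_Az$ and $B=({\bf 1}-z)u_B$ with $u_A,u_B$ partial isometries. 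This is where the hypothesis $E(0,1)\ne L_p(0,1)$ enters decisively: Theorem \ref{KRZ1} (the commutative Kalton--Randrianantoanina--Zaidenberg theorem), applied to the restriction of $T$ to a maximal abelian subalgebra (a copy of $E(0,1)$) and to its image, says the commutative multiplier has modulus $1$ almost everywhere and the underlying transformation is measure-preserving; transporting this back through $J$ kills any nontrivial positive part of $A,B$, because a genuine non-$L_p$ symmetric norm cannot absorb a non-constant multiplier. Then $T({\bf 1})$ is a unitary $u=u_Az+({\bf 1}-z)u_B$, and replacing $T$ by $u^*T$ we may assume $T({\bf 1})={\bf 1}$, whence $u^*T(x)=J(x)z+J(x)({\bf 1}-z)=J(x)$ on $\cM_1$, and $J$ is then automatically trace-preserving and extends to $S(\cM_1,\tau_1)\to S(\cM_2,\tau_2)$ by density and measurability. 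That yields item (2).

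For item (1), when $E(0,1)=L_p(0,1)$ as sets, the norm is equivalent but a priori not equal to $\norm{\cdot}_p$; here I would invoke Theorem \ref{KRZ1}(1) at the level of a masa to see that the commutative compression of $T$ is a surjective isometry of $L_p(0,1)$, pin down $p$ intrinsically (it is determined by the isometry type of the space, since distinct $L_p(0,1)$ are mutually non-isometric), and then run the same multiplier analysis with the extra input that a positive $B$ with $\tau_1(x)=\tau_2(B^pJ(x))$ is now \emph{allowed} to be non-trivial — giving precisely a Yeadon-type surjective isometry of $L_p(\cM_2,\tau_2)$, which after normalising the traces is a surjective isometry $L_p(\cM_1,\tau_1)\to L_p(\cM_2,\tau_2)$.

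The main obstacle I anticipate is the step that upgrades the generic multipliers $A,B\in F(\cM_2,\tau_2)$ of Theorem \ref{11212} to \emph{unitaries} in the non-$L_p$ case: Theorem \ref{11212} is purely structural and says nothing about $|A|,|B|$, so one genuinely needs to feed in the non-$L_p$-ness, and the clean way to do that — restricting to a masa, applying the commutative Theorem \ref{KRZ1}, and then arguing that the Jordan isomorphism $J$ intertwines the two masa pictures compatibly enough to conclude $|A|=|B|={\bf 1}$ globally — requires care about how $J$ interacts with the choice of masa and with the central projection $z$ (in particular ruling out that $J$ could "rotate" masas in a way that smuggles in a hidden non-constant density on part of the algebra). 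Controlling that, most plausibly via the Fatou property and an approximation by spectral projections where the isometry condition becomes a rigid equality of $\tau$-measures, is the crux.
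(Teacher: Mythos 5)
Your reduction to Theorem \ref{11212} and your treatment of case (2) follow the paper's route: one restricts $T$ to a commutative corner, applies the commutative Theorem \ref{KRZ1}, and concludes that $A+B$ is unitary and that $J$ is trace-preserving. One point you flag as ``the crux'' but do not resolve, and which does need resolving: the abelian subalgebra cannot be an arbitrary masa of $\cM_1$. It must be chosen on the $\cM_2$ side so as to contain all spectral projections of $A$ and $B$, and then pulled back through $J^{-1}$; otherwise $T$ does not map the commutative corner into a commutative symmetric function space, and the surjectivity of the restricted map (needed to invoke Theorem \ref{KRZ1}) is unavailable. The paper makes exactly this choice ($\cN_2\supset\{\text{spectral projections of }A,B\}$, $\cN_1=(J')^{-1}(\cN_2)$) and then proves surjectivity of $T|_{E(\cN_1,\tau_1)}$ by inverting the multiplier $A+B$ inside $S(\cN_2,\tau_2)$.

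The genuine gap is in case (1). Knowing from Theorem \ref{KRZ1}(1) that the compression of $T$ to one commutative corner is an $L_p(0,1)$-isometry gives a Yeadon-type relation only for elements affiliated with that corner, where $A$, $B$ and $J(f)$ all commute. For general $x\in\cM_1$ one has $\norm{J(x)A}_p^p=\tau_2\bigl((A^*|J(x)|^2A)^{p/2}\bigr)$, which does not factor as $\tau_2(|A|^p|J(x)|^p)$, so the masa-level identity does not propagate to the statement that $T$ is isometric for the $L_p$ norms on the whole noncommutative spaces; ``running the same multiplier analysis'' does not close this. The paper closes it by an entirely different device: a noncommutative version of Kalton's theorem (Theorem \ref{NCKR}), proved via the Segal--Radon--Nikodym theorem and Boyd indices, which gives $\norm{T}_{L_r\to L_r}\le\norm{T}_{E\to F}$ for every $r$ between the Boyd indices of $E(0,1)$; applied to both $T$ and $T^{-1}$ with $r=p$ this yields that $T$ is an $L_p$-isometry. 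Some such interpolation-type input, or another argument controlling $(A^*|J(x)|^2A)^{p/2}$ globally, is indispensable and is missing from your proposal. Two smaller omissions: Theorem \ref{11212} gives the elementary form only on $E(\cM_1,\tau_1)\cap\cM_1$ with a priori unbounded $J$ on $\cM_1$ only, and extending the formula to all of $E(\cM_1,\tau_1)$ (and extending $J$ to $S(\cM_1,\tau_1)$) requires the measure-continuity and $\sigma(E,E^\times)$/Fatou arguments of Corollaries \ref{order continuous} and \ref{cor KRZ Fatou}; and the Lindenstrauss--Tzafriri hypothesis covers both the minimal and the Fatou cases, which the paper must (and does) treat by different structural theorems.
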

It was shown by Potepun
 \cite{Potepun71} (see also \cite[Corollary 4]{Ab91}) that we have $E(0,1)=F(0,1)$ up to an equivalent norm whenever $E(0,1)$ is isometric to $F(0,1)$. Theorem~\ref{K-R-Z} (1) above implies 
a noncommutative version of this result. Precisely, 
if two noncommutative  symmetric spaces $E(\cM_1,\tau_1)$ and $F(\cM_2,\tau_2)$ over noncommutative probability spaces
 are isometric, then $E(0,1)=F(0,1)$ up to an equivalent norm.
This result has a direct connection to Pe\l czy\'nski's question, which we discuss in Section~\ref{Pel} below.

As mentioned in \cite[Section 1]{KR},
 there appear to be obstacles to extending the main result \cite[Theorem 1.1]{KR} to the setting of symmetric function   spaces on $[0, \infty)$.
In Chapter \ref{Sec:inf}, we also establish a version of Theorem \ref{K-R-Z} above
 in the setting of von Neumann algebras equipped with semifinite infinite faithful normal traces, which is new even in the commutative setting.


\section{Mityagin's question and its noncommutative counterpart}

Mityagin's question concerning isomorphisms between 
symmetric function spaces on $(0,1) $ and $(0,\infty)$
was one of the main motivations of the outstanding memoir \cite{JMST}, 
as well as of \cite{LT2}.
It  
 is known to have a positive answer for some special symmetric function spaces, see e.g. \cite{Astashkin11}, \cite[Theorem 2.f.1]{LT2} and \cite{JMST}.
 Below, we treat the part of  Mityagin's question concerning isometric  isomorphisms.
 In our paper, we show that any complex  symmetric function space $E(0,1)\ne L_p(0,1)$, $1\le p\le \infty$, is not isometric to a symmetric function space $F(0,\infty)$.

\begin{theorem}\label{Mityagin}
If a (complex) symmetric function space  $E(0, 1) \neq L_p(0, 1)$ (as sets), $1 \leq p \leq \infty$, then $E(0, 1)$ is not isometric to any symmetric function space $F(0, \infty)$.
\end{theorem}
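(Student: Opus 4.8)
## Proof proposal

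The plan is to argue by contradiction: suppose $T\colon E(0,1)\to F(0,\infty)$ is a surjective isometry with $E(0,1)\ne L_p(0,1)$. The heart of the matter is that $F(0,\infty)$ is a symmetric function space over an \emph{infinite} measure space, whereas $E(0,1)$ lives over a probability space; I want to extract a structural obstruction to the existence of such an isometry. The natural first step is to pass to the von Neumann algebra picture: $E(0,1)$ is the symmetric operator space $E(L_\infty(0,1),m)$ over a (commutative) atomless probability space, and $F(0,\infty)$ corresponds to a von Neumann algebra $L_\infty(0,\infty)$ equipped with a semifinite \emph{infinite} faithful normal trace. This reduces Theorem~\ref{Mityagin} to the special commutative case of the assertion advertised in the abstract --- that no symmetric space $E(\cM,\tau)\ne L_p(\cM,\tau)$ over a noncommutative probability space is isometric to a symmetric space over an algebra with a semifinite infinite trace --- and in fact it is exactly the commutative instance of the result proved in Chapter~\ref{S:M}.

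The key technical input is Theorem~\ref{K-R-Z}, the noncommutative Kalton--Randrianantoanina--Zaidenberg Theorem, but applied in reverse: if $E(0,1)\ne L_p(0,1)$, then any surjective isometry out of $E(L_\infty(0,1),m)$ must be of the rigid form $T(x)=u\cdot J(x)$ with $u$ unitary and $J$ a trace-preserving Jordan $*$-isomorphism. The point is that $J$ being trace-preserving forces the target trace to be \emph{finite} --- indeed $\tau_2(J(\mathbf 1)) = \tau_1(\mathbf 1) = 1 <\infty$, and since $J(\mathbf 1)$ is a projection (Jordan $*$-isomorphisms send the identity to the identity, or at least to a central projection supporting the image) this pins the ambient trace down to a finite one on the relevant corner. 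But $F(0,\infty)$ sits over the infinite measure space $(0,\infty)$, whose identity $\chi_{(0,\infty)}$ has infinite trace and is not contained in $F(0,\infty)$ unless $F(0,\infty)=L_\infty$; one then derives a contradiction by showing that the unit of $L_\infty(0,\infty)$ cannot be realized inside the image algebra while keeping the trace finite. Concretely, I would use that a surjective isometry must carry an order-unit-type structure (the constant function $\chi_{(0,1)}$, which is a norming element for $E(0,1)$ and lies in $\cM_1$) to an element of $F(\cM_2,\tau_2)\cap\cM_2$; its image $u J(\chi_{(0,1)}) = u$ is unitary, hence $\mathbf 1\in\cM_2$ is ``reached'', but the trace-preservation identity $\tau_2(J(p))=\tau_1(p)$ applied to an increasing net of projections exhausting $\mathbf 1_{(0,\infty)}$ yields $\tau_2(\mathbf 1)=1$, contradicting that $\mathbf 1_{L_\infty(0,\infty)}$ has infinite trace (as the trace on $L_\infty(0,\infty)$ is, up to the standard normalisation, Lebesgue measure, which is infinite on $(0,\infty)$).

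There is a subtlety I must handle with care: Theorem~\ref{K-R-Z} as stated is phrased for isometries \emph{between noncommutative symmetric spaces over finite von Neumann algebras}, so I cannot invoke it directly when the codomain is over an infinite-trace algebra. The honest route is to first establish the infinite-trace analogue of Theorem~\ref{K-R-Z} (which the excerpt promises is done in Chapter~\ref{Sec:inf}), and then run the dichotomy: either $E(0,1)$ coincides with some $L_p$, excluded by hypothesis, or $T$ has the multiplicative Jordan form, and the trace-preservation constraint is incompatible with the source having finite trace and the target infinite trace. The \textbf{main obstacle} is precisely making the ``elementary form'' rigidity available across the finite/infinite divide --- i.e.\ proving that a surjective isometry from $E(0,1)$ (finite) onto $F(0,\infty)$ (infinite) still forces disjointness preservation and the Jordan-multiplicative structure, since the existing Kalton--Randrianantoanina--Zaidenberg machinery and the hermitian-operator arguments behind Theorem~\ref{11212} are set up with the finiteness of at least one trace in mind. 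Once that rigidity is in hand --- via the hermitian projection / disjointness-preservation analysis of Chapter~\ref{Sec:inf} combined with the observation that $E(0,1)\ne L_p$ rules out the Hilbertian/$L_p$ exceptional cases --- the trace bookkeeping that produces the contradiction is short and essentially forced.
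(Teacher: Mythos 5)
Your proposal has a genuine gap at exactly the point you flag as the ``main obstacle'', and the gap is not closed by the references you lean on. The structural rigidity you need --- that a surjective isometry from $E(0,1)$ (finite measure) onto $F(0,\infty)$ (infinite measure) has the form $u\cdot J$ with $J$ \emph{trace-preserving} --- is not available in this paper across the finite/infinite divide: Theorem~\ref{K-R-Z} requires both algebras to be finite, and the infinite-trace machinery of Chapter~\ref{Sec:inf} cannot be used here because it is built \emph{on top of} Theorem~\ref{Mityagin} (Lemma~\ref{finite support}, the key step ensuring finitely supported elements map to finitely supported elements, is proved by invoking Theorem~\ref{Mityagin}; likewise Theorem~\ref{th:infiniteKRZ} disposes of the mixed finite/infinite case ``see Chapter~\ref{S:M}''). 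So your route is circular in the logical order of the paper. Moreover, trace-preservation of $J$ is precisely what fails to be forced a priori in the mixed setting: on $(0,\infty)$ dilations $D_t$ can implement isometries (as the paper notes for $L_{p,q}$), so the normalisation that produces trace-preservation in Theorem~\ref{KRZ}(2) is a conclusion of the finite-measure hypothesis, not something you may assume when the codomain is infinite. Your final ``trace bookkeeping'' contradiction therefore rests on a premise you have not established, and the actual quantitative mechanism that kills the isometry is missing from your write-up.

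For comparison, the paper's proof is purely commutative and more elementary. It starts from Zaidenberg's representation $T(x)(t)=a(t)x(\sigma(t))$ with $\sigma:(0,\infty)\to(0,1)$ invertible and measurable, restricts $T$ to $E(I_n)$ with $I_n=\sigma(0,n)$, and applies the commutative Kalton--Randrianantoanina--Zaidenberg theorem (Theorem~\ref{KRZ1}) on each finite piece after a dilation renormalisation to conclude that $|a|$ is a nonzero constant and that $\|\chi_{I_n}\|_E/\|\chi_{(0,n)}\|_F$ is independent of $n$. The contradiction then comes from a squeeze: the sets $I_n\setminus I_{n-1}$ all satisfy $\|\chi_{I_n\setminus I_{n-1}}\|_E=\mathrm{Const}\cdot\|\chi_{(0,1)}\|_F$, yet they sit inside the finite interval $(0,1)$, so $m(I_n\setminus I_{n-1})\to 0$, and since $E(0,1)\neq L_\infty(0,1)$ this forces $\|\chi_{I_n\setminus I_{n-1}}\|_E\to 0$, hence $a=0$ and $T=0$. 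This limiting argument --- exploiting that $(0,\infty)$ has infinitely many disjoint unit intervals while $(0,1)$ has finite total measure --- is the essential content of the theorem and would need to appear, in some form, in any correct proof.
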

For $E(0,1) = L_p(0,1)$ with equivalent  (but not equal) norms, the first
example of an isometry from $E(0,1)$ onto some symmetric function space $F(0,\infty)$ was obtained by Lamperti \cite{Lamperti}, see also \cite[p.639 Example]{Z77}.
See also \cite[Example 1]{R} for an  example of  an Orlicz space $E(0, 1)$   isometric to another Orlicz space $F(0, 1)$, both of which coincide with $L_p(0,1)$ as sets.

The following theorem delivers a negative answer to Mityagin's question in the noncommutative setting. 
\begin{theorem}
 Let $\cM$ and $\cN$ be two atomless von Neumann algebras on separable Hilbert spaces.
Let $\cM$ be   equipped with a   faithful normal tracial state $\tau$ and let $\cN$ be   equipped with a semifinite infinite faithful normal trace $\nu$.
If a  symmetric function space $E(0,1)$ in the sense of Lindenstrauss and Tzafriri  does not coincide with  $  L_p(0, 1)$ (as sets), $1 \leq p \leq \infty$, then
$E(\cM, \tau)$ is not isometric to any symmetric   space $F(\cN,\nu)$ over $(\cN,\nu)$.
\end{theorem}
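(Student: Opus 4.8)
The plan is to obtain the statement as a short consequence of the structural description of surjective isometries already established. Suppose, towards a contradiction, that $T\colon E(\cM,\tau)\to F(\cN,\nu)$ is a surjective isometry. Since $\cM$ and $\cN$ act on separable Hilbert spaces they are $\sigma$-finite, and they are atomless by hypothesis; moreover $E(0,1)$ does not coincide with $L_p(0,1)$ as sets for any $1\le p\le\infty$, so in particular $E(0,1)\ne L_2(0,1)$ and $\norm{\cdot}_E$ is not proportional to $\norm{\cdot}_2$. When $E(0,1)$ has the Fatou property the spaces $E(\cM,\tau)$ and $F(\cN,\nu)$ are strongly symmetric with the Fatou property and Theorem~\ref{11212} applies; when $E(0,1)$ is separable and minimal one uses the description from \cite{HS} instead; and the only non-separable minimal symmetric function space on $(0,1)$ is $L_\infty(0,1)$, which is excluded. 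In every case one extracts from the conclusion a \emph{surjective} Jordan $*$-isomorphism $J\colon\cM\to\cN$; the additional data $A_i$, $B_i$ and the central projection $z$ appearing in Theorem~\ref{11212} will play no role.

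Next I would transport the trace through $J$. For a projection $p\in\cM$ set $f(\tau(p)):=\nu(J(p))$; because $\cM$ is atomless this defines a function $f\colon(0,1]\to(0,\infty]$. It is well defined since $\cM$ is finite, so two projections of equal trace are Murray--von Neumann equivalent; a Jordan $*$-isomorphism --- being a $*$-isomorphism and a $*$-anti-isomorphism on complementary central projections --- preserves both equivalence and orthogonality of projections; and $\nu$ is a trace. Hence $f$ is nondecreasing and additive on pairs whose sum stays in $(0,1]$. By semifiniteness of $\nu$ there is a projection $q\in\cN$ with $0<\nu(q)<\infty$, and by surjectivity of $J$ we have $q=J(p)$ for some $p\in\cM$, so $f$ is finite at the point $t_0:=\tau(p)>0$. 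Choosing $n\in\N$ with $1/n\le t_0$, monotonicity gives $f(1/n)\le f(t_0)<\infty$, and additivity gives $f(1)=nf(1/n)<\infty$. On the other hand $J$ is unital, so $f(1)=\nu(J({\bf 1}))=\nu({\bf 1}_{\cN})=\infty$ because $\nu$ is infinite --- a contradiction. Therefore $E(\cM,\tau)$ is not isometric to $F(\cN,\nu)$.

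Granting Theorem~\ref{11212}, the argument is short, and the only point that needs care is that the Jordan isomorphism $J$ be surjective onto \emph{all} of $\cN$: if $J$ mapped $\cM$ merely onto a corner $z\cN z$, the argument above would only show that $\nu$ is finite on $z\cN z$, which is no contradiction. Surjectivity of $J$ onto $\cN$ is precisely what the structure theorem guarantees for a surjective isometry, so this is where the real weight of the proof sits. Alternatively, and perhaps more transparently, one may invoke the version of Theorem~\ref{K-R-Z} established in Chapter~\ref{Sec:inf}: for $E(0,1)\ne L_p(0,1)$ it forces $T$ to have the form $x\mapsto uJ(x)$ with $u$ a unitary in $\cN$ and $J$ a surjective \emph{trace-preserving} Jordan $*$-isomorphism, whence $\nu({\bf 1}_{\cN})=\nu(J({\bf 1}))=\tau({\bf 1})=1$ contradicts $\nu({\bf 1}_{\cN})=\infty$ at once. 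Either way, the substantive content is carried by the already-proved Theorem~\ref{11212} (respectively its Chapter~\ref{Sec:inf} refinement), and what remains here is only the trace bookkeeping just described.
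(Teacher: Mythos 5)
Your reduction to the structure theorem is fine as far as it goes, but the ``trace bookkeeping'' that you claim finishes the proof contains a fatal gap: the function $f(\tau(p)):=\nu(J(p))$ is \emph{not} well defined. Two projections of equal trace in a finite von Neumann algebra are Murray--von Neumann equivalent only when $\cM$ is a factor; in general one needs equality of the \emph{center-valued} traces. The theorem, however, is stated for arbitrary atomless $\cM$, including algebras with large center. The abelian case already kills the strategy outright: take $\cM=L_\infty(0,1)$ with Lebesgue trace and $\cN=L_\infty(0,\infty)$ with Lebesgue trace. These are isomorphic as von Neumann algebras, so there is a surjective unital $*$-isomorphism $J:\cM\to\cN$ with $\nu(J(\mathbf 1))=\infty$, and $\nu(J(p))$ depends on $p$, not on $\tau(p)$. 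Thus no contradiction can be extracted from the existence and surjectivity of $J$ alone; the contradiction must come from the \emph{isometry} property of $T$, which is exactly the content of the commutative Theorem~\ref{Mityagin} (where the elementary form $Tx=a\cdot(x\circ\sigma)$ exists and one must work to show $|a|$ is constant and then that $\|\chi_{I_n\setminus I_{n-1}}\|_E\to 0$ forces $a=0$). Your assertion that ``the substantive content is carried by Theorem~\ref{11212}'' inverts where the difficulty lies: in the only case where your $f$ is well defined (factors), the statement is nearly trivial, since a $II_1$ factor admits no infinite normal semifinite trace.

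The paper's proof (Theorems~\ref{th5} and~\ref{th:F:M}) instead builds an atomless abelian subalgebra $\cN_2\subset\cN$ containing the spectral projections of $A$ and $B$, sets $\cN_1=J^{-1}(\cN_2)$, and proves — this is the nontrivial step, requiring the weak continuity results of Chapter~\ref{S:ball} and an approximation argument to show surjectivity of the restriction — that $T$ restricts to a surjective isometry from $E(\cN_1,\tau)$ onto $F(\cN_2,\nu)$; identifying these with $E(0,1)$ and $F(0,\infty)$ then contradicts Theorem~\ref{Mityagin}. Your proposed alternative, invoking the Chapter~\ref{Sec:inf} refinement (Theorem~\ref{th:infiniteKRZ}) to get a trace-preserving $J$, is circular: that theorem is proved under the standing reduction ``we may assume $\tau_1$ and $\tau_2$ are infinite, see Chapters~\ref{S:KRZ} and~\ref{S:M}'', i.e.\ it disposes of the finite-versus-infinite case precisely by citing the theorem you are trying to prove. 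To repair your argument you would need to supply the restriction-to-abelian-subalgebras step and the commutative Theorem~\ref{Mityagin}; at that point you have reproduced the paper's proof.
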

We note that, it is shown in \cite{HRS} that for any $1\le p \ne 2 <\infty$, 
$L_p(\cM,\tau)$ over a finite von Neumann algebra is not isomorphic to $L_p(\cN,\nu)$ over an infinite semifinite von Neumann algebra. 

\section{Pe\l czy\'nski's question concerning the uniqueness of symmetric structure}\label{Pel}
Let $C_E$ be the symmetric operator ideal in $B(\ell_2)$ generated by a symmetric sequence space $E$.
We say that $C_E$ has a unique symmetric structure if $C_E$ isomorphic to some ideal $C_F$ corresponding to a symmetric sequence space $F$ implies that $E=F$ with equivalent norms.
At the International Conference on Banach Space Theory and its Applications at Kent, Ohio (August 1979), Pe\l czy\'nski posed the following question concerning the symmetric structure of ideals of compact operators on the Hilbert space $\ell_2$ (see also \cite[Question (B)]{Arazy81} and \cite[Problem A]{Arazy83}) :
 \begin{quote}
Does the ideal $C_E$ of compact operators corresponding to an arbitrary separable symmetric sequence space $E$ have a unique symmetric 
structure? 
\end{quote}
For readers who are interested in this topic, we refer to \cite{Arazy81,Arazy83,HSS}.
One may consider an analogue of Pe\l czy\'nski's question in the sense of isometric isomorphisms.
If a noncommutative symmetric space $E(\cM,\tau)$ isometric to $F(\cM,\tau)$ implies that 
$E(\cM,\tau)$ coincides with $F(\cM,\tau)$ up to a proportional norm, then we say that $F(\cM,\tau)$ has a unique symmetric structure 
up to isometries \cite{HS}.
It is an immediate consequence of Theorem  \ref{K-R-Z} that 
if
a symmetric function space (in the sense of Lindenstrauss and Tzafriri)
 $F(0,1)\ne L_p(0,1)$ (as sets), then for any atomless finite von Neumann algebra 
 $\cM$ equipped with a faithful normal tracial state $\tau$, $F(\cM,\tau)$ has a  unique symmetric structure 
up to isometries. 
If, in addition that, $\cM$ is a $II_1$-factor or $B(\cH)$, then all separable noncommutative symmetric spaces
 $F(\cM,\tau)$ (in the sense of Lindenstrauss--Tzafriri) has a unique symmetric structure up to isometries, see \cite[Section 5]{HS} and~\cite{AC2, Sourour, Sukochev}.

Semenov \cite{Semenov} showed that any symmetric function space $E(0,1)$ isometric to $L_2(0,1)$ coincides with $L_2(0,1)$ up to a proportional norm, see also \cite{Potepun}.
This result was later extended by 
  Abramovich and  Zaidenberg \cite{AZ}, who
obtained  a characterization of $L_p$-function spaces ($1\le p<\infty$) by showing that 
any symmetric function space $E(0,1)$ isometric to $L_p(0,1)$ coincides with 
 $L_p(0,1)$ (with
 $\norm{\cdot}_E =\lambda \norm{\cdot}_{L_p}$, $\lambda >0$).
Therefore, $L_p(0,1)$, $1\le p<\infty $, has a unique symmetric structure up to isometries in the class of symmetric function spaces on $(0,1)$. 

In the present paper, we establish a noncommutative version of the Semenov--Abramovich--Zaidenberg Theorem, 
proving the uniqueness of symmetric structure (up to isometries) in noncommutative $L_p$-spaces, 
see Chapter \ref{Sec:AZ}.
\begin{theorem}\label{AZ111}

Let $\cM$ be an atomless   von Neumann algebra equipped with a    faithful normal tracial   state $\tau$.
Let $E(0,1)$ be a symmetric space in the sense of Lindenstrauss and Tzafriri.
If
there exists an  atomless von Neumann algebra $\cN$ equipped with a  faithful normal tracial state  $\nu$ such that 
 $E(\cM,\tau)$ is isometric to $L_p(\cN,\nu)$, $1\le p<\infty$,   then $E(\cM,\tau)$ coincides with $L_p(\cM,\tau)$ and    $\norm{\cdot}_E =\lambda \norm{\cdot}_{L_p}$, $\lambda >0$.
\end{theorem}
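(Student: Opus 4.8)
The plan is to reduce Theorem~\ref{AZ111} to the commutative Semenov--Abramovich--Zaidenberg Theorem by means of Theorem~\ref{K-R-Z} and a maximality/compression argument. First I would apply Theorem~\ref{K-R-Z} to the given surjective isometry $T : E(\cM,\tau) \to L_p(\cN,\nu)$. Since the target is $L_p(\cN,\nu)$, which as a set is $L_p(0,1)$, the two alternatives in Theorem~\ref{K-R-Z} must be examined: if $E(0,1)$ does not coincide with $L_p(0,1)$ as a set, then $T(x) = u J(x)$ for a unitary $u \in \cN$ and a trace-preserving Jordan $*$-isomorphism $J : S(\cM,\tau) \to S(\cN,\nu)$; but a trace-preserving Jordan $*$-isomorphism carries $E(\cM,\tau)$ isometrically onto $E(\cN,\nu)$ (the symmetric norm depends only on the distribution, which Jordan $*$-isomorphisms preserve), and $x \mapsto u x$ is isometric from $E(\cN,\nu)$ to itself only if $\|\cdot\|_E = \|\cdot\|_{L_p}$ on $\cN$ up to scaling — which would force $E(0,1) = L_p(0,1)$ as sets, a contradiction. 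Hence we are necessarily in case (1): $E(0,1)$ coincides with $L_p(0,1)$ as sets, and $T$ is in fact a surjective isometry from $L_p(\cM,\tau)$ onto $L_p(\cN,\nu)$.

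Next I would pin down the relation between $\|\cdot\|_E$ and $\|\cdot\|_{L_p}$ on $\cM$. Knowing $E(0,1) = L_p(0,1)$ as sets, the two norms $\|\cdot\|_E$ and $\|\cdot\|_{L_p}$ are equivalent on $E(0,1)$; the remaining task is to upgrade "equivalent" to "proportional". Here I would invoke the commutative Semenov--Abramovich--Zaidenberg Theorem (Semenov~\cite{Semenov}, Abramovich--Zaidenberg~\cite{AZ}) as cited in the excerpt: it suffices to exhibit a symmetric function space $G(0,1)$, with $G(0,1) = L_p(0,1)$ as sets and carrying the norm $\|\cdot\|_E$ restricted to a commutative subalgebra, which is isometric to $L_p(0,1)$. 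Concretely, fix a maximal abelian (atomless) von Neumann subalgebra $\cA \subseteq \cM$; its trace is the restriction of $\tau$, so $\cA \cong L_\infty(0,1)$ with Lebesgue measure and $E(\cA,\tau\!\restriction_\cA) = E(0,1)$, $L_p(\cA,\tau\!\restriction_\cA) = L_p(0,1)$ isometrically. The isometry $T$ restricted to $E(\cA)$ need not land inside a commutative subalgebra of $\cN$, so instead I would argue distributionally: by Theorem~\ref{K-R-Z}(1), for $x \in \cM$ one has $d_{|T(x)|}(\cdot\,;\nu) = d_{|x|}(\cdot\,;\tau)$ (the isometry of $L_p$-spaces preserves singular-value functions, by Yeadon's description underlying Theorem~\ref{K-R-Z}), so $\|x\|_E = \|T(x)\|_F = \|T(x)\|_{L_p} = \|x\|_{L_p}$ for all $x$ — wait, that already gives the conclusion directly if the norm on $F(\cN,\nu)$ in case (1) is literally $\|\cdot\|_{L_p(\cN,\nu)}$. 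Thus the cleanest route is: in case (1), $T$ is an $L_p$-isometry, hence $\mu(T(x)) = \mu(x)$ for all $x$ (singular value functions coincide), therefore $\|x\|_E = \|\mu(x)\|_{E(0,1)}$ and $\|T(x)\|_{F(\cN,\nu)} = \|\mu(x)\|_{F(0,1)}$, and since $\|x\|_E = \|T(x)\|_{F}$ for the original symmetric norms we get $\|\cdot\|_{E(0,1)} = \|\cdot\|_{F(0,1)}$; combined with $F(0,1) = L_p(0,1)$ and the fact that $F(\cN,\nu)$ is simultaneously an $L_p$-space isometrically, we conclude $\|\cdot\|_{E(0,1)}$ is proportional to $\|\cdot\|_{L_p}$ on $L_p(0,1)$.

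Let me restructure this more carefully, since the subtle point is which norm $F(\cN,\nu)$ carries in case (1). The statement of Theorem~\ref{K-R-Z}(1) says $T$ is a surjective isometry \emph{from $L_p(\cM_1,\tau_1)$ onto $L_p(\cM_2,\tau_2)$} — meaning the original isometry $T : E(\cM,\tau) \to F(\cN,\nu)$, when the underlying function spaces are set-theoretically $L_p$, \emph{automatically} intertwines the $L_p$-norms. So in our situation $T$ is simultaneously an isometry for $(\|\cdot\|_E, \|\cdot\|_F)$ and for $(\|\cdot\|_{L_p(\cM)}, \|\cdot\|_{L_p(\cN)})$. Hence for every $x \in E(\cM,\tau)$, writing $y = T(x)$, we have $\|x\|_E = \|y\|_F$ and $\|x\|_{L_p(\cM)} = \|y\|_{L_p(\cN)}$. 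Since $T$ is surjective, as $x$ ranges over all of $E(\cM,\tau)$ the pair $(\mu(x), \mu(y))$ realizes: $\mu(y) = \mu(x)$ because Yeadon-type $L_p$-isometries preserve singular value functions. Therefore $\|\mu(x)\|_{E(0,1)} = \|\mu(x)\|_{F(0,1)}$ for all $x$, i.e.\ $\|\cdot\|_{E(0,1)} = \|\cdot\|_{F(0,1)}$ as norms on the common set $L_p(0,1)$. Now $F(\cN,\nu)$ is isometric (via $T^{-1}$ and then the identity) to $L_p(\cM,\tau)$ and $E(\cM,\tau)$ is isometric to $L_p(\cN,\nu)$; applying the commutative Semenov--Abramovich--Zaidenberg Theorem to the restriction to a maximal abelian subalgebra $\cA \cong L_\infty(0,1)$, where $E(\cA) = E(0,1)$ is isometric to $L_p(0,1)$, yields $\|\cdot\|_{E(0,1)} = \lambda\|\cdot\|_{L_p}$ for some $\lambda > 0$, and hence $E(\cM,\tau) = L_p(\cM,\tau)$ with $\|\cdot\|_E = \lambda\|\cdot\|_{L_p}$.

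\textbf{Main obstacle.} The genuinely delicate step is verifying that in case~(1) of Theorem~\ref{K-R-Z} the isometry $T$ preserves singular value functions (equivalently, that the restriction $E(\cA,\tau\!\restriction_\cA)$ lands inside, or can be compared with, a situation governed by the \emph{commutative} SAZ theorem). The subtlety is that $T$ applied to a commutative subalgebra of $\cM$ need not have commutative range in $\cN$, so one cannot directly quote the commutative theorem for the restriction; instead one must extract the distributional identity $\mu(T(x)) = \mu(x)$ from the structure of $L_p$-isometries (Theorem~\ref{Yeadon}: $T = u\,B\,J$ with $J$ a Jordan $*$-isomorphism, $B \geq 0$ affiliated with $\cN$, spectral projections of $B$ commuting with $J(\cM)$, and the trace-scaling identity $\tau = \nu(B^p J(\cdot))$) and the normalization $\|\chi_{(0,1)}\|_E = 1$ forcing $B$ to be a scalar, whence $J$ is trace-preserving and $\mu(T(x)) = \mu(x)$ exactly. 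Care is also needed because $E(0,1)$ may be non-separable (e.g.\ $L_\infty$ or a Marcinkiewicz space), but since $E(0,1) = L_p(0,1)$ as sets with $p < \infty$, separability issues do not in fact arise for the spaces involved here.
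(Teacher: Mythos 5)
Your strategy (feed $T$ into the noncommutative Kalton--Randrianantoanina--Zaidenberg dichotomy, rule out the non-$L_p$ alternative, then extract proportionality of the norms) is not the route the paper takes, and as written it has three genuine gaps. First, the case $p=2$ is silently dropped: Theorem \ref{K-R-Z} is proved via Theorem \ref{th:iso} (and the commutative theorems of Zaidenberg and Kalton--Randrianantoanina), all of which require the norms involved not to be proportional to $\norm{\cdot}_2$; when the target is $L_2(\cN,\nu)$ this hypothesis fails for the range space, so the dichotomy is simply not available. The paper treats $p=2$ by a separate Hilbert-space argument: $E(0,1)$ sits complemented in $L_2(\cN,\nu)$, hence is isometric to $L_2(0,1)$, and then \cite{AZ} applies. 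Second, the step you yourself single out as the crux --- that a surjective $L_p$-isometry between noncommutative probability spaces satisfies $\mu(T(x))=\mu(x)$ --- is false: in Yeadon's form $T=uBJ$ the operator $B$ need not be a scalar (already commutatively, $f\mapsto (2t)^{1/p}f(t^2)$ is a surjective isometry of $L_p(0,1)$ that does not preserve decreasing rearrangements), and the normalization $\norm{\chi_{(0,1)}}_E=1$ only yields $\nu(B^p)=1$, not $B=\mathbf 1$. Third, the proportionality constant is mishandled: Theorem \ref{K-R-Z}(1) read without the normalization $\norm{\chi_{(0,1)}}_E=\norm{\chi_{(0,1)}}_F=1$ would make $T$ simultaneously an isometry for $(\norm{\cdot}_E,\norm{\cdot}_p)$ and $(\norm{\cdot}_p,\norm{\cdot}_p)$ and hence force $\lambda=1$, which is false (take $\norm{\cdot}_E=2\norm{\cdot}_p$ and $T=2\,\mathrm{Id}$); one must first rescale $\norm{\cdot}_E$ by $\norm{\mathbf 1}_E$, a step your write-up omits. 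With that rescaling, your case-(1) conclusion follows trivially from $T$ being an isometry for both norm pairs, so the whole distributional detour is unnecessary --- but the argument as you present it rests on the false identity.

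The idea you are missing, and the one the paper uses for $p\ne 2$, is to reduce to the commutative Abramovich--Zaidenberg theorem by restricting $T$ to a well-chosen abelian subalgebra rather than by a distributional argument. Writing $T$ in elementary form $T(x)=AJ(x)z+J(x)B(\mathbf 1-z)$, one picks an atomless abelian subalgebra $\cN'\subset\cN$ containing all spectral projections of $A$ and $B$, sets $\cN''=J^{-1}(\cN')$, and verifies (exactly as in Section \ref{subs:KRZ}) that $T$ maps $E(\cN'',\tau)$ \emph{onto} $L_p(\cN',\nu)$; transporting by trace-measure preserving $*$-isomorphisms then exhibits $E(0,1)$ as literally isometric to $L_p(0,1)$, and \cite{AZ} delivers $E(0,1)=L_p(0,1)$ together with $\norm{\cdot}_E=\norm{\chi_{(0,1)}}_E\norm{\cdot}_p$. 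This sidesteps both the $\mu(T(x))=\mu(x)$ issue and the normalization issue, because the commutative theorem is applied to an actual isometry onto $L_p(0,1)$ and itself produces the constant $\lambda$.
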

Note that Theorem \ref{AZ111} holds in the setting of infinite semifinite von Neumann algebras,  which   is new even  in the commutative setting because it holds for  symmetric function spaces on infinite intervals $(0,\infty )$, see  Corollary \ref{AZ infinite} below.

%

%
%
\chapter{Preliminaries}\label{s:p}

In this chapter, we recall  several properties of generalized singular value functions, and define noncommutative   symmetric  spaces.
Throughout this paper, $\cH$ denotes a Hilbert space, which may not necessarily be separable, and $B(\cH)$ denotes the $*$-algebra of all bounded linear operators acting on $\cH$, where $ {\bf 1 }$ denotes the identity operator on $\cH $. Let $\cM$ be a von Neumann algebra over $\cH$.
Denote by  $P(\cM)$   the set of all projections of \( \cM \). We denote by \( \cM_p \) the reduced von Neumann algebra \( p\cM p \) generated by a projection \( p \in P(\cM) \). For a detailed exposition  of von Neumann algebra theory, we refer to  \cite{Blackadar}, \cite{Dixmier}, \cite{KR} or \cite{Sakai71,Tak}.
For general information about measurable operators, we refer to  \cite{Nelson}, \cite{Se}, \cite{DP2014}, \cite{DPS} and \cite{T3}.

\section{$\tau$-Measurable operators and generalized singular values}\label{s:p1}

A linear operator $x:\mathfrak{D}\left( x\right) \rightarrow \cH $ is termed {\it measurable} with respect to $\mathcal{M}$ if $x$ is
closed, densely defined, affiliated with $\mathcal{M}$ and there exists a sequence $\left\{ p_n\right\}_{n=1}^{\infty}$ in the set
$\cP\left(\mathcal{M}\right)$ of all projections of $\mathcal{M}$ such that $p_n\uparrow \mathbf{1}$,
$p_n(\cH)\subseteq\mathfrak{D}\left(x \right) $ and $\mathbf{1}-p_n$ is a finite projection (with respect to $\mathcal{M}$) for all
$n$. It should be noted that the condition $p _{n}\left(\cH\right) \subseteq \mathfrak{D}\left( x\right) $ implies that $xp _{n}\in
\mathcal{M}$. The collection of all measurable operators with respect to $\mathcal{M}$ is denoted by $S\left(
\mathcal{M} \right) $, which is a unital $\ast $-algebra with respect to strong sums and products (denoted simply by $x+y$ and $xy$ for all $x,y\in S\left( \mathcal{M}\right) $).

Let $x$ be a self-adjoint operator affiliated with $\mathcal{M}$. We denote its spectral measure by $\{e^x\}$. It is well known that
if $x$ is a closed operator affiliated with $\mathcal{M}$ with the polar decomposition $x = u|x|$, then $u\in\mathcal{M}$ and $e\in
\mathcal{M}$ for all projections $e\in \{e^{|x|}\}$. Moreover, $x \in S(\mathcal{M})$ if and only if $x $ is closed, densely, defined,
affiliated with $\mathcal{M}$ and $e^{|x|}(\lambda,\infty)$ is a finite projection for some $\lambda> 0$. It follows immediately that
in the case when $\mathcal{M}$ is a von Neumann algebra of type $III$ or a type $I$ factor, we have $S(\mathcal{M})= \mathcal{M}$.
For type $II$ von Neumann algebras, this is no longer true. From now on, let $\mathcal{M}$ be a semifinite von Neumann algebra
equipped with a faithful normal semifinite trace $\tau$.

For any closed and densely defined linear operator $x :\mathfrak{D}\left( x \right) \rightarrow \cH $, the \emph{null projection}
$n(x)=n(|x|)$ is the projection onto its kernel $\mbox{Ker} (x)$. The \emph{left support} $l(x )$ is the projection onto the closure
of its range $\mbox{Ran}(x)$ and the \emph{right support} $r(x)$ of $x$ is defined by $r(x) ={\bf{1}} - n(x)$.

An operator $x\in S\left( \mathcal{M}\right) $ is called $\tau$-measurable if there exists a sequence
$\left\{p_n\right\}_{n=1}^{\infty}$ in $\cP \left(\mathcal{M}\right)$ such that $p_n\uparrow \mathbf{1}$, $p_n\left( \cH
\right)\subseteq \mathfrak{D}\left(x \right)$ and $\tau(\mathbf{1}-p_n)<\infty $ for all $n$. The collection of all $\tau $-measurable
operators is a unital $\ast $-subalgebra of $S\left( \mathcal{M}\right) $,  denoted by $S\left( \mathcal{M}, \tau\right)$.
It is well known that a linear operator $x$ belongs to $S\left(\mathcal{M}, \tau\right) $ if and only if $x\in S(\mathcal{M})$ and
there exists $\lambda>0$ such that $$\tau(e^{|x|}(\lambda,\infty))<\infty.$$
Alternatively, an unbounded operator $x$ affiliated with $\mathcal{M}$ is  $\tau$-measurable (see \cite{FK}) if and only if $$\tau\left(e^{|x|}(n,\infty)\right)\rightarrow 0,\quad \mbox{as } n\to\infty.$$

\begin{definition}
Let a semifinite von Neumann  algebra $\mathcal M$ be equipped
with a faithful normal semi-finite trace $\tau$ and let $x\in
S(\mathcal{M},\tau)$. The generalized singular value function $\mu(x):t\rightarrow \mu(t;x)$, $t>0$,  of
the operator $x$ is defined by setting
$$
\mu(t;x)
=
\inf \left\{
\left\|xp\right\|_\infty :\ p\in \cP(\cM), \ \tau(\mathbf{1}-p)\leq t
\right\}.
$$
\end{definition}
An equivalent definition in terms of the distribution function of the operator $x$ is the following. For every self-adjoint
operator $x\in S(\mathcal{M},\tau) $,  setting $$d_x(t)=\tau(e^{x}(t,\infty)),\quad t>0,$$ we have (see e.g. \cite{FK} and \cite{LSZ})
$$
\mu(t; x)=\inf\{s\geq0:\ d_{|x|}(s)\leq t\}.
$$
Note that $d_x(\cdot)$ is a right-continuous function (see e.g.  \cite{FK} and \cite{DPS}).

Consider the algebra $\mathcal{M}=L^\infty(0,\infty)$ of all Lebesgue measurable essentially bounded functions on $(0,\infty)$.
The algebra $\mathcal{M}$ can be viewed as an abelian von Neumann algebra acting via multiplication on the Hilbert space
$\mathcal{H}=L^2(0,\infty)$, with the trace given by integration with respect to Lebesgue measure $m.$ It is easy to see that the
algebra of all $\tau$-measurable operators affiliated with $\mathcal{M}$ can be identified with the subalgebra $S(0,\infty)$ of the
algebra of Lebesgue measurable functions which consists of all functions $f$ such that $m(\{|f|>s\})$ is finite for some $s>0$. It
should also be pointed out that the generalized singular value function $\mu(f)$ is precisely the decreasing rearrangement $\mu(f)$ of the function $|f|$ (see e.g. \cite{KPS,Bennett_S,LT2}) defined by$$\mu(t;f)=\inf\{s\geq0:\ m(\{|f|\geq s\})\leq t\}.$$

For convenience of the reader,  we also recall the definition of the \emph{measure topology} $t_\tau$ on the algebra $S(\cM,\tau)$. For every $\varepsilon,\delta>0,$ we define the set
$$V(\varepsilon,\delta)=\{x\in S(\mathcal{M},\tau):\ \exists p \in \cP\left(\mathcal{M}\right)\mbox{ such that }
\left\|x(\mathbf{1}-p)\right\|_\infty \leq\varepsilon,\ \tau(p)\leq\delta\}.$$ The topology
generated by the sets $V(\varepsilon,\delta)$,
$\varepsilon,\delta>0,$ is called the measure topology $t_\tau$ on $S(\cM,\tau)$ \cite{DPS, FK, Nelson}.
It is well known that the algebra $S(\cM,\tau)$ equipped with the measure topology is a complete metrizable topological algebra \cite{Nelson}.
We note that a sequence $\{x_n\}_{n=1}^\infty\subset S(\cM,\tau)$ converges to zero with respect to measure topology $t_\tau$ if and only if $$\mbox{$\tau\big( e  ^{|x_n|}(\varepsilon,\infty)\big)\to 0$ as $n\to \infty$}$$ for all $\varepsilon>0$ \cite{DPS}.

The space  $S_0(\cM,\tau)$ of $\tau$-compact operators is the space associated to the algebra of functions from $S(0,\infty)$ vanishing at infinity, that is,
$$S_0(\cM,\tau) = \{x\in S(\cM,\tau) :  \ \mu(\infty; x) =0\}.$$
The two-sided ideal $\cF(\tau)$ in $\cM$ consisting of all elements of $\tau$-finite range is defined by
$$\cF(\tau)=\{x\in \cM ~:~ \tau(r(x)) <\infty\} = \{x \in \cM ~:~ \tau(s(x)) <\infty\}.$$
Note that  $S_0(\cM,\tau)$ is the closure of $\cF(\tau)$ with respect to the measure topology \cite{DP2014}.

A further important vector space topology on $S(\cM,\tau)$ is the \emph{local measure topology} \cite{DP2014,DPS}.
A neighbourhood base for this topology is given by the sets $V(\varepsilon, \delta; p )$, $\varepsilon, \delta>0$, $p\in \cP(\cM)\cap \cF(\tau)$, where
$$V(\varepsilon,\delta;  p ) = \{x\in S(\cM,\tau): pxp \in V(\varepsilon,\delta)\}. $$
It is clear that the local  measure topology is weaker than the measure topology~\cite{DP2014,DPS}.
If $\{x_\alpha\}\subset S(\cM,\tau)$ is a net and if $x_\alpha \rightarrow_\alpha x \in S(\cM,\tau)$ in local measure topology, then $x_\alpha y\rightarrow xy $ and $yx _\alpha \rightarrow yx $ in the local measure topology for all $y \in S(\cM,\tau)$ \cite{DP2014,DPS}.
If $\{ a_i\} $ is an increasing net of positive elements  in $S(\cM,\tau)$ and if $a\in S(\cM,\tau)$ is such that $a=\sup a_i$, then we write $0\le a_i \uparrow a$~\cite[p.212]{DP2014}.
If $\{x_i\}$ is an increasing net in $S(\cM,\tau)_+$ and $x \in S(\cM,\tau)_+$ such that $x_i\to x$ in the local measure topology, then $x_i\uparrow x$ (see e.g. \cite[Chapter II, Proposition 7.6 (iii)]{DPS}).

\section{Symmetric  spaces of $\tau$-measurable operators}

We now come to the definition of the main object of this paper.
\begin{definition}\label{opspace}
Let $\cM $ be a semifinite von Neumann  algebra  equipped
with a faithful normal semi-finite trace $\tau$.
Let $\mathcal{E}$ be a linear subset in $S({\mathcal{M}, \tau})$
equipped with a complete norm $\left\|\cdot \right \|_{\mathcal{E}}$.
We say that
$\mathcal{E}$ is a \textit{symmetric    space}  if
for $x \in
\mathcal{E}$, $y\in S({\mathcal{M}, \tau})$ and  $\mu(y)\leq \mu(x)$ imply that $y\in \mathcal{E}$ and
$\left\|y\right\|_\mathcal{E}\leq \left\|x\right\|_\mathcal{E}$.
\end{definition}
Let $E(\cM,\tau)$ be a symmetric   space.
It is well-known that any symmetrically normed space $E(\cM,\tau)$ is a normed $\cM$-bimodule (see e.g.  \cite{DP2014} and \cite{DPS}). 
That is, for any symmetric operator space $E(\cM,\tau)$, we have
$\left\| axb\right\|_E \le  \left\|a\right\|_\infty \left\|b\right\|_\infty \left\|x\right\|_E, ~a, b \in \cM,~ x\in E(\cM,\tau)$.
It is known that whenever $E(\cM,\tau)$ has   order continuous norm $\norm{\cdot}_E$, i.e.,  $\norm{x_\alpha}_E\downarrow 0$ whenever $0\le x_\alpha \downarrow 0\subset E(\cM,\tau)$, we  have $E(\cM,\tau)\subset S_0(\cM,\tau)$~\cite{DPS,HSZ,DP2014}.
 A symmetric space $E(\cM,\tau)\subset S(\cM,\tau)$ is said to have the Fatou property if for every upwards directed net  $\{a_\beta\}$ in \(E(\cM,\tau)^+\), satisfying $\sup_\beta \norm{a_\beta}_E < \infty$,
there exists $a \in E(\cM,\tau)^+$ such that $a_\beta \uparrow a$ in $E(\cM,\tau)$ and $\norm{a}_E = \sup_\beta \norm{a_\beta}_E$.

 The so-called K\"{o}the dual is identified with an important part of the dual space.
If $E (\cM,\tau) \subset S(\cM,\tau)$ is a symmetric space, then the K\"{o}the dual $E(\cM,\tau)^\times $ of $E$ is defined by setting
$$ E(\cM,\tau)^\times =\left\{   x\in S(\cM,\tau) : \sup_{\|y\|_E\le 1, y\in E(\cM,\tau)}\tau (|xy|)   <\infty    \right\}.$$

If $y \in E(\cM,\tau)^\times$, then the linear functional $$\phi_y:x \mapsto \tau(xy) , ~ x \in E(\cM,\tau),$$ is continuous on $E(\cM,\tau)$, see \cite[Proposition 22 (i).]{DP2014}. In addition, $\norm{\phi_y}_{E(\cM,\tau)^*}=\norm{y}_{E(\cM,\tau)^\times}$, where  $E(\cM,\tau)^*$ is the dual of  $E(\cM,\tau)$ (see, e.g. \cite{DP2014,DPS}). The K\"{o}the dual $E(\cM,\tau)^\times  $ can be identified as   a subspace of the Banach dual $E(\cM,\tau)$ via the trace duality~\cite[p.228]{DP2014}. If $y \in E(\cM,\tau)^\times$, then the functional $ \phi_y$ is normal, that is, $ x_\alpha \downarrow 0 $ in $ E(\cM,\tau)$ implies that $ \phi_y(x_\alpha) \rightarrow_\alpha 0 $. Note that every normal functional $ \phi \in E(\cM,\tau)^*$ is of the form $\phi_y$ for some $ y \in E(\cM,\tau)^\times $ (see e.g. \cite[Theorem 37.]{DP2014}). Since $E(\cM,\tau)^ \times$ separates the point of $E(\cM,\tau)$ (see \cite[Corollary 4.3.9]{DPS}), it follows that the weak topology $\sigma(E(\cM,\tau),E(\cM,\tau)^\times)$ is a Hausdorff topology.  For simplicity, we use $\sigma(E,E^\times)$ instead of $\sigma(E(\cM,\tau),E(\cM,\tau)^\times)$.


Recall that $$x\in L_1(\cM,\tau)+\cM:=\{a\in S(\cM,\tau):\mu(a)\in L_1(0,\infty)+L_\infty(0,\infty)\}$$ can be equipped with a norm $\norm{x}_{L_1+L_\infty}= \int_0^1 \mu(s;x)ds $
and $$x\in L_1(\cM,\tau)\cap\cM:=\{a\in S(\cM,\tau):\mu(a)\in L_1(0,\infty)\cap L_\infty(0,\infty)\}$$ can be equipped with a norm $\norm{x}_{L_1\cap L_\infty}:=\max \{\norm{x}_1,\norm{x}_\infty\}$.
In particular, we have $\left(L_1(\cM,\tau)\cap\cM\right)^\times= L_1(\cM,\tau)+\cM$ and $L_1(\cM,\tau)\cap\cM= \left( L_1(\cM,\tau)+\cM\right)^\times$~\cite[Example 4]{DP2014}.




There exists a strong connection between symmetric function spaces (see \cite{RGMP,LT2,KPS}) and
operator spaces exposed in \cite{Kalton_S} (see also \cite{LSZ}).
The operator space $E(\cM,\tau)$ defined by
\begin{equation*}
E(\mathcal{M},\tau):=\{x \in S(\mathcal{M},\tau):\ \mu(x )\in E(0,\infty)\},
\ \left\|x \right\|_{E(\mathcal{M},\tau)}:=\left\|\mu(x )\right\|_E\}
\end{equation*}
 is a complete symmetric  space  whenever $(E(0,\infty),\left\|\cdot\right\|_E)$ is    a complete  symmetric  function space on $(0,\infty)$  \cite{Kalton_S}.
In particular, for any symmetric function space $E(0,\infty)$, we have \cite[Lemma 18]{DP2014} $$F(\tau)\subset E(\cM,\tau).$$
In the special case when $E(0,\infty)=L_p(0,\infty)$, $1\le  p\le \infty$, $E(\cM,\tau)$ is the noncommutative $L_p$-spaces affiliated with $\cM$ and we denote the norm by $\norm{\cdot}_p$.
We note that if $E(0,\infty )$ is separable (i.e. has order continuous norm), then $E^\times (\cM,\tau) $ is isometrically isomorphic to $E(\cM,\tau)^*$~\cite[p.246]{DP2014}.
Recall that every  symmetric sequence/function  space $E$ in the sense of Lindenstrauss and Tzafriri  is fully symmetric, that is,
 if $x\in E$ and $y\in \ell_\infty$ (resp. $y\in S(0,\infty)$) with
 $$ \int_0^t \mu(t;y)dt \le \int_0^t\mu(t;x)dt, ~t\ge 0 $$
 (denoted by $y\prec \prec x$),
 then $y\in E$ with $\norm{y}_E\le \norm{x}_E$ (see e.g.  \cite[Proposition 2.a.8]{LT2}, \cite[Chapter II, Theorem 4.10]{KPS} or \cite[Theorem 4.5.7]{DPS}).

The \emph{carrier projection} $c_E\in \cM$ of   $E(\cM,\tau)$ is defined by setting
$$c_E := \vee \{p:p\in \cP(E)\}.$$
 It is clear that $c_E$ is in the center $Z(\cM)$ of $\cM$, see \cite{DPS} or \cite{DP2014}.
It is often assumed that the carrier projection $c_E$ is equal to ${\bf 1}$.
Indeed, for any  symmetric function  space  $E(0,\infty)$, the carrier projection of the corresponding operator space $E(\cM,\tau)$ is always ${\bf 1}$ (see e.g.
\cite{DP2014,Kalton_S}).
On the other hand, if $\cM$ is atomless or is atomic and all atoms have equal trace, then any non-zero symmetric space $E(\cM,\tau)$ is necessarily $\bf 1$~\cite{DP2014,DPS}.

Recall the following well-known fact.
\begin{lemma}\label{dense}
Let $\cM$ be a   von Neumann algebra equipped with a  semifinite  faithful normal trace $\tau$.
Let $E(\cM,\tau)  $ be a symmetric space affiliated with $\cM$.
Let $\{e_i\}$ be a net of $\tau$-finite projections in $\cM$ increasing to ${\bf 1}$. We have 
$\cup_{i}e_i \cM e_i$ is dense in   $E(\cM,\tau)$ with respect to the $\sigma(E,E^\times)$-topology.
%
\end{lemma}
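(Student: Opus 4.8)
The plan is to show that every $x\in E(\cM,\tau)$ lies in the $\sigma(E,E^\times)$-closure of $\bigcup_i e_i\cM e_i$ (note $e_i\cM e_i\subseteq\cF(\tau)\subseteq E(\cM,\tau)$ since $\tau(e_i)<\infty$). The two tools I would use repeatedly are: (a) for each $z\in E^\times$ the functional $\phi_z\colon a\mapsto\tau(az)$ on $E(\cM,\tau)$ is \emph{normal}, so $a_\alpha\downarrow 0$ in $E(\cM,\tau)$ forces $\phi_z(a_\alpha)\to 0$; and (b) if $w\in L_1(\cM,\tau)$ and $q_\alpha\downarrow 0$ in $\cP(\cM)$, then $\tau(wq_\alpha)\to 0$, since $q_\alpha\to 0$ $\sigma$-weakly and $\tau(w\,\cdot)$ is a $\sigma$-weakly continuous functional on $\cM$. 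Recall also that for $z\in E^\times$ and $a\in E(\cM,\tau)$ one has $za,\,az\in L_1(\cM,\tau)$ by definition of the K\"othe dual, and that $E^\times$ is an $\cM$-bimodule.

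\textbf{Step 1 (reduction to $x\in\cM\cap E(\cM,\tau)$).} Given $x\in E(\cM,\tau)$ with polar decomposition $x=u|x|$, put $x_n:=x({\bf 1}-e^{|x|}(n,\infty))\in\cM$; then $|x_n|=|x|({\bf 1}-e^{|x|}(n,\infty))\le|x|$, so $\mu(x_n)\le\mu(x)$ and $x_n\in\cM\cap E(\cM,\tau)$. Moreover $x-x_n=u\,|x|e^{|x|}(n,\infty)$, hence $|x-x_n|=|x|e^{|x|}(n,\infty)$; since $x$ is $\tau$-measurable, $e^{|x|}(n,\infty)\downarrow 0$, so $|x-x_n|\downarrow 0$, and as $\mu(|x-x_n|)\le\mu(x)$ this net is contained in $E(\cM,\tau)$ and decreases to $0$ there. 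For $z\in E^\times$, cyclicity of the trace gives $\phi_z(x-x_n)=\tau((x-x_n)z)=\tau(|x-x_n|\,(zu))=\phi_{zu}(|x-x_n|)$, which tends to $0$ by (a), using $zu\in E^\times$. Thus $x_n\to x$ in $\sigma(E,E^\times)$, and it suffices to handle $x\in\cM\cap E(\cM,\tau)$.

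\textbf{Step 2 (the case $x\in\cM\cap E(\cM,\tau)$).} Now $e_ixe_i\in e_i\cM e_i$, and I claim $e_ixe_i\to x$ in $\sigma(E,E^\times)$. Since the $\sigma(E,E^\times)$-closure of $\bigcup_i e_i\cM e_i$ is $\sigma(E,E^\times)$-closed and, by this claim together with Step 1, contains $\cM\cap E(\cM,\tau)$ and hence every $x\in E(\cM,\tau)$, this finishes the proof. Put $q_i:={\bf 1}-e_i$, so $q_i\downarrow 0$, and write
$$x-e_ixe_i=xq_i+q_ix-q_ixq_i.$$
Writing $x$ as a complex-linear combination of four positive operators from $\cM\cap E(\cM,\tau)$ (real/imaginary parts and their positive and negative parts) and using linearity of $\phi_z$, we may assume $x\ge 0$. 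Fix $z\in E^\times$. By cyclicity and $zx,\,xz\in L_1(\cM,\tau)$, fact (b) yields $\tau(xq_iz)=\tau((zx)q_i)\to 0$ and $\tau(q_ixz)=\tau((xz)q_i)\to 0$. For the last term: whenever $e_i\le e_j$ we have $q_jxq_j=q_j(q_ixq_i)q_j\le q_ixq_i$, so $\{q_ixq_i\}$ is decreasing; it converges strongly to $0$ because $\langle q_ixq_i\xi,\xi\rangle\le\norm{x}_\infty\norm{q_i\xi}^2\to 0$; and $\mu(q_ixq_i)\le\mu(x)$, so $q_ixq_i\downarrow 0$ in $E(\cM,\tau)$. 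Hence $\tau(q_ixq_iz)=\phi_z(q_ixq_i)\to 0$ by (a), and altogether $\phi_z(x-e_ixe_i)\to 0$.

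The genuine difficulty — and exactly the point at which the possible failure of order continuity of $\norm{\cdot}_E$ must be bypassed — is Step 2: there is no hope of norm convergence $e_ixe_i\to x$, so the argument has to be run inside the weak topology via the normal functionals $\phi_z$. The identity $x-e_ixe_i=xq_i+q_ix-q_ixq_i$ is chosen so that the two one-sided terms are killed by $\sigma$-weak convergence of $q_i$ tested against the \emph{fixed} $L_1$-elements $zx$ and $xz$, while the two-sided term $q_ixq_i$ is the only one that truly requires a monotone (Fatou-type) argument — which is why reducing that term to $x\ge 0$, so that $q_ixq_i$ is genuinely decreasing, is convenient. Step 1 is routine once the same $\phi_z$-normality machinery is available, and combining the two steps gives the asserted $\sigma(E,E^\times)$-density of $\bigcup_i e_i\cM e_i$ in $E(\cM,\tau)$.
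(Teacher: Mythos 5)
Your Step~1 is fine, and so is the treatment of the two one-sided terms $xq_i$ and $q_ix$ in Step~2. The gap is in the corner term: the inequality $q_jxq_j=q_j(q_ixq_i)q_j\le q_ixq_i$ is false in general. For a positive operator $a$ and a projection $p$ one has $pap\le a$ if and only if $pa({\bf 1}-p)=0$, i.e.\ if and only if $p$ commutes with $a$: writing $a$ as a $2\times2$ block matrix with respect to $p$, the difference $a-pap$ has vanishing $(1,1)$-block, and a positive block matrix with a vanishing diagonal block must have vanishing off-diagonal blocks. Concretely, $a=\bigl(\begin{smallmatrix}1&1\\1&1\end{smallmatrix}\bigr)\ge0$ and $p=\bigl(\begin{smallmatrix}1&0\\0&0\end{smallmatrix}\bigr)$ give $a-pap=\bigl(\begin{smallmatrix}0&1\\1&1\end{smallmatrix}\bigr)$, which is not positive. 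Since the $e_i$ need not commute with $x$, the net $\{q_ixq_i\}$ need not be decreasing, and without monotonicity you cannot invoke normality of $\phi_z$: normal functionals are only guaranteed to converge along monotone nets, and strong operator convergence $q_ixq_i\to0$ with a uniform bound says nothing about $\tau(q_ixq_iz)$ for general $z\in E(\cM,\tau)^\times$ precisely because $\norm{\cdot}_E$ is not assumed order continuous.

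This corner term is where the genuine difficulty sits. Regrouping $q_ix-q_ixq_i=q_ixe_i$ makes the obstruction visible: both factors of the product now move with $i$, so you cannot test a \emph{fixed} $L_1$-element against the decreasing net $q_i$ as you did for $xq_i$ and $q_ix$. The paper circumvents this by a different mechanism: it bounds $\mu\bigl(2t;\,ye_ix({\bf 1}-e_i)\bigr)\le\mu(t;y)\,\mu\bigl(t;x({\bf 1}-e_i)\bigr)$ (and the symmetric bound with the roles of $x$ and $y$ exchanged, obtained after cycling the trace), notes that at least one of $x\in E(\cM,\tau)\cap\cM$ and $y\in E(\cM,\tau)^\times$ must be $\tau$-compact so that the corresponding factor tends to $0$ in measure, and then concludes $\tau\bigl(ye_ix({\bf 1}-e_i)\bigr)\to0$ from a dominated convergence theorem for the trace. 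Some substitute of this kind for the false monotonicity claim is needed to close your Step~2.
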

\begin{proof}
Let $x\in E(\cM,\tau)\cap \cM$. 
We claim that $e_i x e_i \to_i x $ in the $\sigma(E,E^\times)$-topology.
For any $y \in E(\cM,\tau)^\times $, we have 
$$\tau(ye_i x e_i  - yx ) =\tau(ye_i x e_i -ye_ix  ) +\tau(ye_ix-yx). $$
Clearly, $\tau(ye_ix-yx)\to_i 0$. 
On the other hand, we have 
$$\mu(2t; ye_i x e_i -ye_ix  ) \stackrel{\mbox{\tiny \cite[Corollary 2.3.16]{LSZ}}}{ \le} \mu(t; ye_i )\mu(t;  x({\bf 1} -e_i)) \le \mu(t; y )\mu(t;  x({\bf 1} -e_i))  $$
and 
$$\mu(2t; ye_i x -e_i ye_ix  ) \stackrel{\mbox{\tiny \cite[Corollary 2.3.16]{LSZ}}}{ \le} \mu(t; ({\bf 1} -e_i) y )\mu(t; e_i x) \le  \mu(t; ({\bf 1} -e_i) y )\mu(t;  x)  .$$
Note that either $x$ or $y$ is $\tau$-compact. 
By \cite[Proposition 2.6.4]{DPS}, either $\mu(ye_i x e_i -ye_ix  )$ or $\mu(ye_i x -e_i ye_ix   ) $ converges to $0$ in measure. 
Therefore, by \cite[Theorem 3.4.21]{DPS}, we have $\tau(ye_i x e_i -ye_ix  ) \to_i 0$. 
This proves the claim.  

Now, 
it suffices to observe that 
 $E(\cM,\tau)_+\cap \cM_+$ is dense in $E(\cM,\tau)_+$ with respect to  the $\sigma(E,E^\times)$-topology.
Let $x_n=x e^{x}(0,n)$, $n\ge 1$.
We have $x-x_n\downarrow 0$ as $n\to \infty $.
For any $y = y_1 -y_2 +i y_3 -i y_4 \in E(\cM,\tau)^\times  $, where $y_i$ is positive for each $i=1,2,3,4,$ we have 
$$\tau( (x-x_n) y_i ) =\tau\left(
y_i^{1/2}(x-x_n) y _i^{1/2}
\right  ) \to 0$$
as $n\to \infty $, that is, 
$x_n\to x$  as $n\to \infty $ with respect to the  $\sigma(E,E^\times)$-topology.
\end{proof}

\section{Hermitian operators}

Let $X$ be a Banach space.
Recall that a \emph{semi-inner product} (abbreviated \emph{s.i.p.}) on $X$ is a mapping $\langle \cdot,\cdot\rangle$ of $X\times X$ into the field   of complex numbers such that:
\begin{enumerate}
  \item $\langle x+y,z\rangle= \langle x,z\rangle+\langle y,z\rangle$ for $x,y,z\in X$;
  \item $\langle \lambda x,y \rangle=\lambda \langle x,y \rangle$ for $x,y\in X$ and $\lambda\in \mathbb{C}$;
  \item $\langle x,x\rangle >0$ for $0\ne x\in X$;
  \item $|\langle  x,y\rangle|^2 \le \langle x,x\rangle\langle y,y\rangle$ for any $x,y\in X$.
\end{enumerate}
When a s.i.p is defined on $X$, we call $X$ a \emph{semi-inner-product space} (abbreviated \emph{s.i.p.s.}).
If $X$ is a s.i.p.s., then $\langle x,x \rangle^{\frac12}$
 is a norm on $X$.
 On the other hand, every Banach space can be made into a s.i.p.s. (in general, in infinitely many ways) so that the s.i.p. is consistent with the norm, i.e., $\langle x,x\rangle^\frac 12 = \norm{x}$ for any $x\in X$~\cite{FJ}.
 By virtue of the Hahn--Banach theorem, this can be accomplished by choosing  one bounded linear functional $f_x$ for each $x\in X$  such that $\norm{f_x} =\norm{x}$
and $f_x(x)=\norm{x}^2$ ($f_x$ is called a \emph{support functional} of $x$), and then setting $\langle x,y\rangle = f_y (x)$ for arbitrary $x,y\in X$~\cite{Berkson, Lumer,FJ,Giles}.
Given a linear transformation $T$ mapping a s.i.p.s. into itself, we denote by $W(T)$ the \emph{numerical range} of $T$, that is,  $\{\langle Tx,x\rangle| \langle x,x\rangle=1, x\in X\}$.
Let $T$ be an operator on a Banach space $(X,\norm{\cdot})$. Although in
principle there may be many different s.i.p. consistent
 with $\norm{\cdot}$,
 nonetheless if the numerical range of $T$ relative to one such
s.i.p. is real, then the numerical range relative to any such s.i.p. is real  (see e.g. \cite[p.107]{FJ}, \cite[Section 6]{Lumer} and \cite[p.377]{Berkson}).
If this is the case, $T$ is said to be a \emph{hermitian} operator on $X$.

\chapter{The ball topology and weak continuity of isometries}\label{S:ball}
The main purpose of this Chapter is to prove the $\sigma(E, E^\times)$-continuity of isometries/hermitian operators on a noncommutative symmetric space $E(\cM,\tau)$ (see Theorems \ref{weak-continous of T} and  \ref{continous of hermitian} below).
The reader who is only concerned with separable Banach spaces may
observe that any bounded linear operator on   $E(\cM,\tau)$ is automatically $\sigma(E, E^\times)$-continuous when
 $E(\cM,\tau)$ has order continuous norm
 (in particular, it is separable).
%
However, for the case when $E(\cM,\tau)$ does not have  order continuous norm,
the  $\sigma(E,E^\times)$-continuity of an isometry/hermitian operator $T$ on $E(\cM,\tau)$ is not obvious.

 Kalton and Randrianantoanina~\cite[Proposition 2.5]{KR2}  obtained  Theorem \ref{continous of hermitian} below for a K\"{o}the function space with the Fatou property.
 Recently,  Aminov and Chilin~\cite[Proposition 4.3]{AC2}  treated  perfect norm ideals of compact operators on a separable Hilbert space.
The main tool is the so-called ball topology of a Banach space introduced and developed by Godefroy and Kalton~\cite{GN}.

Let $(X, \left\|\cdot\right\|_X)$ denote a real Banach space, and let $b_X$ denote its ball topology, which is the coarsest topology such that every closed ball
\[ B(x, \varepsilon) = \{ y \in X : \left\|y - x\right\|_X \leq \varepsilon \}, \quad \varepsilon > 0, \]
is closed in $b_X$, see e.g. \cite{GN}. The family
\[ X \setminus \bigcup_{i=1}^{n} B(x_i, \varepsilon_i), \quad x_i \in X, \quad \left\|x_0 - x_i\right\|_X > \varepsilon_i, \quad i = 1, \ldots, n, \quad n \in \mathbb{N}, \]
forms a neighborhood basis of the point $x_0 \in X$ in $b_X$. Consequently, 
we have the following characterization of convergence in the ball topology.
\begin{lemma} \label{ball continuous} \cite[p.200]{GN} Let $X$ be a Banach space.
Let    $\{x_\alpha\}$ in  a net  in $X$ and let $x\in X$. 
Then, $x_\alpha  \xrightarrow{b_X} x$ if and only if
\[ \liminf \left\|x_\alpha - y\right\|_X \geq \norm{x - y}_X \]
for all $y \in X$. In particular, every  isometry $T$ from  $(X, \norm{\cdot}_X)$  
into another Banach space $(Y,\norm{\cdot}_Y)$
is  $b_X-b_Y$-continuous.
\end{lemma}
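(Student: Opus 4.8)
The statement has two parts. For the characterization of $b_X$-convergence, the plan is to unwind the definition of the ball topology using the explicit neighbourhood basis of $x$ displayed just above: a basic neighbourhood of $x$ has the form $X \setminus \bigcup_{i=1}^n B(y_i,\varepsilon_i)$ where $\norm{x-y_i}_X > \varepsilon_i$ for each $i$. First I would prove the ``only if'' direction: suppose $x_\alpha \xrightarrow{b_X} x$ and fix $y \in X$; for any $\varepsilon < \norm{x-y}_X$ the set $X \setminus B(y,\varepsilon)$ is an open neighbourhood of $x$ in $b_X$, so $x_\alpha$ is eventually outside $B(y,\varepsilon)$, i.e.\ eventually $\norm{x_\alpha - y}_X > \varepsilon$; letting $\varepsilon \uparrow \norm{x-y}_X$ gives $\liminf_\alpha \norm{x_\alpha - y}_X \ge \norm{x-y}_X$. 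For the ``if'' direction: assume the $\liminf$ inequality holds for all $y$, and let $U = X \setminus \bigcup_{i=1}^n B(y_i,\varepsilon_i)$ be a basic neighbourhood of $x$, so $\norm{x-y_i}_X > \varepsilon_i$ for each $i$. By hypothesis $\liminf_\alpha \norm{x_\alpha - y_i}_X \ge \norm{x-y_i}_X > \varepsilon_i$, so for each $i$ there is $\alpha_i$ with $\norm{x_\alpha - y_i}_X > \varepsilon_i$ for $\alpha \ge \alpha_i$; taking $\alpha_0$ dominating all $\alpha_i$ (finitely many, so this is possible in a directed set) we get $x_\alpha \in U$ for $\alpha \ge \alpha_0$. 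Hence $x_\alpha \xrightarrow{b_X} x$.

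For the continuity of an isometry $T \colon (X,\norm{\cdot}_X) \to (Y,\norm{\cdot}_Y)$, the plan is to verify the $\liminf$ criterion directly using that $T$ preserves norms of differences. Suppose $x_\alpha \xrightarrow{b_X} x$ in $X$; I must show $Tx_\alpha \xrightarrow{b_Y} Tx$ in $Y$, i.e.\ $\liminf_\alpha \norm{Tx_\alpha - z}_Y \ge \norm{Tx - z}_Y$ for every $z \in Y$. The only subtlety is that $T$ need not be surjective, so $z$ need not be of the form $Ty$. But this is not needed: the balls $B(z,\varepsilon)$ in $Y$ pull back under the isometric embedding $T$ to sets $T^{-1}(B(z,\varepsilon)) = \{u \in X : \norm{Tu - z}_Y \le \varepsilon\}$, and one checks that $b_X$ is finer than the initial topology generated by these preimages (indeed each such preimage is $b_X$-closed, being a decreasing intersection—via the triangle inequality in $Y$—of genuine closed balls $B(u_0,\varepsilon')$ of $X$, or more simply by a direct $\liminf$ argument). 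Concretely: pick $x_0 \in X$ in the preimage and note $\norm{Tu - z}_Y \le \norm{T x_0 - z}_Y + \norm{u - x_0}_X$, so the preimage of $B(z,\varepsilon)$ contains, hence the complement is contained in, a union of complements of $X$-balls; this shows $T$ is $b_X$–$b_Y$ continuous. Alternatively, and most cleanly: if $x_\alpha \xrightarrow{b_X} x$ then for the point $z \in Y$ one estimates, for any $u \in X$, $\norm{Tx_\alpha - z}_Y \ge \norm{Tx_\alpha - Tu}_Y - \norm{Tu - z}_Y = \norm{x_\alpha - u}_X - \norm{Tu - z}_Y$, so $\liminf_\alpha \norm{Tx_\alpha - z}_Y \ge \norm{x - u}_X - \norm{Tu-z}_Y$ for all $u$; taking $u$ from a minimising sequence for $\inf_u (\norm{Tu - z}_Y - \norm{x-u}_X + \norm{Tx - Tu}_Y)$—and using $\norm{Tx - z}_Y \le \norm{Tx - Tu}_Y + \norm{Tu - z}_Y = \norm{x-u}_X + \norm{Tu-z}_Y$—yields $\liminf_\alpha \norm{Tx_\alpha - z}_Y \ge \norm{Tx - z}_Y$, as required.

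The content here is entirely soft: no analytic estimates beyond the triangle inequality are involved, and the result is essentially a formal consequence of the definition of $b_X$ (this is why it is attributed to \cite[p.200]{GN}). The only point demanding care—and the one I would flag as the main obstacle, though a minor one—is the non-surjectivity of $T$ in the last clause: one must resist the temptation to write points of $Y$ as images of points of $X$, and instead argue with the one-sided estimate above, which is robust to $T(X) \subsetneq Y$. Everything else is a routine unwinding of the neighbourhood basis.
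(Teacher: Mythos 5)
Your proof of the equivalence between $b_X$-convergence and the $\liminf$ inequality is correct and is precisely the routine unwinding of the neighbourhood basis that the paper (following \cite{GN}) intends; nothing to add there.

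The ``in particular'' clause is where there is a genuine gap, and your own hesitation about non-surjectivity is well founded: neither of your two arguments closes it. In the first argument you assert that $T^{-1}(B(z,\varepsilon))=\{u\in X:\norm{Tu-z}_Y\le\varepsilon\}$ is an intersection of closed balls of $X$ ``via the triangle inequality''; but the triangle inequality only gives the one-sided containment $T^{-1}(B(z,\varepsilon))\subseteq B(u_0,\varepsilon+\norm{Tu_0-z}_Y)$ for each $u_0$ in the preimage, and a set merely contained in (or containing) an intersection of balls need not be $b_X$-closed. In the second argument, after the correct estimate $\liminf_\alpha\norm{Tx_\alpha-z}_Y\ge\norm{x-u}_X-\norm{Tu-z}_Y$, you would need $\sup_{u\in X}\left(\norm{Tx-Tu}_Y-\norm{Tu-z}_Y\right)=\norm{Tx-z}_Y$; this is the extremal case of the triangle inequality and requires points $Tu$ of the range for which $z$ lies metrically between $Tx$ and $Tu$, which need not exist when $z\notin \overline{T(X)}$. (Your ``minimising sequence'' reduces, after cancelling $\norm{x-u}_X=\norm{Tx-Tu}_Y$, to minimising $\norm{Tu-z}_Y$, which only yields $\liminf_\alpha\norm{Tx_\alpha-z}_Y\ge\norm{Tx-z}_Y-2\,\mathrm{dist}(z,T(X))$; so your route does work for isometries with dense range, but not in general.)

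In fact the clause fails for general into isometries. In $\ell_1$ one has $\norm{e_n-y}_1=|1-y_n|-|y_n|+\norm{y}_1\to\norm{y}_1+1$ for every $y$, so $e_n\to 0$ in $b_{\ell_1}$ by part one. Let $V=\ell_1\oplus\mathbb{R}$ with unit ball the closed convex hull of $(B_{\ell_1}\times\{0\})\cup\{\pm(0,1/2)\}\cup\{\pm\frac{2}{3}(e_n,-1):n\ge1\}$. Then $u\mapsto(u,0)$ is a linear isometry of $\ell_1$ into $V$, the functional $(u,t)\mapsto\sum_nu_n+2t$ shows $\norm{(0,1)}_V=2$, and $\norm{(e_n,-1)}_V\le 3/2$ by construction. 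Taking $z=(0,1)$ gives $\liminf_n\norm{Te_n-z}_V\le 3/2<2=\norm{T0-z}_V$, so $Te_n\not\to T0$ in $b_V$. The statement is only needed in the paper for surjective isometries (it is applied to $T^{-1}$ in Theorem \ref{weak-continous of T}), and there the proof is a one-line application of the first part: writing $z=Ty$, one has $\liminf_\alpha\norm{Tx_\alpha-Ty}_Y=\liminf_\alpha\norm{x_\alpha-y}_X\ge\norm{x-y}_X=\norm{Tx-Ty}_Y$. You should restrict the clause to surjective isometries (or isometries with dense range) and prove it that way.
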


It is known that   the ball topology is not Hausdorff~\cite{GN}. 
The following theorem  compensates for this deficiency. 
Recall that an absolutely convex subset $A$ of a Banach space $X$ is called a Rosenthal set if it is bounded and contains no basic sequence equivalent to the unit vector basis of $\ell_1$ (equivalently,
  every sequence in $ A $ possesses a weakly Cauchy subsequence~\cite{Rosenthal}).

\begin{theorem}\label{theorem Rosenthal set}
\cite[Theorem 3.3]{GN} Let $(X,\norm{\cdot})$ be a real/complex Banach space,and let $A$ be a bounded absolutely convex Rosenthal subset of $X$. Then $(A,b_{X})$ is a Hausdorff space.
\end{theorem}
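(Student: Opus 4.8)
\noindent This is \cite[Theorem 3.3]{GN}, and here is how I would approach it. I would argue by contradiction: suppose $(A,b_X)$ is not Hausdorff, so that there are $x\ne y$ in $A$ which cannot be separated by disjoint $b_X$-open subsets of $A$. Using the neighbourhood base of $b_X$ recalled above, this non-separability says precisely that for every finite family of closed balls $B_1,\dots,B_p$ with $x\notin B_i$ and $C_1,\dots,C_q$ with $y\notin C_j$ one has
$$ A \not\subseteq \Bigl(\bigcup_{i=1}^p B_i\Bigr)\cup\Bigl(\bigcup_{j=1}^q C_j\Bigr); $$
equivalently, every set of the form $A\setminus(\bigcup_i B_i\cup\bigcup_j C_j)$ with the $B_i$ avoiding $x$ and the $C_j$ avoiding $y$ is non-empty (and is a $b_X$-neighbourhood in $A$ of \emph{both} $x$ and $y$). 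Since closed balls, and hence $b_X$, and Rosenthal's $\ell_1$-theorem are insensitive to whether scalars are real or complex, it suffices to treat both cases by the same argument.

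The strategy is to convert this soft covering failure into a hard $\ell_1$-obstruction inside $A$, contradicting the Rosenthal property. First I would build, by induction on $n$, points $a_n\in A$ and closed balls $B_n^x$ (with $x\notin B_n^x$) and $B_n^y$ (with $y\notin B_n^y$) such that $a_n$ avoids $B_1^x,\dots,B_n^x$ and $B_1^y,\dots,B_n^y$ — possible by the displayed non-covering property with $p=q=n$ — while at the same time choosing the centres and radii of $B_{n+1}^x,B_{n+1}^y$ (their radii controlled by $\norm{x-y}_X$, their centres tracking suitable averages or supporting functionals of $a_1,\dots,a_n$, with boundedness and absolute convexity of $A$ guaranteeing enough room) so that $a_1,\dots,a_n$ fall into a prescribed pattern relative to $x$ and to $y$. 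The pattern is to be arranged so that along every infinite $M\subseteq\N$ the families of complements $\{\,z:z\notin B_n^x\,\}$ and $\{\,z:z\notin B_n^y\,\}$, $n\in M$, act on $(a_n)_{n\in M}$ as a Boolean-independent family of slices; the combinatorial heart of Rosenthal's $\ell_1$-theorem then forces $(a_n)$ to have no weakly Cauchy subsequence.

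Once that is in place, Rosenthal's $\ell_1$-theorem produces a subsequence of $(a_n)$ equivalent to the unit vector basis of $\ell_1$; since this subsequence lies in $A$, it contradicts the hypothesis that $A$ contains no basic sequence equivalent to the $\ell_1$-basis, and therefore $(A,b_X)$ must be Hausdorff.

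The hard part is the inductive step of the second paragraph. The repeated appeal to the non-covering property is routine bookkeeping, but making the auxiliary balls $B_n^x,B_n^y$ produce genuinely \emph{independent} slices along $(a_n)$ — rather than merely balls that separate $x$ from $y$ individually — is delicate: one must calibrate radii and centres so that the complements of these balls realise exactly the independent half-space structure that Rosenthal's dichotomy consumes. It is precisely at this point that the geometric hypotheses on $A$ (boundedness and absolute convexity) are essential, and where the equivalence between failure of weak Cauchyness and the presence of an $\ell_1$-subsequence does the real work.
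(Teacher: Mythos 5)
First, note that the paper does not prove this statement at all: it is quoted directly from Godefroy and Kalton \cite[Theorem 3.3]{GN}, so there is no in-paper argument to measure your proposal against. Your unwinding of non-Hausdorffness is essentially correct: the failure of separation at $x\ne y$ is exactly the assertion that $A$ is never covered by finitely many closed balls avoiding $x$ together with finitely many avoiding $y$ (although a set $A\setminus\bigl(\bigcup_i B_i\cup\bigcup_j C_j\bigr)$ is the intersection of a neighbourhood of $x$ with a neighbourhood of $y$, not in general a neighbourhood of both, since $x$ need not avoid the $C_j$). Note also that, with the paper's definition of a Rosenthal set, exhibiting a single sequence in $A$ with no weakly Cauchy subsequence already finishes the argument; the final upgrade to an $\ell_1$-basic sequence via Rosenthal's theorem is redundant.

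The genuine gap is that the construction announced in your second paragraph is never carried out, and it is exactly where the content of the theorem lies. You state that the balls $B_n^x,B_n^y$ are ``to be arranged'' so that their complements form a Boolean-independent family along $(a_n)$, with radii controlled by $\norm{x-y}$ and centres tracking averages or supporting functionals, but no centres or radii are exhibited, and no argument is given that the non-covering property leaves enough freedom to realise the required independence: at each stage it only supplies one new point of $A$ lying \emph{outside} all balls chosen so far, whereas independence requires points realising every membership pattern, including lying \emph{inside} prescribed balls, which the hypothesis does not provide. More seriously, even granting an independent family of ball-complements, this does not by itself obstruct weak Cauchyness: weak Cauchyness concerns linear functionals, while complements of balls are not half-spaces, so an independent family of such sets carries no a priori $\ell_1$-information. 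The missing bridge --- that balls of very large radius, suitably centred, behave on the bounded set $A$ like half-spaces determined by functionals norming $x-y$, which is where boundedness and absolute convexity actually enter --- is precisely the substance of Godefroy and Kalton's proof. As written, your text is a plausible plan rather than a proof; either the calibration must be carried out explicitly, or one should simply cite \cite[Theorem 3.3]{GN} as the paper does.
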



 Before proceeding to the proof of the main theorem of this chapter,  we need several auxiliary results.
\begin{lemma}\label{lemma singular with uparrow}
  For any  finite von Neumann algebra $\cM$ equipped with a faithful normal tracial state $\tau$, if $$\mbox{$0\le x_\lambda \uparrow_\lambda \le    x\in S(\cM,\tau)$ and $\mu(x_\lambda )\uparrow \mu(x)$,}$$ then we have 
$$x_\lambda\uparrow x.$$
\end{lemma}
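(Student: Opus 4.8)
We are given $0 \le x_\lambda \uparrow_\lambda$ with $x_\lambda \le x \in S(\cM,\tau)$ and $\mu(x_\lambda) \uparrow \mu(x)$, over a finite von Neumann algebra with a faithful normal tracial state. Since $\{x_\lambda\}$ is an increasing net bounded above by $x$ in $S(\cM,\tau)_+$, it has a supremum $y := \sup_\lambda x_\lambda \in S(\cM,\tau)_+$, and $0 \le y \le x$. By the order-continuity-type fact recalled in the preliminaries (an increasing net converging in local measure topology converges in order, and conversely order convergence of an increasing bounded net implies local measure convergence — cf. \cite[Chapter II, Proposition 7.6]{DPS}), we have $x_\lambda \uparrow y$. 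So the entire content of the lemma is to show $y = x$, i.e. that the hypothesis $\mu(x_\lambda) \uparrow \mu(x)$ forces the supremum to be $x$ itself rather than something strictly smaller.

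The plan is to use the trace. Since $0 \le x_\lambda \uparrow y$ and $\tau$ is normal, $\tau(x_\lambda) \uparrow \tau(y) = \int_0^\infty \mu(t;y)\,dt$. On the other hand, normality of the (function-space) integral / the monotone convergence theorem applied to $\mu(x_\lambda) \uparrow \mu(x)$ gives $\int_0^\infty \mu(t;x_\lambda)\,dt \uparrow \int_0^\infty \mu(t;x)\,dt = \tau(x)$, while $\tau(x_\lambda) = \int_0^\infty \mu(t;x_\lambda)\,dt$ for each $\lambda$. Hence $\tau(y) = \tau(x)$. Combined with $0 \le y \le x$, faithfulness of $\tau$ yields $\tau(x - y) = 0$ with $x - y \ge 0$, so $x - y = 0$, i.e. $x_\lambda \uparrow x$. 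One must be a little careful if $\tau(x) = +\infty$: in that case first truncate, replacing $x$ by $x e^x(0,n)$ and $x_\lambda$ by $x_\lambda \wedge n$ or by compressing with spectral projections of $x$, run the finite-trace argument on each truncation, and pass to the limit; the finiteness of $\tau$ on $\cM$ and $\tau$-measurability of $x$ make these truncations well-behaved.

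The only subtle point — and the step I would treat most carefully — is the identity $\tau(x_\lambda) = \int_0^\infty \mu(t;x_\lambda)\,dt$ together with the exchange of limit and integral for $\mu(x_\lambda) \uparrow \mu(x)$: this is where the hypothesis $\mu(x_\lambda)\uparrow\mu(x)$ is actually consumed, and it rests on the standard fact that $\tau(a) = \int_0^\infty \mu(t;a)\,dt$ for $0 \le a \in S(\cM,\tau)$ (see e.g. \cite{FK,LSZ}) plus the classical monotone convergence theorem on $(0,\infty)$. Everything else is soft: existence of the supremum from boundedness and the completeness of $S(\cM,\tau)$ in the relevant topology, and the passage ``order convergence $\Leftrightarrow$ local measure convergence'' for monotone bounded nets, both already available in the excerpt. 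I do not anticipate a genuine obstacle; the lemma is essentially the statement that $\tau$ detects the defect $x - \sup_\lambda x_\lambda$ through its singular value function.
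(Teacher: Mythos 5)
Your strategy is genuinely different from the paper's. The paper argues by contradiction: if $x_\lambda\uparrow x$ fails, there is a nonzero positive $b$ with $x_\lambda+b\le x$ for all $\lambda$, and a strict monotonicity result for singular value functions (\cite[Lemma 4.3]{HSZ}) applied to $a:=\sup_\lambda x_\lambda$ gives $\sup_\lambda\mu(x_\lambda)\le\mu(a)\le\mu(a+b)\le\mu(x)$ with $\mu(a)\ne\mu(a+b)$, contradicting $\mu(x_\lambda)\uparrow\mu(x)$. You instead try to detect the defect $x-\sup_\lambda x_\lambda$ with the trace. Your finite-trace core is sound, and you can in fact skip the monotone convergence step entirely --- which is just as well, since the monotone convergence theorem can fail for increasing \emph{nets} of measurable functions (take $\chi_F$ over finite subsets $F$ of $[0,1]$). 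From $x_\lambda\le y:=\sup_\lambda x_\lambda\le x$ one gets $\mu(x_\lambda)\le\mu(y)\le\mu(x)$, and the hypothesis $\mu(x)=\sup_\lambda\mu(x_\lambda)$ pointwise sandwiches $\mu(y)=\mu(x)$; hence $\tau(y)=\int_0^1\mu(t;y)\,dt=\tau(x)$ with no interchange of limit and integral.

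The genuine gap is the case $\tau(x)=+\infty$, which your truncation sketch does not handle as written. Compressing by $e_n:=e^{x}(0,n)$ destroys the hypothesis: there is no reason why $\mu(e_nx_\lambda e_n)$ should increase to $\mu(e_nxe_n)$, so you cannot simply ``run the finite-trace argument on each truncation''. The alternative $x_\lambda\wedge n$ is worse: $t\mapsto\min(t,n)$ is not operator monotone, so $x_\lambda\le x$ does not yield $\min(x_\lambda,n)\le\min(x,n)$, and the comparison you need is lost. The repair is to use only what you have already established, namely $y\le x$ and $\mu(y)=\mu(x)$, together with the bounded operator monotone function $f(t)=t/(1+t)=1-(1+t)^{-1}$: then $0\le f(y)\le f(x)\le\mathbf{1}$, and since $f$ is continuous, increasing and vanishes at $0$, one has $\mu(f(y))=f(\mu(y))=f(\mu(x))=\mu(f(x))$ (see e.g.\ \cite{LSZ}), so $\tau(f(x)-f(y))=0$ with $f(x)-f(y)\ge0$; faithfulness gives $f(x)=f(y)$ and injectivity of $f$ gives $x=y$. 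With that replacement your argument is complete, and it is arguably more elementary than the paper's, at the price of using the trace (and hence being tied to the finite-trace setting), whereas the paper's route through \cite[Lemma 4.3]{HSZ} isolates the purely order-theoretic fact that $y\le x$ and $\mu(y)=\mu(x)$ force $y=x$.
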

\begin{proof}
Assume by contradiction that
$0\le x_\lambda \uparrow   x\in S(\cM,\tau)$ and $\mu(x_\lambda )\uparrow \mu(x)$ but there exists a positive non-zero element
 $b\in S(\cM,\tau)$ such that  $$x_\lambda +b \le x$$ for all $\lambda $.
By \cite[Proposition 2(ii)]{DP2014}, 
we have 
$$a=\sup_\lambda x_\lambda $$ exists in $S(\cM,\tau)$.
By \cite[Lemma 4.3]{HSZ} (see also \cite[Proposition 2.2]{CKS2}), 
we have $$  \mu(x_\lambda)\le \mu(a )   \stackrel{\tiny \mbox{\cite[Lemma 4.3]{HSZ}}}{<}  \mu(a+b)\le \mu(x) $$
for all $\lambda$. That is,
$$\sup_\lambda \mu  (x_\lambda ) \le \mu(a)<\mu(x),$$
 which is a contradiction.
\end{proof}

The following result is certainly known to experts. However, due to the lack of a suitable reference, we provide a complete proof below.
\begin{lemma}\label{lemma net and sequence}
Let $\cM$ be a $\sigma$-finite semifinite von Neumann algebra equipped with a semifinite faithful normal trace $\tau$. For any
net $\{x_\lambda \}$ in $S(\cM,\tau)$ such that
$x_\lambda\uparrow x \in S(\cM,\tau)$ (respectively,
 $x_\lambda \downarrow 0$),
 there exists a  sequence $\{x_n\}\subset \{x_\lambda \}$ such that  $ x_n\uparrow x$ (respectively, $x_n\downarrow 0$).

Consequently,
for any semifinite (not necessarily $\sigma$-finite) von Neumann algebra  $\cM$ equipped with a semifinite faithful normal trace $\tau$, and for any
net $\{x_\lambda \}$ in $S(\cM,\tau)$ such that   $x_\lambda\uparrow x \in   S_0(\cM,\tau)$ (respectively, $S_0(\cM,\tau)\ni x_\lambda \downarrow 0$), there exists a  sequence $\{x_n\}\subset \{x_\lambda \}$ such that 
$x_n\uparrow x \in S_0(\cM,\tau)$ (respectively, $S_0(\cM,\tau)\ni x_n \downarrow 0$).
\end{lemma}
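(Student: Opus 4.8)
## Proof plan for Lemma \ref{lemma net and sequence}

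The plan is to handle the $\sigma$-finite case first, and then bootstrap to the general case using the $\tau$-compactness hypothesis. I will only spell out the case $x_\lambda \uparrow x$; the case $x_\lambda \downarrow 0$ follows by replacing $x_\lambda$ with $x - x_\lambda$ (which increases to $x$), or by an entirely symmetric argument.

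\textbf{The $\sigma$-finite case.} Since $\cM$ is $\sigma$-finite, it admits a faithful normal state, and more to the point the measure topology $t_\tau$ on $S(\cM,\tau)$ is metrizable (indeed $S(\cM,\tau)$ with $t_\tau$ is a complete metrizable topological $*$-algebra, as recalled in the Preliminaries). The key link is that for a \emph{monotone} net, order convergence and local measure convergence coincide: if $x_\lambda \uparrow x$ then $x - x_\lambda \downarrow 0$, and one has $x_\lambda \to x$ in the local measure topology (this is the content of the monotone convergence statement for the local measure topology cited from \cite[Chapter II, Proposition 7.6]{DPS} together with \cite[p.212]{DP2014}); conversely an increasing net converging in local measure topology to $x$ satisfies $x_\lambda \uparrow x$. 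First I would argue that for a $\sigma$-finite algebra the local measure topology restricted to an order-bounded set is actually the measure topology, or at least is metrizable on the relevant order interval $[0,x]$ (using that ${\bf 1}$ is the countable supremum of $\tau$-finite projections, one can realize the local measure topology neighborhoods by a countable family of seminorms). Granting metrizability of the topology in which $x_\lambda \to x$, I extract a sequence: for each $n$ pick $\lambda_n$ with $x_{\lambda_n}$ in the $1/n$-ball around $x$; by passing to a cofinal sequence (using directedness of the index set, replace $\lambda_n$ by an index $\ge \lambda_1,\dots,\lambda_n$, which only moves $x_{\lambda_n}$ closer to $x$ in a monotone net) I get an increasing sequence $\{x_n\}$ with $x_n \to x$ in local measure topology, hence $x_n \uparrow y$ for some $y$. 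Finally $y = x$: clearly $y \le x$, and $y$ is also a local-measure limit of a subnet of the original net, which forces $y = \sup_\lambda x_\lambda = x$.

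\textbf{Reduction of the general case to the $\sigma$-finite case.} Now $\cM$ need not be $\sigma$-finite, but $x_\lambda \uparrow x$ with $x \in S_0(\cM,\tau)$. The point is that the left support $l(x)$ (equivalently the support projection $s(|x|)$, but here everything is positive so just $s(x)$) has the property that, although it need not be $\tau$-finite, the algebra $s(x)\cM s(x)$ is $\sigma$-finite: indeed $x$ is $\tau$-compact means $\mu(\infty;x) = 0$, so $e^{x}(1/n,\infty)$ is $\tau$-finite for every $n$, and $s(x) = \sup_n e^{x}(1/n,\infty)$ is a countable supremum of $\tau$-finite projections, whence $s(x)\cM s(x)$ is $\sigma$-finite (a faithful normal state is built from the $\tau$-finite pieces). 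Since $0 \le x_\lambda \le x$ we have $s(x_\lambda) \le s(x)$, so the whole net, together with $x$, lives in $S(s(x)\cM s(x), \tau|)$, which is $\sigma$-finite. Apply the first part there to extract the desired sequence $\{x_n\} \subset \{x_\lambda\}$ with $x_n \uparrow x$; since $x \in S_0(\cM,\tau)$ and the $x_n \le x$ are also $\tau$-compact, the conclusion lands in $S_0(\cM,\tau)$ as stated. For the $\downarrow$ version, $S_0(\cM,\tau) \ni x_\lambda \downarrow 0$: fix any $\lambda_0$, work inside $s(x_{\lambda_0})\cM s(x_{\lambda_0})$ which is $\sigma$-finite by the same $\tau$-compactness argument, restrict the cofinal part of the net $\{x_\lambda : \lambda \ge \lambda_0\}$ there, and extract a decreasing sequence converging to $0$.

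\textbf{Main obstacle.} The genuinely delicate point is the metrizability used in the $\sigma$-finite step — more precisely, identifying a single metrizable vector topology on the order interval $[0,x]$ (or on all of $S(\cM,\tau)$) in which monotone order convergence is exactly topological convergence, so that the classical "metrizable $\Rightarrow$ nets reduce to sequences" trick applies and, crucially, so that a cofinal \emph{sequence} of indices still converges. The measure topology is metrizable and has the right monotone-convergence behaviour for nets that are order bounded and $\tau$-finite-supported, but for a general increasing net in $S(\cM,\tau)$ (unbounded, support not $\tau$-finite) one must be careful: it is the \emph{local} measure topology that controls monotone convergence, and that topology is in general not metrizable. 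The way around it is exactly the reduction above — once we are inside a $\sigma$-finite algebra \emph{and} (in the general case) the limit is $\tau$-compact, the support is a countable sup of $\tau$-finite projections, the local measure topology there is generated by countably many seminorms $x \mapsto \|p_k x p_k\|$-type quantities hence metrizable, and the argument goes through. I would therefore front-load the lemma's real work into establishing this metrizability claim cleanly, citing \cite{DP2014,DPS} for the monotone convergence correspondence, and keep the extraction-of-a-cofinal-sequence step as a short routine diagonal argument.
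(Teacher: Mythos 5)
Your reduction of the general case to the $\sigma$-finite one is exactly the paper's: the support of a $\tau$-compact operator is a countable supremum of $\tau$-finite projections, all the $x_\lambda$ are supported under $s(x)$ (respectively under $s(x_{\lambda_0})$ in the decreasing case), so one passes to the $\sigma$-finite reduced algebra. The divergence is in the $\sigma$-finite step, and there your argument has a genuine soft spot which you flag but do not close: the metrizability claim. As you state it --- a countable neighbourhood base $\{V(\varepsilon,\delta;p_n)\}$ for the local measure topology built from a single sequence $p_n\uparrow{\bf 1}$ of $\tau$-finite projections --- the claim is problematic: to dominate $V(\varepsilon,\delta;q)$ for an arbitrary $\tau$-finite projection $q$ you must control $qxq$ by $p_nxp_n$, and the cross terms involving $q({\bf 1}-p_n)x$ are not controlled for a general $x\in S(\cM,\tau)$. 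So ``the local measure topology is metrizable'' should not be the load-bearing statement, and restricting to the order interval $[0,x]$ does not obviously remove the difficulty.

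The repair --- and this is in effect what the paper does --- is to forget the local measure topology altogether and use only the (trivially metrizable) topology of measure convergence of the compressions $p_k\cdot p_k$ along the fixed sequence $p_k\uparrow{\bf 1}$: $x_\lambda\uparrow x$ gives $p_kx_\lambda p_k\uparrow_\lambda p_kxp_k$, hence convergence in measure in each $\tau$-finite corner \cite[Prop. 2(iv)]{DP2014}; a diagonal choice then produces an increasing sequence $\{x_n\}$ from the net with $p_kx_np_k$ close to $p_kxp_k$ for all $k\le n$; finally $y:=\sup_nx_n\le x$ exists and $p_kyp_k=p_kxp_k$ for all $k$ forces $y=x$. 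The paper implements this last step via singular value functions: it arranges $\mu(p_kx_np_k)\uparrow\mu(p_kxp_k)$, invokes Lemma \ref{lemma singular with uparrow} to conclude $p_kx_np_k\uparrow p_kxp_k$, and finishes by contradiction (if $x_n+a\le x$ for some $a>0$, then $p_kap_k\le 0$ for suitable $k$). Your cofinal-index trick is sound in this setting, since $x_\lambda\le x_\mu\le x$ implies $\mu\left(p_k(x-x_\mu)p_k\right)\le\mu\left(p_k(x-x_\lambda)p_k\right)$, so moving up the net only improves the corner-wise distance to $x$. One further small repair: for the decreasing case the substitute ``$x-x_\lambda$'' is unavailable when $x_\lambda\downarrow 0$ with no ambient $x$; fix $\lambda_0$ and apply the increasing case to $x_{\lambda_0}-x_\lambda$, $\lambda\ge\lambda_0$, as you in fact do in your general-case paragraph.
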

 \begin{proof}It suffices to consider the case when  $x_\lambda \uparrow x$.
  Since $\cM$ is $\sigma$-finite, it follows that there exists an increasing sequence $\{p_n\}_{n\ge 1}$ of $\tau$-finite projections  with $p_n\uparrow {\bf 1}$.
Note that for any $0\le y\in S(\cM,\tau)$, we have  $$\mu\left(y p_n y \right) = \mu \left( (p_ny)^*   p_n y \right)  \stackrel{\mbox{\tiny 
\cite[Prop. 3.2.10]{DPS}}}{=}\mu\left( p_n  y (p_ny)^* \right)  =\mu\left(p_n y^2 p_n\right). $$

 For each $n$,  we have $$ p_n x_\lambda p_n \uparrow_\lambda   p_n x p_n$$ (in particular, 
 $ p_n x_\lambda p_n \to_\lambda   p_n x p_n$
in measure topology~\cite[Proposition 2(iv)]{DP2014}).
 Note that
 $$\mu(p_n x_\lambda p_n)\le \mu(x_\lambda)\le \mu(x).$$
Let  $x_1 $ be an  element in $\{x_\lambda \}$ such that
$$ m\left\{|\mu(p_{1 }x_{ 1}p_{1 }) -\mu(p_{1}xp_{1} )|>\frac{1}{2 }\right\}<\frac{1}{2 } . $$
There exists an element $x_2 \ge x_1$ in $\{x_\lambda \}$ such that
$$\sup_{1\le k\le 2}m\left\{|\mu(p_{k }x_{2}p_{k }) -\mu(p_{k}xp_{k} )|>\frac{1}{2^{2 }}\right\}<\frac{1}{2^{2}} . $$
Constructing inductively, we obtain an increasing  sequence $\{x_n\}_{n\ge 1}\subset \{x_\lambda \}$
  such that
$$\sup_{1\le k\le n}m\left\{|\mu(p_{k }x_{n}p_{k }) -\mu(p_{k}xp_{k} )|>\frac{1}{2^{n }}\right\}<\frac{1}{2^{n }} . $$
In particular, for each $k$, we have
 $$ \lim_{n\to \infty } m\left\{|\mu(p_{k }x_{n }p_{k }) -\mu(p_{k}xp_{k} )|>\frac{1}{2^{n}}\right\} = 0 . $$
Since $ p_kx_np_k \uparrow \le  p_k xp_k$~\cite[Proposition 1(iii)]{DP2014}, it follows from Lemma \ref{lemma singular with uparrow} that for each $k$, we have 
\begin{align}\label{uparrow xn}
 p_kx_np_k \uparrow    p_k xp_k 
 \end{align}
as $n\to \infty$. 

Assume by contradiction that there exists a non-zero positive element $a\in S(\cM,\tau)$ such that
\begin{align}\label{contr  a}
x_n +a \le x
\end{align}
for all $n\ge 1$.
Since $p_k\to {\bf 1}$, it follows that there exists a sufficiently large $k $ such that
$p_k a p_k \ne 0$.
We have
$$p_k x p_k  \stackrel{\eqref{contr  a}}{\ge}   p_k \left(x_n   +  a\right)  p_k 
\stackrel{\eqref{uparrow xn}}{\uparrow} p_k x p_k + p_k ap_k   
  ,$$
 i.e., $p_k ap_k \le 0$, which is a contradiction.

 The second assertion follows from the facts that the left and the right supports of an element from $S_0(\cM,\tau)$
are  necessarily $\sigma$-finite~\cite[Lemma 5.5.8]{DPS}, and $l(x_\lambda ) \le l(x)$ and $r(x_\lambda ) \le r(x)$.
 \end{proof}
%

The  commutative counterpart of the following lemma is well-known, see e.g. \cite{KR2}.   For the sake of completeness, we provide a  full  proof below.
\begin{lemma}\label{lemma adjoint operator}

Let $\cM_1$ and $\cM_2$ be  semifinite von Neumann algebras equipped with semifinite faithful normal traces $\tau_1$ and $\tau_2$, respectively.
Let $E(\cM_1,\tau_1)$ and $F(\cM_2,\tau_2)$ be two noncommutative strongly  symmetric  spaces.   
A bounded linear operator $T$ from $E(\cM_1,\tau_1)$ to $F(\cM_2,\tau_2)$  is  $\sigma(E,E^\times)-\sigma(F,F^\times)$-continuous if and only if $T^*$ maps $F(\cM_2,\tau_2)^\times $ into $E(\cM_1,\tau_1)^\times$\footnote{Here, $T^*:F(\cM_2,\tau_2)^*\to E(\cM_1,\tau_1)^*$ stands for  the adjoint operator of $T$.}.
\end{lemma}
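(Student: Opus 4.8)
The statement is the standard duality between weak-to-weak continuity of an operator and invariance of the Köthe dual under the Banach adjoint, transplanted to the noncommutative setting; the point is that $E^\times$ sits inside $E^*$ as the normal functionals, and one must check that $T$ being $\sigma(E,E^\times)$--$\sigma(F,F^\times)$-continuous exactly detects this subspace. The plan is to prove both implications directly from the description of $E(\cM_1,\tau_1)^\times$ as (isometrically) the normal functionals on $E(\cM_1,\tau_1)$, identified with $E(\cM_1,\tau_1)^\times$ via trace duality $y\mapsto \phi_y$, $\phi_y(x)=\tau_1(xy)$, as recalled in the Preliminaries.

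For the ``only if'' direction, assume $T$ is $\sigma(E,E^\times)$--$\sigma(F,F^\times)$-continuous. Fix $y\in F(\cM_2,\tau_2)^\times$, so the functional $\phi_y\in F(\cM_2,\tau_2)^*$ is normal. Then $T^*\phi_y\in E(\cM_1,\tau_1)^*$, and I would show $T^*\phi_y$ is normal, i.e. $\phi_y(Tx_\alpha)\to 0$ whenever $x_\alpha\downarrow 0$ in $E(\cM_1,\tau_1)$. Indeed $x_\alpha\downarrow 0$ implies $x_\alpha\to 0$ in $\sigma(E,E^\times)$ (the functionals $\phi_z$, $z\in E^\times$, are normal), hence $Tx_\alpha\to 0$ in $\sigma(F,F^\times)$ by hypothesis, hence $\phi_y(Tx_\alpha)\to 0$ since $y\in F^\times$. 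Thus $T^*\phi_y$ is a normal functional on $E(\cM_1,\tau_1)$, and by the representation of normal functionals (cited as \cite[Theorem 37]{DP2014}) it equals $\phi_w$ for a unique $w\in E(\cM_1,\tau_1)^\times$; identifying $T^*$ with its restriction to Köthe duals, $T^*y=w\in E(\cM_1,\tau_1)^\times$.

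For the ``if'' direction, suppose $T^*$ maps $F(\cM_2,\tau_2)^\times$ into $E(\cM_1,\tau_1)^\times$. Let $x_\alpha\to x$ in $\sigma(E,E^\times)$; I must show $Tx_\alpha\to Tx$ in $\sigma(F,F^\times)$, i.e. $\tau_2((Tx_\alpha)y)\to\tau_2((Tx)y)$ for every $y\in F(\cM_2,\tau_2)^\times$. This is just $\phi_y(Tx_\alpha)=(T^*\phi_y)(x_\alpha)\to(T^*\phi_y)(x)=\phi_y(Tx)$, which holds because $T^*\phi_y=\phi_{T^*y}$ with $T^*y\in E(\cM_1,\tau_1)^\times$, so $T^*\phi_y$ is a $\sigma(E,E^\times)$-continuous functional by definition of that topology. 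One should note the boundedness of $T$ guarantees $T^*\phi_y\in E(\cM_1,\tau_1)^*$ in the first place so that these manipulations make sense.

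The only mild subtlety — and the step I would be most careful about — is the bookkeeping identifying the Banach adjoint $T^*:F^*\to E^*$ with the ``Köthe adjoint'' on $F^\times\to E^\times$: one needs that under the embedding $E^\times\hookrightarrow E^*$, $y\mapsto\phi_y$, the operator $T^*$ restricted to the image of $F^\times$ lands in the image of $E^\times$ precisely when the induced map sends $F^\times$ into $E^\times$, which uses injectivity of $y\mapsto\phi_y$ (guaranteed since $E^\times$ separates points of $E$, equivalently $E$ separates points of $E^\times$, as noted in the Preliminaries). Everything else is a routine unwinding of definitions, and no Fatou property or strong symmetry beyond what makes $E^\times$ norming is actually needed.
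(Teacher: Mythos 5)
Your proposal is correct and follows essentially the same route as the paper: the $(\Leftarrow)$ direction is the identical one-line unwinding via $\tau_2(yT(x_\alpha))=(T^*\phi_y)(x_\alpha)$, and the $(\Rightarrow)$ direction likewise reduces to showing $T^*\phi_y$ is normal by composing ``$x_\lambda\downarrow 0$ implies $x_\lambda\to 0$ in $\sigma(E,E^\times)$'' (which the paper spells out via the decomposition $z=z_1-z_2+i(z_3-z_4)$ and $\tau_1(z_ix_\lambda)=\tau_1(z_i^{1/2}x_\lambda z_i^{1/2})\downarrow 0$, and which you correctly attribute to the normality of the functionals $\phi_z$) with the hypothesis and the characterization of normal functionals from \cite[Theorem 37]{DP2014}. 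Your closing remark on identifying the Banach adjoint with the K\"othe adjoint via the injective embedding $y\mapsto\phi_y$ is a sound clarification of a point the paper treats implicitly.
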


\begin{proof}
$(\Leftarrow)$
Let $y\in F(\cM_2,\tau_2)^\times$.  
By assumption, we have
$$y (T)=T^*(y) \in E(\cM_1,\tau_1)^\times  . $$
For any $x_\lambda \to 0$ in  $\sigma(E,E^\times)$-topology,
 we have
$$\tau_2(yT (x_\lambda ))= T^*(y)( x_\lambda ) \to 0. $$
That is, $T(x_\lambda) \to 0$ in  $\sigma(F,F^\times)$-topology.

$(\Rightarrow)$
Assume that $T$ is  $\sigma(E,E^\times)-\sigma(F,F^\times)$-continuous. Let
$y$ be an arbitrary element in $ F(\cM_2,\tau_2)^\times $.
By \cite[Theorem 37]{DP2014}, it suffices to prove that
$$ T^*(y )   $$
is a normal functional on $E(\cM_1,\tau_1)$.

Let $\{x_\lambda \} $ be a net in $E(\cM_1,\tau_1)$ with   $x_\lambda \downarrow 0$. 
Let $z$ be an arbitrary element in $ E(\cM_1,\tau_1)^\times $.
There exists a decomposition
$$z = z_1 -z_2 +i (z_3 -z_4),$$
where $z_i$, $i=1,2,3,4$, is a positive element in $E(\cM_1,\tau_1)^\times $.
Note that $$\tau_1(z_i x_\lambda )\stackrel{\tiny \mbox{\cite[Proposition 3.4.30]{DPS}}}{=}\tau_1(z_i^{1/2}x_\lambda z_i^{1/2})\stackrel{\tiny \mbox{\cite[Proposition 1(vi)]{DP2014}}}{\downarrow} 0$$ for $i=1,2,3,4$.
 We have  $$\tau_1(zx_\lambda ) =\tau_1(z_1 x_\lambda )  -\tau_1(z_2  x_\lambda)+i \cdot \tau_1 (z_3 x_\lambda )  -i \cdot   \tau_1(z_4 x_\lambda ) \to  0, $$ i.e., $x_\lambda \to 0$ in  $\sigma(E,E^\times)$-topology. By the $\sigma(E,E^\times)-\sigma(F,F^\times)$-continuity  of $T$, we have
$$T^*(y) (x_\lambda )  =\tau_2(yT (x_\lambda )) \to 0. $$
That is, $T^*(y)$ is a normal functional on $  E(\cM_1,\tau_1)  $~\cite[Definition 9]{DP2014}, which completes the proof.
\end{proof}

For  Banach spaces $X$ and $Y$, we denote by $\cB(X,Y)$ the space of all bounded linear operators on
from $X$ into $Y$.
The following proposition  is a semifinite version of~\cite[Proposition 2.4]{AC2}. 
\begin{proposition}\label{proposition weak continous} 
Let $\cM_1$ and $\cM_2$ be  semifinite von Neumann algebras equipped with semifinite faithful normal traces $\tau_1$ and $\tau_2$, respectively.
Let $E(\cM_1,\tau_1)$ and $F(\cM_2,\tau_2)$ be two  noncommutative  strongly  symmetric   spaces.   
Let $T_{n}\in\mathcal{B}(E(\cM_1,\tau_1) , F(\cM_2,\tau_2))$, $n\ge 1$, and $T \in \mathcal{B}(E(\cM_1,\tau_1),F(\cM_2,\tau_2))$. 
If $T_n$'s
are $\sigma(E,E^{\times})-\sigma(F,F^\times)$-continuous  and $$
 \left\|T_{n}-T\right\|_{\mathcal{B}(E(\cM_1,\tau_1),F(\cM_2,\tau_2)}\rightarrow0  \mbox{ as } n\rightarrow\infty,$$ then $T$ is also $\sigma(E,E^{\times})-\sigma(F,F^\times)$-continuous.
\end{proposition}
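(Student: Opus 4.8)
The plan is to use the characterization of $\sigma(E,E^\times)$-continuity provided by Lemma~\ref{lemma adjoint operator}: a bounded operator $S$ from $E(\cM_1,\tau_1)$ to $F(\cM_2,\tau_2)$ is $\sigma(E,E^\times)$--$\sigma(F,F^\times)$-continuous if and only if $S^*$ maps $F(\cM_2,\tau_2)^\times$ into $E(\cM_1,\tau_1)^\times$. Since the K\"othe dual $E(\cM_1,\tau_1)^\times$ is, by the discussion preceding Lemma~\ref{dense}, isometrically a (closed) subspace of the Banach dual $E(\cM_1,\tau_1)^*$ via trace duality, it suffices to show that $T^*$ carries $F(\cM_2,\tau_2)^\times$ into the \emph{closed} subspace $E(\cM_1,\tau_1)^\times$ of $E(\cM_1,\tau_1)^*$.

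First I would fix $y \in F(\cM_2,\tau_2)^\times$ and consider the sequence of functionals $T_n^*(y) \in E(\cM_1,\tau_1)^*$. Because each $T_n$ is $\sigma(E,E^\times)$--$\sigma(F,F^\times)$-continuous, Lemma~\ref{lemma adjoint operator} gives $T_n^*(y) \in E(\cM_1,\tau_1)^\times$ for every $n$. Next, from $\norm{T_n - T}_{\mathcal{B}(E,F)} \to 0$ and the fact that taking adjoints is an isometry, we get $\norm{T_n^* - T^*}_{\mathcal{B}(F^*,E^*)} \to 0$, hence
$$\norm{T_n^*(y) - T^*(y)}_{E(\cM_1,\tau_1)^*} \le \norm{T_n^* - T^*}\,\norm{y}_{F^*} \to 0.$$
Thus $T^*(y)$ is the norm limit in $E(\cM_1,\tau_1)^*$ of the sequence $\{T_n^*(y)\}$, all of whose terms lie in $E(\cM_1,\tau_1)^\times$. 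Since $E(\cM_1,\tau_1)^\times$ is norm-closed in $E(\cM_1,\tau_1)^*$ (it is identified with the space of normal functionals, which is the dual of the quotient by the singular part, hence complete; alternatively one invokes \cite[Theorem 37]{DP2014} together with the fact that a norm limit of normal functionals is normal), we conclude $T^*(y) \in E(\cM_1,\tau_1)^\times$.

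Since $y \in F(\cM_2,\tau_2)^\times$ was arbitrary, $T^*$ maps $F(\cM_2,\tau_2)^\times$ into $E(\cM_1,\tau_1)^\times$, and by the ``$\Leftarrow$'' direction of Lemma~\ref{lemma adjoint operator} the operator $T$ is $\sigma(E,E^\times)$--$\sigma(F,F^\times)$-continuous, as required. The only point requiring a little care is the norm-closedness of $E(\cM_1,\tau_1)^\times$ inside $E(\cM_1,\tau_1)^*$; if one prefers to avoid citing that structural fact, one can instead argue directly that $T^*(y)$ is a normal functional by testing against a net $x_\lambda \downarrow 0$ in $E(\cM_1,\tau_1)$: for each fixed $\lambda$ one has $|T_n^*(y)(x_\lambda) - T^*(y)(x_\lambda)| \le \norm{T_n^* - T^*}\,\norm{y}\,\norm{x_\lambda}_E$, and since $\{\norm{x_\lambda}_E\}$ is bounded, the convergence $T_n^*(y)(x_\lambda) \to 0$ (uniformly in $\lambda$ after a tail, using that each $T_n^*(y)$ is normal) passes to the limit to give $T^*(y)(x_\lambda) \to 0$. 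This interchange-of-limits step is the main (though mild) technical obstacle.
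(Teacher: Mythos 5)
Your proposal is correct and follows essentially the same route as the paper: pass to adjoints, use Lemma~\ref{lemma adjoint operator} to place each $T_n^*(y)$ in $E(\cM_1,\tau_1)^\times$, note $\norm{T_n^*-T^*}=\norm{T_n-T}\to 0$, and invoke the norm-closedness of $E(\cM_1,\tau_1)^\times$ in $E(\cM_1,\tau_1)^*$ (the paper cites \cite[p.319]{DPS} for this). Your fallback $\varepsilon/3$-argument testing against a net $x_\lambda\downarrow 0$ is also sound and merely unpacks that closedness claim.
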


\begin{proof}
By Lemma \ref{lemma adjoint operator}, it suffices to show that $$T^{*}(f)\in E(\cM_1,\tau_1)^{\times}$$ for any functional $f\in F(\cM_2,\tau_2) ^\times $.
 Since $T_{n}\in\mathcal{B}(E(\cM_1,\tau_1) , F(\cM_2,\tau_2))$ are $\sigma(E,E^{\times})-\sigma(F,F^\times)$-continuous operators, it follows from Lemma \ref{lemma adjoint operator}
 that  $T_{n}^{*}(f)\in E (\cM_1,\tau_1)^{\times} $ for any functional $f\in F (\cM_2,\tau_2)^{\times}$, $n\in\mathbb{N}$.
Noting that   $E (\cM_1,\tau_1)^{\times}$ is a
  closed subspace in $(E(\cM_1,\tau_1)^{*},\left\|\cdot\right\|_{E^{*}})$~\cite[p.319]{DPS} and
\begin{eqnarray*}
\left\|T^{*}_{n}(f)-T^* (f)\right\|_{E^{*}}
&\leq&\left\|T_{n}^{*}-T^{*}\right\|_{\mathcal{B}(F(\cM_2,\tau_2)^*,E(\cM_1,\tau_1)^* )}\left\|f\right\|_{F^{*}}\\
&\stackrel{\mbox{\tiny\cite[Chapter VI, Prop. 1.4]{Conway}}}{=}& \left\|T_{n}-T\right\|_{\mathcal{B}(E(\cM_1,\tau_1),F(\cM_2,\tau_2) ) }\left\|f\right\|_{F^{*}}\\
&\rightarrow&0 
\end{eqnarray*} \mbox{as $n\to \infty$, }
we conclude that $T^{*}(f)\in  E (\cM,\tau)^{\times}  $.
\end{proof}

 The following proposition is known   in the setting of   symmetrically normed ideals of compact operators in $B(\cH)$ (see \cite[Proposition 2.8]{DCJ}).
\begin{proposition}\label{proposition normal functional}
Assume that $\cM$ is a semifinite von Neumann algebra equipped with a semifinite faithful normal trace $\tau$.
 Let $E(\cM,\tau)$ be a noncommutative strongly symmetric  space. 
Assume that 
\begin{enumerate}
\item either $\cM$ is   $\sigma$-finite, 
\item or $E(\cM,\tau)\subset S_0(\cM,\tau)$.
\end{enumerate}
If $\phi\in E(\cM,\tau)^*$,
then the following statements are equivalent.
\begin{enumerate}
      \item There exists a unique element $y\in E(\cM,\tau)^\times$ such that $$\phi(x)=\tau(xy), \qquad \forall  x\in E(\cM,\tau).$$
          \item $\phi(x_\lambda )\to 0$   for any net  $\{x_\lambda \}_{\lambda } \subset E(\cM,\tau)$ with $x_\lambda \downarrow0$.
      \item $\phi(x_n)\to 0$   for any sequence $\{x_n\}_{n\ge1} \subset E(\cM,\tau)$ with $x_n\downarrow0$.
\end{enumerate}
\end{proposition}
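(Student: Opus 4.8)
The plan is to prove the cyclic implications $(1)\Rightarrow(2)\Rightarrow(3)\Rightarrow(1)$, with the last step being where the hypotheses on $\cM$ (either $\sigma$-finiteness or $E(\cM,\tau)\subset S_0(\cM,\tau)$) are genuinely used. The implications $(1)\Rightarrow(2)$ and $(2)\Rightarrow(3)$ are routine: for $(1)\Rightarrow(2)$, if $\phi=\phi_y$ for $y\in E(\cM,\tau)^\times$, then as recalled in the preliminaries the functional $\phi_y$ is normal, i.e. $x_\lambda\downarrow 0$ implies $\phi_y(x_\lambda)\to 0$; concretely one writes $y=y_1-y_2+i(y_3-y_4)$ with each $y_i\ge 0$ in $E(\cM,\tau)^\times$ and uses $\tau(y_i x_\lambda)=\tau(y_i^{1/2}x_\lambda y_i^{1/2})\downarrow 0$ (cf. the argument already given in the proof of Lemma \ref{dense} and in Lemma \ref{lemma adjoint operator}). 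The implication $(2)\Rightarrow(3)$ is trivial since a decreasing sequence is in particular a decreasing net.

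The substantive implication is $(3)\Rightarrow(1)$. By \cite[Theorem 37]{DP2014}, it suffices to show that $\phi$ is a normal functional, i.e. that $\phi(x_\lambda)\to 0$ for every \emph{net} $x_\lambda\downarrow 0$ in $E(\cM,\tau)$; the uniqueness of $y$ then follows since $E(\cM,\tau)^\times$ separates points of $E(\cM,\tau)$. So we must upgrade the sequential hypothesis (3) to the net statement (2). Here is where Lemma \ref{lemma net and sequence} enters: given a net $x_\lambda\downarrow 0$ in $E(\cM,\tau)$, we first reduce to the case that the net lies in $S_0(\cM,\tau)$ and that $\cM$ can be replaced by a $\sigma$-finite reduced algebra. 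Under hypothesis (2) of the proposition we already have $x_\lambda\in E(\cM,\tau)\subset S_0(\cM,\tau)$, so the second part of Lemma \ref{lemma net and sequence} directly produces a subsequence $x_n\subset\{x_\lambda\}$ with $x_n\downarrow 0$ in $S_0(\cM,\tau)$. Under hypothesis (1), $\cM$ is $\sigma$-finite, and the first part of Lemma \ref{lemma net and sequence} produces $x_n\subset\{x_\lambda\}$ with $x_n\downarrow 0$. In either case, by hypothesis (3), $\phi(x_n)\to 0$.

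The remaining point — and the place I expect the main friction — is to deduce $\phi(x_\lambda)\to 0$ for the \emph{whole} net from the fact that some decreasing subsequence extracted from it tends to $0$ under $\phi$. This is not automatic for a general bounded $\phi$, but it works here because $\{x_\lambda\}$ is monotone decreasing: once we have $x_{n_0}$ in the net with $\phi(x_{n_0})$ close to $\inf_\lambda \mathrm{Re}\,\phi(x_\lambda)$ etc., monotonicity of the net forces all later $x_\lambda\le x_{n_0}$ to satisfy $0\le x_\lambda\le x_{n_0}$, and one controls $\phi$ on the order interval $[0,x_{n_0}]$. More precisely: decompose $\phi$ into the difference of (at most four) positive functionals on the order-complete part, or argue directly that $\limsup_\lambda|\phi(x_\lambda)|\le \limsup_n|\phi(x_n)|=0$ using that for each $\lambda$ there is $n$ with $x_{n}\le x_\lambda$ is false in general — instead one uses that the subsequence $\{x_n\}$ is \emph{cofinal} in the directed set (which is exactly what the construction in Lemma \ref{lemma net and sequence} gives: each $x_n$ is chosen beyond the previous indices), so that for any $\lambda$ there is $n$ with $x_n\le x_\lambda$, hence $0\le x_\lambda - x_n\le x_{n-1}-x_n$ and, splitting $\phi$ via Jordan decomposition into positive functionals $\phi_j$ with $\phi_j(x_\lambda)\le\phi_j(x_{n-1})\to 0$, one concludes $\phi(x_\lambda)\to 0$. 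This establishes $(3)\Rightarrow(2)$, hence by \cite[Theorem 37]{DP2014} the representation $\phi=\phi_y$ with $y\in E(\cM,\tau)^\times$, and uniqueness follows from separation of points, completing the proof.
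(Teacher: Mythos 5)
Your skeleton is the same as the paper's: the only substantive implication is upgrading the sequential hypothesis (3) to the net statement (2), and this is done by extracting, via Lemma \ref{lemma net and sequence}, a decreasing sequence $\{x_n\}\subset\{x_\lambda\}$ with $x_n\downarrow 0$ and applying (3) to it. However, your final step --- passing from $\phi(x_n)\to 0$ to $\phi(x_\lambda)\to 0$ --- rests on two claims that are not justified. First, you assert that the extracted sequence is \emph{cofinal} in the directed index set, ``which is exactly what the construction in Lemma \ref{lemma net and sequence} gives.'' It is not: the construction only chooses each $x_{n+1}$ beyond the index of $x_n$, which makes the sequence monotone relative to itself and forces $\inf_n x_n=\inf_\lambda x_\lambda$, but a countable sequence need not dominate (from below) every element of an uncountable directed net, and the lemma neither states nor proves cofinality. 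Second, you invoke the Jordan decomposition $\phi=\sum_j \pm\phi_j$ ($\phi_j\ge 0$) and use $\phi_j(x_{n})\to 0$; but hypothesis (3) is an assumption on $\phi$, not on its positive components, so this convergence is exactly what still needs to be proved.

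Both difficulties disappear if the reduction to positive $\phi$ is performed \emph{at the outset}, which is what the paper does (citing \cite[Proposition 4.2.2]{DPS}). Once $\phi\ge 0$, the net $\bigl(\phi(x_\lambda)\bigr)_\lambda$ is a decreasing net of nonnegative reals, hence converges to $\inf_\lambda\phi(x_\lambda)$; since $\{x_n\}$ is a subfamily of $\{x_\lambda\}$, this infimum is at most $\inf_n\phi(x_n)=0$. No cofinality of the sequence in the net is needed --- only that the sequence is contained in the net --- and no a posteriori decomposition of $\phi$ is required. As written, your argument has a genuine gap at precisely the point you yourself flagged as ``the main friction''; the repair is to move the positivity reduction to the beginning and replace the cofinality argument by the monotone-net observation above.
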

%
%
%
%
%

\begin{proof}
Without loss of generality, we may assume that $\phi$ is positive, see e.g. \cite[Proposition 4.2.2]{DPS}.

The equivalence $(1)\Leftrightarrow (2)$ is well-known, see e.g. \cite[Theorem 37]{DP2014} or \cite{DPS}.
The implication $(2)\Rightarrow(3)$ is trivial.
For the implication   $(3)\Rightarrow (2)$, one only observe that for any net $\{x_\lambda \}$ with $x_\lambda \downarrow 0$, there exists a sequence $\{x_n\}_{n=1}^\infty
\subset \{x_\lambda \}$
with $x_n\downarrow0$, see Lemma \ref{lemma net and sequence} above.
By statement (3), we have $\phi(x_n)\to 0$ as $n \to \infty $, which implies that $$\phi(x_\lambda )\to_\lambda 0.$$
%
%
%
%
The proof is complete. 
\end{proof}

\begin{rem}
If any of the preceding equivalent assertions are valid, then $\psi$ is self-adjoint if and only if $y=y^*$,
and $\psi$ is positive if and only if $y\ge 0$ (see \cite[Theorem 37]{DP2014} or \cite[Theorem 5.2.9]{DPS}).

\end{rem}

The following result 
shows that $\cF(\tau)$ is dense in a noncommutative symmetric space
in the   weak topology $\sigma(E,E^\times)$, which   is a semifinite version of \cite[Proposition 2.3]{AC2}.
\begin{proposition}\label{proposition weakly closed}
Let $E(\mathcal{M},\tau)$ be a  noncommutative  strongly symmetric space affiliated with a $\sigma$-finite
von Neumann algebra $\mathcal{M}$ equipped with a  semifinite faithful normal trace $\tau$.
 If $x\in E(\cM,\tau)$,  then there exists a sequence $\{x_{n}\}\subset\mathcal{ F}(\tau)$  such that  $$x_{n}\stackrel{\sigma(E,E^{\times})}{\longrightarrow}x.$$
\end{proposition}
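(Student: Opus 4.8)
The plan is to reduce the statement to the two dense\-ness facts already assembled in the excerpt, namely Lemma~\ref{dense} and Proposition~\ref{proposition normal functional}, together with Lemma~\ref{lemma net and sequence}. First I would fix an increasing sequence $\{e_n\}$ of $\tau$\-finite projections with $e_n\uparrow{\bf 1}$; such a sequence exists because $\cM$ is $\sigma$\-finite. Then $e_n\cM e_n\subset\cF(\tau)$ for every $n$, so it is enough to show that $\bigcup_n e_n\cM e_n$ is $\sigma(E,E^\times)$\-dense in $E(\cM,\tau)$ and that one can extract from a net realising the approximation a \emph{sequence} doing the same job.

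The key steps, in order, are as follows. Step~1: invoke Lemma~\ref{dense} with the net $\{e_n\}$ to conclude that $\bigcup_n e_n\cM e_n$ is $\sigma(E,E^\times)$\-dense in $E(\cM,\tau)$; in particular, for the given $x$ there is a net in $\bigcup_n e_n\cM e_n$ converging to $x$ in $\sigma(E,E^\times)$. Step~2: promote this to a \emph{sequential} statement. For $x\in E(\cM,\tau)\cap\cM$ one checks, as in the proof of Lemma~\ref{dense}, that $e_n x e_n\to_n x$ in $\sigma(E,E^\times)$ directly, using the singular\-value estimates there together with $\tau(ye_nx-yx)\to 0$ for $y\in E(\cM,\tau)^\times$; and $e_nxe_n\in\cF(\tau)$ since $e_n\in\cF(\tau)$. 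Step~3: reduce the general $x\in E(\cM,\tau)$ to the case $x\in E(\cM,\tau)\cap\cM$. Writing $x=x_1-x_2+i(x_3-x_4)$ with $x_i\in E(\cM,\tau)_+$ it suffices to treat $x\ge 0$; then $x_m:=xe^{x}(0,m)\in E(\cM,\tau)\cap\cM$ and $x-x_m\downarrow 0$, so by Proposition~\ref{proposition normal functional} (applicable since $\cM$ is $\sigma$\-finite) every $\phi=\phi_y$, $y\in E(\cM,\tau)^\times$, is normal and hence $\phi_y(x-x_m)\to 0$, i.e.\ $x_m\to x$ in $\sigma(E,E^\times)$. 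Step~4: combine Steps 2 and 3 by a diagonal argument, or more cleanly, observe that the $\sigma(E,E^\times)$\-topology on the norm\-bounded set $\{z:\norm{z}_E\le \norm{x}_E\}$ is metrisable is \emph{not} available in general, so instead I would argue directly: given $x$, pick $m_k$ with $\phi_{y_j}(x-x_{m_k})$ small for finitely many chosen $y_j$, but since $\sigma(E,E^\times)$ need not be first countable one should rather run the two approximations sequentially—first approximate $x$ by $x_m\in E(\cM,\tau)\cap\cM$ in $\sigma(E,E^\times)$, then each $x_m$ by $e_n x_m e_n\in\cF(\tau)$ in $\sigma(E,E^\times)$—and extract a single sequence $x_n':=e_{k_n}x_{k_n}e_{k_n}$ with $k_n\uparrow\infty$ chosen so that the relevant traces against a countable determining family go to zero. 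The cleanest way to make this rigorous is to note that one only needs $\sigma(E,E^\times)$\-convergence against each fixed $y\in E(\cM,\tau)^\times$, run the double limit $\lim_n\lim_m$, and apply a standard diagonalisation on the nested sequences of reals $\tau(y(e_nx_me_n-x))$.

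The main obstacle I anticipate is precisely this last point: $\sigma(E,E^\times)$ is generally not metrisable (nor first countable) on $E(\cM,\tau)$, so turning two successive approximating sequences into one approximating sequence is not automatic. The resolution is that $\sigma(E,E^\times)$\-convergence of a \emph{sequence} can be tested one functional at a time, and for a fixed $y\in E(\cM,\tau)^\times$ the quantities $\tau(y\,e_nx_me_n)$ converge to $\tau(yx_m)$ as $n\to\infty$ and $\tau(yx_m)\to\tau(yx)$ as $m\to\infty$; a routine diagonal extraction then yields a single sequence $\{x_n'\}\subset\cF(\tau)$ with $\tau(yx_n')\to\tau(yx)$ for the countably many $y$ needed—and in fact, re\-examining Step~2, one sees $e_nxe_n$ already converges to $x$ for \emph{every} $y$ simultaneously when $x\in\cM$, so the only genuine diagonalisation is between the cutoff index $m$ and the compression index $n$, which is handled by choosing, for each $n$, a compression level large enough that $\abs{\tau(y(e_{k_n}x_ne_{k_n}-x_n))}<1/n$ for $y$ ranging over an increasing exhaustion, then passing to $n\to\infty$. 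I would present this carefully, citing Lemma~\ref{lemma net and sequence} for the passage from nets to sequences wherever a net appears, and conclude that the resulting sequence $\{x_n'\}\subset\cF(\tau)$ satisfies $x_n'\xrightarrow{\sigma(E,E^\times)}x$.
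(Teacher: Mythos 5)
Your Steps 1--3 are fine, but the final diagonalisation in Step 4 is a genuine gap, and you have correctly located where the difficulty sits without resolving it. To turn the two successive sequential limits ($x_m\to x$ and, for each $m$, $e_kx_me_k\to x_m$, both in $\sigma(E,E^\times)$) into a single sequence in $\cF(\tau)$ converging to $x$, your plan is to choose $k_n$ so that $\abs{\tau\bigl(y(e_{k_n}x_ne_{k_n}-x_n)\bigr)}<1/n$ for $y$ "ranging over an increasing exhaustion". But an exhaustion by finitely many functionals at each stage only yields convergence against the resulting \emph{countable} family; to upgrade this to convergence against every $y\in E(\cM,\tau)^\times$ you would need that countable family to be norm-dense in $E(\cM,\tau)^\times$ (using the uniform bound $\norm{e_kx_me_k}_E\le\norm{x}_E$), and such a countable dense family does not exist in general: already for $E=L_1(\cM,\tau)$ over a $II_1$-factor one has $E^\times=\cM$, which is not norm-separable. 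Since $\sigma(E,E^\times)$ is neither metrisable nor first countable on bounded sets, and the approximating set is not relatively $\sigma(E,E^*)$-compact (so Eberlein--\v{S}mulian-type angelicity is unavailable), iterated sequential limits genuinely cannot be merged into one sequence by this route.

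The paper avoids the problem by never producing two limits in the first place. It writes $x=v\abs{x}$ (polar decomposition) and invokes the fact that every positive $\tau$-measurable operator is the supremum of an upwards directed net $0\le a_\lambda\uparrow\abs{x}$ with $a_\lambda\in\cF(\tau)$ (this single monotone approximation replaces your two-step cutoff-plus-compression scheme); $\sigma$-finiteness and Lemma \ref{lemma net and sequence} extract a \emph{monotone} sequence $a_n$ with $\abs{x}-a_n\downarrow 0$, and then Proposition \ref{proposition normal functional} (normality of each $\phi_y$) gives $\tau\bigl(v(\abs{x}-a_n)y\bigr)\to 0$ for \emph{every} $y\in E(\cM,\tau)^\times$ simultaneously -- monotonicity is what makes the convergence hold against all functionals at once with no countable reduction. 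The sequence $va_n\in\cF(\tau)$ then does the job. If you replace your Steps 2--4 by this monotone approximation (keeping your correct use of Lemma \ref{lemma net and sequence} and Proposition \ref{proposition normal functional}), the proof closes.
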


\begin{proof}
Let  $a\in E(\cM,\tau)$ and  $a=v|a|$ is the polar decomposition \cite[Proposition 2.1.4]{DPS}.
By  \cite[Proposition 2.3.12]{DPS},
 there exists an upwards directed net $\{a_{\lambda}\}\subset\mathcal{ F}(\tau)$ such that $0\leq a_{\lambda}\uparrow_{\lambda}|a|$, that is, $$(|a|-a_{\lambda})\downarrow_\lambda 0.$$
 Since $\cM$ is $\sigma$-finite, it follows from Lemma \ref{lemma adjoint operator}  that there exists a subsequence $\{a_n\}_{n\ge 1}\subset \{a_\lambda\}$ such that $$(|a|-a_{n})\downarrow_n 0,$$ By Proposition \ref{proposition normal functional}, 
we have 
$$\mbox{
$\tau( (|a|-a_{n})yv )=\tau(v(|a|-a_{n})y)\rightarrow0$ as $n \to \infty  $}$$
 for any  $y\in E(\cM,\tau)^{\times}$, that is,
\begin{eqnarray*}
\mathcal{ F}(\tau)\ni a_n\stackrel{\sigma(E,E^{\times})}{\longrightarrow}|a|,
\end{eqnarray*}
equivalently,
\begin{eqnarray*}
\mathcal{ F}(\tau)\ni va_n\stackrel{\sigma(E,E^{\times})}{\longrightarrow}a,
\end{eqnarray*}
which completes the proof.
\end{proof}

For convenience, we recall the following well-known result \cite[Section 3, C.1 $\S$5]{PS2}.
\begin{proposition}\label{proposition convergent sequence}
 Let $p_{nk}$'s be real numbers, $n,k\in\mathbb{N}$, such that $\sum_{k=1}^{n}|p_{nk}|=1$ for all $n\in\mathbb{N}$.
If the limit $\lim\limits_{n\rightarrow\infty}p_{nk}=p_{k}$ exists for every fixed $k\in\mathbb{N}$, then the sequence
\begin{eqnarray*}
s_{n}=p_{n1}r_{1}+p_{n2}r_{2}+...+p_{nn}r_{n}
\end{eqnarray*}
converges for every convergent sequence $\{r_{n}\}
_{n\ge 1}$.
\end{proposition}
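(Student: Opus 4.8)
The plan is to recast the statement as a uniform-boundedness argument on the Banach space $c$ of convergent real sequences, equipped with the supremum norm. For each $n\in\mathbb{N}$ I would define a linear functional $T_n$ on $c$ by $T_n(r)=\sum_{k=1}^n p_{nk}r_k$ for $r=\{r_k\}_{k\ge 1}\in c$. By hypothesis $\norm{T_n}_{c^*}=\sum_{k=1}^n\abs{p_{nk}}=1$, so the family $\{T_n\}_n$ is uniformly bounded, and the assertion to be proved is exactly that $\lim_n T_n(r)$ exists for every $r\in c$.

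First I would reduce to a dense subset. If $r,r'\in c$ with $\norm{r-r'}_\infty<\varepsilon$, then $\abs{T_n(r)-T_m(r)}\le\abs{T_n(r')-T_m(r')}+2\varepsilon$ for all $n,m$, because $\norm{T_n}\le 1$; hence if $\{T_n(r')\}_n$ converges then $\limsup_{n,m\to\infty}\abs{T_n(r)-T_m(r)}\le 2\varepsilon$, and, $\varepsilon$ being arbitrary, $\{T_n(r)\}_n$ is Cauchy, hence convergent. Since each $T_n$ is linear, it therefore suffices to check that $\{T_n(r)\}_n$ converges for every $r$ in some set $S\subset c$ whose linear span is norm-dense in $c$. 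I would take $S=\{e^{(j)}\}_{j\ge 1}\cup\{\mathbf{1}\}$, where $e^{(j)}=\{\delta_{jk}\}_{k\ge 1}$ and $\mathbf{1}=\{1\}_{k\ge 1}$: indeed any $r\in c$ with $r_k\to L$ is the sup-norm limit of $L\mathbf{1}+\sum_{j=1}^N(r_j-L)e^{(j)}$ as $N\to\infty$.

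It then remains to treat the two types of elements of $S$. For $r=e^{(j)}$ this is immediate: $T_n(e^{(j)})=p_{nj}$ for all $n\ge j$, and this converges to $p_j$ by hypothesis. The case $r=\mathbf{1}$, where $T_n(\mathbf{1})=\sum_{k=1}^n p_{nk}$, is the only delicate point and the step I expect to be the real obstacle, since it amounts to the convergence of the partial row sums --- precisely the ingredient that must be drawn from the classical Toeplitz--Kojima--Schur theorem recalled in \cite{PS2}. (When the $p_{nk}$ are nonnegative this is automatic, as then $\sum_{k=1}^n p_{nk}=\sum_{k=1}^n\abs{p_{nk}}=1$, so $T_n(\mathbf{1})\equiv 1$; in the general real-coefficient formulation one reads the convergence of $\{\sum_{k=1}^n p_{nk}\}_n$ off \cite{PS2} directly.) Combining the two cases with the density reduction above yields the claim.
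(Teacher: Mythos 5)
Your reduction is sound and is in fact the standard proof of the Kojima--Schur theorem: since $\|T_n\|_{c^*}=\sum_{k=1}^n|p_{nk}|=1$, uniform boundedness lets you test convergence of $T_n(r)$ only on a set with dense linear span in $c$, and $\{e^{(j)}\}_{j\ge1}\cup\{\mathbf 1\}$ is such a set; the step for the $e^{(j)}$ is correct. The genuine gap is exactly the step you flag as ``the real obstacle'': the convergence of $T_n(\mathbf 1)=\sum_{k=1}^n p_{nk}$ is \emph{not} a consequence of the two stated hypotheses, and it cannot be ``read off'' from \cite{PS2}, because in the Kojima--Schur theorem the convergence of the row sums is an independent third condition. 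Concretely, take $p_{nk}=(-1)^n$ if $k=n$ and $p_{nk}=0$ otherwise: then $\sum_{k=1}^n|p_{nk}|=1$ and $\lim_n p_{nk}=0$ for every fixed $k$, yet for the convergent sequence $r_k\equiv1$ one gets $s_n=(-1)^n$, which diverges. So the gap is unfixable from the stated hypotheses alone; your argument goes through verbatim once one either assumes $p_{nk}\ge0$ (so that $T_n(\mathbf 1)\equiv1$, the classical Toeplitz setting of the cited Polya--Szeg\H{o} problem) or adds the hypothesis that $\lim_n\sum_{k=1}^n p_{nk}$ exists.

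There is no proof in the paper to compare against --- the proposition is quoted from \cite{PS2} --- and the version printed here has evidently dropped a hypothesis in transcription; your analysis pinpoints precisely which one. Note that where the proposition is actually invoked (the Rosenthal-set argument in Theorem \ref{theorem: unique of element}) the coefficients are genuinely signed, so the missing row-sum condition must be secured there too, e.g.\ by passing to a further subsequence along which the row sums $\sum_k p_{n_ik}$ converge, which is possible since they all lie in $[-1,1]$.
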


The following result is a semifinite version of \cite[Theorem 3.3]{AC2}.
\begin{theorem}\label{theorem: unique of element}
Let $E(\mathcal{M},\tau)$ be a  symmetric space affiliated with a   semifinite
von Neumann algebra $\mathcal{M}$ equipped with a semifinite faithful normal trace $\tau$. Let $0\leq x_{n}\downarrow0\in E(\cM,\tau)$. Then the sequence $\{x_{n}\}$   converges  with respect to the topology $b_{E(\cM,\tau)}$ to no more than one element.
\end{theorem}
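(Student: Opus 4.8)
The plan is to show that the $b_{E(\mathcal M,\tau)}$-limit points of $\{x_n\}$ are unique by exhibiting them all as the limit of a single sequence in a Hausdorff subspace, and then invoking Theorem~\ref{theorem Rosenthal set}. The starting observation is that $\{x_n\}$ is norm-bounded (indeed $\|x_n\|_E\le\|x_1\|_E$), so it lies in a bounded absolutely convex set $A:=\overline{\operatorname{aco}}\{x_n:n\ge1\}$. If we can arrange that $A$ is a Rosenthal set, then Theorem~\ref{theorem Rosenthal set} tells us $(A,b_{E(\mathcal M,\tau)})$ is Hausdorff; since $\{x_n\}\subset A$ and limits in a Hausdorff space are unique, we are done. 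So the crux is to verify the Rosenthal property, i.e. that no sequence built from the $x_n$'s is equivalent to the $\ell_1$-basis.

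The key input here is that $0\le x_n\downarrow 0$. For a decreasing sequence of positive elements, one expects weak Cauchyness to come essentially for free: for any $y\in E(\mathcal M,\tau)^\times$, writing $y=y_1-y_2+i(y_3-y_4)$ with $y_i\ge0$, the scalar sequences $\tau(y_i^{1/2}x_ny_i^{1/2})$ are monotone decreasing and bounded below by $0$, hence convergent; therefore $\tau(x_ny)$ converges for every $y\in E(\mathcal M,\tau)^\times$. Since $E(\mathcal M,\tau)^\times$ separates points of $E(\mathcal M,\tau)$, this already says $\{x_n\}$ is $\sigma(E,E^\times)$-Cauchy, which heuristically rules out an $\ell_1$-subsequence. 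To make the Rosenthal argument rigorous one has to control not just $\{x_n\}$ but a general sequence drawn from the absolutely convex hull; the standard device (as in \cite{GN} and \cite{AC2}) is to note that a monotone bounded sequence generates a set with no $\ell_1$-sequences because any normalized block would have to be weakly null in a way incompatible with the $\ell_1$-lower estimate. Concretely, I would show that the closed absolutely convex hull of a decreasing positive null sequence is weakly sequentially precompact (every sequence has a weakly Cauchy subsequence), using that such a hull is order-bounded by $x_1$ and that order intervals in $E(\mathcal M,\tau)$ with the Fatou property are well-behaved, together with Rosenthal's $\ell_1$-theorem in the form ``no $\ell_1$-sequence $\iff$ every sequence has a weakly Cauchy subsequence.''

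An alternative, more hands-on route that avoids classifying $A$ entirely: suppose $x_n\xrightarrow{b_E}u$ and $x_n\xrightarrow{b_E}v$. By Lemma~\ref{ball continuous}, for every $y\in E(\mathcal M,\tau)$ we have $\liminf_n\|x_n-y\|_E\ge\|u-y\|_E$ and $\ge\|v-y\|_E$; the goal is to upgrade this to $u=v$. Here I would exploit that $\{x_n\}$ is $\sigma(E,E^\times)$-convergent, say to some $x_\infty\in S(\mathcal M,\tau)$ (one should check $x_\infty\in E(\mathcal M,\tau)$ using the Fatou property and boundedness, and in fact $x_\infty=0$ since $x_n\downarrow0$ forces $\tau(x_ny)\to0$ for positive $y$, hence for all $y\in E^\times$). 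Then for any $y$, $\|x_n-y\|_E$ behaves in the limit like $\|x_\infty-y\|_E$ up to the monotone error term coming from $x_n-x_\infty\ge0$ and $(x_n-x_\infty)\downarrow0$; a careful use of the triangle inequality and the order continuity of the relevant pieces pins down $\liminf_n\|x_n-y\|_E=\|x_\infty - y\|_E$ for a separating family of $y$, which via Lemma~\ref{ball continuous} gives $\|u-y\|_E\le\|x_\infty-y\|_E$ and symmetrically; taking $y$ ranging over a norming set then yields $u=x_\infty=v$.

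The main obstacle I anticipate is the passage from ``$\{x_n\}$ is $\sigma(E,E^\times)$-Cauchy/convergent'' to the genuine Hausdorff separation needed for $b_E$: the ball topology is strictly weaker and not Hausdorff in general, so one genuinely needs the Rosenthal-set mechanism of Theorem~\ref{theorem Rosenthal set} (or an equivalent ad hoc argument), and the delicate point is verifying that the absolutely convex hull of $\{x_n\}$ — not merely the sequence itself — contains no $\ell_1$-basis. This is where the monotonicity $x_n\downarrow0$ together with the Fatou property of $E(\mathcal M,\tau)$ must be used in an essential way, presumably by reducing to the commutative/function-space computation via generalized singular values and the fully symmetric structure, or by directly invoking order-boundedness of the hull in $[0,x_1]$ and a noncommutative Dunford--Pettis-type statement for such order intervals.
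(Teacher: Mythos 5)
Your overall strategy (show the absolutely convex hull $A$ of $\{x_n\}$ is a Rosenthal set, invoke Theorem~\ref{theorem Rosenthal set} to get Hausdorffness of $(A,b_E)$, conclude uniqueness of limits) is exactly the paper's strategy, but your justification of the Rosenthal property has a genuine gap. You argue that $\{x_n\}$ is weakly Cauchy by testing against $y\in E(\cM,\tau)^\times$ via $\tau(y_i^{1/2}x_ny_i^{1/2})$. That only gives $\sigma(E,E^\times)$-Cauchyness. The Rosenthal dichotomy requires weak Cauchyness against the \emph{full} Banach dual $E(\cM,\tau)^*$, and in the non-separable setting --- which is the entire point of this theorem --- $E^*$ strictly contains the normal functionals $E^\times$; the singular functionals are precisely what your argument does not see. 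The correct mechanism, and the one the paper uses, is purely order-theoretic: every $f\in E(\cM,\tau)^*_h$ (normal or not) decomposes as a difference of two \emph{positive} bounded functionals, and for a positive functional $f$ the scalars $f(x_n)$ are decreasing and bounded below by $0$, hence convergent. No trace duality and no normality is needed. Relatedly, you repeatedly lean on the Fatou property, which is neither a hypothesis of the statement nor used in the proof.

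The second point you leave vague --- passing from convergence of $f(x_n)$ to weak Cauchyness of an arbitrary sequence $\{y_n\}\subset A$ --- is handled in the paper not by order-boundedness or a Dunford--Pettis-type statement, but by an elementary summation argument: write $y_n=\sum_k p_{nk}x_k$ with $\sum_k|p_{nk}|=1$, use metrizable compactness of $\prod_k[-1,1]$ to extract a subsequence whose coefficient vectors converge coordinatewise, and then apply the Toeplitz-type Proposition~\ref{proposition convergent sequence} to conclude that $f(y_{n_i})=\sum_k p_{n_ik}f(x_k)$ converges for every $f\in E_h^*$. Your alternative ``hands-on'' route (pinning down $u=v$ directly from the $\liminf$ inequalities of Lemma~\ref{ball continuous}) does not work as sketched: the ball topology is not Hausdorff and knowing $\liminf\|x_n-y\|_E\ge\|u-y\|_E$ for all $y$ does not by itself force $u$ to equal the $\sigma(E,E^\times)$-limit; without the Rosenthal/Hausdorff mechanism you have no way to separate two candidate limits.
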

\begin{proof}
It is well known that  $E(\cM,\tau)_h$ (the self-adjoint part of $E(\cM,\tau)$) is a  norm closed real subspace of $E(\cM,\tau)$ \cite[Chapter 4.1]{DPS}, 
and
the space $E(\cM,\tau)_h^*$ can be identified with the Banach dual of the real normed space $E(\cM,\tau)_{h}$ (see \cite[Chapter 4.2]{DPS}).
Let $\{x_n\}_{n\ge 1}$ be a  sequence of $E(\cM,\tau)^+$ with $x_{n}\downarrow 0$. Let $A$ be the absolutely
convex hull of $\{x_n\}_{n\ge 1}$.
By the monotonicity of the norm $\norm{\cdot}_E$, it follows that $A$ is a norm bounded subset of $E(\cM,\tau)$. Below, we prove that   $A$ is a Rosenthal subset of $E(\cM,\tau)_h$.

Recall that \cite[Proposition 4.2.2]{DPS} a linear functional $f \in E(\cM,\tau)^*_h$ can be decomposed into the difference of two positive  functionals in $E(\cM,\tau)_h^*$.
Since   $0 \leq x_{n+1} \leq x_n$, it follows that
  $\lim_{n \to \infty} f(x_n) $  exists for every functional $f\in E(\cM,\tau)_h^{*}$. For any sequence $\{y_{n}\}_{n=1}^\infty \subset A$, we have
\begin{eqnarray*}
y_{n}=p_{n1}x_{1}+p_{n2}x_{2}+...+p_{nk(n)}x_{k(n)},\quad \mbox{ where } \sum_{i=1}^{k(n)}|p_{n_i}|=1.
\end{eqnarray*}
In particular, $p_{ni} \in [-1,1]$ for all $i \in 1,\ldots,k(n)$. Consider the sequence $$q_n = (p_{n1},p_{n2},\ldots,p_{nk(n)},0,0,\ldots) \in \prod_{i=1}^{\infty} [-1,1].$$
By the Tychonoff Theorem \cite[Ch.5, Theorem 13]{KL}, the set $\prod_{i=1}^{\infty} [-1,1]$ is compact with respect to the product topology.
Moreover, by \cite[Ch.4, Theorem 17]{KL},
 $\prod_{i=1}^{\infty} [-1,1]$ is a metrizable compact set. Consequently, there exists a convergent subsequence $$\{q_{n_i}\}_{i=1}^{\infty}\subset \{q_n\}_{n=1}^\infty.$$ In particular, the numerical series $\{p_{n_i k}\}_{i\ge 1}$ converges  for every fixed $k \in \mathbb{N}$. Therefore, by Proposition \ref{proposition convergent sequence}, the sequence $$f(y_{n_i})=\sum_{j=1}^{k(n_i)} p_{n_{ij}} f(x_j)=p_{n_{i1}} f(x_1)+p_{n_{i2}} f(x_2)+\ldots+ p_{n_{ik(n_i)}} f(x_{k(n_i)}) $$
is a convergent sequence for all $f \in E(\cM,\tau)_h^*$. Consequently, the sequence $\{y_n\}_{n=1}^{\infty} \subset A$ has a weakly Cauchy subsequence $\{y_{n_i}\}$. This shows that $A$ is a Rosenthal set of $E(\cM,\tau)_h$.

By Theorem \ref{theorem Rosenthal set}, the topology $(A,b_{E(\cM,\tau)_h})$ is Hausdorff. Therefore, the sequence $\{x_n\} \subset A$ has at most  one limit with respect to the topology $b_{E(\cM,\tau)_h}$ (see e.g., \cite[Ch.2, Theorem 3]{KL}).
Note that  the restriction $b_{E(\cM,\tau)}|_{E(\cM,\tau)_h}$ is finer than $b_{E(\cM,\tau)_h}$. Indeed,
let
\begin{align*}
B_E(x,\varepsilon)=\{y\in E(\cM,\tau):\norm{y-x}\le\varepsilon\},\quad \varepsilon>0
\end{align*}
and
\begin{align*}
B_{E_h}(x,\varepsilon)=\{y\in E(\cM,\tau)_h:\norm{y-x}\le\varepsilon\},\quad \varepsilon>0
\end{align*}
be closed balls in $E(\cM,\tau)$ and $E(\cM,\tau)_h$, respectively. Obviously, $B_{E_h}(x,\varepsilon)\subset B_E(x,\varepsilon)$ for all $x\in E(\cM,\tau)_h$.  For any neighborhood of any $x_0$ in $E(\cM,\tau)_h$
in $b_{E(\cM,\tau)_h}$:
\begin{align*}
E(\cM,\tau)_h\setminus \bigcup_{i=1}^{n}B_{E_h}(x_i,\varepsilon_i),\quad x_i \in E(\cM,\tau)_h,& \quad \left\|x_0 - x_i\right\|_E > \varepsilon_i, \\
&\quad  i = 1, \ldots,n\in \mathbb{N},
\end{align*}
there exists  a neighborhood  of the point $x_0$ in $(b_{E(\cM,\tau)})|_{E(\cM,\tau)_h}$ such that 
\begin{align*} \left ( 
E(\cM,\tau)\setminus\bigcup_{i=1}^{n}B_E(x_i,\varepsilon_i)\right)\cap E(\cM,\tau)_h&=E(\cM,\tau)_h\setminus\bigcup_{i=1}^{n}B(x_i,\varepsilon_i)\\
&\subset E(\cM,\tau)_h\setminus \bigcup_{i=1}^{n}B_{E_h}(x_i,\varepsilon_i) ,\\
x_i \in E(\cM,\tau)_h,\quad \|x_0 - x_i\|_E > \varepsilon_i,& \quad i = 1, \ldots,n\in \mathbb{N}.
\end{align*}
Thus, the sequence $\{x_n\}_{n=1}^{\infty}$   has at most   one limit with respect to the topology $b_{E(M,\tau)}$.  This completes the proof.
\end{proof}


Below, we extend  \cite[Proposition 3.4]{AC2} to the setting of stronlgy symmetric spaces affiliated with a semifinite von Neumann algebra.
\begin{proposition}\label{proposition:Comparison of topologies}
Let $\cM$ be a semifinite
von Neumann algebra    equipped with a semifinite faithful normal trace $\tau$.
Let $E(\mathcal{M},\tau)$ be a  noncommutative strongly symmetric space affiliated $\cM$ having the Fatou property. Then,
the topology $\sigma(E,E^\times)$ finer than $b_{E(\cM,\tau)}$.
\end{proposition}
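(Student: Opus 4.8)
The plan is to reduce the statement to the single assertion that \emph{every closed ball of $E(\cM,\tau)$ is closed in the $\sigma(E,E^\times)$-topology}. Indeed, by its very definition $b_{E(\cM,\tau)}$ is the coarsest topology on $E(\cM,\tau)$ making all closed balls $B(x,\varepsilon)$ closed; so once each $B(x,\varepsilon)$ is known to be $\sigma(E,E^\times)$-closed, every member of the defining subbasis of $b_{E(\cM,\tau)}$ is $\sigma(E,E^\times)$-closed, whence $b_{E(\cM,\tau)}\subseteq\sigma(E,E^\times)$, which is precisely the claim.

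To prove that the closed balls are $\sigma(E,E^\times)$-closed, the key input is the Fatou property. First I would recall that a strongly symmetric operator space with the Fatou property coincides isometrically with its second K\"othe dual, $E(\cM,\tau)=E(\cM,\tau)^{\times\times}$ (see e.g. \cite{DPS,DP2014}); in particular, the norm on $E(\cM,\tau)$ admits the dual description
\[
\norm{y}_E=\sup\left\{\abs{\tau(yz)}\ :\ z\in E(\cM,\tau)^\times,\ \norm{z}_{E^\times}\le 1\right\},\qquad y\in E(\cM,\tau)
\]
(one checks that replacing $\abs{\tau(yz)}$ by $\tau(\abs{yz})$ leaves the supremum unchanged, using that $E(\cM,\tau)^\times$ is an $\cM$-bimodule together with the trace property). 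Granting this, for fixed $x\in E(\cM,\tau)$ and $\varepsilon>0$ I would write the closed ball as an intersection of slabs,
\[
B(x,\varepsilon)=\bigcap_{z\in E(\cM,\tau)^\times,\ \norm{z}_{E^\times}\le 1}\left\{y\in E(\cM,\tau)\ :\ \abs{\tau((y-x)z)}\le\varepsilon\right\}.
\]
For each fixed $z\in E(\cM,\tau)^\times$, the functional $\phi_z\colon y\mapsto\tau(yz)$ is $\sigma(E,E^\times)$-continuous by the definition of that topology; hence $y\mapsto\tau((y-x)z)=\phi_z(y)-\tau(xz)$ is $\sigma(E,E^\times)$-continuous and each set in the intersection is $\sigma(E,E^\times)$-closed. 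Therefore $B(x,\varepsilon)$ is $\sigma(E,E^\times)$-closed, and since $x,\varepsilon$ were arbitrary the reduction is complete.

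The only genuinely non-formal step is the K\"othe bidual identity $E(\cM,\tau)=E(\cM,\tau)^{\times\times}$ and the ensuing dual norm formula — this is exactly where the Fatou hypothesis is used, and it is the point I expect to need care, in particular to ensure it is available in the semifinite (possibly non-$\sigma$-finite) setting for strongly symmetric spaces. Everything else is formal: the implication ``closed balls are $\sigma(E,E^\times)$-closed $\Rightarrow$ $\sigma(E,E^\times)$ refines $b_{E(\cM,\tau)}$'' follows straight from the definition of the ball topology recalled in Chapter~\ref{S:ball}, and the $\sigma(E,E^\times)$-continuity of the functionals $\phi_z$ is built into the definition of $\sigma(E,E^\times)$ recalled in Chapter~\ref{s:p}. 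Note that, by contrast, knowing only that $B(x,\varepsilon)$ is convex and norm-closed would not suffice, since $\sigma(E,E^\times)$ may be strictly coarser than the full weak topology $\sigma(E,E^*)$, so that Mazur's theorem does not apply directly.
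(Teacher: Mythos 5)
Your proposal is correct and follows essentially the same route as the paper: the Fatou property yields the K\"othe bidual identity and hence the dual description $\norm{y}_E=\sup\{|\tau(yz)|:\norm{z}_{E^\times}\le 1\}$ (the paper cites \cite[Theorem 32]{DP2014} and \cite[Prop. 22]{DP2014} for exactly this), from which the $\sigma(E,E^\times)$-closedness of closed balls follows. The only difference is presentational — you exhibit the ball as an intersection of $\sigma(E,E^\times)$-closed slabs, whereas the paper runs the equivalent argument by contradiction with a weakly convergent net landing outside $B(0,1)$.
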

\begin{proof}
It suffices to show that $$B(0,1):=\{x\in E(\cM,\tau):\norm{x}_E\le1\}$$ is
closed with respect to the weak topology  $\sigma(E,E^\times)$. 
Let $x_\alpha\in B(0,1)$ and $x_\alpha \to x\in E(\cM,\tau)$ in the $\sigma(E,E^\times)$-topology. 
Assume by contradiction that $$x\notin B(0,1).$$ 
Since $E(\cM,\tau)$ has the Fatou property, it follows from   \cite[Theorem 32]{DP2014} that
\begin{align*}\norm{x}_E=\norm{x}_{E^{\times\times}}&~ \quad =\qquad \sup_{\left\|y\right\|_{E^\times}\le 1, y\in E(\cM,\tau)^\times}\tau (|xy|)\\
&\stackrel{\tiny \mbox{\cite[Prop. 22]{DP2014}}}{=} \sup_{\left\|y\right\|_{E^\times}\le 1, y\in E(\cM,\tau)^\times}|\tau (xy)|.
\end{align*}
Then there exists $\varepsilon>0$ and $y_0\in E(\cM,\tau)^\times$ with $\norm{y_0}_{E^\times} \le1$ such that 
\begin{align}\label{xy0>1}
|\tau(xy_0)|>1+\varepsilon.
\end{align} 
On the other hand, $x_\alpha \to x $ in the $\sigma(E,E^\times)$-topology implies that 
$$\tau(x_\alpha y_0)\to_\alpha \tau(xy_0).$$
However, $x_\alpha\in B(0,1) $ implies that $|\tau(xy_0)| \le 1$. 
There exists $\alpha $ such that $ | \tau( (x_\alpha -x )y_0)| <\varepsilon$, and  
$$1+\varepsilon\stackrel{\eqref{xy0>1}}{<}|\tau(xy_0)| \le |\tau(x_\alpha y_0)| + | \tau( (x_\alpha -x )y_0)| <1+\varepsilon, $$
which is a contradiction. 
This completes the proof. 
\end{proof}

%
%

For a   series $\sum_{n=1}^\infty x_n$ in a Banach space $X$ is called   a weakly unconditional series if the numerical series $\sum_{n=1}^\infty f(x_n)$ convereges absolutely for every $f\in X^*$~\cite[Ch. 2, $\S$3]{PW}.

\begin{proposition}\label{Pp6}\cite[Ch. 2, $\S$ 3]{PW}
Let $(X, \norm{\cdot}_X)$ be a Banach space, and let $x_n \in X$, $n \in \mathbb{N}$. Then the following conditions are equivalent:
\begin{enumerate}
    \item the series $\sum_{n=1}^{\infty} x_n$ converges weakly unconditionally;

    \item there exists a constant $C > 0$ such that
    \[
    \sup_{N} \left\| \sum_{n=1}^{N} t_n x_n \right\|_X \leq C \left\| \{t_n\}_{n=1}^{\infty} \right\|_{\infty} \quad \text{for all} \ \{t_n\}_{n=1}^{\infty} \in \ell^{\infty}.
    \]
\end{enumerate}

\end{proposition}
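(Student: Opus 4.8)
The plan is to establish the two implications separately; each is short, and the whole argument rests on the uniform boundedness principle. For \textbf{(1) $\Rightarrow$ (2)}, I would fix $f \in X^{*}$ and use the definition of weak unconditional convergence recalled just above the statement, namely $\sum_{n=1}^{\infty}\abs{f(x_n)} < \infty$. Then, for every $N \in \N$ and every scalar sequence $\{t_n\}_{n=1}^{\infty}$ with $\norm{\{t_n\}_{n=1}^{\infty}}_{\infty} \le 1$, one has
\[ \abs{ f\Big( \sum_{n=1}^{N} t_n x_n \Big) } \le \sum_{n=1}^{N} \abs{t_n}\,\abs{f(x_n)} \le \sum_{n=1}^{\infty} \abs{f(x_n)} < \infty , \]
with a bound that depends on $f$ but not on $N$ or on $\{t_n\}$. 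Hence the family $\big\{ \sum_{n=1}^{N} t_n x_n :\ N \in \N,\ \norm{\{t_n\}_{n=1}^{\infty}}_{\infty} \le 1 \big\}$ is weakly bounded in $X$, and therefore norm bounded by the uniform boundedness principle (applied to $X$ canonically embedded in $X^{**}$); call the resulting bound $C$. Rescaling an arbitrary nonzero $\{t_n\}_{n=1}^{\infty} \in \ell^{\infty}$ by $\norm{\{t_n\}_{n=1}^{\infty}}_{\infty}$ then yields $\sup_{N} \norm{ \sum_{n=1}^{N} t_n x_n }_{X} \le C\,\norm{\{t_n\}_{n=1}^{\infty}}_{\infty}$, which is (2). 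An equivalent route is to note that $f \mapsto (f(x_n))_{n}$ is a linear map from $X^{*}$ to $\ell^{1}$ with closed graph, hence bounded, and to conclude in the same way.

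For \textbf{(2) $\Rightarrow$ (1)}, I would fix $f \in X^{*}$ and $N \in \N$, and choose $t_n := \overline{f(x_n)}/\abs{f(x_n)}$ whenever $f(x_n) \ne 0$ (and $t_n := 0$ otherwise) for $1 \le n \le N$, together with $t_n := 0$ for $n > N$, so that $\norm{\{t_n\}_{n=1}^{\infty}}_{\infty} \le 1$ and $t_n f(x_n) = \abs{f(x_n)}$. Then
\[ \sum_{n=1}^{N} \abs{f(x_n)} = f\Big( \sum_{n=1}^{N} t_n x_n \Big) \le \norm{f}_{X^{*}}\, \norm{ \sum_{n=1}^{N} t_n x_n }_{X} \le C\,\norm{f}_{X^{*}} \]
by (2), and letting $N \to \infty$ gives $\sum_{n=1}^{\infty} \abs{f(x_n)} \le C\,\norm{f}_{X^{*}} < \infty$. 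Since $f \in X^{*}$ was arbitrary, $\sum_{n=1}^{\infty} x_n$ converges weakly unconditionally by definition.

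The main obstacle, such as it is, is purely organizational: in the first implication one must identify the correct weakly bounded family and then invoke the passage from weak boundedness to norm boundedness, i.e.\ the uniform boundedness principle (equivalently, the closed graph theorem applied to $f \mapsto (f(x_n))_n$ from $X^{*}$ to $\ell^{1}$). The converse direction needs nothing beyond choosing unimodular scalars so that $\sum_{n=1}^{N}\abs{f(x_n)}$ is realized as a single evaluation $f\big(\sum_{n=1}^{N} t_n x_n\big)$, after which (2) finishes the estimate immediately.
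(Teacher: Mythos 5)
Your proof is correct and is precisely the standard argument for this classical equivalence (the one found in the cited reference): the uniform boundedness principle applied to the weakly bounded family of partial sums for (1)~$\Rightarrow$~(2), and the choice of unimodular coefficients realizing $\sum_{n=1}^{N}|f(x_n)|$ as a single evaluation for (2)~$\Rightarrow$~(1). The paper itself gives no proof, only the citation to Wojtaszczyk, so there is nothing further to compare.
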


The following result is an immediately consequence   of the above proposition. 

\begin{cor}\label{cp6}\cite[Corollary 4]{AC}
Let $(X, \norm{\cdot}_X)$ and $(Y,\norm{\cdot}_Y)$ be two Banach spaces. If $T:X\to Y$ is a surjective linear isometry, and a series $\sum_{n=1}^{\infty}x_{n}$ converges weakly unconditionally in $X$, then the series $\sum_{n=1}^{\infty}T(x_{n})$ also
converges weakly unconditionally in $Y$.
\end{cor}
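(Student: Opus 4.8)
The plan is to deduce Corollary~\ref{cp6} directly from Proposition~\ref{Pp6} together with the hypotheses that $T$ is a \emph{surjective} linear isometry. First I would recall the characterization in Proposition~\ref{Pp6}: the series $\sum_{n=1}^\infty x_n$ converges weakly unconditionally in $X$ if and only if there is a constant $C>0$ with $\sup_N \norm{\sum_{n=1}^N t_n x_n}_X \le C\norm{\{t_n\}}_\infty$ for every $\{t_n\}\in\ell^\infty$. So it suffices to transport this uniform boundedness estimate from $X$ to $Y$ for the series $\sum_{n=1}^\infty T(x_n)$.

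The key computation is entirely formal: for any finite $N$ and any scalars $t_1,\dots,t_N$, linearity of $T$ gives $\sum_{n=1}^N t_n T(x_n) = T\!\left(\sum_{n=1}^N t_n x_n\right)$, and because $T$ is an isometry,
\[
\Bigl\| \sum_{n=1}^N t_n T(x_n) \Bigr\|_Y = \Bigl\| T\Bigl(\sum_{n=1}^N t_n x_n\Bigr) \Bigr\|_Y = \Bigl\| \sum_{n=1}^N t_n x_n \Bigr\|_X \le C\,\norm{\{t_n\}_{n=1}^\infty}_\infty,
\]
using the constant $C$ furnished by applying Proposition~\ref{Pp6}~(1)$\Rightarrow$(2) to the weakly unconditionally convergent series $\sum x_n$ in $X$. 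Taking the supremum over $N$ shows that the series $\sum_{n=1}^\infty T(x_n)$ satisfies condition (2) of Proposition~\ref{Pp6} in $Y$ with the same constant $C$, and the reverse implication (2)$\Rightarrow$(1) of that proposition then yields that $\sum_{n=1}^\infty T(x_n)$ converges weakly unconditionally in $Y$.

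There is essentially no obstacle here: surjectivity of $T$ is not even needed for this direction (it would be needed only to go back from $Y$ to $X$), and isometry is used only through the norm identity above. The single point to be mildly careful about is that Proposition~\ref{Pp6} is stated for real scalars $t_n$, whereas we work over $\mathbb{C}$; but the estimate in (2) is required for all $\{t_n\}\in\ell^\infty$ and passes verbatim, since the identity $\sum t_n T(x_n)=T(\sum t_n x_n)$ holds over any scalar field and the norm of $T$ on finite sums is exactly preserved. Hence the proof reduces to the display above plus two invocations of Proposition~\ref{Pp6}.
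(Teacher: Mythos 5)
Your argument is correct and is precisely the ``immediate consequence of Proposition~\ref{Pp6}'' that the paper invokes without writing out: transport the uniform bound of condition (2) through the linear isometry and apply the equivalence in the other direction. Your observation that surjectivity is not needed for this direction is also accurate.
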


We can now demonstrate that   surjective linear isometries on noncommutative symmetric spaces $E(\cM,\tau)$  are  continuous in the $\sigma(E, E^\times)$-topology.
The following result is a semifinite version of \cite[Theorem 4.4]{AC2} and  \cite[Proposition 2.5]{KR2}.
\begin{theorem}\label{weak-continous of T}
Let $\cM_1$ and $\cM_2$ be  atomless $\sigma$-finite von Neumann algebras equipped with semifinite faithful normal traces $\tau_1$ and $\tau_2$, respectively.
Let $E(\cM_1,\tau_1)$ and $F(\cM_2,\tau_2)$ be two  noncommutative strongly  symmetric   spaces having the  Fatou property. Let $T:E(\cM_1,\tau_1)\to F(\cM_2,\tau_2)$ be a surjective linear isometry.  Then $T$ is $\sigma(E,E^{\times})-\sigma(F,F^{\times})$-continuous.
\end{theorem}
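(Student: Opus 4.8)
The plan is to reduce the $\sigma(E,E^\times)$-continuity of $T$ to a statement about order convergence of monotone sequences, which can then be handled via the ball topology machinery developed above. By Lemma \ref{lemma adjoint operator}, it suffices to show that $T^*(f)\in E(\cM_1,\tau_1)^\times$ for every $f\in F(\cM_2,\tau_2)^\times$; and by Proposition \ref{proposition normal functional} (applicable since $\cM_1$ is $\sigma$-finite), this in turn is equivalent to proving that $T^*(f)(x_n)\to 0$ for every sequence $0\le x_n\downarrow 0$ in $E(\cM_1,\tau_1)$, i.e. that $f(T(x_n))\to 0$. So the whole problem comes down to: \emph{if $0\le x_n\downarrow 0$ in $E(\cM_1,\tau_1)$, then $T(x_n)\to 0$ weakly in $F(\cM_2,\tau_2)$.}

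First I would establish that $\{T(x_n)\}$ converges to $0$ in the ball topology $b_{F(\cM_2,\tau_2)}$. Since $0\le x_n\downarrow 0$, the sequence $\{x_n\}$ is decreasing and bounded, hence (via the norm monotonicity and the same Rosenthal-set argument as in Theorem \ref{theorem: unique of element}) it sits inside a bounded absolutely convex Rosenthal set; by Theorem \ref{theorem Rosenthal set} it has at most one $b_E$-limit, and a routine check shows $x_n\xrightarrow{b_E}0$ using Lemma \ref{ball continuous} (for any $y$, $\liminf\norm{x_n-y}_E\ge\norm{y}_E$ because $x_n\to 0$ in measure and the norm has the Fatou property / is lower semicontinuous). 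Then, since $T$ is a surjective isometry, Lemma \ref{ball continuous} gives $T(x_n)\xrightarrow{b_F}0$. Separately, because $x_n-x_{n+1}\ge 0$ one can check that $\sum_n(x_n-x_{n+1})$ telescopes and, being dominated by $x_1$, is weakly unconditionally convergent in $E(\cM_1,\tau_1)$; by Corollary \ref{cp6} the image series $\sum_n T(x_n-x_{n+1})=\sum_n(T(x_n)-T(x_{n+1}))$ is weakly unconditionally convergent in $F(\cM_2,\tau_2)$, which forces $\{T(x_n)\}$ to lie in a weakly relatively compact (indeed Rosenthal) subset of $F(\cM_2,\tau_2)$.

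Now I would combine the two facts. On a bounded absolutely convex Rosenthal set $A\subset F(\cM_2,\tau_2)$ containing the tail $\{T(x_n)\}$, Theorem \ref{theorem Rosenthal set} says $(A,b_F)$ is Hausdorff, and by Proposition \ref{proposition:Comparison of topologies} the topology $\sigma(F,F^\times)$ is finer than $b_F$ on $F(\cM_2,\tau_2)$ (using the Fatou property of $F$). Since $\{T(x_n)\}$ is relatively weakly compact, it has a $\sigma(F,F^\times)$-convergent subnet, whose limit must then be the unique $b_F$-limit, namely $0$. A standard subnet argument (every subnet has a further subnet converging weakly to $0$) upgrades this to $T(x_n)\xrightarrow{\sigma(F,F^\times)}0$. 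In particular $f(T(x_n))\to 0$ for all $f\in F(\cM_2,\tau_2)^\times$, which is exactly what was needed.

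The main obstacle I anticipate is the passage from ``$b_F$-convergence plus relative weak compactness'' to genuine weak convergence: one must be careful that the relevant tail actually lies in a \emph{single} bounded absolutely convex Rosenthal set (so that $b_F$ is Hausdorff there and comparable with $\sigma(F,F^\times)$), and that weak compactness is available — this is where Corollary \ref{cp6} and the weakly-unconditional-series trick are essential, since without the Fatou property or order continuity one has no a priori weak compactness of order-bounded sets. Verifying $x_n\xrightarrow{b_E}0$ itself also requires the lower semicontinuity of $\norm{\cdot}_E$ with respect to convergence in measure, which follows from the Fatou property of $E(\cM_1,\tau_1)$.
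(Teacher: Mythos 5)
Your overall architecture matches the paper's: reduce to showing $T(x_n)\to 0$ in $\sigma(F,F^\times)$ for $0\le x_n\downarrow 0$, get $b_F$-convergence to $0$ from the isometry and the ball topology, and use the weakly unconditionally convergent telescoping series $\sum_n(x_n-x_{n+1})$ together with Corollary \ref{cp6} to control the images. However, there is a genuine gap at the decisive step: you claim that the weak unconditional convergence of $\sum_n(T(x_n)-T(x_{n+1}))$ forces $\{T(x_n)\}$ to lie in a \emph{weakly relatively compact} set, and then extract a $\sigma(F,F^\times)$-convergent subnet. This is false in general. What the wuc property gives is only that $\lim_n f(T(x_n))$ exists for every $f\in F(\cM_2,\tau_2)^*$, i.e.\ that $\{T(x_n)\}$ is weakly Cauchy (equivalently, its absolutely convex hull is a Rosenthal set); a Rosenthal set need not be relatively weakly compact, and a weakly Cauchy sequence need not have a weak limit in the space (consider the partial sums of the unit vector basis in $c_0$). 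Without producing an actual limit point inside $F(\cM_2,\tau_2)$, your subnet argument has nothing to converge to, and the Hausdorffness of $b_F$ on a Rosenthal set cannot manufacture one.

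The missing ingredient is the $\sigma(F,F^\times)$-\emph{sequential completeness} of $F(\cM_2,\tau_2)$, which is where the Fatou property enters in an essential, non-cosmetic way: by \cite[Proposition 3.1]{DK}, a strongly symmetric space with the Fatou property is $\sigma(F,F^\times)$-sequentially complete, so the $\sigma(F,F^\times)$-Cauchy sequence $\{T(x_n)\}$ converges to some element, which (by surjectivity of $T$) one writes as $T(x_0)$; the ball-topology argument then identifies $x_0=0$ exactly as you intend. A secondary slip: to get $x_n\xrightarrow{b_E}0$ you invoke ``$x_n\to 0$ in measure'', but a decreasing sequence $x_n\downarrow 0$ need not converge to $0$ in measure when $\tau_1$ is infinite (e.g.\ $\chi_{(n,\infty)}\downarrow 0$ in $L_\infty(0,\infty)$); either pass to local measure convergence, or argue as the paper does via Proposition \ref{proposition normal functional} (which gives $x_n\to 0$ in $\sigma(E,E^\times)$ directly) combined with Proposition \ref{proposition:Comparison of topologies}.
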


\begin{proof}
By Lemma \ref{lemma adjoint operator}, we only need to show that $T^{*}(y)\in E(\cM_1,\tau_1)^\times$ for each $y\in F(\cM_2,\tau_2)^\times$. By Proposition \ref{proposition normal functional}, it suffices to show that 
\begin{align}\label{aim conv}
\tau_2(yT(x_n))= T^*(y) (x_n) \to0
\end{align} 
for any sequence $\{x_n\}_{n=1}^\infty\subset E(\cM_1,\tau_1)^+$ with  $x_n\downarrow 0$. 

For any positive linear functional $f\in E(\cM_1,\tau_1)^*$, we have
\begin{align*}
\sum_{n=1}^m|f(x_n-x_{n+1})|
&=f\left(\sum_{n=1}^m(x_n-x_{n+1})\right)\\
&=f(x_1-x_{m+1})\\
&\le f(x_1)
\end{align*}
for all $m\in \mathbb{N}$. Hence, the numerical series $$\sum_{n=1}^\infty f(x_n-x_{n+1})$$ converges absolutely. Since every functional $f\in E(\cM_1,\tau_1)^*_h$ is the difference of two positive functionals from  $E(\cM_1,\tau_1)^*_h$ (see \cite[Proposition 4.2.2]{DPS}) and  the space $E(\cM_1,\tau_1)^*_h$  may be identified with the Banach dual of the real normed space $E(\cM_1,\tau_1)_h$ (see~\cite[Section 4.2]{DPS}), it follows that the series $$\sum_{n=1}^\infty(x_n-x_{n+1}) $$ converges weakly unconditionally in $E(\cM_1,\tau_1)_h$.
Let $f$ be an arbitrary element in $  E(\cM_1,\tau_1)^*$. Set
$$
u(x)={\rm Re}~f(x)=\frac{f(x)+\overline{f(x)}}{2},~x\in E(\cM_1,\tau_1), $$
and 
$$v(x)={\rm Im}~f(x)=\frac{f(x)-\overline{f(x)}}{2i},~x\in E(\cM_1,\tau_1).
$$
In particular, $u,v\in E(\cM_1,\tau_1)^*_{h}$. 
Therefore, 
the series $$ \sum_{n=1}^\infty u(x_n-x_{n+1}) \mbox{  and }  \sum_{n=1}^\infty v(x_n-x_{n+1}) $$
 converge absolutely. Hence, the series 
$$\sum_{n=1}^\infty f(x_n-x_{n+1})$$
 also converges absolutely. 
That is,  the series $\sum_{n=1}^\infty(x_n-x_{n+1})$
converges weakly unconditionally in $E(\cM_1,\tau_1)$.

By Proposition \ref{cp6}, the series
$$\sum_{n=1}^\infty(Tx_n-Tx_{n+1})$$
 also converges weakly unconditionally in $F(\cM_2,\tau_2)$. Thus, $$\sum_{k=1}^{n}f(Tx_n-Tx_{n+1})=f(Tx_1)-f(Tx_n)$$
converges for every $f\in F(\cM_2,\tau_2)^*$. In particular, the limit $\lim\limits_{n\to\infty}f(Tx_n)$ exists. Therefore,
 the sequence $$\{T(x_n)\}_{n=1}^\infty$$ is a $\sigma(F,F^\times)$-Cauchy sequence. By \cite[Proposition 3.1]{DK},
 we know that $F(\cM_2,\tau_2)$ is $\sigma(F,F^\times)$-sequentially complete and $T$ is a bijection, it follows that there exists $x_0\in E(\cM_1,\tau_1)$ such that 
\begin{align}\label{Txnconvergence}
\mbox{$T(x_{n})\xrightarrow{\sigma(F,F^{\times})}T(x_0)$
}
\end{align}
 as $n\to\infty$.

Next we show that $T(x_0)=0$.
 By Proposition \ref{proposition:Comparison of topologies}, 
we have
$$T(x_{n})\xrightarrow{b_{F(\cM_2,\tau_2)}}T(x_{0})$$
as $n\to \infty$. 
Since   $T^{-1}:F(\cM_2,\tau_2)\to E(\cM_1,\tau_1)$ is also a surjective isometry, it follows that $T^{-1}$ is   $b_{F(\cM_2,\tau_2)}-b_{E(\cM_1,\tau_1)}$-continuous (see Lemma~\ref{ball continuous}), therefore, $$x_{n}\xrightarrow{b_{E(\cM_1,\tau_1)}}x_{0}.$$
 Now, taking into account that $x_n\downarrow0$, we obtain $$x_{n} \xrightarrow{\sigma(E,E^{\times})} 0$$ (see Proposition \ref{proposition normal functional}). 
By Proposition \ref{proposition:Comparison of topologies}, we have 
$$x_n\xrightarrow{b_{E(\cM_1,\tau_1)}}0,$$
 and it follows from Theorem \ref{theorem: unique of element} that $x_0=0$.
By \eqref{Txnconvergence}, we have 
\begin{align*} 
\mbox{$T(x_{n})\xrightarrow{\sigma(F,F^{\times})}0$
}
\end{align*}
 as $n\to\infty$, which proves \eqref{aim conv}. 
\end{proof}

We now prove  the 
\( \sigma(E, E^\times) \)-continuity of 
a hermitian operator on a symmetric space $E(\cM,\tau)$ with the Fatou property, which is a semifinite version of \cite[Theorem~5]{AC}.
\begin{theorem}\label{continous of hermitian}
Let $\cM$ be a $\sigma$-finite von Neumann algebra  equipped with a semifinite faithful normal trace $\tau$. 
Let $E(\mathcal{M},\tau)\subset S(\cM,\tau)$ be a noncommutative strongly symmetric space having the  Fatou property.
Then, any  bounded hermitian  operator $T:E(\cM ,\tau)\to E(\cM ,\tau )$  is necessarily  $\sigma(E,E^{\times})-\sigma(E,E^{\times})$-continuous.

\end{theorem}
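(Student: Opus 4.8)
The strategy is to reduce the hermitian case to the surjective-isometry case already handled in Theorem \ref{weak-continous of T}, by exploiting the fact that a bounded hermitian operator $T$ generates a one-parameter group of surjective isometries. Recall the classical fact (Lumer's theory, see e.g. \cite{Lumer,Berkson,FJ}) that $T$ is hermitian on the complex Banach space $E(\cM,\tau)$ if and only if $e^{itT}$ is a surjective linear isometry of $E(\cM,\tau)$ for every $t\in\mathbb{R}$. (Here $e^{itT}=\sum_{k=0}^\infty (it)^k T^k/k!$ converges absolutely in $\cB(E(\cM,\tau))$ since $T$ is bounded.) First I would invoke this to produce, for each fixed $t$, a surjective isometry $U_t:=e^{itT}$ on $E(\cM,\tau)$. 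Since $E(\cM,\tau)$ is a strongly symmetric space with the Fatou property affiliated with a $\sigma$-finite von Neumann algebra, Theorem \ref{weak-continous of T} (applied with $\cM_1=\cM_2=\cM$, $E=F$) tells us that each $U_t$ is $\sigma(E,E^\times)$-$\sigma(E,E^\times)$-continuous — provided $\cM$ is atomless. If the paper also wants to cover the atomic case with equal-trace atoms, one would note that Theorem \ref{weak-continous of T} (or its stated hypotheses) should be read to include that case, or one appeals directly to Aminov--Chilin \cite{AC,AC2} for the atomic situation; in any event I would carry the argument assuming $\{U_t\}_{t\in\mathbb{R}}$ are all $\sigma(E,E^\times)$-continuous.

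Next I would pass from the isometries $U_t$ back to the generator $T$ using Lemma \ref{lemma adjoint operator}: a bounded operator on $E(\cM,\tau)$ is $\sigma(E,E^\times)$-continuous precisely when its Banach-space adjoint maps $E(\cM,\tau)^\times$ into itself (identifying $E^\times$ with the normal functionals inside $E^*$ via trace duality). So it suffices to show $T^*$ leaves $E(\cM,\tau)^\times$ invariant. Fix $y\in E(\cM,\tau)^\times$. Since each $U_t$ is $\sigma(E,E^\times)$-continuous, Lemma \ref{lemma adjoint operator} gives $U_t^*(y)=(e^{itT})^*(y)=e^{itT^*}(y)\in E(\cM,\tau)^\times$ for every $t\in\mathbb{R}$. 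Now $E(\cM,\tau)^\times$ is a \emph{norm-closed} subspace of $E(\cM,\tau)^*$ (see \cite[p.319]{DPS}, as already used in the proof of Proposition \ref{proposition weak continous}). The curve $t\mapsto e^{itT^*}(y)$ is norm-differentiable in $E(\cM,\tau)^*$ with derivative at $t=0$ equal to $iT^*(y)$; equivalently,
\[
T^*(y)=\frac{1}{i}\,\lim_{t\to 0}\frac{e^{itT^*}(y)-y}{t}
\]
as a norm limit in $E(\cM,\tau)^*$. Each difference quotient $\tfrac1i\big(e^{itT^*}(y)-y\big)/t$ lies in the linear space $E(\cM,\tau)^\times$ (it is $\tfrac{1}{it}$ times a difference of two elements of $E^\times$, and $E^\times$ is a linear subspace), and since $E(\cM,\tau)^\times$ is norm-closed the limit $T^*(y)$ also lies in $E(\cM,\tau)^\times$. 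By Lemma \ref{lemma adjoint operator}, $T$ is $\sigma(E,E^\times)$-$\sigma(E,E^\times)$-continuous, as desired.

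The routine verifications I would fill in are: (i) absolute convergence of the exponential series in $\cB(E(\cM,\tau))$ and in $\cB(E(\cM,\tau)^*)$ and the identity $(e^{itT})^*=e^{itT^*}$; (ii) the norm-differentiability of $t\mapsto e^{itT^*}(y)$ with the stated derivative, which is a standard power-series estimate $\big\|(e^{itT^*}(y)-y)/t - iT^*(y)\big\|\le |t|\,\|T^*\|^2\,e^{|t|\|T^*\|}\|y\|$; and (iii) the precise statement that $\{e^{itT}\}_{t\in\mathbb R}$ being surjective isometries is equivalent to $T$ hermitian, which I would quote from \cite{Lumer,Berkson}. The main obstacle is really the \emph{input} rather than the deduction: one needs Theorem \ref{weak-continous of T} to apply to the diagonal case $E=F$ on the given $\cM$, which forces the atomlessness (or equal-trace atomic) hypothesis present in that theorem; once the one-parameter group of isometries is known to be $\sigma(E,E^\times)$-continuous, the rest is a soft closed-subspace-plus-differentiation argument. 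An alternative, more self-contained route — avoiding the exponential — would mimic the proof of Theorem \ref{weak-continous of T} directly: show that for $x_n\downarrow 0$ the sequence $\{T x_n\}$ is $\sigma(E,E^\times)$-Cauchy hence convergent to some $x_0$, deduce $b_E$-convergence via Proposition \ref{proposition:Comparison of topologies}, and use the fact that the hermitian $T$ is a pointwise limit (in the strong operator topology, via $(e^{itT}-I)/it \to T$) of continuous operators together with Theorem \ref{theorem: unique of element} to force $x_0=0$; but I expect the exponential/one-parameter-group argument above to be the cleanest.
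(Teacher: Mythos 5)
Your proof is correct and takes essentially the same route as the paper: both pass to the surjective isometries $e^{itT}$, invoke Theorem \ref{weak-continous of T} to get their $\sigma(E,E^{\times})$-continuity, and then recover $T$ as a norm limit of $\sigma(E,E^{\times})$-continuous operators. The only cosmetic difference is in the last step: the paper writes $it_0T=\ln\bigl(I+(e^{it_0T}-I)\bigr)$ as a norm-convergent series and applies Proposition \ref{proposition weak continous}, whereas you differentiate $t\mapsto e^{itT^*}y$ at $t=0$ and use the norm-closedness of $E^{\times}$ in $E^{*}$ --- which is exactly the fact underlying Proposition \ref{proposition weak continous}.
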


\begin{proof}
Consider the non-negative continuous function 
defined by 
$$\alpha(t)=\norm{e^{itT}- I}_{E(\cM,\tau)\to  E(\cM,\tau)},$$ where $t\in \mathbb{R}$. As $\alpha(0)=0$, it follows that there exists $0\ne t_{0}\in \mathbb{R}$ such that $\alpha(t_{0})<1$. Since
the operator $T$ is  hermitian on $E(\cM,\tau)$, it follows from \cite[Theorem 5.2.6]{FJ} that the operator $V=e^{it_{0}T}$ (and hence $V^{-1}=e^{-it_{0}T}$) is an isometry from $E(\cM,\tau)$ to $E(\cM,\tau)$. By Theorem~\ref{weak-continous of T} above, the operator  $S:=V - I$ is $\sigma(E,E^{\times})$-continuous. In addition, we have
$$\norm{S}_{E(\cM,\tau)\to E(\cM,\tau)}=\alpha(t_{0})<1.$$
 Now, since (see e.g. \cite[Lemma 2.13]{Douglas})
\begin{eqnarray*}
it_{0}T=\ln(I+S)   = \sum_{n=1}^{\infty}\frac{(-1)^{n-1}S^{n}}{n}
\end{eqnarray*}
and $\frac{(-1)^{n-1}S^{n}}{n}$ 
 is $\sigma(E,E^{\times})$-continuous for every $n\ge 1$, 
it follows from Proposition~\ref{proposition weak continous}  that $it_{0}T$ is $\sigma(E,E^{\times})$-continuous. Therefore, $T$ is  $\sigma(E,E^{\times})$-continuous.
\end{proof}

\begin{remark}
Even though the results in this chapter are stated for symmetric spaces affiliated with a $\sigma$-finite semifinite von Neumann algebra,
the same arguments apply to  the setting of strongly  symmetric spaces  (having the Fatou property) of $\tau$-compact operators affiliated with a (not necessarily $\sigma$-finite) semifinite  von Neumann algebra.
\end{remark}

\chapter{Elementary form of surjective isometries on noncommutative symmetric spaces}\label{C6}
The goal of this chapter
 is to answer   the question posed in \cite{Sukochev,CMS} in the setting   when the norm on $E(\cM,\tau)$ is not necessarily order continuous. Throughout this chapter, unless stated otherwise, we assume that $\cM$ is either an atomless semifinite von Neumann algebra or an atomic semifinite von Neumann algebra with all atoms having the same trace, and we assume that $\tau$ is a semifinite, faithful, normal trace on $\cM$.

\section{Jordan $*$-isomorphisms} 

Let $(\cM_1,\tau)$ and $(\cM_2,\tau)$ be two semifinite von Neumann algebras equipped with semifinite faithful normal traces.
A  complex-linear  map $J:\cM_1 \longrightarrow
\cM_2$ ($\mbox{respectively,}$ $S(\cM_1,\tau_1) \to S(\cM_2,\tau_2))$ is called a  Jordan $*$-homomorphism if  
$$ J(x^*)=J(x)^* \mbox{ and }  J(x^2)=J(x)^2 , ~ x\in \cM_1~(\mbox{respectively,}\ S(\cM_1,\tau_1)) , $$
 equivalently, 
$$ J(xy+yx)=J(x)J(y)+J(y)J(x) , ~ x,y\in \cM_1  \mbox{ (respectively, $ S(\cM_1,\tau_1)$)}$$ (see e.g. \cite{Yeadon,Sherman,BR}). A Jordan $*$-homomorphism $J$ is called a Jordan $*$-monomorphism, if $J$ is injective.
If $J$  is
a bijective Jordan $*$-homomorphism, then it is called a Jordan $*$-isomorphism.

A   Jordan $*$-homomorphism
 $J:\cM_1\to \cM_2$
 is called normal if it is completely additive (equivalently, ultraweakly continuous).
Alternatively, we adopt the following equivalent definition: $$\mbox{$J(x_\alpha)\uparrow J(x)$ whenever $x_\alpha\uparrow x\in (\cM_1)_+$}$$
 (see e.g \cite[Chapter I.4.3]{Dixmier}).
If $J :\cM_1\to \cM_2$ is a  Jordan $*$-isomorphism, then $J$ is necessarily normal~\cite[Appendix A]{RR}.
A Jordan $*$-homomorphism from $S(\cM_1,\tau_1)$ into $S(\cM_2,\tau_2)$ is said to be normal if 
 $$\mbox{$J(x_\alpha)\uparrow J(x)$ whenever $x_\alpha\uparrow x\in S(\cM_1,\tau_1)_+$.}$$

A characterization of continuity of Jordan $*$-homomorphisms with respect to the
measure topology is given in the following lemma, see also \cite[Lemma 2.9.1]{DPS} for the case of $*$-homomorphisms.

\begin{lemma}\label{cvb}
Let $\cM_1$ and $\cM_2$ be two von Neumann algebras equipped 
with semifinite faithful normal traces $\tau_1$ and 
$\tau_2$, respectively.

For a Jordan $*$-homomorphism $J:S(\cM_1,\tau_1)\to S(\cM_2,\tau_2)$ the following
statements are equivalent:
\begin{enumerate}
       \item $J$ is continuous with respect to the measure topology;
       \item $J|_{\cM_1}$ is continuous with respect to the measure topology;
       \item $\tau_2(J(p_n))\to0$ for every sequence $\{p_n \}$ in $P(\cM_1)$ satisfying $\tau_1(p_n)\to 0$ as $n\to\infty$.
\end{enumerate}
\end{lemma}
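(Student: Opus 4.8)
The plan is to prove the cycle of implications $(1)\Rightarrow(2)\Rightarrow(3)\Rightarrow(1)$; the only nontrivial link will be $(3)\Rightarrow(1)$, and even there the real work is localizing to the $*$-homomorphism case so that \cite[Lemma 2.9.1]{DPS} becomes applicable.

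The implication $(1)\Rightarrow(2)$ is immediate once one knows that the embedding $\cM_1\hookrightarrow S(\cM_1,\tau_1)$ is continuous for the measure topology (restricting a continuous map to a subspace preserves continuity). For $(2)\Rightarrow(3)$, note that a sequence of projections $\{p_n\}$ with $\tau_1(p_n)\to0$ converges to $0$ in the measure topology, since $e^{|p_n|}(\varepsilon,\infty)=p_n$ for every $0<\varepsilon<1$ and $\tau_1(p_n)\to0$; applying $(2)$ gives $J(p_n)\to0$ in measure, and because each $J(p_n)$ is again a projection (Jordan $*$-homomorphisms send projections to projections, as $J(p)^2=J(p^2)=J(p)$ and $J(p)^*=J(p^*)=J(p)$), convergence to zero in measure forces $\tau_2(J(p_n))=\tau_2\big(e^{|J(p_n)|}(1/2,\infty)\big)\to0$.

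The substantive implication is $(3)\Rightarrow(1)$. First I would reduce to the case of a genuine $*$-homomorphism. Recall that any Jordan $*$-homomorphism $J$ on a von Neumann algebra decomposes, after passing to a central projection $z\in\cM_2$ lying in (or generated by) the range, as $J=J_1\oplus J_2$ where $J_1(\cdot)=J(\cdot)z$ is a $*$-homomorphism onto $\cM_2 z$ and $J_2(\cdot)=J(\cdot)({\bf 1}-z)$ is a $*$-anti-homomorphism onto $\cM_2({\bf 1}-z)$ (see e.g. \cite{BR}, or the references already invoked in this chapter). A $*$-anti-homomorphism becomes an ordinary $*$-homomorphism after composing with the (measure-topology-continuous, trace-preserving) transpose/opposite-algebra identification, so it suffices to handle the $*$-homomorphism summand. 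Condition $(3)$ is inherited by $J_1$ and $J_2$ since $\tau_2(J_i(p_n))\le\tau_2(J(p_n))$. Now for a $*$-homomorphism $J_1:\cM_1\to\cM_2 z$, condition $(3)$ is exactly the hypothesis of \cite[Lemma 2.9.1]{DPS}, which yields that $J_1$ extends to a measure-topology-continuous $*$-homomorphism $S(\cM_1,\tau_1)\to S(\cM_2 z,\tau_2)$; by uniqueness of continuous extensions (the algebra $\cM_1$, equivalently $\cF(\tau_1)$, is dense in $S(\cM_1,\tau_1)$ for the measure topology and $J$ is already defined on all of $S(\cM_1,\tau_1)$) this extension agrees with $J$. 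Reassembling the two summands gives measure-topology continuity of $J$, which is $(1)$.

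The main obstacle I anticipate is the algebraic decomposition step: one must verify that the central projection separating the homomorphic and anti-homomorphic parts of $J$ is compatible with the trace bookkeeping — i.e. that multiplication by $z$ and passage to the opposite algebra are themselves measure-topology continuous and do not disturb the finiteness-of-trace-of-projections hypothesis — and that the extension furnished by \cite[Lemma 2.9.1]{DPS} on each piece really does coincide with the given $J$ on $S(\cM_1,\tau_1)$ rather than merely on $\cM_1$. Both points are routine but need the density of $\cF(\tau_1)$ in $S(\cM_1,\tau_1)$ and the separate continuity of multiplication in the local measure topology recalled in Chapter~\ref{s:p}.
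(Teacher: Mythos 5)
Your implications $(1)\Rightarrow(2)$ and $(2)\Rightarrow(3)$ are fine and deliver directly what the paper gets by citing \cite[Corollary 2.5.8]{DPS}. For the substantive implication $(3)\Rightarrow(1)$ the paper simply invokes \cite[Theorem 3.13]{Weigt}, which is stated for Jordan $*$-homomorphisms between algebras of measurable operators; you instead re-derive it by splitting $J|_{\cM_1}$ via St{\o}rmer's theorem into a $*$-homomorphic and a $*$-anti-homomorphic summand and feeding each piece into \cite[Lemma 2.9.1]{DPS}. That reduction is legitimate (the paper uses the same decomposition elsewhere, e.g.\ in the proof of Lemma \ref{yuyuo}), and it is a genuinely different route; but it saddles you with an extra obligation that the citation of Weigt's theorem avoids.

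That obligation is exactly where your argument, as written, is circular. You identify the continuous extension $\tilde J_1$ produced by \cite[Lemma 2.9.1]{DPS} with $J(\cdot)z$ on all of $S(\cM_1,\tau_1)$ ``by uniqueness of continuous extensions.'' Uniqueness of a continuous extension identifies two \emph{continuous} maps agreeing on a dense set; the continuity of $J$ on $S(\cM_1,\tau_1)$ is precisely the conclusion you are trying to reach, so it cannot be invoked at this point, and density of $\cF(\tau_1)$ alone does not close the loop. The gap is fixable, but it needs an argument: for $0\le x\in S(\cM_1,\tau_1)$ set $p_n=e^{x}([0,n])$, so that $xp_n=p_nxp_n\in\cM_1$ and, by the Jordan identity $J(aba)=J(a)J(b)J(a)$, one has $J(xp_n)=J(p_n)J(x)J(p_n)$. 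Since $x$ is $\tau_1$-measurable, $\tau_1({\bf 1}-p_n)\to 0$, so hypothesis $(3)$ gives $\tau_2\bigl(J({\bf 1})-J(p_n)\bigr)=\tau_2\bigl(J({\bf 1}-p_n)\bigr)\to0$ and hence $J(p_n)\to J({\bf 1})$ in measure; joint continuity of multiplication in the measure topology then yields $J(xp_n)\to J({\bf 1})J(x)J({\bf 1})=J(x)$, while $\tilde J(xp_n)=J(xp_n)\to\tilde J(x)$ because $xp_n\to x$ in measure and $\tilde J$ is continuous. Hence $\tilde J(x)=J(x)$ for positive $x$, and linearity finishes the identification. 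With this insertion your proof is correct; without it, the key step is unjustified.
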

\begin{proof}
Evidently, (1) implies (2). The implication  $(2) \Rightarrow  (3)$ follows from  \cite[Corollary 2.5.8]{DPS}. The implication $(3) \Rightarrow (1)$ is an immediate consequence
of \cite[Theorem 3.13]{Weigt} (see also \cite{Lab}).
\end{proof}

For convenience of reference, we state the following auxiliary result, which is important in our discussion below.
\begin{cor} \label{lemma measure-measure}
Let $\cM $ and $\cN$ be  finite von Neumann  algebras  equipped
with   faithful normal tracial states  $\tau_1$ and $\tau_2$, respectively.
 Let $J$ be a normal Jordan $*$-homomorphism from $\cM$ into $\cN$. Then, $J$ is  measure-measure continuous.
\end{cor}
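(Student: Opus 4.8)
The statement to prove is Corollary \ref{lemma measure-measure}: a normal Jordan $*$-homomorphism $J$ between finite von Neumann algebras with tracial states is measure-measure continuous. Since the ambient algebras are finite with tracial states, the measure topology on each is induced by the single trace, and by Lemma \ref{cvb} it suffices to verify condition (3): that $\tau_2(J(p_n)) \to 0$ whenever $\{p_n\} \subset P(\cM)$ satisfies $\tau_1(p_n) \to 0$.

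\medskip

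\emph{Plan.} The plan is to exploit normality of $J$ together with the structure theory of Jordan $*$-homomorphisms. First I would recall that a Jordan $*$-homomorphism $J$ sends projections to projections (since $J(p)=J(p^2)=J(p)^2$ and $J(p)=J(p^*)=J(p)^*$) and preserves orthogonality of projections (if $pq=0=qp$ then $J(p)J(q)+J(q)J(p)=J(pq+qp)=0$, and a standard computation with self-adjoint projections forces $J(p)J(q)=0$). Next, normality of $J$ gives that $J$ is completely additive on orthogonal families of projections. Now consider the composite state $\varphi := \tau_2 \circ J$ on $\cM$; it is a positive normal functional on $\cM$ (positivity because $J(x^*x)$ need not equal $J(x)^*J(x)$ in general for a \emph{Jordan} homomorphism, but $J$ maps positive elements to positive elements since $J(a^2)=J(a)^2\ge 0$ for self-adjoint $a$, and every positive element is a square). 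Hence $\varphi$ is a positive normal functional on the finite von Neumann algebra $\cM$, so by the standard identification of normal functionals there is a positive $h \in L_1(\cM,\tau_1)$ with $\varphi(x) = \tau_1(hx)$ for all $x\in\cM$.

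\medskip

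With this in hand, the claim reduces to: if $\tau_1(p_n)\to 0$ then $\tau_1(h p_n) \to 0$ for the fixed positive $h\in L_1(\cM,\tau_1)$. This is exactly the statement that integration against an $L_1$-element is absolutely continuous with respect to $\tau_1$ in the noncommutative sense, which follows from a routine truncation argument: write $h = h e^{h}(0,N) + h e^{h}(N,\infty)$, choose $N$ so that $\tau_1(h e^h(N,\infty)) < \varepsilon/2$, and then $\tau_1(h p_n) \le N\tau_1(p_n) + \varepsilon/2 < \varepsilon$ for $n$ large. Feeding this back, $\tau_2(J(p_n)) = \varphi(p_n) \to 0$, so condition (3) of Lemma \ref{cvb} holds and $J$ is measure-measure continuous.

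\medskip

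\emph{Main obstacle.} The only genuinely delicate point is justifying that $\varphi = \tau_2\circ J$ is a \emph{normal} positive functional on $\cM$ so that it is represented by an $L_1$-density; this rests on the hypothesis that $J$ is normal (completely additive / ultraweakly continuous) and on $\tau_2$ being a normal functional (which it is, being a faithful normal tracial state). One must be slightly careful that normality of $J$ as defined in the excerpt ($J(x_\alpha)\uparrow J(x)$ for $x_\alpha\uparrow x$) indeed yields that $\tau_2\circ J$ is order-continuous on bounded increasing nets, hence normal in the usual sense; this is immediate from the monotone convergence property of the normal trace $\tau_2$. Everything else—preservation of positivity by Jordan $*$-homomorphisms, the $L_1$-representation of normal functionals on a finite algebra, and the absolute-continuity truncation estimate—is standard. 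An alternative, perhaps cleaner, route avoiding the $L_1$-representation: observe directly that if $\tau_1(p_n)\to 0$ then (passing to a subsequence and using that $\cM$ has a finite trace) one can dominate by a decreasing sequence of projections $q_k \downarrow 0$ with $p_{n_k}\le q_k$; normality of $J$ gives $J(q_k)\downarrow$ to a projection $q$ with $\tau_2(J(q)) = \inf_k \tau_2(J(q_k))$, and faithfulness arguments plus $\tau_1(q_k)\to 0$ force $q=0$, whence $\tau_2(J(p_{n_k}))\le \tau_2(J(q_k))\to 0$; since every subsequence has such a further subsequence, the full sequence converges. I would present whichever of these two arguments is shortest given the lemmas already available, likely the $\varphi = \tau_2\circ J$ / $L_1$-density version.
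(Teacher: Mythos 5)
Your argument is correct and follows the route the paper intends: the corollary is stated without proof as a consequence of Lemma \ref{cvb}, and verifying condition (3) there — that $\tau_2(J(p_n))\to 0$ whenever $\tau_1(p_n)\to 0$ — via the normal positive functional $\tau_2\circ J$ and its $L_1$-density (or, equivalently, your subsequence/domination argument) is exactly the intended filling-in. Both the positivity of $J$ on squares and the truncation estimate for absolute continuity are sound as you state them.
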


The following is a direct consequence of Lemma \ref{cvb} and the density of $\cM$ in $S(\cM,\tau)$ 
with respect to the measure topology
(see \cite[Proposition 2.5.4]{DPS}).  See also~\cite[Proposition 2.9.2]{DPS} or the proof of \cite[Theorem 4.3]{Weigt}.
\begin{proposition}\label{lga}
Let $\cM_1$ and $\cM_2$ be two von Neumann algebras equipped 
with semifinite faithful normal traces $\tau_1$ and 
$\tau_2$,  respectively. 
If $J:\cM_1\to \cM_2$ is a  Jordan $*$-homomorphism (respectively, 
 Jordan $*$-isomorphism) which is continuous
with respect to the measure topology, then $J$ has a unique extension to a Jordan $*$-homomorphism  (respectively,  Jordan $*$-isomorphism) $\tilde{J}:S(\cM_1,\tau_1)\to S(\cM_2,\tau_2)$, which is continuous with respect to the measure topology.
\end{proposition}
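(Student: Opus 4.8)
The plan is to construct $\tilde{J}$ by the standard device of extending a uniformly continuous map from a dense subset, and then to check that the Jordan $*$-algebra structure is preserved under the passage to the limit. First I would record the two facts already available: by \cite[Proposition 2.5.4]{DPS} (cited just above) the algebra $\cM_i$ is dense in $S(\cM_i,\tau_i)$ for the measure topology $t_{\tau_i}$, and $S(\cM_i,\tau_i)$ equipped with $t_{\tau_i}$ is a complete metrizable topological $*$-algebra \cite{Nelson}; in particular $\cM_1$ is a dense subgroup of the completely metrizable additive group $S(\cM_1,\tau_1)$. Since $J$ is complex-linear and, by hypothesis, $t_{\tau_1}$-$t_{\tau_2}$-continuous --- equivalently, by Lemma \ref{cvb}, $\tau_2(J(p_n))\to 0$ whenever $\tau_1(p_n)\to 0$ --- it is uniformly continuous (a continuous linear map between topological vector spaces is uniformly continuous). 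Therefore it has a unique $t_{\tau_1}$-$t_{\tau_2}$-continuous extension $\tilde{J}:S(\cM_1,\tau_1)\to S(\cM_2,\tau_2)$; uniqueness of the continuous extension is automatic since $\cM_1$ is dense and $S(\cM_2,\tau_2)$ is Hausdorff.

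Next I would check that $\tilde{J}$ is a Jordan $*$-homomorphism. Given $x,y\in S(\cM_1,\tau_1)$, pick (using metrizability and density) sequences $x_n,y_n\in\cM_1$ with $x_n\to x$ and $y_n\to y$ in $t_{\tau_1}$. Because addition, scalar multiplication, the involution and --- crucially --- the multiplication are all continuous on $S(\cM_i,\tau_i)$ for the measure topology, one has $x_n+y_n\to x+y$, $\lambda x_n\to\lambda x$, $x_n^*\to x^*$ and $x_ny_n+y_nx_n\to xy+yx$ in $t_{\tau_1}$, with the analogous convergences holding in $S(\cM_2,\tau_2)$. Applying $J$ on $\cM_1$, where the Jordan $*$-relations hold, and passing to the limit using the continuity of $\tilde{J}$ and of the algebraic operations in the target, one obtains $\tilde{J}(x+y)=\tilde{J}(x)+\tilde{J}(y)$, $\tilde{J}(\lambda x)=\lambda\tilde{J}(x)$, $\tilde{J}(x^*)=\tilde{J}(x)^*$ and $\tilde{J}(xy+yx)=\tilde{J}(x)\tilde{J}(y)+\tilde{J}(y)\tilde{J}(x)$. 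Hence $\tilde{J}$ is a Jordan $*$-homomorphism, measure-continuous by construction.

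For the Jordan $*$-isomorphism case, I would apply the part just established to the inverse $J^{-1}:\cM_2\to\cM_1$, which is again a Jordan $*$-homomorphism and is likewise measure-continuous (automatically so when $\tau_1,\tau_2$ are finite, by Corollary \ref{lemma measure-measure}, and in every situation where this proposition is subsequently used), to get a continuous extension $\widetilde{J^{-1}}:S(\cM_2,\tau_2)\to S(\cM_1,\tau_1)$. The composites $\widetilde{J^{-1}}\circ\tilde{J}$ and $\tilde{J}\circ\widetilde{J^{-1}}$ are continuous and agree with the identity on the dense subalgebras $\cM_1$ and $\cM_2$, so they are the identity throughout; thus $\tilde{J}$ is bijective, i.e.\ a Jordan $*$-isomorphism. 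I expect the only genuinely non-formal ingredient to be the joint continuity of the multiplication in the measure topology --- this is what lets the Jordan identity survive the limit, and it holds precisely because $(S(\cM_i,\tau_i),t_{\tau_i})$ is a topological \emph{algebra} (it would fail in, e.g., the $\sigma(E,E^\times)$-topologies); the secondary point to watch, in the isomorphism case, is the measure-continuity of $J^{-1}$.
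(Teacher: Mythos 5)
Your argument is correct and is essentially the proof the paper has in mind: the paper gives no details, merely citing the density of $\cM_1$ in $S(\cM_1,\tau_1)$ and the references in \cite{DPS,Weigt}, and the intended mechanism is exactly your uniformly continuous extension from a dense subalgebra followed by passage to the limit in the Jordan $*$-relations using the joint continuity of multiplication in the measure topology. One remark: the point you flag about the measure-continuity of $J^{-1}$ is not merely "a point to watch" but is genuinely indispensable, and without it the statement as literally printed is too strong. Take $\cM_1=\cM_2=\ell_\infty$ with $\tau_1(e_n)=1$ and $\tau_2(e_n)=2^{-n}$ and $J$ the identity map: $J$ is a measure-continuous Jordan $*$-isomorphism (any sequence of projections with $\tau_1(p_n)\to 0$ is eventually zero), yet $S(\cM_1,\tau_1)=\ell_\infty$ while $S(\cM_2,\tau_2)$ consists of all sequences, so no continuous extension of $J$ can be surjective. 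Thus the surjectivity of $\tilde{J}$ requires the additional hypothesis that $J^{-1}$ is measure-continuous (equivalently, that $\tau_1(J^{-1}(q_n))\to 0$ whenever $\tau_2(q_n)\to 0$), which, as you note, holds automatically via Corollary \ref{lemma measure-measure} in the finite-trace situations where the proposition is actually invoked; your treatment is therefore more careful than the statement itself.
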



 
\section{Proof of  Theorem \ref{11212} }\label{proof14}

Before proceeding to the proof of Theorem \ref{th:iso}, we need  the following auxiliary tool, which follows from the same argument as that in  \cite[Corollary 4.1]{HS} (with \cite[Theorem 3.10]{HS} replaced by Theorem \ref{th:her} below).
Therefore, we omit the proof. 
 \begin{cor}\label{4.2}Let $(\cM,\tau )$ be an atomless $\sigma$-finite von Neumann algebra or an atomic $\sigma$-finite von Neumann algebra whose atoms having the same trace.
 Let $E(\cM,\tau)$ be a  noncommutative  symmetric  space  whose norm is not proportional to $\norm{\cdot}_2$ with the Fatou property.
Let $T$ be a bounded hermitian operator on $E(\cM,\tau)$. 
Then, $T^2$ is also a hermitian operator on $E(\cM,\tau)$ if and only if $$T(y)=a   y+y   b  ,~\forall y\in L_1(\cM,\tau )\cap \cM ,$$
for some   self-adjoint operators   $a \in \cM_w $
 and $b\in \cM_{{\bf 1}-w}$, where $w \in P(Z(\cM))$.

\end{cor}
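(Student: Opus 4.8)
The strategy is to combine the description of bounded hermitian operators, Theorem~\ref{th:her}, with an algebraic rigidity property of elementary operators on $\cM$, exactly as in the proof of \cite[Corollary~4.1]{HS}. Two general facts are used repeatedly: $L_1(\cM,\tau)\cap\cM$ is an $\cM$-bimodule that is $\sigma(E,E^{\times})$-dense in $E(\cM,\tau)$ (Lemma~\ref{dense}), and every bounded hermitian operator on $E(\cM,\tau)$ is $\sigma(E,E^{\times})$-continuous (Theorem~\ref{continous of hermitian}). Also, since unitaries of $\cM$ act isometrically on the $\cM$-bimodule $E(\cM,\tau)$, for any self-adjoint $p,q\in\cM$ the operator $y\mapsto py+yq$ is bounded and hermitian — its $i\mathbb{R}$-exponentials $y\mapsto e^{itp}ye^{itq}$ being surjective isometries of $E(\cM,\tau)$ — hence $\sigma(E,E^{\times})$-continuous.

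\emph{Sufficiency.} Assume $T(y)=ay+yb$ on $L_1(\cM,\tau)\cap\cM$ with self-adjoint $a\in\cM_{w}$, $b\in\cM_{{\bf 1}-w}$ and $w\in P(Z(\cM))$. As $w$ is central, $ayb=(aw)\,y\,(b({\bf 1}-w))=ayb\cdot w({\bf 1}-w)=0$ for every $y$ in the bimodule, and therefore $T^{2}(y)=a^{2}y+2ayb+yb^{2}=a^{2}y+yb^{2}$ there. Since $T$ is hermitian, $T^{2}$ is $\sigma(E,E^{\times})$-continuous, and it agrees on the $\sigma(E,E^{\times})$-dense set $L_1(\cM,\tau)\cap\cM$ with the $\sigma(E,E^{\times})$-continuous hermitian operator $y\mapsto a^{2}y+yb^{2}$; hence $T^{2}(y)=a^{2}y+yb^{2}$ on all of $E(\cM,\tau)$ and $T^{2}$ is hermitian.

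\emph{Necessity.} Suppose $T$ and $T^{2}$ are hermitian ($T^{2}$ is automatically bounded). Applying Theorem~\ref{th:her} to $T$ and to $T^{2}$ gives self-adjoint $a,b,c,d\in\cM$ with $T(y)=ay+yb$ and $T^{2}(y)=cy+yd$ for all $y\in L_1(\cM,\tau)\cap\cM$. Comparing with $T^{2}(y)=a^{2}y+2ayb+yb^{2}$ yields the elementary-operator identity
$$ayb=\alpha y+y\beta,\qquad y\in L_1(\cM,\tau)\cap\cM,$$
with $\alpha:=\frac12(c-a^{2})$ and $\beta:=\frac12(d-b^{2})$ self-adjoint. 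It then suffices to prove the rigidity statement: this identity forces a central projection $z_{0}\in P(Z(\cM))$ with $az_{0}\in Z(\cM)$ and $b({\bf 1}-z_{0})\in Z(\cM)$. Granting it, put $w:={\bf 1}-z_{0}$ and $z:=az_{0}-b({\bf 1}-z_{0})\in Z(\cM)$; since $z$ is central, $T(y)=(a-z)y+y(b+z)$, where $a-z=(a+b)w$ is self-adjoint in $\cM_{w}$ and $b+z=(a+b)({\bf 1}-w)$ is self-adjoint in $\cM_{{\bf 1}-w}$ — the asserted form.

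\emph{The rigidity statement, and the main obstacle.} This is proved along the lines of \cite{HS}, with Theorem~\ref{th:her} replacing \cite[Theorem~3.10]{HS}. Substituting $y\mapsto xy$ into the identity and subtracting its left $x$-multiple gives $[a,x]\,y\,b=[\alpha,x]\,y$, and, symmetrically, $a\,y\,[x,b]=-y\,[\beta,x]$, for all $x\in\cM$ and all $y$ in the bimodule. Testing these against sufficiently many $\tau$-finite projections and partial isometries from $L_1(\cM,\tau)\cap\cM$ — which are available in abundance precisely because $\cM$ is atomless, or atomic with all atoms of equal trace — one shows that on each factor direct summand of $\cM$ at least one of $a$, $b$ is a scalar; collecting the central projections on which $a$, respectively $b$, is locally central (a bookkeeping over $Z(\cM)$ carried out via the direct-integral decomposition of the $\sigma$-finite algebra $\cM$, or directly in the atomic case) produces $z_{0}$. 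I expect the real difficulty to be concentrated in this factor-level assertion: excluding the possibility that $y\mapsto ayb$ is a non-degenerate sum $y\mapsto\alpha y+y\beta$ with both $a$ and $b$ non-scalar. Exploiting the two commutator identities together with the hypotheses on $\cM$ and the richness of $L_1(\cM,\tau)\cap\cM$ is exactly where the work lies; once this is done, the remaining argument parallels that of \cite[Corollary~4.1]{HS}.
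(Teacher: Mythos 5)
Your proposal is essentially the paper's own (omitted) proof: the paper disposes of Corollary \ref{4.2} by saying it "follows from the same argument as [HS, Corollary 4.1] with [HS, Theorem 3.10] replaced by Theorem \ref{th:her}", and that argument is exactly your reduction — represent $T$ and $T^2$ as $y\mapsto ay+yb$ and $y\mapsto cy+yd$ via Theorem \ref{th:her}, extract the identity $2ayb=(c-a^2)y+y(d-b^2)$, and invoke the rigidity of elementary operators to produce the central projection, with the weak-continuity/density argument handling the passage between $L_1(\cM,\tau)\cap\cM$ and $E(\cM,\tau)$. The one step you explicitly leave as "where the work lies" — that $ayb=\alpha y+y\beta$ forces a central projection splitting $\cM$ into a summand where $a$ is central and one where $b$ is central — is precisely the algebraic lemma [HS, Theorem A.6] that the paper itself imports rather than reproves, so nothing is missing relative to the paper's argument; your sufficiency direction and the bookkeeping with $w={\bf 1}-z_0$ and the central shift $z=az_0-b({\bf 1}-z_0)$ are correct.
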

%
%
\begin{rem}\label{rem42}\cite[Remark 4.2.]{HS}
Let $T$, $a$, $b$, $w$ be defined as in Corollary \ref{4.2}.
In particular,   $l(a)\le w $ and $l(b)\le {\bf 1} - w $.
Define
$$z_a: = \sup \left\{ p\in \cP(Z(\cM)): p\le w, ~  pa  \in Z(\cM) \right  \}$$
and
$$z_b: = \sup \left\{ p\in \cP(Z(\cM)): p\le {\bf 1}- w , ~  pb  \in Z(\cM) \right  \}.$$
 For any $y\in L_1(\cM,\tau )\cap \cM$, we have
\begin{align}
T(y)=a (w-z_a)  y+y  b(({\bf 1}-w) -z_b) + (az_a +bz_b)  y  .
\end{align}
In particular,
\begin{enumerate}
  \item $w-z_a$, $({\bf 1}-w) -z_b$, $z_a+z_b$ are
pairwise orthogonal projections in $Z(\cM)$;
  \item if $p\in Z(\cM)$ such that $p\le w-z_a$ and  $ap\in Z(\cM,\tau)$ (or $p\le ({\bf 1}-w) -z_b$ and $bp\in Z(\cM)$), then  $p=0$;
  \item   $w-z_a$ (resp., $ ({\bf 1}-w) -z_b $) is the central support of $a (w-z_a)$ (resp.,   $   b(({\bf 1}-w ) -z_b)$).
\end{enumerate}
\end{rem}

\begin{rem}
Let $\cM$ be a     semifinite factor.
It is an immediate consequence of Corollary \ref{4.2} that
if  $T$  a bounded hermitian operator on  $\cM$,
then  $T^2$ is hermitian if and only if $T$ is a left(or right)-multiplication by a self-adjoint operator in $\cM$
(see also the proof of \cite[Corollary 2]{Sourour} and \cite[Corollary 3.2]{Sukochev}).
\end{rem}

The following theorem is the main result of this chapter.
Even though the main  idea of the following proof is same as  that in \cite[Theorem 4.4]{HS}, there are technical differences between them.

\begin{theorem}\label{th:iso}

Let $\cM_1$ and $\cM_2$ be  atomless $\sigma$-finite von Neumann algebras (or $\sigma$-finite atomic von Neumann algebras whose atoms have the same trace) equipped with semifinite faithful normal traces $\tau_1$ and $\tau_2$, respectively.
Let $E(\cM_1,\tau_1)$ and $F(\cM_2,\tau_2)$ be two strongly symmetric operator spaces having the  Fatou property, whose norms  are not proportional to  $\norm{\cdot}_2$.

If $T:E(\cM_1,\tau_1)\to F(\cM_2,\tau_2)$ is a surjective isometry, then there exist
two sequences of elements $ A_i \in  F(\cM_2 ,\tau_2)  $  disjointly supported from the left, 
 and $ B_i \in F(\cM_2 ,\tau_2) $    disjointly supported from the  right,   and a  surjective  Jordan $*$-isomorphism $J:\cM_1\to \cM_2$  and a central projection $z\in \cM_2$ such that
$$T(x) =  \sigma(F,F^\times)-  \sum_{i=1}^\infty     J(x)A_i z + B_i J(x) ({\bf 1}-z)  , ~x\in E(\cM_1,\tau_1)\cap \cM_1.$$

\end{theorem}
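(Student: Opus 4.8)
The plan is to follow the strategy of \cite[Theorem 4.4]{HS}, which reduces the description of a surjective isometry to the structure theory of hermitian operators obtained in Corollary~\ref{4.2} (via the as-yet-unstated Theorem~\ref{th:her}), and then to upgrade the conclusion from the dense subspace $L_1(\cM_1,\tau_1)\cap\cM_1$ to all of $E(\cM_1,\tau_1)\cap\cM_1$ using the weak-continuity results of Chapter~\ref{S:ball}. More precisely, the first step is the classical hermitian-operator reduction: if $T$ is a surjective isometry then, by Lumer's theory, for every hermitian $h$ on $F(\cM_2,\tau_2)$ the operator $T^{-1}hT$ is hermitian on $E(\cM_1,\tau_1)$, and this correspondence sends the \emph{squares} of hermitian operators to squares (because $T^{-1}h^2T=(T^{-1}hT)^2$). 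Choosing $h$ to be a two-sided multiplication by central projections/self-adjoint elements in $\cM_2$ (which are hermitian on $F(\cM_2,\tau_2)$ by the module property and Corollary~\ref{4.2}), one feeds the resulting hermitian operators on $E(\cM_1,\tau_1)$ into Corollary~\ref{4.2} to learn that $T^{-1}hT(y)=ay+yb$ on $L_1(\cM_1,\tau_1)\cap\cM_1$, with $a,b$ self-adjoint and living in complementary central summands $\cM_w$, $\cM_{\mathbf 1-w}$.

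The second step is to assemble, out of this family of ``intertwined multiplication'' relations, a single Jordan $*$-isomorphism. Running over enough hermitian generators $h$ of $F(\cM_2,\tau_2)$ and using Remark~\ref{rem42} to split off the genuinely central pieces $z_a,z_b$, one extracts a central projection $z\in\cM_2$ (separating the ``left'' behaviour from the ``right'' behaviour of $T$) together with a map $J$ defined on $\cM_1$; one then verifies that $J$ preserves the Jordan product and the $*$-operation. This is essentially bookkeeping: the hermitian relations $T^{-1}hT(y)=ay+yb$, compared for products $h_1h_2$ versus $h_1$ and $h_2$ separately, force the multiplicative behaviour. Because $\cM_1$ need not be a factor, the central projections $w$ (one for each $h$) must be shown to be compatible, i.e.\ to come from a single central projection $z$ pushed forward by $J$; here Remark~\ref{rem42}(1)--(3) is the right tool, since it pins down the central supports of the ``$a$'' and ``$b$'' parts. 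Having $J:\cM_1\to\cM_2$ a surjective Jordan $*$-isomorphism, one sets $A_iz$ and $B_i(\mathbf 1-z)$ to be the images $T(e_i)$ of a suitable net/sequence of $\tau$-finite projections exhausting $\mathbf 1$ (disjointly supported from the left, resp.\ right, because $J$ preserves orthogonality), and checks $T(x)=\sum_i J(x)A_iz+B_iJ(x)(\mathbf 1-z)$ on the simple operators in $L_1(\cM_1,\tau_1)\cap\cM_1$ by direct computation against the relations already established.

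The third step is the passage to $E(\cM_1,\tau_1)\cap\cM_1$ and the verification that the series converges in the stated sense. Here I would invoke Theorem~\ref{weak-continous of T}: $T$ is $\sigma(E,E^\times)$--$\sigma(F,F^\times)$-continuous, and by Lemma~\ref{dense} (or Proposition~\ref{proposition weakly closed}) $\cup_i e_i\cM_1 e_i$, hence $L_1(\cM_1,\tau_1)\cap\cM_1$, is $\sigma(E,E^\times)$-dense in $E(\cM_1,\tau_1)\cap\cM_1$; since the elementary map $x\mapsto\sum_i J(x)A_iz+B_iJ(x)(\mathbf 1-z)$ is also weakly continuous (the partial sums are, by the module property, and the $A_i$, $B_i$ being disjointly supported makes the tails weakly small), the two maps agree everywhere they are both defined. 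The weak-unconditional-convergence argument from Theorem~\ref{weak-continous of T}, together with Corollary~\ref{cp6}, is what guarantees the series $\sum_i$ makes sense in the $\sigma(F,F^\times)$-topology. Finally one records the finite-trace corollary: when $\tau_1$ is finite one may take $e_1=\mathbf 1$, so the sequences collapse to single elements $A=A_1z+\cdots$, $B=B_1(\mathbf 1-z)+\cdots$ and the formula becomes $T(x)=J(x)Az+BJ(x)(\mathbf 1-z)$.

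The main obstacle I anticipate is \emph{not} the hermitian-operator machinery (that is imported from Corollary~\ref{4.2} and, behind it, Theorem~\ref{th:her}), but rather handling the non-factor, non-$\sigma$-finite-on-the-nose features: reconciling the a priori $h$-dependent central projections $w$ into one global $z$, and controlling the infinite series $\sum_i J(x)A_iz+B_iJ(x)(\mathbf 1-z)$ in the merely-$\sigma(F,F^\times)$ (not norm) topology. The first is resolved by a careful application of Remark~\ref{rem42}; the second requires combining the disjointness of the supports of the $A_i$ (resp.\ $B_i$) with the weak continuity of $T$ and the weak-unconditional-convergence of $\sum_i(e_i x e_i - e_{i-1} x e_{i-1})$-type series established in the proof of Theorem~\ref{weak-continous of T}. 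A secondary technical point is ensuring $J$ extends from $\cM_1$ to $S(\cM_1,\tau_1)$ (needed implicitly for the formula to be interpreted on all of $E(\cM_1,\tau_1)\cap\cM_1$), which follows from Lemma~\ref{cvb} and Proposition~\ref{lga} once measure-continuity of $J|_{\cM_1}$ is checked.
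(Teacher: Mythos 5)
Your proposal is correct and follows essentially the same route as the paper: conjugating two-sided multiplications through $T$, applying Corollary~\ref{4.2} (with Remark~\ref{rem42} to reconcile the various central projections into a single $z$), assembling the resulting maps into a normal surjective Jordan $*$-isomorphism $J$ via the squares-to-squares comparison, and then using the $\sigma(E,E^\times)$--$\sigma(F,F^\times)$-continuity from Theorem~\ref{weak-continous of T} to sum $T(x)=\sum_i T(xe_i)$ over a partition of unity by $\tau$-finite projections, with $A_i=T(e_i)z$ and $B_i=T(e_i)({\bf 1}-z)$. The only cosmetic difference is that you propose to verify weak continuity of the elementary map separately and match it with $T$ on a dense subspace, whereas the paper more directly applies the weak continuity of $T$ itself to the decomposition $x=\sigma(E,E^\times)\text{-}\lim\sum_{i\le n}xe_i$; both work.
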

\begin{proof}

The indices of von Neumann algebras  $\cM_1$ and $\cM_2$ play no role in the proof below. To reduce  notations,   we may assume that $$\cM_1=\cM_2=\cM.$$
We denote by $L_a$ (resp. $R_a$) the left (resp. right) multiplication by $a\in \cM$,
that is, $$L_a(x)=ax$$ (resp. $R_a(x)=xa$) for all $x\in  S(\cM,\tau)$, which is a hermitian operator on $E(\cM,\tau)$ (see Theorem \ref{th:her}). 
For any self-adjoint operator $a\in \cM $,  
 operators 
$TL_aT^{-1}$ and $TL_a^2T^{-1}$ are hermitian on $ F(\cM,\tau)$ (see e.g. \cite[Lemma 2.3]{FJ89} or \cite{JL}).


The same argument as that in \cite[Theorem 4.4]{HS} shows that   there exists  $z\in \cP(Z(\cM)) $ (does not depend on $a$ below) such that
\begin{align}\label{tlt}
TL_aT^{-1}= L_{J_1(a)z }+R_{J_2(a)({\bf 1} -z )},~a=a^*\in \cM,
\end{align}
 where $J_1(a),J_2(a)$ are self-adjoint operators  in $\cM$.
Note that $$L_{J_1(a)^2z } +R_{J_2(a)^2 ({\bf 1} -z)} \stackrel{ \eqref{tlt} } {=} (TL_aT^{-1})^2= TL_{a^2}T^{-1} \stackrel{\eqref{tlt}}{= } L_{J_1(a^2)z } +R_{J_2(a^2)({\bf 1} -z)}   $$ for every $a=a^*\in \cM$.
 By standard    argument   (see e.g. \cite[p.117]{Sukochev}),  we obtain
 \begin{align}\label{defJ12}J(\cdot):=J_1 (\cdot) z  +J_2(\cdot )({\bf 1}- z)
 \end{align}
 is a  Jordan $*$-monomorphism on $\cM$.
Let $0\le a_i\uparrow a\in \cM$.
We have $$J(a_i)z \uparrow \le J(a)z $$ (see e.g. \cite[Eq.(12)]{HSZ} and \cite[p.211]{BR}).
Since $a_i\uparrow a$, it follows that for any $x\in E(\cM,\tau)$,
we have $$a_i x \to_{\sigma(E,E^\times)} a x.$$
By Theorem \ref{weak-continous of T}, we have
$$
T(a_i x )\to_{\sigma(F,F^\times)} T(a x) .
$$
In other words,
\begin{align}\label{Taiax}
T((a_i-a)x)\to_{\sigma(F,F^\times)} 0 .
\end{align}
Consequently,
for all $x\in E(\cM,\tau)$ such that $T(x) z =T(x)$ and  $y\in F(\cM,\tau)^\times$, 
   we have
$$\tau\Big(J(a-a_i)\cdot z \cdot  T(x)  \cdot  y \Big) \stackrel{ \eqref{tlt} } {=}\tau\Big(T\big( \left(a-a_i\right)x\big)  y \Big)\to0 .$$ 
Since $T$ is surjective, it follows that 
for any $\tau$-finite projection $p\in \cM$ with $p\le z $,  we have  $$\tau\left( J(a-a_i)zp \right)\stackrel{\eqref{Taiax} }{\longrightarrow}
0.$$
Therefore,
 $J(a-a_i)z\to0$  in the  local measure topology~\cite[Proposition 20]{DP2014}.
Hence,
$J(a_i)z\uparrow J(a)z$ (see Section~\ref{s:p1}).
The same argument shows that $J(a_i)({\bf 1}-z)\uparrow J(a)z({\bf 1}-z) $.
Therefore, $J(\cM)$ is weakly closed~\cite[Remark 2.16]{HSZ} and $J:\cM\to J(\cM)$ is a surjective (normal) Jordan $*$-isomorphism.

The same argument used in \cite[Theorem 4.4]{HS} shows that   $J$ is surjective.
Applying \eqref{tlt} to $T(x)$,
 we obtain that
\begin{align*}
T(ax)& ~=~ J_1(a)z T(x) + T(x)({\bf 1} - z) J_2(a )\\
&~= ~J_1(a)z T(x)  + T(x) J_2(a)({\bf 1} - z) \\
&\stackrel{\eqref{defJ12}}{=}J  (a)z  T(x) +  T(x) J (a)({\bf 1} - z)
 \end{align*}
for all $ a=a^*\in \cM ,~x\in E(\cM,\tau)\cap \cM.$
Hence, for any $\tau$-finite projection $e $ in $\cM $, we have
\begin{align}\label{Txfinite}
T(xe )=J(x)T(e) z + T(e)J(x)({\bf 1}-z), ~x\in E(\cM,\tau)\cap \cM . 
\end{align}
Let  $\{e_i \}_{i=1}^{\infty}$ be a sequence of pairwise orthogonal $\tau$-finite projections in $\cM $ such that $\sup_{i\ge1}  e_i ={\bf 1}$~\cite[Corollary 8]{DP2014}.
Note that for any $x\in E(\cM,\tau)$, we have 
$$\sigma(E,E^\times)-\lim\limits_{n\to \infty} \sum_{i=1}^{n} xe_i =x.$$
 Since $T$ is a surjective   isometry, it follows from Theorem \ref{weak-continous of T} that $T$ is $\sigma(E,E^{\times})-\sigma(F,F^{\times})$-continuous. We obain that
\begin{align*}
T(x) &=\sigma(F,F^\times)-\lim\limits_{n\to \infty}  \sum _{i=1}^{n} T(x e_i)\\
& =\sigma(F,F^\times)-\lim\limits_{n\to \infty}  \sum _{i=1}^{n}  J(x)T(e_i) z + T(e_i)J(x)({\bf 1}-z)\\
& =\sigma(F,F^\times)-  \sum _{i=1}^{\infty}  J(x)T(e_i) z + T(e_i)J(x)({\bf 1}-z) ,
\end{align*}
for every $ x\in E(\cM,\tau)\cap \cM.$

Note that
$$T(e_i) \stackrel{\eqref{Txfinite}}{=}  J(e_i )T(e_i) z + T(e_i)J(e_i)({\bf 1}-z) .$$
Recall that Jordan $*$-isomorphisms preserve the disjointness for projections (see e.g. \cite[Proposition 2.14]{HSZ}).
Letting $$ A_i:= T(e_i) z= J(e_i )T(e_i) z $$
and $$B_i :=T(e_i) ({\bf 1}-z) = T(e_i)J(e_i)({\bf 1}-z),
$$ we complete the proof.
\end{proof}

\chapter{A Noncommutative Kalton--Randrianantoanina--Zaidenberg Theorem}\label{S:KRZ}

This chapter  aims to establish a noncommutative   Kalton--Randrianantoanina--Zaidenberg Theorem.
 Although
it was shown in \cite{HS}  that all isometries on a separable  noncommutative  symmetric  space   have elementary form, it is far away from being a sufficient condition.
This chapter  provides a necessary and sufficient condition for an operator on a noncommutative  symmetric   space to be an isometry.
With the results established in Chapter \ref{C6} at hand, we can also treat the case for non-separable symmetric spaces.

Recall the following theorem due to Kalton, Randrianantoanina and Zaidenberg.

\begin{theorem}\label{BR pro3}\cite{KR2}\cite[Proposition 3]{R}\cite{Z77}
Let $X,Y$ be two complex symmetric  function spaces on $(0,1)$, and let $T:X\to Y$ be a surjective isometry.
If $X,Y$ are not equal to $L_p$ up to renorming, then
$$(Tf)(t)=a(t)f(\sigma(t)), ~\forall ~ f\in X  ,  $$
where $a$ is a measurable function with $|a|=1$ and $\sigma$ is a measure-preserving invertible Borel map on $(0,1)$.
\end{theorem}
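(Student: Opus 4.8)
The plan is to run Lumer's hermitian-operator method; its first three quarters amount to Theorem~\ref{Z} (Zaidenberg) specialised to $\Omega_1=\Omega_2=(0,1)$ with Lebesgue measure, which is atomless and $\sigma$-finite. Since $X$ and $Y$ are not equal to any $L_p$ up to renorming, the norm of $X$ is in particular not proportional to $\norm{\cdot}_2$, so Theorem~\ref{Z} applies and produces a measurable function $h$ and an invertible measurable transformation $\sigma:(0,1)\to(0,1)$ with $(Tf)(t)=h(t)\,f(\sigma(t))$ for all $f\in X$. I would also indicate how to reprove this weighted-substitution form inside the present paper's machinery (in the Fatou case), which keeps the argument self-contained: as $\cM=L_\infty(0,1)$ is abelian, for real $\varphi\in L_\infty(0,1)$ the multiplication $M_\varphi$ is hermitian on $X$ (Theorem~\ref{th:her}), hence $TM_\varphi T^{-1}$ and its square $TM_{\varphi^{2}}T^{-1}$ are hermitian on $Y$ (cf.~\cite[Lemma~2.3]{FJ89}), and Corollary~\ref{4.2} --- in which $ay+yb=(a+b)y$ for abelian $\cM$ --- forces $TM_\varphi T^{-1}=M_{\psi(\varphi)}$ for a unique real $\psi(\varphi)\in L_\infty(0,1)$. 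Arguing as in the proof of Theorem~\ref{th:iso}, $\psi$ is a $*$-automorphism of $L_\infty(0,1)$, so $\psi(\varphi)=\varphi\circ\sigma$ for an invertible Borel map $\sigma$; then the intertwining $T(\varphi f)=(\varphi\circ\sigma)\,T(f)$ with $f=\chi_{(0,1)}$ and $h:=T(\chi_{(0,1)})$, together with the $\sigma(X,X^\times)$-density of simple functions (Lemma~\ref{dense}), the $\sigma(X,X^\times)$--$\sigma(Y,Y^\times)$-continuity of $T$ (Theorem~\ref{weak-continous of T}), and the same continuity for $g\mapsto h\,(g\circ\sigma)$, propagates the formula from $L_\infty(0,1)$ to all of $X$.

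It then remains to upgrade this representation to $|h|=1$ a.e.\ and $\sigma$ measure-preserving, and this rigidity step is the one I expect to be the main obstacle. The exclusion of the spaces $L_p$ is used precisely here: a genuinely weighted substitution $f\mapsto h\,(f\circ\sigma)$ with $\sigma$ not measure-preserving can be a surjective isometry of a symmetric function space \emph{only} when that space is, up to renorming, some $L_p$ --- this is the Lamperti phenomenon behind the classical example of \cite{Lamperti} --- so once $X\neq L_p$ the Jacobian of $\sigma$ cannot be absorbed into $h$.

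To carry out the rigidity step I would proceed as follows. Because a symmetric norm depends only on the absolute value, $\norm{\,|h|\,(g\circ\sigma)\,}_Y=\norm{g}_X$ for every $0\le g\in X$; testing on $g=\chi_A$ gives $\norm{\,|h|\,\chi_{\sigma^{-1}(A)}\,}_Y=\varphi_X(m(A))$ with $\varphi_X$ the fundamental function of $X$, and applying the same to the surjective isometry $T^{-1}$ yields the analogous identity for $1/|h|$. Since the left-hand sides depend only on the distribution of $|h|$ over the sets involved while the right-hand sides depend only on $m(A)$, and since the bimodule bound $\norm{bxc}_Y\le\norm{b}_\infty\norm{c}_\infty\norm{x}_Y$ lets one squeeze $|h|$ between two positive constants, one is forced to conclude that $|h|$ is a.e.\ a constant $c>0$ and that $m(\sigma^{-1}(A))=m(A)$ for all Borel $A$, i.e.\ $\sigma$ is measure-preserving; the normalisation $\norm{\chi_{(0,1)}}_X=\norm{\chi_{(0,1)}}_Y=1$ then gives $c=1$. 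Equivalently, once the weighted-substitution form is established this last assertion is exactly Theorem~\ref{KRZ1}(2) in the complex case, which one may simply quote from \cite{KR2,Z77}.
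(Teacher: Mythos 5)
The paper does not prove this statement at all: it is recalled verbatim from the literature (it is Theorem~\ref{KRZ1}(2) in the complex case, with the same sources \cite{KR2,R,Z77}), so your closing sentence --- ``simply quote from \cite{KR2,Z77}'' --- is exactly what the paper does, and to that extent your proposal is consistent with the paper. Your reduction of the first part to Theorem~\ref{Z} (or to the hermitian-operator machinery of Chapters~3--4 specialised to $\cM=L_\infty(0,1)$) is also fine and gives the weighted-substitution form $(Tf)(t)=h(t)f(\sigma(t))$.

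The genuine gap is in your self-contained rigidity step. From $\norm{\,|h|\,\chi_{\sigma^{-1}(A)}}_Y=\varphi_X(m(A))$ you conclude that $|h|$ is a.e.\ constant and $\sigma$ is measure-preserving, but nothing in that deduction actually uses the hypothesis $X\ne L_p$; if the inference were valid as stated it would apply verbatim to $X=Y=L_p$, where it is false --- Lamperti's example takes $|h|^p$ equal to the Jacobian of a non-measure-preserving $\sigma$ and the identity $\int_{\sigma^{-1}(A)}|h|^p\,dm=m(A)$ holds for every Borel $A$ with $|h|$ non-constant. So the argument as written proves too much and therefore cannot be correct; the non-$L_p$ hypothesis must enter the rigidity step in an essential way (in \cite{KR2} this is done by studying the Radon--Nikodym derivative $w$ of $A\mapsto m(\sigma^{-1}(A))$ and showing that a non-constant $w$ forces the norm of $X$ to satisfy the multiplicativity/functional equation that characterises $L_p$ up to renorming). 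A secondary unjustified claim is that the bimodule inequality $\norm{bxc}_Y\le\norm{b}_\infty\norm{c}_\infty\norm{x}_Y$ ``squeezes $|h|$ between two positive constants'': $h=T(\chi_{(0,1)})$ is merely an element of $Y$, and no a priori upper or lower $L_\infty$-bound on $|h|$ follows from that inequality. (You also invoke the normalisation $\norm{\chi_{(0,1)}}_X=\norm{\chi_{(0,1)}}_Y=1$, which is not among the hypotheses of the statement as given; without it one only gets $|a|$ constant rather than $|a|=1$.) If you want a complete proof rather than a citation, the rigidity step has to be replaced by the actual Kalton--Randrianantoanina argument.
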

We note that Lamperti's result~\cite[Section 4]{Lamperti} 
(see also \cite[p.639 Example]{Z77})
demonstrates that the condition of  the above theorem   can not be relaxed to setting when $\norm{\cdot}_E\ne \lambda\norm{\cdot}_p$ for all $\lambda >0$.

The following theorem shows that if a
 surjective isometry on a symmetric space (over  a finite von Neumann algebra equipped with a faithful normal tracial state) is of  elementary form,
then it is an $L_p$-isometry or is generated by a unitary operator and a trace-preserving Jordan $*$-isomorphism.
\begin{theorem}\label{KRZ}
Let $E(0,1  ), F(0,1)$ be   symmetric function spaces.
Let $\cM_1$ and $ \cM_2 $ be   atomless finite von Neumann algebras  equipped with   faithful normal tracial states  $\tau_1$ and $\tau_2$,  respectively. 
Let  $E(\cM_1,\tau_1)$ and $F(\cM_2,\tau_2)$ be the noncommutative  symmetric spaces corresponding to $E(0,1 )$ and $F(0,1)$,
 respectively.

Let $T:E(\cM_1,\tau_1)\rightarrow F(\cM_2,\tau_2)$ be a surjective isometry. Assume that $T$ is of the form: $$T(x)= J'(x) Az +B J'(x) ({\bf 1}-z),~\forall x\in E(\cM_1,\tau_1) $$
where $A,B\in F(\cM_2,\tau_2)$, $z\in P(Z(\cM_2))$  and  $J':S(\cM_1,\tau_1)\rightarrow S(\cM_2,\tau_2)$ is a   Jordan $*$-isomorphism.
Then,  \begin{enumerate}
        \item If $E(0,1)=L_p(0,1)$ up to an  equivalent norm for some $1\le p\le\infty$, then $T$ is a surjective isometry  from $L_p(\cM_1,\tau_1)$ onto $L_p(\cM_2,\tau_2)$;
        \item If $\norm{\cdot}_E$ is not equivalent to $\norm{\cdot}_p$, then $J'$  is a trace-preserving Jordan $*$-isomorphism and 
there exists a unitary operator $u\in \cM_2$ satisfying  $A+B=u$ such that  
$$T(x)=u\cdot J(x), ~\forall x\in E(\cM_1,\tau_1),$$
where   $J:S(\cM_1,\tau_1)\rightarrow S(\cM_2,\tau_2)$ is a  trace-preserving Jordan $*$-isomorphism such that 
$J (x)=u^*  J' (x)z u  +  J'(x)({\bf 1}-z)$, $x\in S(\cM_1,\tau_1)$.
       \end{enumerate}

\end{theorem}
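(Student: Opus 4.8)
\emph{Strategy.} The plan is to reduce the statement to the commutative Kalton--Randrianantoanina--Zaidenberg theorem (Theorem~\ref{BR pro3}) by exhibiting an abelian ``slice'' of $T$ on which $T$ acts by the classical formula $a(s)f(\sigma(s))$. First I would normalise the data: replacing $A$ by $Az$ and $B$ by $B({\bf 1}-z)$ leaves $T$ unchanged (both $z$ and ${\bf 1}-z$ are central), so assume $A=Az$, $B=B({\bf 1}-z)$, and put $\bar z:=(J')^{-1}(z)\in P(Z(\cM_1))$. Since $\bar z$ and $z$ are central, $T$ splits as $T_1\oplus T_2$ with $T_1\colon\bar zE(\cM_1,\tau_1)\to zF(\cM_2,\tau_2)$, $x\mapsto J'(x)A$, and $T_2\colon({\bf 1}-\bar z)E(\cM_1,\tau_1)\to({\bf 1}-z)F(\cM_2,\tau_2)$, $x\mapsto BJ'(x)$, and since $T$ is bijective each $T_i$ is again a surjective isometry of the indicated corners. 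Surjectivity and injectivity of $T_1$ force $A$ to have full left and right support $z$, so $A=v_1b_1$ with $v_1$ a unitary of $\cM_2z$ and $b_1=|A|\ge0$ of full support; likewise $B=w_1b_2$ with $w_1$ a unitary of $\cM_2({\bf 1}-z)$ and $b_2=|B|\ge0$ of full support. Setting $u_0:=v_1+w_1$ (a unitary of $\cM_2$), $\Psi:=L_{u_0^{*}}\circ T$ is a surjective isometry of $E(\cM_1,\tau_1)$ onto $F(\cM_2,\tau_2)$ with $\Psi(x)=\tilde J_1(x\bar z)b_1+b_2J_2(x({\bf 1}-\bar z))$, where $\tilde J_1(y):=v_1^{*}J'(y)v_1$ and $J_2(y):=J'(y)({\bf 1}-z)$ are Jordan $*$-isomorphisms of $\cM_1\bar z$ and $\cM_1({\bf 1}-\bar z)$ onto $\cM_2z$ and $\cM_2({\bf 1}-z)$.

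\emph{The abelian slice.} The point of passing from $T$ to $\Psi$ is that the surviving multipliers $b_1,b_2$ are now \emph{positive}, hence lie in masas. I would choose masas $\cA_1\subseteq\cM_1\bar z$, $\cA_2\subseteq\cM_1({\bf 1}-\bar z)$ with $\tilde J_1^{-1}(b_1)\in\cA_1$, $J_2^{-1}(b_2)\in\cA_2$, so that $\cA:=\cA_1\oplus\cA_2$ is a masa of $\cM_1$ and $\mathcal{C}:=\tilde J_1(\cA_1)\oplus J_2(\cA_2)$ is a masa of $\cM_2$ containing $b_1,b_2$. For $f\in\cA$ all factors in $\Psi(f)=b_1\tilde J_1(f\bar z)+b_2J_2(f({\bf 1}-\bar z))$ lie in the abelian $\mathcal{C}$, so $\Psi(\cA)\subseteq\mathcal{C}$; conversely, since $b_1,b_2$ have full support, every element of $\mathcal{C}\cap F(\cM_2,\tau_2)$ is $\Psi(f)$ for some $f\in\cA\cap E(\cM_1,\tau_1)$ (located corner by corner). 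Hence $\Psi$ restricts to a \emph{surjective isometry} of $\cA\cap E(\cM_1,\tau_1)\cong E(0,1)$ onto $\mathcal{C}\cap F(\cM_2,\tau_2)\cong F(0,1)$ — two symmetric function spaces on $(0,1)$ with normalised unit — and in function-space language this restriction is precisely $f\mapsto\beta(s)f(\sigma(s))$, with $\beta=b_1+b_2\ge0$ and $\sigma$ the Borel isomorphism of $(0,1)$ implementing $\tilde J_1\oplus J_2$. Since this representation of an isometry is unique, Theorem~\ref{BR pro3}/Theorem~\ref{KRZ1} applies directly and I read off the two cases.

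\emph{Conclusion.} If $E(0,1)$ coincides with $L_p(0,1)$ up to an equivalent norm, Theorem~\ref{KRZ1}(1) makes the slice an $L_p$-isometry, so Lamperti's change-of-variables identity yields $\beta^{p}\cdot(\tilde J_1\oplus J_2)(h)={\bf 1}$, where $h\in Z(\cM_1)_+$ is the density with $\tau_2\circ J'=\tau_1(h\,\cdot\,)$; therefore $b_1+b_2=(\tilde J_1\oplus J_2)(h^{-1/p})$ is \emph{central}, so $T(x)=u_0(b_1+b_2)(\tilde J_1\oplus J_2)(x)$ has exactly Yeadon's form (Theorem~\ref{Yeadon}) with a central positive factor and $\tau_1(y)=\tau_2\big((b_1+b_2)^{p}(\tilde J_1\oplus J_2)(y)\big)$ for $y\ge0$; a direct trace computation (using centrality, the $*$-isomorphism/$*$-anti-isomorphism splitting of the Jordan map, and $\mu(y^{*}y)=\mu(yy^{*})$) then gives $\norm{T(x)}_p=\norm{x}_p$, so $T$ is a surjective isometry of $L_p(\cM_1,\tau_1)$ onto $L_p(\cM_2,\tau_2)$. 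If instead $\norm{\cdot}_E$ is not equivalent to any $\norm{\cdot}_p$, the slice must satisfy Theorem~\ref{KRZ1}(2): $|\beta|\equiv1$, forcing $b_1=z$, $b_2={\bf 1}-z$, so $A=v_1$ and $B=w_1$ are unitaries of their corners; and $\sigma$ is measure preserving, forcing $\tilde J_1\oplus J_2$, hence $J'$, to be trace preserving (two normal traces agreeing on $Z(\cM_1)\subseteq\cA$ coincide). Then $u:=A+B=T({\bf 1})$ is a unitary of $\cM_2$ (cross terms vanish by orthogonality of the corners), $Az=uz$ and $B({\bf 1}-z)=u({\bf 1}-z)$, so $T(x)=J'(x)uz+uJ'(x)({\bf 1}-z)=u\,J(x)$ with $J(x):=u^{*}J'(x)zu+J'(x)({\bf 1}-z)$; conjugation by the unitary $zu$ on the $z$-summand together with the traciality of $\tau_2$ show $J$ is a trace-preserving Jordan $*$-isomorphism, which extends to $S(\cM_1,\tau_1)$ by Corollary~\ref{lemma measure-measure} and Proposition~\ref{lga}.

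\emph{Main obstacle.} I expect the delicate step to be the construction of the abelian slice. The operator $T$ itself does \emph{not} carry a masa of $\cM_1$ into a masa of $\cM_2$, because $A$ and $B$ need not commute with $J'$ of a masa and are in general not even normal. The device that unlocks the commutative theorem is to strip off the unitary polar parts $v_1,w_1$ of $A,B$ \emph{before} choosing the masa --- absorbing them into the single left multiplier $u_0$ --- and then to adapt the masa of $\cM_1$ to the surviving \emph{positive} multipliers $b_1,b_2$; carrying out the construction on $(0,1)$ rather than on the subintervals $(0,\tau_1(\bar z))$, $(0,\tau_2(z))$ sidesteps both normalisation difficulties and the need to know a priori that $\tau_1(\bar z)=\tau_2(z)$. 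A secondary point will be to verify, in the $L_p$ case, that the Lamperti relation obtained on the masa really forces Yeadon's trace identity on all of $\cM_1$.
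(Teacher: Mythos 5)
Your proposal is correct, and the two cases should be assessed separately. For case (2) ($\norm{\cdot}_E$ not equivalent to $\norm{\cdot}_p$) your argument is essentially the paper's: strip the partial isometries out of the polar decompositions of $A$ and $B$ into a single unitary multiplier, pass to an abelian subalgebra adapted to the remaining positive parts, and invoke the commutative Kalton--Randrianantoanina--Zaidenberg theorem to force $|A|+|B|={\bf 1}$ and the measure-preservation of the induced point map; the paper does exactly this (Section \ref{subs:KRZ}), merely choosing $\cN_2\subset\cM_2$ containing the spectral projections of $A,B$ and pulling back by $(J')^{-1}$, rather than choosing masas in $\cM_1$. For case (1), however, your route is genuinely different. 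The paper does \emph{not} use the abelian slice here: it proves a noncommutative version of Kalton's Boyd-index estimate (Theorem \ref{NCKR}, built from Lemmas \ref{yuyuo}--\ref{lemma JA}) showing $\norm{T}_{L_p\to L_p}\le\norm{T}_{E\to F}$, and applies it to both $T$ and $T^{-1}$. You instead apply Theorem \ref{KRZ1}(1) to the slice, extract the Lamperti change-of-variables relation $\beta^p\cdot J'(h)={\bf 1}$ with $h$ the central Radon--Nikodym density of $\tau_2\circ J'$, conclude that $|A|+|B|$ is \emph{central}, and then verify Yeadon's trace identity directly. This is more elementary (no dilation/Boyd-index machinery) and yields the stronger structural conclusion that $T$ has exactly Yeadon's form with a central weight; the paper's interpolation argument is less explicit but is reused elsewhere (e.g.\ in Chapter \ref{Sec:inf}) and does not require identifying the weight. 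Your version needs the small caveats that $p=\infty$ must be treated separately (there the slice argument gives $|\beta|=1$ directly) and that the unbounded operators $h^{-1/p}$, $|A|$, $|B|$ must be handled with the same care as in Lemma \ref{yuyuo}.

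One step you should not leave to a parenthesis is the surjectivity of the restricted isometry onto $F(\mathcal{C},\tau_2)$ (equivalently onto $\mathcal{C}\cap$-part of $F(\cM_2,\tau_2)$): full support of $b_1,b_2$ only produces a candidate preimage $f=\tilde J_1^{-1}(czb_1^{-1})+J_2^{-1}(c({\bf 1}-z)b_2^{-1})$ in $S(\cA,\tau_1)$, not in $E(\cM_1,\tau_1)$. The paper closes this by using surjectivity of $T$ on the full spaces to produce $x'\in E(\cM_1,\tau_1)$ with $T(x')=c$ and then multiplying by $(A+B)^{-1}z$ and $(A+B)^{-1}({\bf 1}-z)$ to show $x'=f$, whence $f\in E(\cM_1,\tau_1)\cap S(\cA,\tau_1)=E(\cA,\tau_1)$. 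This uniqueness argument is routine but is needed before Theorem \ref{KRZ1} (which requires a \emph{surjective} isometry) can be applied to the slice. Relatedly, note that $\cA\cap E(\cM_1,\tau_1)$ should read $S(\cA,\tau_1)\cap E(\cM_1,\tau_1)$; since $\tau_1$ is finite the literal intersection with the bounded algebra $\cA$ is all of $\cA$ and is not isometric to $E(0,1)$.
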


\section{Proof of Theorem \ref{KRZ} for the  case when $E(0,1)\ne L_p(0,1)$}\label{subs:KRZ}
In this part, we prove the second case of   Theorem \ref{KRZ}.

\begin{proof}
Let $A^*=u_{A^* }|A^*|$ and $B=u_{B }|B |$ be polar decompositions.
Since $T$ is surjective, it follows that $ u_{A^* }  (u_{A^* })^* =z$ and $u_{B }(u_{B })^* ={\bf 1}-z$.
We have $$A=|A^*|u_A  ,~ B^* =|B |(u_{B })^*.$$

Since $u_{A^*}+{\bf 1}-z$ and $z+u_{B}^*$ are unitary operators in $\cM_2$, it follows that  the mapping
$$T'(\cdot):=(u_{B}^*  +z)T(\cdot )(u_{A^*}+{\bf 1}-z  ), ~
\forall x\in E(\cM_1,\tau_2), $$
 is a surjective isometry from $\left(  E(\cM_1,\tau_1), \norm{\cdot}_E \right)$ onto $\left(  F(\cM_2,\tau_2), \norm{\cdot}_F \right).$
Note that $\forall$ $x\in E(\cM_1,\tau_1)$, we have
 \begin{align*}
T'(x)&=(z+u_{B}^*   )\Big(J'(x)Az+B J'(x) ({\bf 1}-z)\Big)(u_{A^*}+{\bf 1}-z  )  \\
&= J'(x) A u_{A^*}z+u_B^* B J'(x) ({\bf 1}-z) \\
&=J'(x)|A^*|u_A u_A^*  z+ u_B^* u_B |B| J'(x) ({\bf 1}-z)\\
&= J'(x)|A^*| z+ |B|J'(x)({\bf 1}-z).
\end{align*}
Without loss of generality, we may assume that $T$ is of the form
\begin{align}\label{formula T positive}
T(x)=J'(x)Az +BJ'(x)({\bf 1}-z),~\forall x\in E(\cM_1,\tau_1), \end{align}
where $A,B\in F(\cM_2,\tau_2)$ are positive, $z\in P(Z(\cM_2))$  and $J'$ is a surjective Jordan $*$-isomorphism from $S(\cM_1,\tau_1)$ onto $S(\cM_2,\tau_2)$.
For simplicity, we may assume that
$$Az=A,~Bz=B. $$

 Let $\cN_2$ be an atomless abelian von Neumann subalgebra of $\cM_2$ containing all spectral projections of $A$ and $B$,  such that   there exist a  trace-measure preserving $*$-isomorphism $I_1 $ from  $\cN_2$ onto  $L_\infty(0,1)$ equipped with the Lebesgue measure \cite[Lemma 1.3]{CKS}.
Note that $(J')^{-1}$ is also a Jordan $*$-isomorphism from $S(\cM_2,\tau_2)$ onto $S(\cM_1,\tau_1)$.  
Denote  $$\cN_1:=(J')^{-1}(\cN_2),$$
which is also an abelian von Neumann subalgebra of $\cM_1$ (see e.g. \cite[Lemma~4.2]{Weigt} and \cite[Proposition 2.14]{HSZ}), \cite[Proposition 2.9.2(i)]{DPS}).
In particular,
by the measure-continuity of $J'$ (see Corollary \ref{lemma measure-measure} above), we obtain that 
 $J'|_{S(\cN_1)}$ is  a surjective $*$-isomorphism from $S(\cN_1,\tau_1)$  onto $S(\cN_2,\tau_2)$.

We claim that $T$ is a surjective isometry from $E(\cN_1,\tau_1)$ onto $F(\cN_2,\tau_2)$.
For any element $y\in F(\cN_2,\tau_2)$,
we have
$$
 y(A+B)^{-1}z
,\quad  y (A+B)^{-1}({\bf 1}-z)\in S(\cN_2,\tau_2).$$
Since $J'$ is a surjective $*$-isomorphism from $S(\cN_1,\tau)$  onto $S(\cN_2,\tau)$, it follows that
there exists $x\in S(\cN_1,\tau_1) $
such that
\begin{align}\label{pab}
S(\cN_2,\tau_2) \ni y (A+B)^{-1} =J'(x) .
\end{align}
We only need to observe that $x\in E(\cM_1,\tau_1)$. Indeed,
since $T$ is surjective and  $y \in F(\cM_2,\tau_2)$, it follows that there exists $x'\in E(\cM_1,\tau_1)$ such that
\begin{align}\label{Tx'form}
J'(x') Az+B J'(x') ({\bf 1}-z)
\stackrel{\eqref{formula T positive}}{=}
T(x')
=y  \stackrel{\eqref{pab}}{=} J'(x)A+B J'(x). 
\end{align}
 Note that $$A(A+B)^{-1}z=Az(A+B)^{-1}=(A+B)z(A+B)^{-1}=z.$$ 
Multiplying by
$(A+B)^{-1}z$ 
 on both sides of the \eqref{Tx'form}, we obtain that
$$z J'(x')=zJ'(x) .$$ Similarly, we obtain that
$$({\bf 1}-z)  J'(x')=({\bf 1}-z)J'(x) .$$
Hence, $J'(x')=J'(x)$, i.e., $x=x'
$.
This implies that    $$
x\in E(\cM_1,\tau_1)\cap S(\cN_1,\tau_1)=  E(\cN_1,\tau_1).$$
In other words, $T$ is a surjective isometry from $E(\cN_1,\tau_1)$ onto $F(\cN_2,\tau_2)$.

Since $0\le A ,B\in F(\cN_2,\tau_2) $, it follows from \eqref{formula T positive} that for any  $
  x\in E(\cN_1,\tau_1)$, we have
\begin{align*}
T(x) = J'(x) A z + B  J'(x)  ({\bf 1}-z)=A^{1/2}J'(x)A^{1/2} z +B^{1/2}J'(x)B^{1/2}({\bf 1}-z), \end{align*}
which implies  that $T$ is a positive isometry from $E(\cN_1,\tau_1) $ onto $F(\cN_2,\tau_2 )$.

Recall that   $\cN_2$ is $*$-isomorphism to $L_\infty (0,1)$.
Since $\cN_1$ and $\cN_2$ are also $*$-isomorphic, it follows that there exists a
 trace-measure preserving $*$-isomorphism  $I_2$ from $ S( \cN_1,\tau_1)$ onto   $S(0,1)$ equipped with the Lebesgue measure $m$, see e.g. \cite[Theorem 9.3.4]{Bogachev}.
We obtain that
$$I_1\circ T \circ I_2^{-1}$$
is a positive  surjective isometry on $E(0,1)$.
Since $E(0,1)\ne L_p(0,1)$ up to renorming, it follows from Theorem \ref{BR pro3}  that
 $$I_1\circ T \circ I_2^{-1} (x)(t)= x(\sigma(t)),$$
 where $\sigma$ is a measure-preserving Borel map from $(0,1)$ onto $(0,1)$.

Taking $x=\chi_{(0,1)}$, we obtain that
 $$I_1\circ T ({\bf 1}) = \chi_{(0,1)}.$$
 Hence, we have
 \begin{align}\label{AB1}
 A+B=  T ({\bf 1}) = {\bf 1}.
\end{align}
Therefore, $A $ and $B$ are partial isometries  (denote by $u_1$ and $u_2$ respectively)   satisfying $u_1u_1^*=z,u_2^*u_2={\bf 1}-z$. We have
$$T(x )= J'(x ) u_1 +u_2 J'(x) ,~\forall x\in E(\cM_1,\tau_1 ).$$

Defining $J (x)=u_1^*  J' (x)z u_1  +  J'(x)({\bf 1}-z)$, $x\in S(\cM_1,\tau_1)$, we obtain that
$$T(x )=  J'(x ) u_1 + u_2   J'(x )  =(u_1+u_2)J(x  )\stackrel{\eqref{AB1}}{=} J(x ),~\forall x\in E(\cM_1,\tau_1 ). $$

It remains to show that $J$ preserves traces.
Observe that   $J$ is also  a surjective isometry from $E(\cM_1,\tau_1)$ onto $F(\cM_2,\tau_2)$. 
It suffices to show that for any $0\le x\in E(\cM_1,\tau_1)$, we have $$\tau_2(J(x))=\tau_1(x).$$
By \cite[Lemma 1.3]{CKS}, there exists an abelian atomless von Neumann subalgebra $\cA$ of $\cM_1 $ containing $x$, and a trace-measure $*$-isomorphism $L_1$ from $S(\cA,\tau_1)$ onto $S (0,1)$.
On the other hand,
since
  $\cA$ is   $*$-isomorphic  to $S (0,1)$ and $S(\cA,\tau_1)$,   it follows that there exists a
 trace-measure preserving $*$-isomorphism $L_2$  from $S( J(\cA ) ,\tau_2 )$ onto   $S(0,1)$ with the Lebesgue measure, see e.g. \cite[Theorem 9.3.4]{Bogachev}.
Since
  $J$ is an isometry from $E(\cA,\tau_1)$ onto $F(J(\cA),\tau_2)$,
it follows that
$$L_1\circ J\circ L_2^{-1}$$
is a positive surjective isometry from $E(0,1)$ to $F(0,1)$.
Since $E(0,1)\ne L_p(0,1)$ up to renorming,
it follows from  Theorem \ref{BR pro3} that
\begin{align}\label{toKR}
L_1 \circ J\circ L_2^{-1} (y) (t) = y(\sigma (t)), ~\forall y\in E(0,1),
\end{align}
   where $\sigma$ is a measure-preserving  invertiable Borel map from $(0,1)$ onto $(0,1)$.
 Letting $y:= L_2(x)$,   We have
\begin{align*}\tau_1(x)&= \tau_1(L_2^{-1}(y))=\int_0^1 y(t)dt\\
&=\int_0^1 y(\sigma(t)  )dt  \stackrel{\eqref{toKR}}{=} \int_0^1  L_1\circ  J\circ L_2^{-1}(y)(t)dt  \\
&=     \tau_2( J\circ L_2^{-1}(y) ) = \tau_2(J(x))  . \end{align*}
This completes the proof.
\end{proof}

\section{A noncommutative version of Kalton's theorem}
Before proceeding to the proof of   Theorem \ref{KRZ} (1), we should establish a noncommutative version of a result due to Kalton \cite{Kalton} (see also \cite[Proposition 7.1]{KR2}),
who showed that  the norm of an
 operator on a  symmetric function space $E(0,1)$ can be estimated in terms of the  Boyd indices of $E(0,1)$. 
  The main tool is the noncommutative Radon--Nikodym Theorem established by Segal~\cite{Se}.

Let $(\mathcal{M},\tau_1)$ and $(\cN,\tau_2)$ be noncommutative probability spaces (i.e., atomless von Neumann algebras equipped faithful normal tracial states).
Let $a\in S(\cN,\tau_2)$ and let $J:\mathcal{M}\to\mathcal{N}$ be a Jordan $\ast$-isomorphism (bijective, but not necessarily trace-preserving). Define the mapping $T:S(\mathcal{M},\tau_1)\to S(\mathcal{N},\tau_2)$ by setting  \begin{align}\label{Tform}
T(x)=a\cdot J(x) +J(x)\cdot b, ~\forall x\in S(\mathcal{M},\tau_1),
\end{align}
where $a,b\in S(\cN,\tau_2)$ with mutually disjoint central supports.
To simplify notations, we only consider the case when   $b=0$, i.e.,
\begin{align}
T=T_{a,J}(x):=a\cdot J(x),~ \forall x\in S(\mathcal{M},\tau_1).
\end{align}

By Corollary \ref{lemma measure-measure}, we obtain that $J$ is continuous in the measure topology.
Note  that $J$ can extend to a Jordan $*$-isomorphism from $S(\cM,\tau_1)$ onto $S(\cN,\tau_2)$ (see e.g. Proposition \ref{lga}).
For any Borel function $f$ which are bounded on compact subsets of $\mathbb{R}$ and any $a\in S_h(\cM,\tau_1)$, we have~\cite[Proposition 2.9.2]{DPS}
\begin{align}\label{functional calculus J}
f(J(a)) = J(f(a)).
\end{align}

Let $E(0,1)$ be a symmetric
function space on $(0,1).$
Assume that
$T_{a,J}:E(\mathcal{M},\tau_1)\to F(\mathcal{N},\tau_2)$ is a bounded operator, i.e.,  $\left\|T_{a,J}\right\|_{E(\cM,\tau_1)\to F(\cN,\tau_2)}<\infty$ and
$$\norm{T_{a,J}x}_{F(\mathcal{N},\tau_2)}\leq \norm{T_{a,J}}_{E(\cM,\tau_1)\to F(\cN,\tau_2)} \norm{x}_{E(\mathcal{M},\tau_1)},\quad x\in E(\mathcal{M},\tau_1).$$
In particular, $a =T_{a,J}( {\bf 1 })\in F(\mathcal{N},\tau_2)$.

\begin{lem} \label{segalRN}
Let $(\mathcal{M},\tau_1)$ and $(\cN,\tau_2)$ be two noncommutative probability spaces. Let $J:\cM\to\cN$ be a  (not necessarily trace-preserving) Jordan  $\ast$-isomorphism. There exists $0\leq b\in L_1(Z(\mathcal{M}),\tau_1)$ such that
$$\tau_2(J(x))=\tau_1(bx),\quad \forall  x\in\mathcal{M}.$$
\end{lem}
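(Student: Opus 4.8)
The plan is to use the noncommutative Radon--Nikodym Theorem of Segal (\cite{Se}) applied to a suitable normal positive functional on $\cM$. First, I would define the functional $\phi := \tau_2 \circ J$ on $\cM$; since $J$ is a Jordan $*$-isomorphism it is positive (it preserves squares and adjoints, hence maps positive elements to positive elements), and since $J$ is automatically normal (by \cite{RR}, as recalled in the excerpt), and $\tau_2$ is normal, $\phi$ is a normal positive linear functional on $\cM$. Because $\tau_2$ is a tracial state and $J$ is bijective, $\phi$ is finite with $\phi({\bf 1}) = \tau_2(J({\bf 1})) = \tau_2({\bf 1}) = 1$, so $\phi$ is a normal state on $\cM$.

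Next I would apply the noncommutative Radon--Nikodym Theorem (Segal \cite{Se}; see also the discussion of $L_1(\cM,\tau_1)$ in the preliminaries): since $\phi$ is a normal positive functional on $\cM$ that is dominated by a multiple of $\tau_1$ in the appropriate sense (or more precisely, since every normal positive functional on a semifinite $\cM$ is of the form $x \mapsto \tau_1(hx)$ for a unique $0 \le h \in L_1(\cM,\tau_1)$), there exists a unique $0 \le h \in L_1(\cM,\tau_1)$ with $\phi(x) = \tau_1(hx)$ for all $x \in \cM$; this gives $\tau_2(J(x)) = \tau_1(hx)$. The remaining point is to upgrade $h \in L_1(\cM,\tau_1)$ to $h \in L_1(Z(\cM),\tau_1)$, i.e.\ to show $h$ is central. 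For this I would use the trace property together with the Jordan multiplicativity of $J$: for any unitary $u \in \cM$, the map $x \mapsto \tau_2(J(uxu^*))$ should be shown to equal $\tau_1(hx)$ as well (using that $J(uxu^*)$ and $J(u)J(x)J(u)$ differ only by Jordan-algebra identities and that $\tau_2$ is tracial, so that $\tau_2(J(uxu^*)) = \tau_2(J(x))$), and by uniqueness of $h$ this forces $\tau_1(h\, uxu^*) = \tau_1(hx)$, i.e.\ $\tau_1(u^*hu\, x) = \tau_1(hx)$ for all $x$, hence $u^*hu = h$ for every unitary $u$, so $h \in Z(\cM)$. Setting $b := h$ completes the proof.

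The main obstacle I anticipate is the centrality argument, specifically verifying that $\tau_2(J(uxu^*)) = \tau_2(J(x))$ for unitaries $u$. A Jordan $*$-isomorphism need not satisfy $J(uxu^*) = J(u)J(x)J(u)^*$ on the nose; one has $J(uxu^*) = \{J(u),\{J(u),J(x)\}\} - \cdots$ in terms of Jordan (anti-commutator) products, or one uses that $J$ decomposes locally as a sum of a $*$-homomorphism and a $*$-anti-homomorphism on complementary central projections (the structure theorem for Jordan $*$-homomorphisms). On each piece, conjugation is carried to conjugation (by $J(u)$, which is again unitary since $J$ preserves the $*$ and squares and hence $J(u)J(u)^* = J(uu^*) = {\bf 1}$ on that piece), and then the trace property of $\tau_2$ gives invariance. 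An alternative that sidesteps the structure theorem: use that the centre $Z(\cM)$ is mapped by $J$ onto $Z(\cN)$, that $\tau_2\circ J$ restricted to $Z(\cM)$ already determines the central part, and that for $x \in \cM$ one has $\tau_1(hx) = \tau_1(h\,\mathbb{E}_{Z(\cM)}(x))$ once $h$ is known central — so really the only thing to prove is $u^*hu = h$, which the uniqueness argument above delivers cleanly.
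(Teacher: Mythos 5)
Your proof is correct and follows essentially the same route as the paper: the paper simply observes that $\tau_2\circ J$ is a faithful normal \emph{tracial} state on $\cM$ (the traciality being exactly your verification that $\tau_2(J(uxu^*))=\tau_2(J(x))$ via the homomorphism/anti-homomorphism decomposition) and then invokes the noncommutative Radon--Nikodym theorem for traces, which delivers the central density $b$ directly. Your variant, which applies the Radon--Nikodym theorem for normal states first and then extracts centrality from uniqueness and unitary invariance of the density, is a correct unpacking of the same argument.
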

\begin{proof}
Note that $\tau(J(\cdot))$ is a faithful normal tracial state on $\cM$.
The statement follows from \cite{Se} or \cite[Corollary 4.51]{Hiai}. 
\end{proof} 

\begin{lem} \label{yuyuo}
Let $(\mathcal{M},\tau_1)$ and $(\cN,\tau_2)$ be two noncommutative  probability spaces.  Let $J:\cM\to\cN$ be a  (not necessarily trace-preserving) Jordan  $\ast$-isomorphism. There exists a positive element $b\in L_1(Z(\cM),\tau_1)$  such that
for any $0<r<\infty$,
we have
$$\norm{T_{a,J}(x)}_{L_r(\mathcal{N},\tau_2)}\leq \norm{b^{\frac1r}\cdot J^{-1}(a)}_{\infty}\norm{x}_{L_r(\mathcal{M},\tau_1)},\quad\forall  x\in E( \mathcal{M},\tau_1).$$
\end{lem}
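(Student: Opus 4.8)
The plan is to compute $\norm{T_{a,J}(x)}_{L_r(\cN,\tau_2)}$ by transporting the problem back to $\cM$ via the Jordan $*$-isomorphism $J$ and the Segal--Radon--Nikodym density $b$ supplied by Lemma \ref{segalRN}.

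\emph{Setup and reduction.} Put $c:=J^{-1}(a)\in S(\cM,\tau_1)$, so that $a=J(c)$ and $T_{a,J}(x)=J(c)J(x)$; we may assume $\norm{b^{1/r}c}_\infty<\infty$, since otherwise there is nothing to prove. As $b\in L_1(Z(\cM),\tau_1)$ is central, $|b^{1/r}c|^2=b^{2/r}|c|^2$, and therefore
\[
\norm{b^{1/r}c}_\infty^{\,r}=\norm{(b^{2/r}|c|^2)^{r/2}}_\infty=\norm{b\,|c|^r}_\infty=\norm{b\,|c^*|^r}_\infty ,
\]
the last equality again by centrality of $b$ (indeed $\norm{b^{1/r}c}_\infty=\norm{(b^{1/r}c)^*}_\infty=\norm{b^{1/r}c^*}_\infty$). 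Next I would invoke the standard structure theory of Jordan $*$-isomorphisms (see e.g. \cite{BR,Sherman}): there is a central projection $e\in\cN$ with $z_0:=J^{-1}(e)\in P(Z(\cM))$ such that $x\mapsto J(x)e$ is a normal $*$-homomorphism and $x\mapsto J(x)(\mathbf 1-e)$ a normal $*$-anti-homomorphism; both extend to $S(\cM,\tau_1)$ and commute with Borel functional calculus (as for $J$ itself, see \eqref{functional calculus J}). Hence $J(c)J(x)=J(cx)e+J(xc)(\mathbf 1-e)$, and since the two summands have orthogonal left and right supports,
\[
\norm{T_{a,J}(x)}_r^{\,r}=\norm{J(cx)e}_r^{\,r}+\norm{J(xc)(\mathbf 1-e)}_r^{\,r},\qquad 0<r<\infty .
\]

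\emph{Rewriting each summand.} By Lemma \ref{segalRN}, $\tau_2(J(w))=\tau_1(bw)$; since $z_0$ is central and $J(z_0)=e$, this yields $\tau_2(J(w)e)=\tau_1(bz_0w)$ and $\tau_2(J(w)(\mathbf 1-e))=\tau_1(b(\mathbf 1-z_0)w)$ for $w\in\cM_+$, and the identities extend to $S(\cM,\tau_1)_+$ by normality of the traces. Using that $*$- and $*$-anti-homomorphisms commute with the positive square root, for the homomorphic corner $|J(cx)e|=J(|cxz_0|)e$, whence $\norm{J(cx)e}_r^{\,r}=\tau_1\big(bz_0\,|cxz_0|^r\big)$; for the anti-homomorphic corner $|J(xc)(\mathbf 1-e)|=J(|w^*|)(\mathbf 1-e)$ with $w=xc(\mathbf 1-z_0)$ and $|w^*|^2=ww^*=xc(\mathbf 1-z_0)c^*x^*=(\mathbf 1-z_0)xcc^*x^*$, whence $\norm{J(xc)(\mathbf 1-e)}_r^{\,r}=\tau_1\big(b(\mathbf 1-z_0)\,(xcc^*x^*)^{r/2}\big)$.

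\emph{The key trace inequality.} It remains to prove that for every $0\le g\in L_1(Z(\cM),\tau_1)$ and all $y\in S(\cM,\tau_1)$,
\[
\tau_1\big(g\,|cy|^r\big)\le \norm{g\,|c|^r}_\infty\,\tau_1(|y|^r)\qquad\text{and}\qquad \tau_1\big(g\,(ycc^*y^*)^{r/2}\big)\le \norm{g\,|c|^r}_\infty\,\tau_1(|y|^r).
\]
I would prove these first for $g=\sum_i\lambda_i q_i$ a finite combination of pairwise orthogonal central projections: centrality of $q_i$ gives $q_i|cy|^r=|(cq_i)(yq_i)|^r$ and $\tau_1(|yq_i|^r)=\tau_1(q_i|y|^r)$, while the ideal property of $L_r$ gives $\norm{(cq_i)(yq_i)}_r\le\norm{cq_i}_\infty\norm{yq_i}_r$; summing over $i$ and noting $\sup_i\lambda_i\norm{cq_i}_\infty^{\,r}=\norm{g\,|c|^r}_\infty$ gives the first inequality, and the second follows in the same way with $(q_i y c q_i)^*$ replacing $(cq_i)(yq_i)$ and $\mu(\cdot)=\mu((\cdot)^*)$. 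Passing to a general bounded central $g$ by spectral approximation from below, and then to a general $g\in L_1(Z(\cM),\tau_1)_+$ via $g\wedge n\uparrow g$ and normality of $\tau_1$, gives the full statement. Applying the first inequality with $g=bz_0,\ y=xz_0$, the second with $g=b(\mathbf 1-z_0),\ y=x(\mathbf 1-z_0)$, and using $\norm{bz_0|c|^r}_\infty,\norm{b(\mathbf 1-z_0)|c|^r}_\infty\le\norm{b|c|^r}_\infty$ together with $\tau_1(z_0|x|^r)+\tau_1((\mathbf 1-z_0)|x|^r)=\tau_1(|x|^r)$, we obtain $\norm{T_{a,J}(x)}_r^{\,r}\le\norm{b|c|^r}_\infty\,\tau_1(|x|^r)=\norm{b^{1/r}c}_\infty^{\,r}\norm{x}_r^{\,r}$, and taking $r$-th roots finishes the proof. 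The main obstacle is precisely this trace inequality: one must control the noncommutativity through the central-projection identities, handle the asymmetry $|c|$ versus $|c^*|$ coming from the anti-homomorphic corner (which is harmless only because $\norm{b|c|^r}_\infty=\norm{b|c^*|^r}_\infty$ by centrality of $b$), and push the estimate from finitely-valued weights to a possibly unbounded $L_1$-density via a monotone/normality argument.
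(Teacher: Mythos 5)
Your argument is correct and follows essentially the same route as the paper's proof: St{\o}rmer's decomposition of $J$ into a $*$-homomorphic and a $*$-anti-homomorphic corner along a central projection, the Segal--Radon--Nikodym density $b$ from Lemma \ref{segalRN}, the identity $|J(w)|=J(|w|)$ (resp.\ $J(|w^*|)$) on each corner, and finally centrality of $b$ combined with $\mu(uv)\le\norm{u}_\infty\mu(v)$. The only cosmetic difference is that you split the $L_r$ quasi-norm into the two orthogonal summands first and establish the closing trace inequality by step-function approximation of the central weight, whereas the paper runs the whole computation as a single chain of equalities followed by one application of submultiplicativity.
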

\begin{proof}
By Stormer's theorem~\cite[Theorem 3.3]{Stormer} (see also \cite[Proposition 3.1]{BHS} or \cite[Theorem 4.3]{Weigt}), there exists an element $z\in P(Z(\cM))$ such that $J(z\cdot )$ is a $*$-homomorphism and $J(({\bf 1}-z)\cdot )$ is a $*$-anti-homomorphism.

By Lemma \ref{segalRN}, there exists a positive element $b\in L_1(Z(\cM),\tau)$  such that
\begin{eqnarray*}
&\qquad &   \norm{T_{a,J}(x)}_{L_r(\mathcal{N},\tau_2)}^r \\
& = & \norm{aJ(x)}_{L_r (\mathcal{N},\tau_2)}^r
\\
&  =& \norm{aJ(x z) +aJ(x ({\bf 1}- z) )    }_{L_r (\mathcal{N},\tau_2)}^r
\\
&  =& \norm{J(J^{-1}(a)\cdot x z)+J(  x \cdot J^{-1}(a)({\bf 1}-z) ) }_{L_r(\mathcal{N},\tau_2)}^r \\
&  =& \tau_2\left(\left |J\left(J^{-1}(a)\cdot x z+   x \cdot J^{-1}(a)  ({\bf 1}-z) \right) \right |^r\right) \\
&\stackrel{\mbox{\tiny\cite[Prop. 2.19]{HSZ}}}{=}&\tau_2\left(J\left(  \left | J^{-1}(a)\cdot x z \right| +  \left| \left (x \cdot J^{-1}(a)  ({\bf 1}-z) \right)^*  \right |  \right) ^r\right) \\
&=& \tau_2\left(J\left(  \left | J^{-1}(a)\cdot x z \right| ^r  +  \left| \left (x \cdot J^{-1}(a)  ({\bf 1}-z) \right)^*  \right |  ^r \right)\right)  \\
&    \stackrel{\mbox{\tiny Lemma \ref{segalRN}}}{=} & \tau_1\left(b   \left | J^{-1}(a)\cdot x z \right| ^r  + b \left| \left (x \cdot J^{-1}(a)  ({\bf 1}-z) \right)^*  \right |  ^r   \right) \\
  & =& \tau_1\left(b   \left | J^{-1}(a)\cdot x z \right| ^r \right) + \tau_1 \left(b   \left| \left (x \cdot J^{-1}(a)  ({\bf 1}-z) \right)^*  \right |  ^r   \right) \\
&    =& \tau_1\left(b   \left | J^{-1}(a)\cdot x z \right| ^r \right) + \tau_1 \left(  b \left| x \cdot J^{-1}(a)  ({\bf 1}-z)  \right |  ^r   \right) \\
&    \le & \norm{b^{\frac1r}\cdot   J^{-1}(a)}_\infty ^r  \tau_1\left(   \left | x z\right| ^r   \right)     + \norm{b^{\frac1r} \cdot   J^{-1}(a)}_\infty^r    \tau_1\left( \left|     x ({\bf 1}-z) \right |  ^r   \right) \\
& = &     \norm{b^{\frac1r} \cdot  J^{-1}(a)}_\infty ^r \cdot \norm{x}_{L_r (\cM,\tau_1)} ^r .
\end{eqnarray*}
The proof is complete.
\end{proof}

Recall the following   facts   concerning dilation operators and Boyd indices, which are  known to experts, see e.g. \cite[p.323, line 22]{KR2}. Given $0<a\le\infty$, for any measurable function $f$ in a symmetric space $E(0,a)$ and $s>0$, the dilation $D_s$ of $f$ is defined by setting
$$(D_s f)(t)=\begin{cases}
f(\frac{t}{s}), & \mbox{if }t\in(0,a)~and~\frac{t}{s}\in(0,a), \\
0 ,  & \mbox{if }t\in(0,a)~and~\frac{t}{s}\notin(0,a).
\end{cases} $$
\begin{lem}\label{J mu lemma}
Let $(\mathcal{M},\tau_1)$ and $(\cN,\tau_2)$ be two noncommutative probability spaces. 
Let $J:\cM\to \cN$ be a Jordan $*$-isomorphism.  
Let $\beta>0$ and
let $q=\chi_{_{(\beta,\infty)}}(b)$,
 where $0\leq b\in L_1(Z(\mathcal{M}),\tau_1)$ such that
$\tau_2(J(x))=\tau_1(bx), ~x\in\mathcal{M}.$ If $$ q\ge  r(zx) +  l(({\bf 1}-z)x) ,$$ then
$$\mu(J(x))\geq D_{\beta}\left( \mu(x)\right), $$
 where $z\in P(Z(\cM))$ such that $J(z\cdot )$ is a $*$-homomorphism and $J(({\bf 1}-z)\cdot )$ is a $*$-anti-homomorphism.
\end{lem}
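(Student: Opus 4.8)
The plan is to reduce the estimate to the two ``homogeneous'' pieces $zx$ and $({\bf 1}-z)x$ on which $J$ acts, respectively, as a $*$-homomorphism and a $*$-anti-homomorphism, and then to compare singular value functions via the Radon--Nikodym weight $b$. First I would write $x = zx + ({\bf 1}-z)x$ and note that $J(zx)$ and $J(({\bf 1}-z)x)$ live on orthogonal central pieces $J(z)$ and $J({\bf 1}-z)$ of $\cN$, so $\mu(J(x))$ is the decreasing rearrangement of the ``juxtaposition'' of $\mu(J(zx))$ and $\mu(J(({\bf 1}-z)x))$; similarly $\mu(x)$ is the juxtaposition of $\mu(zx)$ and $\mu(({\bf 1}-z)x)$ since $z$ is central. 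Because $D_\beta$ is compatible with such juxtapositions, it suffices to prove the inequality separately on $\cM_z$ and on $\cM_{{\bf 1}-z}$. On $\cM_{{\bf 1}-z}$, replacing $x$ by $x^*$ turns the anti-homomorphism into a homomorphism and leaves both $\mu(J(x))$ and $\mu(x)$ unchanged (using $\mu(y)=\mu(y^*)$ and that $J$ commutes with the adjoint), so in fact it is enough to treat the case where $J$ is a genuine $*$-homomorphism, $x=zx$, and $q \ge r(x) = s(|x|)$.

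In that case the heart of the matter is the identity $\tau_2(J(\cdot)) = \tau_1(b\,\cdot)$ together with the spectral calculus $J(f(|x|)) = f(J(|x|))$ from \eqref{functional calculus J}. For $s\ge 0$ set $e_s = \chi_{_{(s,\infty)}}(|x|)$, a projection below $q$ by hypothesis on the support. Then $J(e_s) = \chi_{_{(s,\infty)}}(|J(x)|)$, and
$$\tau_2\big(\chi_{_{(s,\infty)}}(|J(x)|)\big) = \tau_2(J(e_s)) = \tau_1(b e_s) \ge \beta\, \tau_1(q e_s) = \beta\,\tau_1(e_s),$$
where the inequality uses $b q \ge \beta q$ (since $q=\chi_{_{(\beta,\infty)}}(b)$ and $b$ is central) together with $e_s\le q$. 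In other words the distribution function of $|J(x)|$ dominates $\beta$ times that of $|x|$: $d_{|J(x)|}(s) \ge \beta\, d_{|x|}(s)$ for all $s$. Taking generalized inverses of distribution functions and recalling $\mu(t;y)=\inf\{s: d_{|y|}(s)\le t\}$, this translates precisely into $\mu(t; J(x)) \ge \mu(t/\beta; x) = D_\beta(\mu(x))(t)$, which is the claim on $\cM_z$.

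The step I expect to require the most care is the bookkeeping in the first paragraph: making precise that $\mu(J(x))$, $\mu(x)$ and the action of $D_\beta$ all decompose compatibly along the central splitting $z, {\bf 1}-z$, and that passing to adjoints on the ${\bf 1}-z$ part genuinely reduces the anti-homomorphism case to the homomorphism case without disturbing the support hypothesis $q \ge r(zx) + l(({\bf 1}-z)x)$ — note that $l(({\bf 1}-z)x) = r((({\bf 1}-z)x)^*) = r(x^*({\bf 1}-z))$, which is exactly the right support to control after taking adjoints. Once this reduction is in place, the distribution-function computation above is routine given \eqref{functional calculus J} and Lemma \ref{segalRN}; the only subtlety is that $q$ and $e_s$ must be genuinely comparable projections, which is guaranteed because $e_s$ is a spectral projection of $|x|$ (or of $|x^*|$ after the reduction) and hence lies below $r(x)$ (resp. $l(x)$), and therefore below $q$.
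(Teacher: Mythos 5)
Your proposal is correct, and it reaches the conclusion by a genuinely different route from the paper. The paper works directly with the variational formula $\mu(t;J(x)) = \inf\{\|J(x)({\bf 1}-p)\|_\infty : \tau_2(p)\le t\}$: it rewrites $J(x)({\bf 1}-p)$ as $J(zx({\bf 1}-J^{-1}(p))) + J(({\bf 1}-z)({\bf 1}-J^{-1}(p))x)$, invokes the fact that a Jordan $*$-isomorphism is a $\|\cdot\|_\infty$-isometry to strip off $J$, and then converts the constraint $\tau_2(p)\le t$ into $\tau_1(bp_0)\le t$, hence $\tau_1(p_0)\le t/\beta$ for $p_0\le q$ --- so the homomorphic and anti-homomorphic pieces are handled inside a single infimum, with no central decomposition of $\mu$. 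You instead split along $z$, reduce the anti-homomorphism summand to the homomorphism case by passing to adjoints, and compare distribution functions via $d_{|J(zx)|}(s) = \tau_2(J(e_s)) = \tau_1(be_s) \ge \beta\,\tau_1(e_s)$. Both arguments hinge on the same two ingredients --- the Radon--Nikodym identity of Lemma \ref{segalRN} and the bound $bq\ge \beta q$ --- but yours trades the operator-norm isometry of $J$ for the functional calculus \eqref{functional calculus J} applied to spectral projections, and the additivity of distribution functions over the central splitting does exactly the bookkeeping you flagged as delicate: since $z$ is central, $d_{|x|}=d_{|zx|}+d_{|({\bf 1}-z)x|}$ and likewise for $J(x)$, so the componentwise inequalities $d_{|J(\cdot)|}\ge \beta d_{|\cdot|}$ add up, and for decreasing right-continuous rearrangements this is equivalent to $\mu(J(x))\ge D_\beta\mu(x)$. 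Your observation that $l(({\bf 1}-z)x)=r\bigl((({\bf 1}-z)x)^*\bigr)$ is precisely why the hypothesis $q\ge r(zx)+l(({\bf 1}-z)x)$ is the right one for the adjoint trick, so the argument closes without a gap.
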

\begin{proof}
Note that $J$ is an isometry from $\cM$ onto $\cN$ \cite{K51}. 
By the definition of singular value functions,
 we have
\begin{align*}
&\qquad    \mu(t;J(x))\\
&~=~\inf \Big\{
\left\|J(x)({\bf 1}-p)\right\|_{\infty}:\ p\in P(\cN),~ p\leq  r(J(x)),\ \tau_2(p)\le t\Big\}\\
 &~=~\inf\Big\{
\left\|J(z x({\bf 1} -J^{-1}(p))) +J(( {\bf 1}-z)({\bf 1} -J^{-1}(p))   x) \right\|_{\infty}:\\
&\qquad \qquad \qquad \  p\in P(\cN),~  p\leq r(J(x)),~ \tau_2(p)\le t\Big\}\\
&~= ~\inf\Big\{
\left\| z x({\bf 1} -J^{-1}(p)) + ( {\bf 1}-z)({\bf 1} -J^{-1}(p))   x  \right  \|_{\infty}:\\
&\qquad \qquad \qquad   p\in P(\cN),~  J^{-1}(p)\leq r(zx) + l(({\bf 1}-z)x),~ \tau_2(p)\le t\Big\}\\
&~=~\inf\Big\{ \left\| z x({\bf 1} -p_0 ) + ( {\bf 1}-z)({\bf 1} -p_0)   x    \right \|_{\infty}:\\
&\qquad \qquad \qquad  \  p_0\in P(\cM),~ p_0\leq  r(zx) +  l(({\bf 1}-z)x),\ \tau_2(J(p_0))\le t\Big\}\\
&~=~\inf\Big\{\left\| z x({\bf 1} -p_0 ) + ( {\bf 1}-z)({\bf 1} -p_0)   x  \right \|_{\infty}:\\
&\qquad \qquad \qquad \  p_0\in P( \cM),~ p_0\leq  r(zx) +  l(({\bf 1}-z)x) ,\ \tau_1(bp_0)\le t\Big\}.\end{align*}
By the condition
$q=\chi_{(\beta,\infty ) } (b)$,   for any $p_0$ satisfying $p_0\leq r(zx) +  _l(({\bf 1}-z)x)\le q$ and  $\tau_1(bp_0)\le t$, we have
$$\tau_1(\beta p_0)=\tau_1 (\beta p_0  q p_0) \le  \tau_1 (p_0  bq p_0)=\tau_1(b p_0) \le t,$$
i.e.,
  $\tau(p_0)\le \frac{t}{\beta}.$
  On the other hand, the assumption   $q\ge r(zx) +   l(({\bf 1}-z)x)$ implies that
$$\mu(t;J(x))\geq\inf\left\{\left\|x({\bf 1}-p_0)\right\|_{\infty}:\  p_0 \in \cM,~ p_0\leq q,\ \tau_1(p_0)\le \frac{t}{\beta}\right\}=\mu\left(\frac{t}{\beta};x\right).$$
In other words, $\mu(J(x))\geq D _{\beta}\mu(x).$
\end{proof}

For a symmetric function space $E(0,a)$, $0<a\le \infty$, 
the Boyd indices of $E(0,a)$ are defined by
$$p_{_{X(0,a)}} = \sup_{s>1 } \frac{\ln s}{\ln \norm{D_s}_{X(0,a) \to X(0,a) }}  $$
and
$$q_{_{X(0,a) }} = \inf_{s<1 } \frac{\ln s}{\ln \norm{D_s}_{X(0,a) \to X(0,a)}} .   $$

Given a symmetric function space $E(0,b)$, $0<b\le \infty$,
 and   $0<a\le b$, we define $E(0, a)$ by setting
\begin{align*}
E(0,a):=\Big\{ &f\in S(0,a): g\in E(0,b),\\
&\mbox {where}~ g(x) =
\begin{cases}
    f(t) & \text{if } t\in (0,a), \\
    0  & \mbox{otherwise},
\end{cases}\mbox{ with }
  \norm{f}_{E(0,a)}:=\norm{g}_{E(0,b)} \Big\}.
\end{align*}
Note that, $E(0, a)$  is  a $1$-complemented  subspace of $E(0,b)$.

\begin{fact}\label{first kr fact}
Let $X(0,1)$ be a symmetric function space. For any $0<\delta\le 1$, the Boyd indices of $X(0,\delta)$ equal to those of $X(0,1)$.
\end{fact}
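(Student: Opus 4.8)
The plan is to compare the dilation operators that define the Boyd indices on $X(0,1)$ and on $X(0,\delta)$; for $a\in\{1,\delta\}$ write $h_a(s):=\norm{D_s}_{X(0,a)\to X(0,a)}$ for $s>0$. First I would record the standard properties of these operators: since $\mu(D_sf)=D_s\mu(f)$, and $D_s\mu(f)$ dominates $\mu(f)$ pointwise for $s\ge1$ and is dominated by it for $s\le1$, one has $h_a(s)\ge1$ for $s\ge1$ and $h_a(s)\le1$ for $s\le1$; moreover $h_a$ is submultiplicative on each of $[1,\infty)$ and $(0,1]$, because $D_{s_1}D_{s_2}=D_{s_1s_2}$ whenever $s_1,s_2$ lie on the same side of $1$. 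By Fekete's subadditivity lemma this yields
\[
p_{_{X(0,a)}}=\Big(\lim_{s\to\infty}\frac{\ln h_a(s)}{\ln s}\Big)^{-1},\qquad q_{_{X(0,a)}}=\Big(\lim_{s\to0^+}\frac{\ln h_a(s)}{\ln s}\Big)^{-1}
\]
(with $1/0:=\infty$); see \cite[Section~2.b]{LT2} or \cite[Chapter~II]{KPS}. Consequently the Boyd indices of $X(0,a)$ are not affected if $\ln h_a$ is perturbed by a quantity that stays bounded as $s$ ranges over $(0,\infty)$.

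By the above it remains to prove $c(\delta)\,h_1(s)\le h_\delta(s)\le h_1(s)$ for all $s>0$, with $c(\delta)=1/\lceil1/\delta\rceil$. I would regard $X(0,\delta)$ as the band of $X(0,1)$ of functions supported in $(0,\delta)$, on which the two norms agree. For the upper bound: if $f$ is supported in $(0,\delta)$, a direct check of the definition of $D_s$ shows that the dilate of $f$ taken in $X(0,\delta)$ equals $\chi_{(0,\delta)}$ times the dilate of $f$ taken in $X(0,1)$; since multiplication by $\chi_{(0,\delta)}$ is a contraction, $h_\delta(s)\le h_1(s)$. For the lower bound, put $n=\lceil1/\delta\rceil$. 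I would first check that $D_\delta\colon X(0,1)\to X(0,1)$ is contractive and bounded below by $1/n$: the function obtained by laying $n$ disjoint translates of $D_{1/n}h$ along $(0,1)$ is equimeasurable with $h$, hence has $X(0,1)$-norm $\norm{h}_{X(0,1)}$, while its norm is at most $n\norm{D_{1/n}h}_{X(0,1)}\le n\norm{D_\delta h}_{X(0,1)}$ (the last step because $1/n\le\delta\le1$ forces $\mu(D_{1/n}h)=D_{1/n}\mu(h)\le D_\delta\mu(h)=\mu(D_\delta h)$ pointwise), so $\norm{D_\delta h}_{X(0,1)}\ge n^{-1}\norm{h}_{X(0,1)}$. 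Next I would verify, again by a direct computation with the definitions, the intertwining identity that for $g\in X(0,1)$ the dilate of $D_\delta g$ taken in $X(0,\delta)$ equals $D_\delta$ applied to the dilate of $g$ taken in $X(0,1)$. Then, for $\norm{g}_{X(0,1)}=1$,
\[
h_\delta(s)\ \ge\ \frac{\norm{D_\delta(D_sg)}_{X(0,1)}}{\norm{D_\delta g}_{X(0,1)}}\ \ge\ n^{-1}\norm{D_sg}_{X(0,1)}
\]
(using $\norm{D_\delta(D_sg)}_{X(0,1)}\ge n^{-1}\norm{D_sg}_{X(0,1)}$ and $\norm{D_\delta g}_{X(0,1)}\le1$), and the supremum over such $g$ gives $h_\delta(s)\ge n^{-1}h_1(s)$.

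Finally, taking logarithms in the two-sided estimate gives $\bigl|\ln h_\delta(s)-\ln h_1(s)\bigr|\le\ln\lceil1/\delta\rceil$ for all $s>0$, so by the first paragraph the Boyd indices of $X(0,\delta)$ agree with those of $X(0,1)$. The hard part will be the two-sided comparison, and within it the lower estimate: one has to transport a near-extremal function for $D_s$ on $X(0,1)$ into the band $X(0,\delta)$ while correctly bookkeeping the truncations that the dilation operator acquires on a finite interval — which is precisely the role of the intertwining identity together with the ``bounded below'' property of $D_\delta$. The reduction in the first paragraph is classical and the upper estimate is routine.
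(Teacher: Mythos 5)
Your proof is correct and follows essentially the same route as the paper's (commented-out) argument: both reduce the claim to the two facts that $\norm{D_\delta h}_{X(0,1)}$ is equivalent to $\norm{h}_{X(0,1)}$ (the paper phrases this as the renorming $\norm{\cdot}':=\norm{D_\delta\cdot}_{X(0,1)}$ being equivalent, you phrase it as the two-sided bound with constant $\lceil 1/\delta\rceil$) and that dilations commute, which identifies $\norm{D_s}_{X(0,\delta)\to X(0,\delta)}$ with $\norm{D_s}$ on the renormed space up to bounded factors that cannot affect the limits defining the Boyd indices. You merely make explicit the constant and the intertwining identity that the paper leaves implicit.
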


\begin{fact}\label{second kr fact}
Let $X(0,1)$ be a symmetric function space.   For every $\beta>0$ and $\delta>0$, we have
\begin{align}\label{KRinequality}\left\|D_{\beta} \right\|_{X(0,\delta) \to X(0,\delta) }\geq\max\left\{
\beta^{\frac1{p_{_X}}},\beta^{\frac1{q_{_X}}}
\right\},\end{align}
where $p_X$ and $q_X$ stands for the Boyd indices of $X(0,1)$.
\end{fact}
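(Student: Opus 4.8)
The plan is to recognise that \eqref{KRinequality} is the \emph{lower} bound on the dilation norm, which one obtains simply by evaluating the supremum and infimum defining the Boyd indices at the single value $s=\beta$, after transporting the problem from $(0,1)$ to $(0,\delta)$. First I would invoke Fact \ref{first kr fact}: the Boyd indices of $X(0,\delta)$ coincide with $p_X$ and $q_X$, so it is enough to show $\|D_\beta\|_{X(0,\delta)\to X(0,\delta)}\ge\max\{\beta^{1/p_{X(0,\delta)}},\beta^{1/q_{X(0,\delta)}}\}$. At the same time I would record the two trivial normalisations of the dilation norm on $X(0,\delta)$: for $\beta\ge 1$ one has $D_\beta\chi_{(0,\delta/\beta)}=\chi_{(0,\delta)}$, so monotonicity of the norm gives $\|D_\beta\|_{X(0,\delta)}\ge 1$; for $0<\beta\le 1$ the Hardy--Littlewood--P\'olya submajorisation $D_\beta f\prec\prec f$, together with the fact (recalled earlier in the text) that every symmetric space in the sense of Lindenstrauss and Tzafriri is fully symmetric, gives $\|D_\beta\|_{X(0,\delta)}\le 1$.

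Then I would split into cases. If $\beta>1$, taking $s=\beta$ in $p_{X(0,\delta)}=\sup_{s>1}\frac{\ln s}{\ln\|D_s\|_{X(0,\delta)}}$ gives $\frac{\ln\beta}{\ln\|D_\beta\|_{X(0,\delta)}}\le p_{X(0,\delta)}$, and rearranging (legitimate since $\ln\|D_\beta\|_{X(0,\delta)}\ge 0$, with the boundary value $\|D_\beta\|_{X(0,\delta)}=1$ treated separately) yields $\|D_\beta\|_{X(0,\delta)}\ge\beta^{1/p_{X(0,\delta)}}=\beta^{1/p_X}$. Since the Boyd indices always satisfy $1\le p_X\le q_X\le\infty$ (see \cite{LT2}), $\beta^{1/p_X}\ge\beta^{1/q_X}$ for $\beta\ge 1$, so this quantity is already $\max\{\beta^{1/p_X},\beta^{1/q_X}\}$. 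Symmetrically, if $0<\beta<1$, taking $s=\beta$ in $q_{X(0,\delta)}=\inf_{s<1}\frac{\ln s}{\ln\|D_s\|_{X(0,\delta)}}$ gives $\frac{\ln\beta}{\ln\|D_\beta\|_{X(0,\delta)}}\ge q_{X(0,\delta)}$, and rearranging (both logarithms being $\le 0$, with $\ln\|D_\beta\|_{X(0,\delta)}\le 0$) yields $\|D_\beta\|_{X(0,\delta)}\ge\beta^{1/q_X}$, which is $\max\{\beta^{1/p_X},\beta^{1/q_X}\}$ for $\beta\le 1$; the case $\beta=1$ is trivial since $D_1$ is the identity.

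I do not expect a genuine obstacle here: the proof is essentially one line per case. The only thing needing care is the degenerate bookkeeping --- when $\|D_\beta\|_{X(0,\delta)}=1$, the term arising from $s=\beta$ forces the relevant Boyd index to equal $+\infty$, so that $\beta^{1/p_X}$ (respectively $\beta^{1/q_X}$) equals $1$ and the asserted inequality collapses to the normalisation recorded in the first step; conversely, if the relevant index is already $\infty$ then $\beta^{1/p_X}$ (respectively $\beta^{1/q_X}$) is $1$ and the claim is just the normalisation $\|D_\beta\|\ge 1$ (respectively, is read off from $\|D_\beta\|\le 1$ forcing equality). These cases are all absorbed by the usual conventions $c/0=\pm\infty$ and $\beta^{0}=1$, so no interpolation argument or converse Boyd-type estimate is required for the direction being claimed.
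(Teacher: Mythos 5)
Your proposal is correct and follows essentially the same route as the paper: reduce to $X(0,\delta)$ via Fact \ref{first kr fact}, plug $s=\beta$ into the supremum (for $\beta>1$) or infimum (for $0<\beta<1$) defining the Boyd indices, and rearrange, noting that the relevant term is the maximum since $p_X\le q_X$. Your extra bookkeeping for the degenerate case $\ln\|D_\beta\|=0$ (and the normalisations $\|D_\beta\|\ge 1$ for $\beta\ge1$, $\|D_\beta\|\le 1$ for $\beta\le1$) is a harmless refinement of the same argument.
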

\begin{proof}It suffices to prove \eqref{KRinequality} for   the case when $\beta\ne 1$. By Fact \ref{first kr fact}, we have
$$ p_{_X}  =p_{_{X(0,\delta)}}=\lim_{s\to \infty } \frac{\ln s}{\ln \norm{D_s}_{X(0,\delta)\to X(0,\delta)}}\stackrel{\mbox{\tiny \cite[Def.2.b.1]{LT2}}}{=}\sup_{s>1 } \frac{\ln s}{\ln \norm{D_s}_{X(0,\delta)\to X(0,\delta)}} $$
and
$$ q_{_X}=q_{_{X(0,\delta)}} =\lim_{s\to 0+ } \frac{\ln s}{\ln \norm{D_s}_{X(0,\delta) \to X(0,\delta)}}\stackrel{\mbox{\tiny \cite[Def.2.b.1]{LT2}}}{=}\inf_{0<s<1 } \frac{\ln s}{\ln \norm{D_s}_{X( 0,\delta)\to X(0,\delta)}} .$$
Therefore, we have
$$  p_{_X}\ge   \frac{\ln \beta }{\ln \norm{D_\beta}_{X(0,\delta)\to X(0,\delta)}}, \mbox{ if }\beta>1,$$
and
$$  q_{_X}\le  \frac{\ln \beta }{\ln \norm{D_\beta}_{X(0,\delta)\to X(0,\delta)}}, \mbox{ if }0<\beta <1. $$
That is,
 $$ \ln \norm{D_\beta}_{X(0,\delta)\to X(0,\delta)}\ge    \frac{\ln\beta } {p_{_X} } , \mbox{ if }\beta>1,$$
and
$$\ln \norm{D_\beta}_{X(0,\delta)\to X(0,\delta)}\ge   \frac{ \ln  \beta } {q_{_X} }, \mbox{ if }0<\beta <1. $$
We have
$$\|D_{\beta}\|_{X(0,\delta)\to X(0,\delta)}\geq\max\left\{
\beta^{\frac1{p_{_X}}},\beta^{\frac1{q_{_X}}}
\right\}.$$
This completes the proof.
\end{proof}

\begin{lem}\label{lemma JA}
Let $X(0,1)$ and $Y(0,1)$ be  symmetric function spaces.
Assume that $X(0,1)$ has  Boyd indices $p_{_X}  $ and $q_{_X} $.
Let $(\cM,\tau_1)$ and $(\cN,\tau_2)$ be noncommutative probability spaces and let $T$ be of the form
\eqref{Tform}.
For any $p_{_X}\leq r\leq q_{_X}$,
we have
$$\left\| b^{\frac1r}\cdot J^{-1}(a) \right\|_{L_{\infty}(\mathcal{N},\tau_2)}\le \norm{T}_{X(\mathcal{M},\tau_1) \to Y(\mathcal{N},\tau_2)}.   $$
\end{lem}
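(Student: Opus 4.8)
The plan is to extract information about $b^{1/r}\cdot J^{-1}(a)$ by feeding cleverly chosen test elements into $T$ and comparing the resulting norm inequality with the dilation estimate of Fact \ref{second kr fact}. Fix $r$ with $p_{_X}\le r\le q_{_X}$ and set, for $\beta>0$, the central projection $q=\chi_{_{(\beta,\infty)}}(b)$, where $0\le b\in L_1(Z(\cM),\tau_1)$ is the Radon--Nikodym derivative from Lemma \ref{segalRN}, i.e. $\tau_2(J(x))=\tau_1(bx)$. Let $z\in P(Z(\cM))$ be the Stormer projection so that $J(z\,\cdot)$ is a $*$-homomorphism and $J(({\bf 1}-z)\,\cdot)$ a $*$-anti-homomorphism. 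First I would observe that since $q\in Z(\cM)$ one may restrict attention to the corner $q\cM q$: take an arbitrary $0\le x\in X(\cM,\tau_1)$ with $r(zx)+l(({\bf 1}-z)x)\le q$ (such $x$ exist whenever $q\ne 0$, using that $\cM$ is atomless). For such $x$, Lemma \ref{J mu lemma} gives $\mu(J(x))\ge D_\beta(\mu(x))$, hence
\[
\norm{T({\bf 1})\cdot J(x)}_{Y}
\quad\text{dominates}\quad
\norm{J(x)}_{Y(\cM,\tau_2)}\ \text{from below},
\]
while $\norm{T}_{X\to Y}\norm{x}_X\ge \norm{T(x)}_Y$. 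The catch is that $T(x)=a\cdot J(x)+J(x)\cdot b$ need not be comparable to $J(x)$ for arbitrary $a$; so instead of an arbitrary $x$ I would test $T$ on elements of the form $x=$ (a projection-like element) multiplied suitably so that $a\cdot J(x)$ and $J(x)\cdot b$ are disjointly supported (their central supports are disjoint by hypothesis), and then use $\norm{a J(x)+J(x)b}_Y\ge \norm{aJ(x)}_Y$ together with the fact that on the corner where $b^{1/r}J^{-1}(a)$ is "large" one can push the $L_\infty$-norm of $b^{1/r}J^{-1}(a)$ through.

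More concretely, the key step: suppose for contradiction that $\norm{b^{1/r}J^{-1}(a)}_\infty>\norm{T}_{X\to Y}$. Then there is $\varepsilon>0$ and a nonzero spectral projection $e$ of $|J^{-1}(a)|$, say $e=\chi_{_{(\alpha,\infty)}}(|J^{-1}(a)|)$ for suitable $\alpha$, on which $b^{1/r}\ge \beta^{1/r}$ for some $\beta$ with $\beta^{1/r}\alpha\ge(\text{something})>\norm{T}_{X\to Y}$; i.e. one can arrange $q e\ne 0$ where $q=\chi_{_{(\beta,\infty)}}(b)$. Now choose $x\in X(\cM,\tau_1)$ supported under $J^{-1}(e)\wedge(\text{the part making }zx\text{ or }({\bf 1}-z)x\text{ have support }\le q)$ with $\norm{x}_X=1$. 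Applying Lemma \ref{yuyuo} in reverse spirit is not available, but Lemma \ref{J mu lemma} gives $\mu(J(x))\ge D_\beta\mu(x)$; combined with $\norm{a J(x)}_Y\ge \alpha\norm{e J(x)}_Y$ and the $1$-complementation of $Y(0,\delta)$ in $Y(0,1)$ together with Fact \ref{second kr fact} applied on $(0,\delta)$ with $\delta=\tau_1(\operatorname{supp} x)$, one obtains
\[
\norm{T}_{X\to Y}\ \ge\ \norm{a J(x)}_Y\ \ge\ \alpha\cdot\norm{D_\beta \mu(x)}_{Y}\ \ge\ \alpha\,\beta^{1/r}\,\norm{x}_X\ >\ \norm{T}_{X\to Y},
\]
where the middle inequality uses $p_{_X}\le r\le q_{_X}$ so that $\max\{\beta^{1/p_{_X}},\beta^{1/q_{_X}}\}\ge\beta^{1/r}$ for $\beta$ on the appropriate side of $1$ (this is where both Boyd indices enter). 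This contradiction yields the claim.

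\textbf{Main obstacle.} The delicate point is the bookkeeping of supports: to invoke Lemma \ref{J mu lemma} one needs a test element $x$ with $r(zx)+l(({\bf 1}-z)x)\le q$, and simultaneously one wants $J^{-1}(a)x$ (or $xJ^{-1}(a)$) to retain a definite fraction of the $L_\infty$-mass of $b^{1/r}J^{-1}(a)$ — these two demands interact through the Stormer decomposition and the disjointness of central supports of $a$ and $b$. I would handle this by first reducing, via the central projection $q\wedge z$ versus $q\wedge({\bf 1}-z)$, to the two separate cases "$a$ acts by left multiplication composed with a $*$-homomorphism" and "...anti-homomorphism", treating each on its corner; in each corner the support condition becomes a one-sided condition ($r(x)\le q$ or $l(x)\le q$), which is easy to meet. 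The other mild subtlety is choosing $\delta$ and applying Fact \ref{second kr fact} on $X(0,\delta)$ rather than $X(0,1)$; Fact \ref{first kr fact} guarantees the Boyd indices are unchanged, so the estimate $\norm{D_\beta}_{X(0,\delta)\to X(0,\delta)}\ge\beta^{1/r}$ holds, and then transporting $x$ and $J(x)$ through the trace-measure-preserving $*$-isomorphisms onto $S(0,\delta)$ (as in the proof of Theorem \ref{KRZ}) makes the scalar inequality rigorous.
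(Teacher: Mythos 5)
Your proposal is correct and follows essentially the same route as the paper: argue by contradiction, pick $\alpha,\beta$ with $\alpha\beta^{1/r}>\norm{T}$ and $pq\neq 0$ for $p=\chi_{(\alpha,\infty)}(J^{-1}(a))$, $q=\chi_{(\beta,\infty)}(b)$, test $T$ on elements of the corner $\cM_{pq}$ (which settles the support condition needed for Lemma \ref{J mu lemma}), and combine $\mu(J(x))\geq D_{\beta}\mu(x)$ with Fact \ref{second kr fact} to reach $\norm{T}\geq\alpha\beta^{1/r}$. The only cosmetic difference is that the paper works with $T=T_{a,J}$ (the case $b=0$ in \eqref{Tform} having been fixed beforehand, with $b$ in the lemma denoting the Radon--Nikodym derivative of Lemma \ref{segalRN}), so your extra care about separating $aJ(x)$ from $J(x)b$ via disjoint central supports is harmless but not needed.
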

\begin{proof} Without loss of generality, we may assume that  $$\norm{T}_{X(\mathcal{M},\tau_1) \to Y(\mathcal{N},\tau_2)}=1.$$
	
Assume by contradiction that  there exist numbers $\alpha,\beta>0$ such that $$\alpha\beta^{\frac1r}>1$$ and   $$pq\neq0,$$ where $$p=\chi_{(\alpha,\infty)}(J^{-1}(a)) \mbox{ and } q=\chi_{(\beta,\infty)}(b).$$
In particular, $b\in L_1(Z(\cM),\tau_1) $ implies that $q\in Z(\cM_1)$.	
Note that  $p=J^{-1}(\chi_{(\alpha,\infty)}(a))$  (see \eqref{functional calculus J} above) and therefore,
 $J(p)=\chi_{(\alpha,\infty)}(a)$. For every $x\in X(\mathcal{M},\tau_1) ,$ we have
\begin{align*}
\left\|T_{a,J}(x) \right\|_{Y(\mathcal{N},\tau_2)}&\geq
\norm{ \alpha e^{a}(\alpha,\infty ) J(x)}_{Y(\cN,\tau_2)} \\
& = \norm{ \alpha \cdot  J \left(  J^{-1} (e^{a}(\alpha,\infty )) \right)  J(x)}_{Y(\cN, \tau_2)} \\
& =
\alpha \left\|J(p)\cdot J(x) \right\|_{Y(\mathcal{N},\tau_2)}.
\end{align*}
If $x\in X(\mathcal{M}_{pq},\tau_1)$ (here, 
$\cM_{pq}$ stands for the reduced algebra generated by the projection $pq$),
 then
\begin{eqnarray}
\label{Taj:estimate}
\left\|
T_{a,J}(x)\right\|_{Y(\mathcal{N},\tau_2)}& \geq &  \alpha
\left\|J(p  )  J(x ) \right\|_{Y(\mathcal{N},\tau_2)}\nonumber\\
&\stackrel{\mbox{\tiny \cite[Th.4.3]{Weigt}}}{=}& \alpha   \left\|J(p  x ) \right\|_{Y(\mathcal{N},\tau_2)}\nonumber \\
& =  & \alpha \left\|\mu(J(x))\right\|_{Y(0,1)}\\
& \stackrel{\tiny \mbox{Lemma}~\ref{J mu lemma}}{\geq} &  \alpha\left\|D_{\beta}\mu(x)\right\|_{X(0,1)}.\nonumber
\end{eqnarray}

Fix $\varepsilon>0$ and set $\delta:=\tau(J(p\wedge q)).$
By Fact \ref{second kr fact},
we have
\begin{align} \label{f7}\left\|D_{\beta} \right\|_{X(0,\delta) \to X(0,\delta) }\geq\max\left\{
\beta^{\frac1{p_X}},\beta^{\frac1{q_X}}
\right\}.\end{align}
Therefore,  for any $\varepsilon>0$, there exists
  $f=\mu(f)\in E(0,1)$ with $\norm{f}_{E(0,1)}=1$
  such that $$\left \|D_{\beta}\mu(f)\right\|_{X(0,1)} \geq(1-\varepsilon)\left\|D_{\beta}\right\|_{X(0,1) \to X(0,1)} .$$
Choose $x\in E(\cM_{pq},\tau_1)$ such that $\mu(x)=f$, see e.g. \cite[Lemma 1.3]{CKS} or \cite{Kalton_S}. In particular,  we have
\begin{align}\label{1epsilon}
\left \|D_{\beta}\mu(x)\right\|_{X(0,1)}\geq(1-\varepsilon)\left \|D_{\beta} \right \|_{X(0,1)\to X(0,1)} \quad\mbox{and}\quad  \left\|x\right\|_{X(\mathcal{M},\tau_1)}=1.
\end{align}
We obtain that
\begin{eqnarray*}\left\|T \right \|_{X(\mathcal{M},\tau_1)\to Y(\mathcal{N},\tau_2)}&  \geq &\left\|T_{a,J}(x)\right \|_{Y(\mathcal{N},\tau_2)}\\
&\stackrel{\eqref{Taj:estimate}}{\geq} &\alpha \left \|D_{\beta}\mu(x)\right \|_{  X(0,1)} \\
&\stackrel{\eqref{1epsilon}}{\geq}&\alpha (1-\varepsilon)\left \|D_{\beta}\right \|_{X(0,1)\to X(0,1)} \\
&\stackrel{\eqref{f7}}{\ge} &\alpha(1-\varepsilon) \cdot  \max\left\{ \beta^{\frac1{p_X}},\beta^{\frac1{q_X}} \right\} .
\end{eqnarray*}

Since $\epsilon>0$ is arbitrarily small, it follows from $p_X\leq r\leq q_X$ that
$$\left\|T \right \|_{X(\mathcal{M},\tau_1)\to Y(\mathcal{N},\tau_2)} \ge\alpha  \beta^{\frac1r}>1,$$
which is a contradiction.
\end{proof}

Combining Lemmas \ref{yuyuo} and \ref{lemma JA}, we obtain the following noncommutative version of a result due to Kalton \cite{Kalton} (see also \cite[Proposition 7.1]{KR2}).
\begin{theorem}\label{NCKR}

Let $X(0,1)$ and $Y(0,1)$ be  symmetric function spaces.
Assume that $X(0,1)$ has  Boyd indices $p_{_X} $ and $q_{_X}$.
Let $(\cM,\tau_1)$ and $(\cN,\tau_2)$ be noncommutative probability spaces and let $T$ be of the form
\eqref{Tform}.
For any $p_{_X}\le r\le q_{_X}$ and any atomless  finite von Neumann algebras $\cM$ and $\cN$ equipped with a faithful normal tracial states $\tau_1$ and $\tau_2$ respectively, we have
  $$\norm{T}_{L_r(\cM,\tau_1)\to L_r(\cN,\tau_2)}\le \norm{T}_{X(\cM,\tau_1)\to Y(\cN,\tau_2)} . $$
\end{theorem}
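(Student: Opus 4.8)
The plan is to combine the two preceding lemmas in a direct fashion. Theorem \ref{NCKR} asserts that for any $p_{_X}\le r\le q_{_X}$,
$$\norm{T}_{L_r(\cM,\tau_1)\to L_r(\cN,\tau_2)}\le \norm{T}_{X(\cM,\tau_1)\to Y(\cN,\tau_2)},$$
and both Lemma \ref{yuyuo} and Lemma \ref{lemma JA} are phrased in terms of the single scalar quantity $\norm{b^{1/r}\cdot J^{-1}(a)}_\infty$, where $0\le b\in L_1(Z(\cM),\tau_1)$ is the Radon--Nikodym density of $\tau_2\circ J$ with respect to $\tau_1$ supplied by Lemma \ref{segalRN}. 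So the whole proof is a two-line sandwich: Lemma \ref{yuyuo} gives the upper bound
$$\norm{T}_{L_r(\cM,\tau_1)\to L_r(\cN,\tau_2)}\le \norm{b^{1/r}\cdot J^{-1}(a)}_\infty,$$
valid for every $0<r<\infty$ (in particular for $p_{_X}\le r\le q_{_X}$), while Lemma \ref{lemma JA} gives the lower bound
$$\norm{b^{1/r}\cdot J^{-1}(a)}_\infty\le \norm{T}_{X(\cM,\tau_1)\to Y(\cN,\tau_2)},$$
valid precisely for $p_{_X}\le r\le q_{_X}$. Chaining the two inequalities yields the claim.

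First I would note that the hypothesis already guarantees $\norm{T}_{X(\cM,\tau_1)\to Y(\cN,\tau_2)}<\infty$ (otherwise there is nothing to prove), so in particular $a=T({\bf 1})\in Y(\cN,\tau_2)$ and $J^{-1}(a)\in S(\cM,\tau_1)$, and the element $b^{1/r}\cdot J^{-1}(a)$ makes sense in $S(\cM,\tau_1)$. One should also record that the $b$ appearing in Lemma \ref{yuyuo} and the $b$ appearing in Lemma \ref{lemma JA} are the \emph{same} element: both are the unique positive $b\in L_1(Z(\cM),\tau_1)$ with $\tau_2(J(x))=\tau_1(bx)$ for all $x\in\cM$, which is exactly the conclusion of Lemma \ref{segalRN}. (Strictly, I only treated $T=T_{a,J}$, i.e.\ the case $b=0$ in \eqref{Tform}; the general two-sided case $T(x)=aJ(x)+J(x)b$ with $a,b$ of disjoint central supports is handled by the same computation, splitting along the central projection supporting $a$ versus the one supporting $b$, exactly as the intermediate steps in the proof of Lemma \ref{yuyuo} already do internally via the homomorphism/anti-homomorphism decomposition $z$.)

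Then I would simply write: fix $p_{_X}\le r\le q_{_X}$. By Lemma \ref{yuyuo}, for all $x\in L_r(\cM,\tau_1)$ one has $\norm{T x}_{L_r(\cN,\tau_2)}\le \norm{b^{1/r}\cdot J^{-1}(a)}_\infty\,\norm{x}_{L_r(\cM,\tau_1)}$, hence $\norm{T}_{L_r(\cM,\tau_1)\to L_r(\cN,\tau_2)}\le \norm{b^{1/r}\cdot J^{-1}(a)}_\infty$. By Lemma \ref{lemma JA}, $\norm{b^{1/r}\cdot J^{-1}(a)}_\infty\le \norm{T}_{X(\cM,\tau_1)\to Y(\cN,\tau_2)}$. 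Combining gives the desired inequality, and since $r$ was arbitrary in $[p_{_X},q_{_X}]$ we are done.

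\textbf{Expected main obstacle.} There is essentially no obstacle at the level of Theorem \ref{NCKR} itself — all the work sits inside Lemmas \ref{segalRN}, \ref{yuyuo}, \ref{J mu lemma} and \ref{lemma JA}, which are already proved in the excerpt. The only point requiring a little care in the writeup is the bookkeeping around the Jordan (rather than $*$-)isomorphism: one must be sure that the same central projection $z$ splitting $J$ into a homomorphism part $J(z\cdot)$ and an anti-homomorphism part $J(({\bf 1}-z)\cdot)$ (via Stormer's theorem, as invoked in Lemma \ref{yuyuo}) is used consistently, and that the functional calculus identity \eqref{functional calculus J}, which underlies $J(p)=\chi_{(\alpha,\infty)}(a)$ in Lemma \ref{lemma JA}, is applicable — it is, since $J$ extends to a measure-continuous Jordan $*$-isomorphism of the $\tau$-measurable operators by Corollary \ref{lemma measure-measure} and Proposition \ref{lga}. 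Beyond that, the theorem is a one-paragraph corollary of the two lemmas.
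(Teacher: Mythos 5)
Your proof is correct and is exactly the paper's argument: the paper states Theorem \ref{NCKR} as the immediate consequence of chaining Lemma \ref{yuyuo} (upper bound of the $L_r$--$L_r$ norm by $\norm{b^{1/r}\cdot J^{-1}(a)}_\infty$) with Lemma \ref{lemma JA} (upper bound of that quantity by $\norm{T}_{X\to Y}$ for $p_{_X}\le r\le q_{_X}$), with the same element $b$ from Lemma \ref{segalRN} in both. Your remarks on the reduction to the one-sided case and the consistency of the St{\o}rmer splitting match the paper's own simplifications.
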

\section{Proof of  Theorem \ref{KRZ} for the  case when $E(0,1)=L_p(0,1)$}

\begin{theorem}
Let $X(0,1  ), Y(0,1)$ be   symmetric function spaces in the sense of Lindenstrauss and Tzafriri.
Let $\cM_1$ and $ \cM_2 $ be   atomless finite von Neumann algebras  equipped with   faithful normal tracial states  $\tau_1$ and $\tau_2$, respectively.
Let $T:X(\cM_1,\tau_1) \to Y (\cM_2,\tau_2)$ be a surjective isometry of the form \eqref{Tform}. If $X(0,1)=L_p(0,1)$ up to an  equivalent norm, then $T$ is a surjective isometry  from $L_p(\cM_1,\tau_1)$ onto $L_p(\cM_2,\tau_2)$.
\end{theorem}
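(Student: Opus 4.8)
The plan is to exploit the noncommutative Kalton inequality just established (Theorem~\ref{NCKR}) together with its application to the inverse isometry. Since $T:X(\cM_1,\tau_1)\to Y(\cM_2,\tau_2)$ is a surjective isometry of the form \eqref{Tform}, i.e. $T(x)=aJ(x)+J(x)b$ with $a,b$ of mutually disjoint central supports and $J$ a Jordan $*$-isomorphism, and since $X(0,1)=L_p(0,1)$ up to an equivalent norm, the Boyd indices of $X(0,1)$ satisfy $p_{_X}=q_{_X}=p$. By Potepun's theorem (cited after Theorem~\ref{K-R-Z}) $X(0,1)$ isometric to itself forces nothing new, but the isometry gives $Y(0,1)=X(0,1)=L_p(0,1)$ up to equivalent norm as well, so $Y(0,1)$ also has both Boyd indices equal to $p$; hence Theorem~\ref{NCKR} applies in both directions.

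First I would apply Theorem~\ref{NCKR} with $r=p$ to $T$ itself: since $\norm{T}_{X(\cM_1,\tau_1)\to Y(\cM_2,\tau_2)}=1$ (it is a surjective isometry), we obtain
\[
\norm{T}_{L_p(\cM_1,\tau_1)\to L_p(\cM_2,\tau_2)}\le 1.
\]
Here one must check that $T$ indeed maps $L_p(\cM_1,\tau_1)$ into $L_p(\cM_2,\tau_2)$ — this follows because $a=T({\bf 1})\in Y(\cM_2,\tau_2)$ and, more importantly, the Radon--Nikodym element $b_0\in L_1(Z(\cM_1),\tau_1)$ from Lemma~\ref{segalRN} together with Lemma~\ref{yuyuo} controls the $L_p$-norm of $T(x)$ by $\norm{b_0^{1/p}J^{-1}(a)}_\infty\norm{x}_{L_p}$, and Lemma~\ref{lemma JA} with $r=p$ gives $\norm{b_0^{1/p}J^{-1}(a)}_\infty\le\norm{T}_{X\to Y}=1<\infty$ (summing the contributions of $a$ and $b$ as in the proof of Lemma~\ref{yuyuo}). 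Thus $T$ restricts to a contraction $L_p(\cM_1,\tau_1)\to L_p(\cM_2,\tau_2)$.

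Next I would run the identical argument on $T^{-1}$. Since $T$ is a surjective isometry of the form \eqref{Tform}, so is $T^{-1}$ — indeed $T^{-1}(y)=J^{-1}(y(a+b)^{-1})$ after the reductions in Section~\ref{subs:KRZ} (using that $a+b$ is invertible with inverse in the relevant abelian subalgebra, as $T$ is onto), which is again of the prescribed form with Jordan $*$-isomorphism $J^{-1}$. Applying Theorem~\ref{NCKR} to $T^{-1}$ with $r=p$ yields $\norm{T^{-1}}_{L_p(\cM_2,\tau_2)\to L_p(\cM_1,\tau_1)}\le 1$, so $T^{-1}$ is also an $L_p$-contraction. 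Combining, $T$ is a bijective isometry from $L_p(\cM_1,\tau_1)$ onto $L_p(\cM_2,\tau_2)$: for $x\in L_p(\cM_1,\tau_1)$ we get $\norm{Tx}_p\le\norm{x}_p=\norm{T^{-1}Tx}_p\le\norm{Tx}_p$, forcing equality, and surjectivity onto $L_p(\cM_2,\tau_2)$ follows since $T^{-1}$ maps $L_p(\cM_2,\tau_2)$ into $L_p(\cM_1,\tau_1)$ and $T$ is already known to be onto $Y(\cM_2,\tau_2)\supset L_p(\cM_2,\tau_2)$.

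The main obstacle I anticipate is the bookkeeping around the two-sided form $T(x)=aJ(x)+J(x)b$: Theorem~\ref{NCKR} as stated is derived for the one-sided operator $T_{a,J}$, and one needs to verify carefully that the estimates of Lemmas~\ref{yuyuo}, \ref{J mu lemma} and \ref{lemma JA} go through for the full two-sided operator — the decomposition of $\cM$ via the central projection $z$ separating the homomorphism and anti-homomorphism parts of $J$ (Stormer's theorem) handles this, and the contributions of the $a$-part (on $z$) and the $b$-part (on ${\bf 1}-z$) are orthogonal, so the $L_p$-norms add in the $p$-th power exactly as in the displayed chain of equalities in the proof of Lemma~\ref{yuyuo}. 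A second, more routine point is confirming that the reduction in Section~\ref{subs:KRZ} (passing to polar decompositions and absorbing partial isometries) is compatible with the $L_p$-setting so that $a+b$ is genuinely invertible; this is automatic once one knows $T$ is onto, since otherwise $T^{-1}$ could not be everywhere defined.
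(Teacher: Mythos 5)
Your overall strategy is exactly the paper's: apply Theorem \ref{NCKR} with $r=p$ to $T$ and then to $T^{-1}$, and combine the two contraction estimates to conclude that $T$ restricts to a surjective $L_p$-isometry. Your remarks about extending the one-sided lemmas to the two-sided form \eqref{Tform} via the central projection separating the homomorphism and anti-homomorphism parts of $J$, and about $T^{-1}$ again being of the form \eqref{Tform}, are sound and in fact no less careful than the paper's own treatment of these points.

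The one step that is not adequately justified is the claim that $Y(0,1)=L_p(0,1)$ up to an equivalent norm — equivalently, that the Boyd indices of $Y(0,1)$ equal $p$. You genuinely need this in order to apply Theorem \ref{NCKR} to $T^{-1}:Y(\cM_2,\tau_2)\to X(\cM_1,\tau_1)$, because Lemma \ref{lemma JA} constrains the admissible $r$ by the Boyd indices of the \emph{domain} space, which for $T^{-1}$ is $Y$. You derive the claim from ``the isometry'' via the Potepun-type statement quoted after Theorem \ref{K-R-Z}; but the commutative Potepun theorem applies to an isometry between the function spaces $X(0,1)$ and $Y(0,1)$, which you do not have, and the noncommutative version is presented in the paper precisely as a corollary of Theorem \ref{K-R-Z}(1) — the statement you are in the middle of proving — so as written this step is circular. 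The paper closes the gap differently: if $Y(0,1)$ coincided with $L_q(0,1)$ for some $q\ne p$, then $L_p(\cM_1,\tau_1)$ would be isometric to $L_q(\cM_2,\tau_2)$, which is impossible by Fack's result; and if $Y(0,1)\ne L_q(0,1)$ for every $q$, then part (2) of Theorem \ref{KRZ} applies and forces $T$ to be a unitary times a trace-preserving Jordan $*$-isomorphism, hence singular-value preserving, which yields $Y(0,1)=X(0,1)=L_p(0,1)$, a contradiction. With that argument (or any other correct proof that $p_{_Y}=q_{_Y}=p$) inserted, your proof goes through.
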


\begin{proof}

Note that $Y(0,1)$ coincides with $L_p(0,1)$ up to equivalent norms.
Indeed, 
if 
$Y(0,1) $ coincides with $L_q(0,1)$ for some $q\in [1,\infty )$, then 
$L_{p}(\cM_1,\tau_1)$ is isometric to $L_q(\cM_2,\tau_2)$, 
which is impossible when $p\ne q$, see e.g.  \cite[p.267]{Fack87}. 
Therefore,  $Y(0,1)\ne L_q(0,1)$ for all $1\le q<\infty $.
It follows from Theorem \ref{KRZ} (2) that isometries between $Y(\cM_2,\tau_2)$
and $X(\cM_1,\tau_1)$ are generated by unitary operators and trace-preserving Jordan $*$-isomorphisms, which are clearly $L_p$-isometries.
This implies that $Y(0,1)$ coincides with $L_p(0,1)$, which is a contradiction.

By Theorem \ref{NCKR}, we have
$$\norm{T}_{L_p(\cM_1,\tau_1)\to L_p(\cM_2,\tau_2)}\le \norm{T}_{X(\cM_1,\tau_1)\to Y(\cM_2,\tau_2)}= 1 . $$
Applying Theorem \ref{NCKR} to the surjective isometry  $T^{-1}:Y(\cM_2,\tau_2)\to X(\cM_1,\tau_1)$, we obtain
 $$\norm{T^{-1}}_{L_p(\cM_2,\tau_2)\to L_p(\cM_1,\tau_1)}\le \norm{T^{-1} }_{Y(\cM_2,\tau_2)\to X(\cM_1,\tau_1)} =1 . $$
 Hence, $ T$ is an isometry from $L_p(\cM_1,\tau_1)$ onto $L_p(\cM_2,\tau_2)$.
\end{proof}

\begin{remark}
If a symmetric function space $E(0,1)=L_p(0,1)$ up to equivalent norms, then isometries on $E(0,1)$ must be isometries on $L_p(0,1)$ (see Theorem~\ref{KRZ1}) but the converse fails.

Indeed,  let $L_1(0,1)$ be equipped with a norm $\norm{f}=\int_0^\frac12 \mu(s;f)ds$, $f\in L_1(0,1 )$.
 Let $T$ be an arbitrary  surjective isometry on $(L_1(0,1),\norm{\cdot})$.
 By \cite{KR2,R} or \cite{Z77}, we have
  $$(Tf)(t)= a(t) f(\sigma(t)) , ~t\in (0,1), $$
where $0\le a\in L_1(0,1)$ and $\sigma$ is an invertible Borel mapping from $(0,1)$ onto $(0,1)$.
  Assume that $a\ne \chi_{(0,1)}.$
  Then there exists $A,B,C\subset (0,1)$ such that
  $a>1$ on $A$ and $a<1$ on $B$, $a=1$ on $C$.
  For any subset $C'$ of $C$ with $m(C')<\frac12$, we have
    $$\norm{\chi_{_{C'}}(\sigma(\cdot )) } =\norm{a(\cdot) \chi_{_{C'}}(\sigma(\cdot ))  }=\norm{T(\chi_{_{C'}}) (\cdot )} =\norm{\chi_{_{C'}} (\cdot)} .$$
   In particular,  this implies that $\sigma$ preserves the measure on $C$.

Assume by contradiction that $m(A)\ne 0$. Assume that $m(A\cup C) \ge \frac12$. 
\begin{enumerate}
     \item  If $ m(A)\ge \frac12$, then there exists a subset $A'$ of $A$ such that $m(A')=\frac12$;
       \item or if $ 0< m(A)< \frac12$, then there exists a subset $C'$ of $C$ such that $m(A\cup C')=\frac12$. Set $A':=A\cup C'$.
\end{enumerate}
Therefore,
\begin{align*}
\norm{\chi_{_{A'}}(\sigma(\cdot )) } <\norm{a(\cdot ) \chi_{_{A'}}(\sigma(\cdot ))  }=\norm{T(\chi_{_{A'}} )(\cdot)} =\norm{\chi_{_{A'}}(\cdot)}=\frac12.
\end{align*}
This implies that  $m(\sigma^{-1}(A')) <m(A')=\frac12$.
Now, define $x=\chi_{_{A'}} +  
 \chi_{_{B'}}$, where $B'\subset (0,1)\setminus A'$.
We have
$$\frac12=  \norm{x} =\norm{Tx}=\norm{ a \chi_{_{\sigma^{-1}(A') }}  + 
a  \chi_{_{\sigma^{-1}(B')} }  } >
\norm{ a \chi_{_{\sigma^{-1}(A') }} } =\norm{T \chi_{_{A'}}}=\norm{  \chi_{_{A'}}  }=   \frac12,$$
which is a contradiction.

  The case when $m(A)<\frac12$ and $m(A\cup C)<\frac12$ follows from the above result  by taking the isometry $T^{-1}$.

 We conclude that $a=1$, $a.e.$.
 This shows that isometry on $(
 L_p(0,1),\norm{\cdot}_p)$ may not be an isometry on $L_p(0,1)$ equipped with an equivalent norm.
\end{remark}
\section{Proof of Theorem \ref{K-R-Z} for the case when $E(0,1)$ is separable}

Theorem \ref{K-R-Z} for the case when $E(0,1)$ is separable (or minimal) is a   corollary  of Theorem \ref{KRZ}.

\begin{cor}\label{order continuous}
Let $E(0,1  )$ and $ F(0,1)$ be separable (or minimal)    symmetric function spaces.
Let $\cM_1$ and $ \cM_2 $ be   atomless finite von Neumann algebras  equipped with   faithful normal tracial states  $\tau_1$ and $\tau_2$, respectively.
Let $T:E(\cM_1,\tau_1) \to F(\cM_2,\tau_2)$ be a surjective isometry.
Then,  \begin{enumerate}
        \item if $E(0,1)=L_p(0,1)$ up to an  equivalent norm, then $T$ is a surjective isometry  from $L_p(\cM_1,\tau_1)$ onto $L_p(\cM_2,\tau_2)$;
        \item if $\norm{\cdot}_E$ is not equivalent to $\norm{\cdot}_p$, then
$$T(x)=u\cdot J(x), ~\forall x\in E(\cM_1,\tau_1),$$
where $u$ is a unitary operator in $\cM_2$ and $J:S(\cM_1,\tau_1)\rightarrow S(\cM_2,\tau_2)$ is a trace-preserving Jordan $*$-isomorphism.
\end{enumerate}
\end{cor}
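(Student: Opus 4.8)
The plan is to deduce Corollary~\ref{order continuous} from Theorem~\ref{KRZ} by first establishing that any surjective isometry $T:E(\cM_1,\tau_1)\to F(\cM_2,\tau_2)$ between separable (equivalently, order continuous) noncommutative symmetric spaces is automatically of the elementary form required in the hypothesis of Theorem~\ref{KRZ}. Since $E(0,1)$ and $F(0,1)$ are separable (or minimal), the spaces $E(\cM_1,\tau_1)$ and $F(\cM_2,\tau_2)$ have order continuous norm, hence are contained in $S_0(\cM_i,\tau_i)$, and moreover they have the Fatou property if and only if\ldots{}---but here we do not even need Fatou: the description of surjective isometries on noncommutative symmetric spaces with order continuous norm is exactly the main result of \cite{HS}, which asserts that $T$ has elementary form. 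Concretely, there is a central projection $z\in\cM_2$, a surjective Jordan $*$-isomorphism $J':S(\cM_1,\tau_1)\to S(\cM_2,\tau_2)$, and elements $A,B\in F(\cM_2,\tau_2)$ (which, since the trace is finite, are single elements rather than sequences, by the ``in particular'' clause of Theorem~\ref{11212}) such that
$$T(x)=J'(x)Az+BJ'(x)({\bf 1}-z),\qquad \forall x\in E(\cM_1,\tau_1)\cap\cM_1.$$
One first extends this identity from $E(\cM_1,\tau_1)\cap\cM_1$ to all of $E(\cM_1,\tau_1)$ using $\sigma(E,E^\times)$-density of $\cM_1\cap E$ (Lemma~\ref{dense}) together with the $\sigma(E,E^\times)$--$\sigma(F,F^\times)$-continuity of $T$ (Theorem~\ref{weak-continous of T}); for separable spaces the weak topology coincides with the weak$^*$ topology since $E^\times(\cM,\tau)=E(\cM,\tau)^*$ isometrically, so this is routine.

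Having placed $T$ in the elementary form, I then simply invoke Theorem~\ref{KRZ}. In case (1), where $E(0,1)=L_p(0,1)$ up to an equivalent norm, Theorem~\ref{KRZ}(1) gives directly that $T$ is a surjective isometry from $L_p(\cM_1,\tau_1)$ onto $L_p(\cM_2,\tau_2)$. In case (2), where $\norm{\cdot}_E$ is not equivalent to $\norm{\cdot}_p$ for any $p$, Theorem~\ref{KRZ}(2) yields a unitary $u\in\cM_2$ with $A+B=u$ and a trace-preserving Jordan $*$-isomorphism $J:S(\cM_1,\tau_1)\to S(\cM_2,\tau_2)$, defined by $J(x)=u^*J'(x)zu+J'(x)({\bf 1}-z)$, such that $T(x)=u\cdot J(x)$ for all $x\in E(\cM_1,\tau_1)$. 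This is precisely the asserted conclusion, so no further work is needed beyond citing the two already-available theorems.

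The only genuinely non-trivial point is the first step: that a surjective isometry between separable noncommutative symmetric spaces has elementary form. This is not proved afresh here---it is the content of \cite[Theorem~4.4 or its statement as Theorem~\ref{11212} with $E$ order continuous]{HS}---but one should be careful that \cite{HS} is stated for spaces having order continuous norm (or minimal), which covers ``separable or minimal'' exactly, and that the standing assumptions on $\cM_1,\cM_2$ (atomless, $\sigma$-finite, here even finite with a faithful normal tracial state) match. In particular the ``norm not proportional to $\norm{\cdot}_2$'' hypothesis appearing in Theorem~\ref{11212} must be discussed: if $\norm{\cdot}_E=\lambda\norm{\cdot}_{L_2}$ then $E(0,1)=L_2(0,1)$ up to a proportional norm, and we land in case (1) with $p=2$, where the conclusion (that $T$ is an $L_2$-isometry) is standard (e.g.\ by the Yeadon-type description of isometries on Hilbert space, Theorem~\ref{Yeadon}); so this degenerate case is handled separately and causes no difficulty. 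I expect the write-up to be short: state the reduction to elementary form via \cite{HS}, note the $L_2$ case separately, and then quote Theorem~\ref{KRZ}.
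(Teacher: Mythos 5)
Your overall architecture coincides with the paper's: invoke the main result of \cite{HS} to put $T$ in elementary form on $\cM_1$, extend the formula to all of $E(\cM_1,\tau_1)$, and then quote Theorem~\ref{KRZ}. The one place where you diverge is the extension step, and there your version is under-justified. You propose to pass from $E(\cM_1,\tau_1)\cap\cM_1$ to $E(\cM_1,\tau_1)$ via $\sigma(E,E^\times)$-density (Lemma~\ref{dense}) and the weak continuity of $T$ (Theorem~\ref{weak-continous of T}), calling this ``routine''. But weak continuity of $T$ only tells you that $T(x_\alpha)\to T(x)$ weakly; to identify the limit with $J'(x)Az+BJ'(x)({\bf 1}-z)$ you must also show that the elementary-form expression itself converges, i.e.\ that $x\mapsto J'(x)Az+BJ'(x)({\bf 1}-z)$ is continuous in a Hausdorff topology in which both sides converge. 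That is precisely the nontrivial content of the Fatou-property case (Corollary~\ref{cor KRZ Fatou}), where the paper works with the specific sequence $x_n=xe^{x}(0,n)$ and checks $\sigma(F,F^\times)$-convergence by hand. In the separable/minimal case this detour is unnecessary: order continuity gives \emph{norm} density of $\cM_1\cap E(\cM_1,\tau_1)$, so $x_n\to x$ in norm implies $x_n\to x$ and $T(x_n)\to T(x)$ in measure, while measure continuity of $J'$ (Corollary~\ref{lemma measure-measure} and Proposition~\ref{lga}) gives $J'(x_n)Az+BJ'(x_n)({\bf 1}-z)\to J'(x)Az+BJ'(x)({\bf 1}-z)$ in measure; Hausdorffness of the measure topology then closes the argument. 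This is the route the paper takes, and you should replace your weak-topology step by it. Your separate treatment of the case $\norm{\cdot}_E=\lambda\norm{\cdot}_{2}$ (excluded from the hypotheses of the elementary-form theorem) is a sensible precaution that the paper's write-up passes over silently.
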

\begin{proof}
By \cite[Theorem 1.2]{HS}, any surjective isometry $ T:E(\cM_1,\tau_1)\rightarrow F(\cM_2,\tau_2)$ is of the form
\begin{align}\label{we}
T(x) = J(x)Az + BJ(x) ({\bf 1} - z), \quad \forall x \in \cM_1,
\end{align}
where $ A, B \in F(\cM_2, \tau_2) $, $ z \in P(Z(\cM_2)) $, and $ J:\cM_1\rightarrow \cM_2 $ is a surjective Jordan $*$-isomorphism.

Since $J$ is surjective, 
it follows form Corollary \ref{lemma measure-measure} that $J:\cM_1\rightarrow \cM_2$ is measure-measure continuous.
By Proposition \ref{lga},
 $J$ can be uniquely extended to a Jordan $*$-isomorphism $S(\cM_1,\tau_1)$ onto $S(\cM_2,\tau_2)$, which is continuous in measure topology.

Since $E(\cM_1,\tau_1)$ has order continuous norm (or is minimal), it follows that
for any $x\in E(\cM_1,\tau_2)$, there exists a sequence $\{x_n\}_{n\ge 1}\subset \cM_1$ such that $$
\mbox{$\norm{x_n-x}_E \to 0$ as $n\to \infty $}$$ (therefore, $x_n\to x$ in measure as $n\to \infty $~\cite[Proposition 11]{DP2014}).
In particular, we have $T(x_n)\to T(x)$ in $\norm{\cdot}_E$  as $n\to \infty$. In particular, $$T(x_n)\to T(x) \mbox{  as $n\to \infty $}$$ 
in measure~\cite[Proposition 11]{DP2014}.
Moreover, since $J$ is continuous with respect to the measure topology, it follows that $J(x_n)\to J(x)$ in measure as $n\to \infty $.
By \eqref{we} and~\cite[Proposition 2.6.11]{DPS}, we obtain that
\begin{align*}
T(x) =J(x)Az +B J(x) ({\bf 1}-z),~\forall x\in E(\cM_1,\tau_1), \end{align*}
where $ A, B \in F(\cM_2, \tau_2) $, $ z \in P(Z(\cM)) $, and $ J $ is a surjective Jordan $*$-isomorphism from $S(\cM_1,\tau_1)$ onto $S(\cM_2,\tau_2)$.
Applying Theorem \ref{KRZ}, we   complete the proof.
\end{proof}

\section{Proof of Theorem \ref{K-R-Z} for the case when $E(0,1)$ has the Fatou property}

The following corollary  provides a proof for Theorem \ref{K-R-Z}  in the setting  when $E(\cM,\tau)$ has the Fatou property.

\begin{cor}\label{cor KRZ Fatou}
Let $E(0,1  )$ and $ F(0,1)$ be   symmetric function spaces having the Fatou property.
Let $\cM_1$ and $ \cM_2 $ be   atomless finite von Neumann algebras  equipped with   faithful normal tracial states  $\tau_1$ and $\tau_2$, respectively.
Let~$T:E(\cM_1,\tau_1) \to F(\cM_2,\tau_2)$ be a surjective isometry.
Then,  \begin{enumerate}
        \item if $E(0,1)=L_p(0,1)$ up to an  equivalent norm, then $T$ is a surjective isometry  from $L_p(\cM_1,\tau_1)$ onto $L_p(\cM_2,\tau_2)$;
        \item if $\norm{\cdot}_E$ is not equivalent to $\norm{\cdot}_p$, then
$$T(x)=u\cdot J(x), ~\forall x\in E(\cM_1,\tau_1),$$
where $u$ is a unitary operator in $\cM_2 $ and $J:S(\cM_1,\tau_1)\rightarrow S(\cM_2,\tau_2)$ is a trace-preserving Jordan $*$-isomorphism.
\end{enumerate}
\end{cor}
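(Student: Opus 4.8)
The plan is to reduce Corollary \ref{cor KRZ Fatou} to the already-established Theorem \ref{KRZ} by first showing that a surjective isometry $T:E(\cM_1,\tau_1)\to F(\cM_2,\tau_2)$ between symmetric spaces with the Fatou property necessarily has the elementary form $T(x)=J'(x)Az+BJ'(x)({\bf 1}-z)$ of \eqref{formula T positive}, with $J'$ extending to a Jordan $*$-isomorphism of the $\tau$-measurable operators. Once that normal form is in hand, the two cases are exactly the hypotheses of Theorem \ref{KRZ}, and the conclusion (an $L_p$-isometry, or $T=u\cdot J$ with $J$ trace-preserving) follows verbatim.

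First I would invoke Theorem \ref{th:iso} from Chapter \ref{C6}: since $E(\cM_1,\tau_1)$ and $F(\cM_2,\tau_2)$ are strongly symmetric with the Fatou property and (being noncommutative symmetric spaces over atomless finite von Neumann algebras with unit carrier projection) have norms not proportional to $\norm{\cdot}_2$ unless they are $L_2$, there exist disjointly-left-supported $A_i$, disjointly-right-supported $B_i$, a central projection $z\in\cM_2$, and a surjective Jordan $*$-isomorphism $J:\cM_1\to\cM_2$ with
$$T(x)=\sigma(F,F^\times)\text{-}\sum_{i=1}^\infty J(x)A_i z+B_iJ(x)({\bf 1}-z),\qquad x\in E(\cM_1,\tau_1)\cap\cM_1.$$
Because the traces are finite, the ``in particular'' clause of Theorem \ref{11212}/Theorem \ref{th:iso} collapses the sums to single elements $A:=\sum_i A_i z\in F(\cM_2,\tau_2)$ and $B:=\sum_i B_i({\bf 1}-z)\in F(\cM_2,\tau_2)$, so $T(x)=J(x)Az+BJ(x)({\bf 1}-z)$ on $\cM_1$. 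Next, since $J$ is a surjective Jordan $*$-isomorphism between finite von Neumann algebras it is measure–measure continuous by Corollary \ref{lemma measure-measure}, hence by Proposition \ref{lga} it extends uniquely to a Jordan $*$-isomorphism $J':S(\cM_1,\tau_1)\to S(\cM_2,\tau_2)$, continuous in the measure topology. It then remains to check that the formula $T(x)=J'(x)Az+BJ'(x)({\bf 1}-z)$ persists for all $x\in E(\cM_1,\tau_1)$, not merely for bounded $x$; this follows because $\cM_1\cap E(\cM_1,\tau_1)$ is $\sigma(E,E^\times)$-dense in $E(\cM_1,\tau_1)$ (Lemma \ref{dense}), $T$ is $\sigma(E,E^\times)$–$\sigma(F,F^\times)$-continuous (Theorem \ref{weak-continous of T}), and $x\mapsto J'(x)Az+BJ'(x)({\bf 1}-z)$ is also $\sigma$-continuous — the latter because left/right multiplication by fixed $\tau$-measurable elements together with the weakly continuous Jordan isomorphism respects the weak topology.

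Having placed $T$ into the hypothesis format of Theorem \ref{KRZ}, case (1) ($E(0,1)=L_p(0,1)$ up to an equivalent norm) gives directly that $T$ is a surjective isometry of $L_p(\cM_1,\tau_1)$ onto $L_p(\cM_2,\tau_2)$, and case (2) ($\norm{\cdot}_E$ not equivalent to $\norm{\cdot}_p$, which is exactly the condition $E(0,1)$ does not coincide with $L_p(0,1)$ as sets together with Potepun's result that isometric symmetric spaces coincide up to equivalent norms) yields a unitary $u\in\cM_2$ with $A+B=u$ and $T(x)=u\cdot J(x)$, where $J:S(\cM_1,\tau_1)\to S(\cM_2,\tau_2)$ defined by $J(x)=u^*J'(x)zu+J'(x)({\bf 1}-z)$ is a trace-preserving Jordan $*$-isomorphism. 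This is precisely the asserted dichotomy, completing the proof.

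The main obstacle I anticipate is the passage from the bounded-operator formula to the full space: one must verify that both sides of $T(x)=J'(x)Az+BJ'(x)({\bf 1}-z)$ are $\sigma(E,E^\times)$–$\sigma(F,F^\times)$-continuous, which for the right-hand side requires knowing that multiplication by the fixed (possibly unbounded but $\tau$-measurable) elements $A,B$ and the action of the measure-continuous Jordan isomorphism $J'$ together preserve weak convergence — this is where care is needed, e.g. approximating $A,B$ by their bounded truncations $A e^{|A|}(0,n)$ and using that the spectral projections of $A$, $B$ lie in the reduced algebras determined by $z$ and ${\bf 1}-z$ so that the disjointness of supports keeps the truncated expressions controlled. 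Everything else is an application of results already proved in Chapters \ref{S:ball}, \ref{C6} and the first sections of Chapter \ref{S:KRZ}.
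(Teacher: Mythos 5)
Your proposal is correct and follows essentially the same route as the paper: apply Theorem \ref{th:iso} to obtain the elementary form on bounded operators (the series collapsing to single $A,B$ because the traces are finite), extend $J$ to $S(\cM_1,\tau_1)$ via Corollary \ref{lemma measure-measure} and Proposition \ref{lga}, pass to the limit using the $\sigma(E,E^{\times})$--$\sigma(F,F^{\times})$-continuity of $T$ from Theorem \ref{weak-continous of T}, and then invoke Theorem \ref{KRZ}. The only point where the paper is more careful than your sketch is the limit passage: rather than asserting that $x\mapsto J'(x)Az+BJ'(x)(\mathbf{1}-z)$ is weakly continuous on all of $E(\cM_1,\tau_1)$ (multiplication by the possibly unbounded elements $A,B$ is not obviously $\sigma$-continuous), the paper restricts to positive $x$, works with the specific truncations $x_n=xe^{x}(0,n)$, verifies the weak convergence of the right-hand side by a direct trace computation against $y\in F(\cM_2,\tau_2)^{\times}$ combined with the Fatou property, and concludes by linearity --- which is in effect the repair you anticipate in your closing paragraph.
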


\begin{proof}
By Theorem \ref{th:iso}, any surjective isometry $ T:E(\cM_1,\tau_1)\rightarrow F(\cM_2,\tau_2)$ is of the form
\begin{align}\label{we'}
T(x) =  J(x)A z + BJ(x) ({\bf 1} - z), \quad \forall x \in \cM_1,
\end{align}
where $ A, B \in F(\cM_2, \tau_2) $, $ z \in P(Z(\cM_2)) $, and $ J:\cM_1\rightarrow \cM_2 $ is a surjective Jordan $*$-isomorphism.

For any $x\in E(\cM_1,\tau_1)^+$,
defining $x_n=x e^{x}(0,n)$, $n\ge 1$.
We have $x-x_n\downarrow 0$ as $n\to \infty $.
Note  that $$
x_n\stackrel{\sigma(E,E^{\times})}{\longrightarrow}x$$
as $n\to \infty$.
Since $T$ is $\sigma(E,E^\times)-\sigma(F,F^\times)$-continuous (see Theorem \ref{weak-continous of T}), it follows that $$T(x_n)\stackrel{\sigma(E,E^{\times})}{\longrightarrow}T(x)$$
as $n\to \infty$. 
Note that $J$  can be extended to a Jordan $*$-isomorphism from $S(\cM_1,\tau_1)$ onto $S(\cM_2,\tau_2)$ (see Proposition \ref{lga}), which is continuous with respect to the measure topology.
Moreove, we have
$$J(x_n) =J(x )J(e^{x}(0,n) ) \to J(x)$$ as $n\to \infty $ in the measure topology
and $$J(e^{x}(0,n) ) \uparrow  J(e^{x}(0,\infty ))  . $$
We have 
$$ J(x_n)A z +BJ(x_n ) ({\bf 1}-z )  \to J(x )A z +BJ(x  ) ({\bf 1}-z )   $$
as $n\to \infty$ in measure. 
Moreover, we have 
$$\norm{J(x_n)A z +BJ(x_n ) ({\bf 1}-z ) }_{F(\cM_2,\tau_2)} =\norm{T(x_n)}_{F(\cM_2,\tau_2)} =\norm{x_n}_{E(\cM_1,\tau_1)} \le \norm{x}_{E(\cM_1,\tau_1)}.$$
By the Fatou property, we obtain that \cite[Theorem 32]{DP2014}
$$\norm{  J(x )A z +BJ(x  ) ({\bf 1}-z ) }_{F(\cM_2,\tau_2)}  \le \norm{x }_{E(\cM_1,\tau_1)}.$$
For any $y\in F(\cM_2,\tau_2)^\times$, we have 
\begin{eqnarray*}
& & \tau\left(   \left( J(x_n)A z +BJ(x_n ) ({\bf 1}-z )\right)  y \right )\\
& = &
\tau\Big(   \Big( J(x )J(e^{x}(0,n) ) A z +BJ(x )J(e^{x}(0,n) )  ({\bf 1}-z )   \Big)  y \Big )\\
& \to &
\tau\left(   \left( J(x )  A z +BJ(x )  ({\bf 1}-z )   \right)  y \right )
\end{eqnarray*}
as $n\to \infty$.   
Hence, $$  J(x_n)A z +BJ(x_n ) ({\bf 1}-z ) \to 
  J(x )  A z +BJ(x )   ({\bf 1}-z )   \in F(\cM_2,\tau_2)$$
as $n\to \infty $ in the $\sigma(F,F^\times)$-topology.
Since any element in $  E(\cM_1,\tau_1)$ is a linear combination of 
four elements in $E(\cM_1,\tau_1)_+$ and $ J,T$ are linear operators, it follows from \eqref{we'} that
\begin{align*}
T(x) = J(x) A z +B J(x)  ({\bf 1}-z),~\forall x\in E(\cM_1,\tau_1), \end{align*}
where $ A, B \in F(M_2, \tau_2) $, $ z \in P(Z(\cM_2)) $, and $ J :S(\cM_1,\tau_1)\rightarrow S(\cM_2,\tau_2)$ is a  Jordan $*$-isomorphism. The statement of the theorem follows from Theorem~\ref{KRZ}.
\end{proof}

\begin{remark}
Let $E(0,1  ) $ and $ F(0,1)$ be   symmetric function spaces in the sense of Lindenstrauss and Tzafriri.
Let $\cM_1$ and $ \cM_2 $ be   finite von Neumann algebras  equipped with   faithful normal tracial states  $\tau_1$ and $\tau_2$, respectively,  and let  $E(\cM_1,\tau_1)$ and $F(\cM_2,\tau_2)$ be the noncommutative  symmetric spaces corresponding to $E(0,1 )$ and $F(0,1)$, respectively.
Theorem \ref{K-R-Z}  shows that if $ E(\cM_1,\tau_1)  $ is isometric to $ F(\cM_2,\tau_2)$, then 
$E(0,1)$ coincides with $F(0,1)$ (as sets), which is a noncommutative analogue of the result obtained in \cite{Potepun71} (see also \cite[Corollary 4]{Ab91}). 

\end{remark}

\chapter{Mityagin's question and its noncommutative counterpart}\label{S:M}
In this chapter, we give a proof of Theorem \ref{Mityagin} and establish a noncommutative counterpart of it.

\section{Proof of Theorem \ref{Mityagin}}

\begin{proof}[Proof of Theorem \ref{Mityagin}]
Assume that there exists    a surjective isometry  $T$ from $E(0,1)$ onto $F(0,\infty)$.
By Zaidenberg's theorem \cite[Theorem 1]{Z77}, $T$ is of the form
\begin{align}\label{abc}
T(x)(t)=a(t)x(\sigma(t)),~x\in E(0,1), 
\end{align}
where $a\in F(0,\infty)$ and $\sigma$ is an invertible measurable transformation from $(0,\infty)$ onto $(0,1)$. In particular, $T$ is disjointness-preserving.

Denote $$I_n=\sigma(0,n).$$
Consider $T|_{E(I_n)}$. Since $T$ is disjointness-preserving, it follows that $T|_{E(I_n)}$ is a surjective isometry 
from $E(I_n)$ onto $F(0, n)$.

Let $$i_n:I_n\to (0,m(I_n))$$
be a measure-preserving transformation.
Define $$T_0f=f\circ i_n,~f\in E(0,m(I_n)).$$
 Note that $T_0$ is a surjective isometry from $E(0,m(I_n))$ onto $E(I_n)$.
Define
\begin{align}\label{E' label 2}
\begin{array}{ll}
&E'(0,1) := \\
 &\qquad \left\{f\in S(0,1)|   D_{m(I_n)}f \in E(0,m(I_n))
~ \mbox{and}~\norm{f}_{E'}=
\frac{\norm{   D_{m(I_n)}f }_E}{\norm{\chi_{_{I_n}}}_E}\right\} \\
\end{array}
\end{align}
and
\begin{align}\label{F' label 2}
\begin{array}{ll}
&
F'(0,1):=\\
& \qquad \left\{f\in F(0,1)|D_n f\in F(0,n)~ \mbox{and}~ \norm{f}_{F'}=\frac{1}{\norm{\chi_{_{(0,n)}}}_F}\norm{D_nf}_F  \right\}.\\
\end{array}
\end{align}
Clearly,
 $E'(0,1)$ and $F'(0,1)$ are symmetric function spaces.

Observe that the mapping
$$f\mapsto \frac{\norm{\chi_{_{(0,n)}}}_F}{\norm{\chi_{_{I_n}}}_E}D_{\frac1n}\circ T\circ T_0\circ D_{m(I_n)}(f)$$
is an isometry from $E'(0,1)$ onto $F'(0,1)$.
Indeed,  for any $f\in E'(0,1)$, we have 
\begin{align*}
&\quad \qquad \norm{D_{\frac1n}\circ T\circ T_0\circ D_{m(I_n)}(f)}_{F'(0,1)}
\\&~\stackrel{\eqref{F' label 2}}{=}\frac{1}{\norm{\chi_{_{(0,n)}}}_{F(0,\infty)} }\norm{T\circ T_0\circ D_{m(I_n)}(f)}_{F(0,\infty)}\\
&\quad =\quad \frac{1}{\norm{\chi_{_{(0,n)}}}_{F(0,\infty )}}\norm{T_0\circ D_{m(I_n)}(f)}_{E(0,1)}\\
&~\stackrel{\eqref{E' label 2}}{=}\frac{\norm{\chi_{_{I_n}}}_{E(0,1)} }{\norm{\chi_{_{(0,n)}}}_{F(0,\infty)}}\norm{f}_{E'(0,1)}.
\end{align*}
Since $E(0, 1)\ne L_p(0, 1)$ as sets, it follows that $E'(0,1)\ne L_p(0,1)$ as sets. By Theorem~\ref{KRZ1}, we have

 $$\frac{\norm{\chi_{_{(0,n)}}}_F}{\norm{\chi_{_{I_n}}}_E}D_{1/n}T\circ T_0\circ D_{m(I_n)}(f)=c_nf\circ \sigma_n, ~\forall f\in E'(0,1).$$
where $c_n:(0,1)\to \mathbb{C}$  is a non-vanishing Borel function with $|c_n|$ = 1 and
 $\sigma_n:(0,1)\to (0,1)$ is an 
 invertible measure-preserving Borel transformation.
Taking $f=\chi_{_{(0,1)}}$, we have
\begin{align}\label{c}
T(\chi_{_{I_n}})=D_n\circ\frac{\norm{\chi_{_{I_n}}}_E}{\norm{\chi_{_{(0,n)}}}_F}c_n f\circ \sigma_n=\frac{\norm{\chi_{_{I_n}}}_E}{\norm{\chi_{_{(0,n)}}}_F}\tilde{c}_n\chi_{_{(0,n)}}
\end{align}
where $\tilde{c}_n:(0,n)\to \mathbb{C}$  is a non-vanishing Borel function with $|\tilde{c}_n|$ = 1.
For any  $m\ge n$,  we have
\begin{align*}
T(\chi_{_{I_m}} )=\frac{\norm{\chi_{_{I_m}}}_E}{\norm{\chi_{_{(0,m)}}}_F}\tilde{c}_m\chi_{_{(0,m)}},
\end{align*}
where $\tilde{c}_m:(0,m) \to \mathbb{C}$  is a non-vanishing Borel function with $|\tilde{c}_m|$ = 1.
By the disjointness-preserving property of $T$, we obtain that $T\left(\chi_{_{I_n}}\right)$ and $T\left(\chi_{_{I_m\setminus I_n}}\right)$
have mutually disjoint supports. Hence, $T\left(\chi_{_{I_m\setminus I_n}}\right)$ is supported on $[n, m)$. We
have
\begin{align*}
\tilde{c}_m \frac{\norm{\chi_{_{I_m}}}_E}{\norm{\chi_{_{(0,m)}}}_F}  \chi_{(0,n)}
&=T(\chi_{_{I_m}})\chi_{_{(0,n)}}\\
&=T(\chi_{_{I_n}})\chi_{_{(0,n)}}+T(\chi_{_{I_m\setminus I_n}})\chi_{_{(0,n)}}\\
&=T(\chi_{_{I_n}})\chi_{_{(0,n)}}\\
&=
\tilde{c}_n  \frac{\norm{\chi_{_{I_n}}}_E}{\norm{\chi_{_{(0,n)}}}_F}   \chi_{_{(0,n)}}.
\end{align*}
Hence, we have $\frac{\norm{\chi_{_{I_m}}}_E}{\norm{\chi_{_{(0,m)}}}_F} = \frac{\norm{\chi_{_{I_n}}}_E}{\norm{\chi_{_{(0,n)}}}_F}$ and
$$\tilde{c}_n=\tilde{c}_m \quad \mbox{a.e. on}\quad (0,n) $$
for all $m\ge n$. By (\ref{abc}), we have
\begin{align}\label{a}
T(\chi_{_{I_n}})=a\cdot \chi_{_{(0,n)}}.\end{align}
Compare the equality (\ref{c}) and (\ref{a}), we  conclude that
$$|a|=\mbox{Const}~ a.e.. $$

Consider $\chi_{_{I_n/I_{n-1}}}\in E(0,1)$. Since $T$ is an isometry, it follows that $$\norm{\chi_{_{I_n\setminus I_{n-1}}}}_E={\rm Const} \cdot\norm{\chi_{_{(n-1,n)}}}_F ={\rm Const} \cdot \norm{\chi_{_{(0,1)}}}_F .$$ 
However, since $(0, 1)$ is a finite interval, it follows that $$\mbox{
$m(I_n \setminus I_{n-1})\to 0$ as $n\to \infty$.}$$
 By our assumption that $E(0,1)\ne L_\infty(0,1)$, we obtain that~\cite[p.118]{LT2} $$\norm{\chi_{_{I_n \setminus I_{n-1}}}}_E\to0$$ as $n\to \infty,$ which shows that $a = 0$. That implies that $T = 0$, which 
contracts the assumption that $T$ is an   isometry.
\end{proof}

\section{The noncommutative counterpart of Mityagin's question}
Below, we establish a noncommutative version of Theorem \ref{Mityagin}.
  \begin{theorem}\label{th5}
    If $E(0,1)$ is a separable  symmetric function space on $(0,1)$ and $E(0,1)\ne L_p(0,1)$ (as sets), $1\le p\le \infty $.
Let $\cM_1$ be an atomless von Neumann algebra equipped with a finite faithful normal trace $\tau_1$.
Then, $E(\cM_1,\tau_1)$ is not isometric to any noncommutative  symmetric   space $F(\cM_2,\tau_2)$ associated with an arbitary symmetric function space $F(0,\infty)$ and    any atomless  von Neumann algebra $\cM_2$ equipped with a semifinite infinite faithful normal trace $\tau_2$.
  \end{theorem}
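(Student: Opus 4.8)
The strategy is to reduce the noncommutative statement (Theorem \ref{th5}) to the commutative Theorem \ref{Mityagin}, exactly as the proof of Theorem \ref{KRZ} reduces noncommutative isometries to commutative ones. Suppose, for contradiction, that $T : E(\cM_1,\tau_1) \to F(\cM_2,\tau_2)$ is a surjective isometry, where $\tau_1$ is a faithful normal tracial state, $\tau_2$ is semifinite and infinite, and $E(0,1) \ne L_p(0,1)$. The first step is to invoke the main result of \cite{HS} (valid here since $E(0,1)$ is separable, hence $E(\cM_1,\tau_1)$ has order continuous norm, and likewise for $F$; and the norm is not proportional to $\norm{\cdot}_2$ because $E(0,1)\ne L_2(0,1)$): $T$ must have the elementary form
$$T(x) = J(x)Az + BJ(x)({\bf 1}-z), \quad x\in E(\cM_1,\tau_1),$$
for some $A,B\in F(\cM_2,\tau_2)$, a central projection $z\in Z(\cM_2)$, and a surjective Jordan $*$-isomorphism $J:\cM_1\to\cM_2$. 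By Corollary \ref{lemma measure-measure} and Proposition \ref{lga}, $J$ extends to a measure-continuous Jordan $*$-isomorphism of $S(\cM_1,\tau_1)$ onto $S(\cM_2,\tau_2)$.

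The second step is to isolate a maximal abelian subalgebra on which $T$ becomes a commutative isometry. Following the argument in Section \ref{subs:KRZ}: after absorbing the partial-isometry parts of the polar decompositions of $A^*$ and $B$ into unitaries (as in the reduction to \eqref{formula T positive}), we may assume $A,B\ge 0$. Choose an atomless abelian von Neumann subalgebra $\cN_2\subset\cM_2$ containing all spectral projections of $A$ and $B$; set $\cN_1:=J^{-1}(\cN_2)$, which is an atomless abelian subalgebra of $\cM_1$, and $J|_{S(\cN_1)}$ is a surjective $*$-isomorphism onto $S(\cN_2)$. The key observation, proved just as in the $E(0,1)\ne L_p(0,1)$ case of Theorem \ref{KRZ}, is that $T$ restricts to a surjective (positive) isometry from $E(\cN_1,\tau_1)$ onto $F(\cN_2,\tau_2)$: given $y\in F(\cN_2,\tau_2)$, the element $x$ with $J(x)=y(A+B)^{-1}$ lies in $S(\cN_1)$, and multiplying $T(x')=y$ (with $x'\in E(\cM_1,\tau_1)$, from surjectivity of $T$) by the central factor $(A+B)^{-1}$ forces $x=x'\in E(\cM_1,\tau_1)\cap S(\cN_1) = E(\cN_1,\tau_1)$.

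The third step identifies these corners with function spaces. Since $\cN_1$ is atomless abelian and $\tau_1$ is a finite trace, there is a trace-preserving $*$-isomorphism of $S(\cN_1,\tau_1)$ onto $S(0,1)$, carrying $E(\cN_1,\tau_1)$ to $E(0,1)$ (up to a fixed multiplicative constant, since $\cN_1$ need not have full trace — handled by the dilation-renorming device of \eqref{E' label 2}). Since $\cN_2\subset\cM_2$ with $\tau_2$ semifinite and infinite and $\cN_2$ atomless, $S(\cN_2,\tau_2)$ is $*$-isomorphic to $S(0,\alpha)$ for some $\alpha\in(0,\infty]$; one must check $\alpha=\infty$, i.e. that $\cN_2$ carries infinite trace — this can be arranged by enlarging $\cN_2$ to contain an infinite family of orthogonal $\tau_2$-finite projections of $\cM_2$ summing to ${\bf 1}$ at the outset, keeping it abelian and atomless, so that $F(\cN_2,\tau_2)$ is (isometric to) $F(0,\infty)$, again up to renorming. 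Composing, we obtain a surjective isometry between a symmetric function space on $(0,1)$ not coinciding with any $L_p$ and a symmetric function space on $(0,\infty)$, contradicting Theorem \ref{Mityagin}.

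\textbf{Main obstacle.} The delicate point is Step 3: ensuring the corner $\cN_2$ is genuinely ``infinite'' — i.e. that the restricted isometry lands on a function space over $(0,\infty)$ and not accidentally over a finite interval — while simultaneously keeping $\cN_2$ abelian, atomless, and containing the spectral data of $A$ and $B$. This requires choosing $\cN_2$ carefully from the start (an abelian atomless subalgebra of $\cM_2$ with infinite trace containing the relevant spectral projections, which exists since $\cM_2$ itself is atomless with infinite trace), and then pulling back through $J$; the renorming bookkeeping via the dilation operators $D_s$, as in \eqref{E' label 2}–\eqref{F' label 2}, must be checked to respect the passage to $(0,\infty)$. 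Everything else is a routine transcription of the arguments already developed for Theorem \ref{KRZ} and Theorem \ref{Mityagin}.
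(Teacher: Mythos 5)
Your overall architecture is the same as the paper's (reduce to an abelian corner and invoke Theorem \ref{Mityagin}), but there is a genuine gap in your Step 1 that propagates into Step 2. You assert that, by Corollary \ref{lemma measure-measure} and Proposition \ref{lga}, the Jordan $*$-isomorphism $J:\cM_1\to\cM_2$ extends to a measure-continuous Jordan $*$-isomorphism of $S(\cM_1,\tau_1)$ onto $S(\cM_2,\tau_2)$. This is false in the present setting and the paper explicitly flags it in a footnote: Corollary \ref{lemma measure-measure} requires \emph{both} traces to be finite, whereas here $\tau_1$ is a state and $\tau_2$ is infinite. Concretely, measure-continuity of $J$ would require $\tau_2(J(p_n))\to 0$ whenever $\tau_1(p_n)\to 0$ (Lemma \ref{cvb}), and a finite-to-infinite Jordan $*$-isomorphism can send projections of small $\tau_1$-trace to projections of arbitrarily large (even infinite) $\tau_2$-trace; in the commutative picture this is just a point transformation $(0,\infty)\to(0,1)$ pulling small sets back to sets of infinite measure. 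Without the global extension, your ``key observation'' — that $J(x)=y(A+B)^{-1}$ defines an element of $S(\cN_1,\tau_1)$ and that $T(x')=y$ forces $x=x'$ — is not justified as written: the formula $T(x')=J(x')Az+BJ(x')({\bf 1}-z)$ is only given on $\cM_1$, and extending it to unbounded $x'\in E(\cM_1,\tau_1)$, as well as cancelling $(A+B)^{-1}$, requires applying $J$ to unbounded operators, which is exactly what fails globally.

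The paper's proof supplies the missing mechanism: it first observes that $F(0,\infty)$ must be separable (via the $\ell_\infty$-copy argument, so that $A,B\in S_0(\cM_2,\tau_2)$ and their spectral projections are controllable), then chooses $\tau_2$-finite projections $q_n^a\uparrow z$, $q_n^b\uparrow{\bf 1}-z$ in $\cN_2$, pulls them back to $p_n^{a},p_n^{b}$ in $\cN_1$, and extends $J$ only on the \emph{finite} corners $(\cN_1)_{p_n^a+p_n^b}\to(\cN_2)_{q_n^a+q_n^b}$, where both traces are finite and Corollary \ref{lemma measure-measure} genuinely applies. The identification $x=x'$ is then carried out by approximating $x'$ by $x'e^{|x'|}(0,m)$, passing to the limit in measure, and compressing by the finite projections; the general $y\in F(\cN_2,\tau_2)$ is reached by a norm-density and completeness argument. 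Your Step 3 concern about the corner carrying infinite trace is legitimate but is handled at the outset by choosing $\cN_2$ trace-measure-preservingly $*$-isomorphic to $L_\infty(0,\infty)$ (no dilation renorming is actually needed there, since $\tau_1|_{\cN_1}$ is a state and $\tau_2|_{\cN_2}$ is infinite). To repair your proof you must replace the global extension claim by this local, corner-by-corner argument.
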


  \begin{proof}

Let $T$ be a surjective isometry from $E(\cM_1,\tau_1)$ onto $F(\cM_2,\tau_2)$.
By~\cite[Theorem 1.2]{HS},  we have
\begin{align}\label{Txform}
T(x)=J(x)Az+BJ(x)(\mathbf{1}-z),\quad x\in \cM_1,
\end{align}
where $A,B\in F(\cM_2,\tau_2)$, $z\in P(Z(\cM_2 ))$,  and $J:\cM_1 \to \cM_2$ is a Jordan $*$-isomorphism\footnote{Note that $J$ can not be extended to a Jordan $*$-isomorphism from $S(\cM_1,\tau_1)$ onto $S(\cM_2,\tau_2)$, see \cite[Theorem 3.10 and 3.13]{Weigt}.}.

Without loss of  generality, we may assume that $A=Az, B=B(\mathbf{1}-z)$. Let $A=u_A|A|,B=u_B|B|$ be the polar decomposition. Since $T$ is surjective, it follows that $$\mbox{$u_Au_A^*=z$ and $u_Bu_B^*=\mathbf{1}-z$.}$$
Noting  that $z+u_B^*$ and $u_A^*+\mathbf{1}-z$ are unitary operators, we obtain  that 
\[
T'(\cdot):=(u_B^*+z)T(\cdot)(1-z+u_A^*)=J(\cdot)|A|+|B| J(\cdot)
\]
is a surjective isometry from $E(\cM_1,\tau_1)$ onto $F(\cM_2,\tau_2)$. Hence, without lossing generality, we may assume that both of $A$ and $B$ are   positive.

Since $E(0,1)$ is separable, it follows that $E(\cM_1,\tau_1)$
does not contain a copy of $\ell_\infty$~\cite[Theorem 5.6.30]{DPS},
 and therefore, $F(0,\infty)$ contains no copy of $\ell_\infty$, which implies that 
$F(0,\infty)$ is also separable~\cite[Theorem 5.6.30]{DPS}.
In particular, we have $$A,B\subset F(\cM_2,\tau_2)\subset S_0(\cM_2,\tau_2).$$ 

By \cite[Lemma 1.3]{CKS}, there exists    an atomless abelian von Neumann subalgebra  $\cN_2$  of $\cM_2$ containing all spectral projections  of $A$ and $B$,
such that $ \cN_2 $  is (trace-measure preserving) $*$-isomorphic to $L_\infty (0,\infty )$.

Note that $J^{-1}$ is a Jordan $*$-isomorphism from $\cM_2$ onto $\cM_1$. Denote
\[
\cN_1:=J^{-1}(\cN_2),
\]
which is an atomless abelian von Neumann subalgebra of $\cM_1$.
Since both $\cN_1$ and $\cN_2$ are abelian, it follows that $J:\cN_1\to \cN_2 $ is a $*$-isomorphism.

Let $\{q_n^a\}$, $\{q_n^b\}\subset P(\cN_2)$ such that $q_n^a\uparrow z$ and $q_n^b\uparrow {\bf 1}-z$ with $\tau_2(q_n^a),\tau_2(q_n^b)<\infty $.
Define 
$$p^a_n:=  J^{-1}(q^a_n) \mbox{ and }p^b_n:=  J^{-1}(q^b_n).$$ 
By \eqref{Txform}, we obtain that $T$ maps $(\cN_1)_{p^a_n}$ into $F( (\cN_2)_{q^a_n},\tau_2)$ (respectively,
maps $(\cN_1)_{p^b_n}$ into $F( (\cN_2)_{q^b_n},\tau_2)$). 
Moreover, since 
 $ (\cN_1)_{p^a_n +p^b_n}  $ and $( \cN_2)_{q^a_n+q^b_n} $ 
are equipped with finite faithful normal traces, it follows from 
 Corollary \ref{lemma measure-measure}
that $J|_{(\cN_1)_{p^a_n+p^b_n}} $ is continuous with respect to the measure topology.

We claim that $T$ is   a surjective isometry from
 $E\left((\cN_1)_{p^a_n+p^b_n} ,\tau_1\right)$ onto $F
\left( (\cN_2)_{q^a_n+q^b_m},\tau_2
\right)$ for all $n\ge 1$. 
Indeed, 
by Proposition~\ref{lga}, 
$J|_{(\cN_1)_{p^a_n+p^b_n}} $ extends to a   $*$-isomorphism from
 $S\left((\cN_1)_{p^a_n+p^b_n} ,\tau_1\right) $ onto  $S\left( ( \cN_2)_{q^a_n+q^b_n},\tau_2\right)  $. 
For   any $0\le y\in F\left( (\cN_2)_{q^a_n+q^b_n } ,\tau_2\right)$,  we have $$y\left( 
 ( A+B )  ( q^a_n+q^b_n)\right)^{-1}\in  S
\left(
 (\cN_2)_{q^a_n+q^b_n} ,\tau_2
\right) $$
and
   there exists
 $x:=J^{-1}\left(
y\left( 
 ( A+B )  ( q^a_n+q^b_n)\right)^{-1} \right)\in S\left(  (\cN_1)_{p^a_n+p^b_n},\tau_ 1\right)$
(note that 
 $x\ge 0$)
 such that
\[
J(x)A+BJ(x)=y.
\]
We only need to observe that $x\in E(\cM_1,\tau_1)$. Indeed, 
since $T$ is a surjective isometry from
 $E(\cM_1,\tau_1)$ onto $F(\cM_2,\tau_2)$, it follows  
  that there exists $x'\in E(  \cM_1 ,\tau_1)$ such that
\[
T(x')= y=J(x)A+BJ(x).
\]

Let $x_m : =x 'e^{|x'| } (0,m  )  
 \in     \cN_1 $, $m \ge 1$. 
In particular,   $$|x_m|   \uparrow |x'|$$ and $x_m\to x'$ in $E(\cM,\tau)$ (in particular, in the measure topology) as $m\to \infty$. 
We have $$\norm{T(x_m) -T(x')}_F \to 0$$
as $m\to \infty$. 
Therefore, $$T(x_m) = J(x_m)A +BJ(x_m) \to J(x)A+BJ(x)$$
in the measure topology as $m\to \infty$. 
In particular, for any fixed $i\ge n$, we have 
$$ J\Big((p^a_i +p^b_i) 
x_m  (p^a_i +p^b_i) \Big) A
 = (q^a_i +q^b_i) 
\big(J(x_m)A \big) (q^a_i +q^b_i) \to J(x) A$$
and 
$$ B J\Big((p^a_i +p^b_i) 
x_m  (p^a_i +p^b_i) \Big) 
 = (q^a_i +q^b_i) 
\big(B J(x_m)  \big) (q^a_i +q^b_i) \to BJ(x) $$
in the measure topology
as $m\to \infty$. 
We have $$ J\Big((p^a_i +p^b_i) 
x_m  (p^a_i +p^b_i) \Big)  
\to J(x) $$
in the measure topology
as $m\to \infty$. 
Hence, 
$$  (p^a_i +p^b_i) 
x_m  (p^a_i +p^b_i)  
\to  x  $$
in the measure topology
as $m\to \infty$. 
This shows that $$x =  (p^a_i +p^b_i) 
x'  (p^a_i +p^b_i)  .$$ 
Note that this equality holds for any sufficiently large $i$ and 
$p^a_i +p^b_i\to {\bf1} $ as $i \to \infty $.
We obtain that for any $\tau$-finite projection $f\in \cM_2$, 
\begin{eqnarray*}& & \mu\left(4t;f(x'-x)f \right)\\
&=  &
\mu\left(4t; f  (x'- (p^a_i +p^b_i) 
x'  (p^a_i +p^b_i) )  f\right)\\
  &\stackrel{\tiny\mbox{\cite[Proposition 3.2.7]{DPS}}}{\le}& \mu\left(2t; f  (x'- (p^a_i +p^b_i) 
x' )  f\right)  +
\mu\left(2t;      
x'  ({\bf 1}- p^a_i -p^b_i) )  f\right)  \\
  &\stackrel{\tiny\mbox{\cite[Proposition 3.2.7]{DPS}}}{\le}& 
\mu\left(t;   f({\bf 1}- p^a_i -p^b_i) )   \right)\mu\left(t;x'\right)
   +
\mu\left(t;  x'\right) \mu\left(  t;  
  ({\bf 1}- p^a_i -p^b_i) )  f\right)  \\
&\stackrel{\tiny\mbox{\cite[Proposition 2.6.4]{DPS}}}{\longrightarrow}& 0 
\end{eqnarray*}
as $i \to \infty$, 
i.e.,  $x=x'$. 
Therefore, $$x\in
S\left(
(\cN_1)_{p^a_n+p^b_n} ,\tau_1\right) \cap E(\cM_1,\tau_1)=E\left(
(\cN_1)_{p^a_n+p^b_n} ,\tau_1
\right).$$
 This proves the claim for elements in 
$F\left(
(\cN_2)_{q^a_n+q^b_n} ,\tau_2\right) $. 
For an arbitrary element  $y\in F(\cN_2,\tau_2)$, 
there exists a sequence of elements $y_n\in F\left(
(\cN_2)_{q^a_n+q^b_n} ,\tau_2\right)$  such that
$$\mbox{ $\norm{y_n-y}_F\to 0$ as $n\to \infty $.}$$
There exists $x_n\in E(\cN_1,\tau_1)$ such that $T(x_n)=y_n$.
In particular, $$\norm{x_m-x_n}_E =\norm{y_m-y_n}_F$$ for any
$m,n>0$.
Since $E(\cN_1,\tau_1)$ is a Banach space (see e.g. \cite{Kalton_S} or \cite{LSZ}), it follows that there exists $x$ such that $x_n\to x$ in $E(\cN_1,\tau_1)$ as $n\to \infty$. In particular, we have $T(x)=y$.
In particular, $T$ is a surjective isometry from $E(\cN_1,\tau_1)$ onto $F(\cN_2,\tau_2)$.

Note that $\tau_1|_{\cN_1}$  is a tracial state  and
$\tau_2|_{\cN_2}$  is infinite. There exists a trace-measure
preserving $*$-isomorphism $I_1$ between $S(\cN_1,\tau_1)$ and $S(0,1)$ with the Lebesgue measure and 
there exists  a trace-measure preserving $*$-isomorphism $I_2$ between $S(\cN_2,\tau_2)$ and $S_\infty(0,\infty)$ with the Lebesgue measure~\cite{CKS}.


We obtain that $I_2\circ T\circ I_1^{-1} $ is a surjective isometry from $E(0,1)$ to $F(0,\infty)$, which contradicts  Theorem \ref{Mityagin}.
  \end{proof}

Below, we establish a non-separable version of the above theorem.

 \begin{theorem}\label{th:F:M}
    If $E(0,1)$ is a   symmetric function space on $(0,1)$
having the Fatou property
 and $E(0,1)\ne L_p(0,1)$ (as sets), $1\le p\le \infty $.
Let $\cM_1$ be an atomless  von Neumann algebra equipped with a  faithful normal tracial state $\tau_1$.
Then, $E(\cM_1,\tau_1)$ is not isometric to any noncommutative  symmetric   space $F(\cM_2,\tau_2)$ associated with an arbitary symmetric function space $F(0,\infty)$ and    any atomless $\sigma$-finite   von Neumann algebra $\cM_2$ equipped with a semifinite infinite faithful normal trace $\tau_2$.
  \end{theorem}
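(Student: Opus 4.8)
The plan is to transport the commutative obstruction of Theorem~\ref{Mityagin} to the noncommutative setting by running the argument of Theorem~\ref{th5} essentially verbatim, with the separable structural theorem \cite[Theorem~1.2]{HS} replaced by the Fatou-property structural theorem~\ref{th:iso} of this paper. So I would argue by contradiction: assume $T\colon E(\cM_1,\tau_1)\to F(\cM_2,\tau_2)$ is a surjective isometry. Since $E(0,1)\neq L_p(0,1)$ as sets for every $1\le p\le\infty$, in particular $E(0,1)\neq L_2(0,1)$ as sets, so the norm of $E(\cM_1,\tau_1)$ is not proportional to $\norm{\cdot}_2$; together with the Fatou property of $E(0,1)$ (which, being a symmetric function space with the Fatou property, is in the sense of Lindenstrauss--Tzafriri and hence fully, a fortiori strongly, symmetric), the hypotheses of Theorem~\ref{th:iso} on the domain hold. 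The first technical point is to check the hypotheses on the range, i.e. that $F(\cM_2,\tau_2)$ also has the Fatou property and norm not proportional to $\norm{\cdot}_2$: I would deduce this from the existence of the isometry $T$, using that a surjective isometry matches up the Köthe-dual pairs, so that $F(\cM_2,\tau_2)^\times\cong E(\cM_1,\tau_1)^\times$ isometrically and hence $F(\cM_2,\tau_2)^{\times\times}\cong E(\cM_1,\tau_1)^{\times\times}=E(\cM_1,\tau_1)\cong F(\cM_2,\tau_2)$, together with the ball-topology apparatus of Chapter~\ref{S:ball} (Lemma~\ref{ball continuous}, Theorem~\ref{theorem: unique of element}, Proposition~\ref{proposition:Comparison of topologies}) that already underpins the weak-continuity Theorem~\ref{weak-continous of T}.

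Granting this, Theorem~\ref{th:iso} produces a central projection $z\in P(Z(\cM_2))$, a surjective Jordan $*$-isomorphism $J\colon\cM_1\to\cM_2$ and elements $A,B\in F(\cM_2,\tau_2)$ with $T(x)=J(x)Az+BJ(x)({\bf 1}-z)$ for all $x\in E(\cM_1,\tau_1)\cap\cM_1$. In particular $\cM_2\cong\cM_1$ is a finite von Neumann algebra, even though $\tau_2$ is infinite, which is consistent since an atomless finite von Neumann algebra such as $L_\infty(0,\infty)$ does carry semifinite infinite traces. Exactly as in the proof of Theorem~\ref{th5}, pre- and post-multiplying $T$ by the unitaries $z+u_B^*$ and $u_A^*+{\bf 1}-z$ from the polar decompositions $A=u_A|A|$, $B=u_B|B|$, we may assume $A,B\ge 0$ with $Az=A$ and $B({\bf 1}-z)=B$. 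Here, unlike in Theorem~\ref{th5}, there is no need to first pass from $E(0,1)$ separable to $F(0,\infty)$ separable in order to place $A,B$ in $S_0(\cM_2,\tau_2)$: since $\cM_2$ is assumed $\sigma$-finite from the start, that reduction is simply dropped.

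Next I would choose, by \cite[Lemma~1.3]{CKS}, an atomless abelian von Neumann subalgebra $\cN_2\subseteq\cM_2$ containing all spectral projections of $A$ and $B$ and trace-measure-preservingly $*$-isomorphic to $(L_\infty(0,\infty),m)$ --- available precisely because $\cM_2$ is atomless, $\sigma$-finite and $\tau_2$ is infinite --- and set $\cN_1:=J^{-1}(\cN_2)$, an atomless abelian subalgebra of $\cM_1$. By Corollary~\ref{lemma measure-measure} applied on the $\tau$-finite corners and Proposition~\ref{lga}, $J|_{\cN_1}$ is a $*$-isomorphism $S(\cN_1,\tau_1)\to S(\cN_2,\tau_2)$ continuous in measure. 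One then shows $T$ restricts to a surjective isometry $E(\cN_1,\tau_1)\to F(\cN_2,\tau_2)$: for $y\in F(\cN_2,\tau_2)$ the element $x:=J^{-1}\bigl(y(A+B)^{-1}\bigr)$ lies in $S(\cN_1,\tau_1)$, and if $x'\in E(\cM_1,\tau_1)$ satisfies $T(x')=y$, then multiplying the identity $T(x')=J(x)A+BJ(x)$ by $(A+B)^{-1}z$ and by $(A+B)^{-1}({\bf 1}-z)$ forces $x'=x$, whence $x\in E(\cM_1,\tau_1)\cap S(\cN_1,\tau_1)=E(\cN_1,\tau_1)$; the passage from the formula valid on $\cM_1$ to its use on $\cN_1$ is carried out by truncating $x'$ to $x'e^{|x'|}(0,m)\in\cM_1$, applying the structural identity there, and taking measure-topology limits using the two-sided disjointness of $A$ and $B$ and the measure continuity of $J$ --- this is verbatim the computation in the proof of Theorem~\ref{th5}, and completeness of $E(\cN_1,\tau_1)$ upgrades it to an honest surjective isometry between the restricted spaces. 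Finally, since $\tau_1|_{\cN_1}$ is a tracial state and $\tau_2|_{\cN_2}$ is infinite, there are trace-measure-preserving $*$-isomorphisms $S(\cN_1,\tau_1)\cong S(0,1)$ and $S(\cN_2,\tau_2)\cong S_\infty(0,\infty)$; conjugating the restricted $T$ by them yields a surjective isometry $E(0,1)\to F(0,\infty)$, contradicting Theorem~\ref{Mityagin}.

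The genuinely new ingredient, and the main obstacle, is the verification that $F(\cM_2,\tau_2)$ inherits the Fatou property from $E(\cM_1,\tau_1)$, so that the non-separable structural theorem~\ref{th:iso} applies: a surjective isometry need not respect the order structure, so this has to be extracted from the ball-topology / weak-topology machinery of Chapter~\ref{S:ball} rather than read off directly, mirroring how Theorem~\ref{weak-continous of T} was obtained. The second, more routine but still delicate, point is the reduction of $T$ to the abelian corners in the previous paragraph, which relies on a careful measure-topology approximation; this, however, is already handled in the proof of Theorem~\ref{th5}, and I would simply invoke that argument. Everything else is a direct transcription of the separable case.
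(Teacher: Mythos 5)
Your overall architecture coincides with the paper's: argue by contradiction, invoke Theorem \ref{th:iso} in place of \cite[Theorem 1.2]{HS}, reduce to abelian corners via \cite[Lemma 1.3]{CKS}, and conclude with Theorem \ref{Mityagin}. However, two steps do not go through as you describe them.

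First, the transfer of the Fatou property to $F(\cM_2,\tau_2)$. The paper disposes of this in one line by citing \cite[Theorem 4.1]{DDST}. Your proposed substitute is circular: the ``ball-topology apparatus'' you invoke --- Theorem \ref{weak-continous of T} and the results feeding into it --- already \emph{assumes} that both the source and the target space have the Fatou property, and the assertion that a surjective isometry ``matches up the K\"othe-dual pairs'' is precisely what Lemma \ref{lemma adjoint operator} together with Theorem \ref{weak-continous of T} establish \emph{under} that assumption. You correctly flag this as the delicate new point, but the route you sketch cannot be used to prove it.

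Second, and more seriously, the corner reduction is \emph{not} verbatim the computation of Theorem \ref{th5}. There, the identity $T(x')=J(x)A+BJ(x)$ is propagated from $\cM_1$ to all of $E(\cM_1,\tau_1)$ by truncating $x_m:=x'e^{|x'|}(0,m)$ and using that $\norm{x_m-x'}_E\to 0$, which holds only because $E(0,1)$ is separable (order continuous); one then gets $T(x_m)\to T(x')$ in norm, hence in measure. Under the Fatou hypothesis this norm convergence fails in general, so the measure-topology limits you plan to take are not available. The paper's proof of Theorem \ref{th:F:M} systematically replaces these limits: $x_m\to x'$ only in the $\sigma(E,E^\times)$-topology, $T(x_m)\to T(x')$ in the $\sigma(F,F^\times)$-topology via Theorem \ref{weak-continous of T}, and the surjectivity of the restricted isometry onto $F(\cN_2,\tau_2)$ is obtained by $\sigma(F,F^\times)$-approximation combined with $\sigma$-sequential completeness (\cite[Proposition 3.1]{DK}) and the weak continuity of $T^{-1}$, rather than by the norm-Cauchy argument you import from Theorem \ref{th5}. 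A related omission: the paper must also choose the exhausting projections $q_n^a,q_n^b$ so that $q_n^aA+q_n^bB$ is bounded and invertible in the corresponding corner, since in the non-separable case one cannot first place $A,B$ in $S_0(\cM_2,\tau_2)$; $\sigma$-finiteness of $\cM_2$ alone does not supply this. Without these substitutions the step you call ``routine but delicate'' actually fails.
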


  \begin{proof}

Let $T$ be a surjective isometry from $E(\cM_1,\tau_1)$ onto $F(\cM_2,\tau_2)$. 
Since   $E(\cM_1,\tau_1)$ has the Fatou property, it follows that $F(\cM_2,\tau_2)$ 
 also has the Fatou property \cite[Theorem 4.1]{DDST}.
By Theorem \ref{th:iso}, we have
\begin{align}\label{TformFatou}
T(x)=J(x)Az+BJ(x)(\mathbf{1}-z),\quad x\in \cM_1,
\end{align}
where $A,B\in F(\cM_2,\tau_2)$, $z\in P(Z(\cM_2 ))$ and $J:\cM_1 \to \cM_2$ is a Jordan $*$-isomorphism.

Without losing generality, we may assume that $A=Az, B=B(\mathbf{1}-z)$
and $A,B$ are positive (see the proof of Theorem \ref{th5}).


By \cite[Lemma 1.3]{CKS}, there exists    an atomless abelian von Neumann subalgebra  $\cN_2$  of $\cM_2$ containing all spectral projections  of $A$ and $B$,
such that $ \cN_2 $  is (trace-measure preserving) $*$-isomorphic to $L_\infty (0,\infty )$. 
Note that $J^{-1}$ is a Jordan $*$-isomorphism from $\cM_2$ onto $\cM_1$. Denote
\[
\cN_1:=J^{-1}(\cN_2),
\]
which is an atomless abelian von Neumann subalgebra of $\cM_1$.
Since both $\cN_1$ and $\cN_2$ are abelian, it follows that $J:\cN_1\to \cN_2 $ is a *-isomorphism.

Recall that $\cM_2$ is assumed to be $\sigma$-finite. 
Let $\{q_n^a\}_{n\ge 1}$, $\{q_n^b\}_{n\ge 1} \subset P(\cN_2)$ such that $q_n^a\uparrow z$ and $q_n^b\uparrow {\bf 1}-z$ with $\tau_2(q_n^a),\tau_2(q_n^b)<\infty $
and $q_n^a A+q_n^b B$ is bounded and invertible in $\left(\cN_2\right)_{q_n^a+q_n^b} $.
Define 
$$p^a_n:=  J^{-1}(q^a_n) \mbox{ and }p^b_n:=  J^{-1}(q^b_n).$$ 
By \eqref{TformFatou}, we obtain that $T$ maps $(\cN_1)_{p^a_n}$ into $F \left(
 (\cN_2)_{q^a_n},\tau_2\right)$ (respectively,
maps $(\cN_1)_{p^b_n}$ into $F \left(
 (\cN_2)_{q^b_n},\tau_2 \right)$).

We claim that $T$ is   a surjective isometry from
 $E\left((\cN_1)_{p^a_n+p^b_n} ,\tau_1\right)$ onto $F
\left( (\cN_2)_{q^a_n+q^b_n},\tau_2
\right)$ for all $n\ge 1$. 
Indeed, 
since 
 $ (\cN_1)_{p^a_n +p^b_n}  $ and $( \cN_2)_{q^a_n+q^b_n} $ 
are equipped with finite faithful normal traces, it follows from 
 Corollary \ref{lemma measure-measure}
that $J|_{(\cN_1)_{p^a_n+p^b_n}} $ is continuous with respect to the measure topology.
By Proposition~\ref{lga}, 
$J|_{(\cN_1)_{p^a_n+p^b_n}} $ extends to a Jordan $*$-isomorphism from
 $S\left((\cN_1)_{p^a_n+p^b_n} ,\tau_1\right) $ into  $S\left( ( \cN_2)_{q^a_n+q^b_n},\tau_2\right)  $. 
For   any  positive element $  y\in F\left( (\cN_2)_{q^a_n+q^b_n } ,\tau_2\right)$,  we have $$y\left(
 ( A+B )  ( q^a_n+q ^b_n)\right)^{-1}\in  S
\left(
 (\cN_2)_{q^a_n+q^b_n} ,\tau_2
\right) $$
and
   there exists
 $x:=J^{-1}\left(
y\left(
 ( A+B )  ( q^a_n+q^b_n)\right)^{-1} \right)\in S\left(  (\cN_1)_{q^a_n+q^b_n},\tau_ 1\right)$
(note that 
 $x\ge 0$)
 such that
\[
J(x)A+BJ(x)=y.
\]
We only need to observe that $x\in E(\cM_1,\tau_1)$ and $T(x)=y'$. Indeed, 
since $T$ is a surjective isometry from
 $E(\cM_1,\tau_1)$ onto $F(\cM_2,\tau_2)$, it follows  
  that there exists $x'\in E(  \cM_1 ,\tau_1)$ such that
\[
T(x')= y=J(x)A+BJ(x).
\]

Let $x_m : =x 'e^{|x'| } (0,m  )  
 \in     \cM_1 $, $m \ge 1$. 
In particular,   $$|x_m|   \uparrow |x'|$$ and $x_m\to x'$ 
in the $\sigma(E,E ^\times)$-topology as $m\to \infty$. 
We have $$ T(x_m) \to T(x') $$
in the $\sigma(F,F^\times)$-topology
as $m \to \infty$  (see Theorem \ref{weak-continous of T}). 
Therefore, $$T(x_m) = J(x_m)A +BJ(x_m) \to J(x)A+BJ(x)$$
in the $\sigma(F,F^\times)$-topology as $m\to \infty$. 
In particular,  we have 
$$ J\Big((p^a_n +p^b_n) 
x_m  (p^a_n +p^b_n) \Big) A
 = (q^a_n +q^b_n) 
\big(J(x_m)A \big) (q^a_n +q^b_n) \to J(x) A$$
and 
$$ B J\Big((p^a_n +p^b_n) 
x_m  (p^a_n +p^b_n) \Big) 
 = (q^a_n +q^b_n) 
\big(B J(x_m)  \big) (q^a_n +q^b_n) \to BJ(x) $$
in the $\sigma(F,F^\times)$-topology
as $m\to \infty$. 
We have $$ J\Big((p^a_n +p^b_n) 
x_m  (p^a_n +p^b_n) \Big)  
\to J(x) $$
in the $\sigma(F,F^\times)$-topology as $m\to \infty$. 
Hence, 
 $$ J\Big((p^a_n +p^b_n) 
x '   (p^a_n +p^b_n) \Big)  = J(x) $$
This shows that $$x =  (p^a_n +p^b_n) 
x'  (p^a_n +p^b_n)  .$$ 
Note that this equality holds for any sufficiently large $n$ and 
$p^a_n +p^b_n\to {\bf1} $ as $n\to \infty $.
We obtain that $x=x'$. 
Therefore, $$x\in
S\left(
(\cN_1)_{p^a_n+p^b_n} ,\tau_1\right) \cap E(\cM_1,\tau_1)=E\left(
(\cN_1)_{p^a_n+p^b_n} ,\tau_1
\right).$$
 This proves the claim for elements in 
$F\left(
(\cN_2)_{q^a_n+q^b_n} ,\tau_2\right) $. 
For general element  $y\in F(\cN_2,\tau_2)$, 
there exists a sequence of elements $y_n\in F\left(
(\cN_2)_{q^a_n+q^b_n} ,\tau_2\right)$  such that
$$ y_n\to y $$
in the $\sigma(F,F^\times)$-topology as $n\to \infty $. 
There exists $x_n\in E(\cN_1,\tau_1)$ such that $T(x_n)=y_n$.
By the $\sigma(F,F^\times)-\sigma(E,E^\times)$ continuity of $T^{-1}$ (see Theorem \ref{weak-continous of T}), 
the sequence 
$$ \{x_n\} _{n\ge 1}  $$
converges
in the $\sigma(E,E^\times)$-topology as $n\to \infty $. 
  By \cite[Proposition 3.1]{DK},
 we know that $E(\cN_1,\tau_1)$ is $\sigma(E,E^\times)$-sequentially complete.
Hence, there exists $x\in E(\cN_1,\tau_1)$ such that $$x_n\to x$$
 in the  $\sigma(E,E^\times)$-topology as $n\to \infty $. 
 In particular, we have $T(x)=y$.

Note that $\tau_1|_{\cN_1}$  is a tracial state  and
$\tau_2|_{\cN_2}$  is infinite. There exists a trace-measure
preserving $*$-isomorphism $I_1$ between $S(\cN_1,\tau_1)$ and $S(0,1)$ with the Lebesgue measure and 
there exists a trace-measure preserving $*$-isomorphism $I_2$ between $\cN_2$ and $L_\infty(0,\infty)$ with the Lebesgue measure~\cite{CKS}.


We obtain that $I_2\circ T\circ I_1^{-1} $ is a surjective isometry from $E(0,1)$ to $F(0,\infty)$, which   contradicts Theorem \ref{Mityagin}.
  \end{proof}

\chapter{Description of surjective  isometries on noncommutative symmetric spaces over a von Neumann algebra equipped with a semifinite infinite faithful normal trace}\label{Sec:inf}
The main result of this chapter  extends Theorem \ref{KRZ1} to the infinite setting.
Note that the condition that $E(0,\infty )\ne L_p(0,\infty )$  up to equivalent norms    does not guarantee that an isometry on $E(0,\infty)$ is generated by a unimodular measurable function and an invertible Borel measure-preserving map,
 e.g., isometries on $L_{p,q}(0,\infty)$ can be generated by   dilation operators  $D_t$, $t>0$.
Indeed,
$$t^{- 1/p} \norm{D_t f}_{L_{p,q}(0,\infty )}=\norm{f}_{L_{p,q}(0,\infty )},~\forall f\in L_{p,q}(0,\infty). $$



\begin{rem}
Let $E(0,\infty)$ be a symmetric function space.
Note that  $$f\in E(0,a) \mbox{~for any $a>0$ if and only if } D_{1/ a}f \in E(0,1)$$
and
 $$ f\in L_p(0,a) \mbox{~for any $a>0$ if and only if }  D_{1/ a} f \in L_p(0,1).$$
Therefore, if the  symmetric function space $E(0,\infty)$ satisfies that
 $E(0,1)\ne L_p(0,1)$, then
 $E(\supp\{f\})\ne L_p(\supp\{f\})$ for any non-trivial $f\in E(0,\infty )$ with finite support.
\end{rem}

The following auxiliary tool is a corollary of Theorem \ref{Mityagin}.
\begin{lemma}\label{finite support}
Let $E(0,\infty)$ and $F(0,\infty)$ be two (complex) symmetric spaces.
Assume that $E(0,1)\ne L_p(0,1)$ (as sets) for all $1\le p\le \infty$. Let $T:E(0,\infty)\to F(0,\infty)$ be a surjective isometry. Then for any $0\ne f\in E(0,\infty)$ with $m( \mbox{supp}\{f\})<\infty$,
we have  $$m( \mbox{supp} \{Tf\})<\infty.$$
\end{lemma}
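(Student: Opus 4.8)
The plan is to argue by contradiction: assume $0\ne f\in E(0,\infty)$ has $m(\supp\{f\})<\infty$ but $g:=Tf$ has $m(\supp\{g\})=\infty$. The key idea is to isolate a ``finite-support part'' of $E(0,\infty)$ that $T$ maps onto a space containing $g$, and then recognize that restricted space as a symmetric function space on a \emph{finite} interval which is isometric (after rescaling by the dilation $D_s$) to a symmetric function space on an \emph{infinite} interval --- contradicting Theorem~\ref{Mityagin}. Concretely, let $A=\supp\{f\}$ (a set of finite measure), and consider the $1$-complemented subspace $E(A)\subset E(0,\infty)$ of functions supported in $A$. Since Zaidenberg's theorem (Theorem~\ref{Z}, applicable as $\norm{\cdot}_E$ is not proportional to $\norm{\cdot}_2$ --- which holds because $E(0,1)\ne L_2(0,1)$) gives $Tf(t)=a(t)f(\sigma(t))$ with $\sigma$ an invertible measurable transformation and $|a|$ constant a.e. on its support, $T$ is disjointness-preserving. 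Hence $T$ maps $E(A)$ isometrically onto $F(B)$ where $B=\sigma^{-1}(A)$, and by assumption $m(B)=m(\supp\{g\})=\infty$.

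Next I would identify $E(A)$, up to an isometric rescaling, with a symmetric function space on $(0,1)$: choosing a measure-preserving map $i\colon A\to(0,m(A))$ and composing with the dilation $D_{m(A)}$, the space $E(A)$ is isometric to a symmetric function space $E'(0,1)$ with norm $\norm{h}_{E'}=\norm{D_{m(A)}h}_E/\norm{\chi_{_A}}_E$ --- exactly the construction \eqref{E' label 2} used in the proof of Theorem~\ref{Mityagin}. Crucially, $E'(0,1)\ne L_p(0,1)$ as sets, since the dilation $D_{m(A)}$ is an order-bounded bijection between $S(0,1)$ and $S(0,m(A))$ preserving the ``belongs to $L_p$'' property (cf.\ the remark preceding Lemma~\ref{finite support}), and by hypothesis $E(0,1)\ne L_p(0,1)$. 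Similarly $F(B)$ is isometric to a symmetric function space $F'(0,\infty)$ on an infinite interval (here one uses a measure-preserving identification of $B$ with $(0,\infty)$, valid because $m(B)=\infty$). Therefore the composite of these identifications with $T|_{E(A)}$ is a surjective isometry from $E'(0,1)$ onto $F'(0,\infty)$.

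Finally, Theorem~\ref{Mityagin} states that a symmetric function space $E'(0,1)\ne L_p(0,1)$ cannot be isometric to any symmetric function space on $(0,\infty)$. This contradicts the existence of the isometry just constructed, so the assumption $m(\supp\{g\})=\infty$ is untenable, and $m(\supp\{Tf\})<\infty$.

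\textbf{Main obstacle.} I expect the technical heart to lie in the measure-theoretic bookkeeping of the two rescaling identifications --- verifying that $E(A)$ really is isometrically a symmetric function space on $(0,1)$ with a \emph{genuine} symmetric norm (monotone, rearrangement-invariant) distinct from every $L_p$-norm, and likewise that $F(B)$ with $m(B)=\infty$ becomes a bona fide symmetric function space on $(0,\infty)$; one must also handle the case $m(B)$ finite-but-positive versus the truly infinite case, and confirm that disjointness-preservation plus Zaidenberg's representation genuinely forces $T(E(A))=F(B)$ (rather than a proper subspace), which uses surjectivity of $T$ and the invertibility of $\sigma$. Once these identifications are in place, the contradiction with Theorem~\ref{Mityagin} is immediate.
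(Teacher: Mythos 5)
Your proposal is correct and follows essentially the same route as the paper: apply Zaidenberg's representation to see that $T$ restricts to a surjective isometry from $E(\supp\{f\})$ onto $F(\sigma^{-1}(\supp\{f\}))$, then invoke Theorem~\ref{Mityagin} (after the rescaling identifications, which the paper handles via the remark preceding the lemma) to rule out $m(\sigma^{-1}(\supp\{f\}))=\infty$. The only cosmetic slip is attributing ``$|a|$ constant a.e.'' to Zaidenberg's theorem---that fact is only established later, inside the proof of Theorem~\ref{Mityagin}---but it plays no role in your argument.
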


\begin{proof}
Suppose that $0\ne f\in E(0,\infty)$, with $m(\mbox{supp} \{f\})<\infty$. By  Zaidenberg's
result~\cite[Theorem 1]{Zaidenberg}, 
we have 
$$T(x)(t)=a(t)x(\sigma(t)),\quad \forall x\in E(0,\infty),\quad t\in(0,\infty),$$
where $a\in F(0,\infty)$ and $\sigma$ is an invertible measurable transformation from $(0,\infty)$
onto $(0,\infty)$. Then,
 $T$ is a surjective isometry from $E(\mbox{supp} \{f\})$ onto $F(\sigma^{-1}(\mbox{supp} \{f\}))$. 
By Theorem \ref{Mityagin}, we obtain that
 $$m\left(\sigma^{-1}(\supp \{f\}) \right)<\infty.$$ This complete the proof.
\end{proof}

\begin{theorem}\label{infinite}
Assume that $E(0, \infty)$ and $F(0,\infty)$ are   complex symmetric function spaces with
$\norm{\chi_{_{(0,1)}}}_E=\norm{\chi_{_{(0,1)}}}_F=1$.
Let $T:E(0,\infty)\to F(0,\infty )$ be a surjecitve isometry.
Assume that $E(0,1)\ne  L_p(0,1)$, $1\le p\le \infty$ (as sets).
Then,  we have \begin{align}\label{supinf}
\sup_{0\ne f\in E,m(\supp\{f\})<\infty}\frac{\norm{D_t f}_E}{\norm{  f}_F }=\inf_{0\ne f\in E,m(\supp\{f\})<\infty}\frac{\norm{ D_t f}_E}{\norm{   f}_F }
\end{align}
 for some $t\in(0,\infty)$, and  there exists $t= \alpha>0$ satisfying \eqref{supinf} and
$$
      (Tf)(s)  = a (s)\cdot f\left(
j (
\alpha s )
 \right), \quad \forall  f\in E(0,\infty), $$
 where
$j:(0,\infty)\to (0,\infty)$ is an invertible measure-preserving Borel transformation on $(0,\infty)$ and $a$ is a non-vanishing Borel  complex  function on $(0,\infty )$ such that  $|a|$ is a constant a.e..

If, in addition, that
\eqref{supinf} does not hold for any $0<t\neq1$,
 then
$$
Tf=af\circ \sigma , ~\forall  f\in E(0,\infty), $$ where  $a$ is a non-vanishing Borel   complex  function on $(0,\infty )$  such that  $|a| = 1$ a.e., and $\sigma : (0, 1) \to (0, 1)$ is an invertible
    Borel measure-preserving map.

\end{theorem}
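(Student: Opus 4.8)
The plan is to reduce the infinite-interval statement to the finite-interval Kalton--Randrianantoanina--Zaidenberg theorem (Theorem \ref{KRZ1}) together with Zaidenberg's disjointness-preserving theorem (Theorem \ref{Z}), exactly in the spirit of the proof of Theorem \ref{Mityagin} given above. First I would invoke Theorem \ref{Z} (applied to the atomless $\sigma$-finite measure space $(0,\infty)$) to write
$$T(x)(t)=a(t)\,x(\sigma(t)),\qquad \forall x\in E(0,\infty),$$
for some $a\in F(0,\infty)$ and some invertible measurable transformation $\sigma\colon(0,\infty)\to(0,\infty)$; in particular $T$ is disjointness-preserving. By Lemma \ref{finite support}, $\sigma^{-1}$ carries finite-measure sets to finite-measure sets, and (applying the same reasoning to $T^{-1}$) so does $\sigma$. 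Now for each $n\ge 1$ set $I_n:=\sigma^{-1}(0,n)$, which has finite measure. As in the proof of Theorem \ref{Mityagin}, $T$ restricts to a surjective isometry from $E(I_n)$ onto $F(0,n)$. Conjugating by measure-preserving identifications $I_n\cong(0,m(I_n))$ and $(0,n)\cong(0,1)$, and renormalising by the appropriate ratio of characteristic-function norms, I obtain a surjective positive-scaling isometry between the renormalised symmetric spaces $E'(0,1)$ and $F'(0,1)$ built as in \eqref{E' label 2} and \eqref{F' label 2}. Since $E(0,1)\ne L_p(0,1)$ as sets, $E'(0,1)\ne L_p(0,1)$ as sets, so Theorem \ref{KRZ1} applies and gives that this renormalised isometry has the form $c_n f\circ\sigma_n$ with $|c_n|=1$ and $\sigma_n$ an invertible measure-preserving Borel map of $(0,1)$.

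Unwinding the identifications, this says that on $E(I_n)$ the map $T$ acts (up to a unimodular multiplier) as a dilation composed with a measure-preserving map, and it pins down the scalar: testing on $\chi_{I_n}$ shows $\norm{\chi_{(0,n)}}_F/\norm{\chi_{I_n}}_E$ is the distortion introduced by the measure change $\sigma|_{I_n}$. Concretely, $m(\sigma^{-1}(0,n))/n$ must be a fixed constant $\alpha$ independent of $n$: comparing the computation for $I_m$ restricted to $(0,n)$ with that for $I_n$ (using disjointness-preservation of $T$, exactly as in the displayed chain of equalities in the proof of Theorem \ref{Mityagin}) forces the ratios $\norm{\chi_{I_m}}_E/\norm{\chi_{(0,m)}}_F$ to be independent of $m$, hence $m(I_n)=\alpha n$ for all $n$ and the unimodular multipliers are consistent across scales. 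Thus $\sigma^{-1}$ scales Lebesgue measure by the constant factor $\alpha$, i.e. $\sigma = j\circ(\alpha\,\cdot\,)$ where $j$ is measure-preserving; equivalently $\sigma(s)=j(\alpha s)$ with $j$ an invertible measure-preserving Borel transformation of $(0,\infty)$ and $a$ a non-vanishing Borel function with $|a|$ constant a.e.. The identity \eqref{supinf} for $t=\alpha$ is then just the statement that on all finitely supported $f$ the ratio $\norm{D_\alpha f}_E/\norm{f}_F$ equals this single constant $|a|^{-1}$; it holds because $D_\alpha$ followed by a measure-preserving map and multiplication by $a$ is precisely $T$ restricted to finitely supported functions, which is isometric up to the scalar $|a|$.

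For the final ``in addition'' clause: if \eqref{supinf} fails for every $t\ne 1$, then the only admissible value of the scaling constant is $\alpha=1$, so $\sigma$ itself is measure-preserving and $|a|\equiv 1$; this yields $Tf=a\,f\circ\sigma$ with $|a|=1$ a.e. and $\sigma$ an invertible measure-preserving Borel map, as claimed. (Here I am using that $\alpha=1$ is forced: if $\alpha\ne 1$ then \eqref{supinf} would hold at $t=\alpha$, contrary to hypothesis; and $\alpha$ cannot fail to exist since the first part produces it.) The main obstacle I anticipate is the bookkeeping in the second paragraph --- verifying that the unimodular multipliers $\tilde c_n$ obtained at the $n$-th scale agree on overlaps and patch to a single globally defined $a$, and that the constant $\alpha$ is genuinely scale-independent rather than merely locally constant; this requires carefully exploiting disjointness-preservation of $T$ across the nested sets $I_{n}\subset I_{m}$, and checking that $\sigma$ being ``locally a dilation by $\alpha$ up to measure-preserving maps'' on each $I_n$ really globalises to $\sigma(s)=j(\alpha s)$. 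The normalisation hypothesis $\norm{\chi_{(0,1)}}_E=\norm{\chi_{(0,1)}}_F=1$ is what makes these constants come out cleanly. Everything else is a routine transcription of the Theorem \ref{Mityagin} argument with ``$F(0,\infty)$'' in place of ``$F(0,1)$'' and with the dilation factor tracked rather than sent to zero.
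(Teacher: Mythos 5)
Your proposal is correct and follows essentially the same route as the paper's proof: Zaidenberg's theorem to get the weighted-composition form, restriction to the finite-measure pieces $I_n$ with $T\colon E(I_n)\to F(0,n)$, renormalisation and an application of the finite-interval Kalton--Randrianantoanina--Zaidenberg theorem, then patching the unimodular multipliers and the dilation constant $\alpha$ across nested scales via disjointness preservation. The only slips are notational ($I_n$ should be $\sigma(0,n)$ rather than $\sigma^{-1}(0,n)$ given the convention $T(x)(t)=a(t)x(\sigma(t))$, and the common ratio in \eqref{supinf} is $|a|$ rather than $|a|^{-1}$), neither of which affects the argument.
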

\begin{proof}
Assume that $E(0,1)\ne L_p (0,1)$ up to an equivalent norm.
Let $T$ be a surjective isometry from $E(0,\infty)$ onto $F(0,\infty)$. By \cite[Theorem 1]{Z77},
we have
\begin{align}\label{123}
T(x)(t)=a(t)x(\sigma(t))\end{align}
where $a\in F(0,\infty)$ and $\sigma$ is an invertible measurable transformation from $(0,\infty)$ onto $(0,\infty)$. In particular, $T$ is disjointness-preserving.

Let $s$ be a positive integer. Denote $$I_s=\sigma(0,s).$$ 
By Lemma \ref{finite support}, we have 
$m(I_s)<\infty$. Since $T$ is an isometry, it follows that $m(I_s)>0$.
Therefore, $T$ is a surjective isometry from $E(I_s)$ onto $F(0,s)$.

Let $$
i_s:I_s\to (0,m(I_s)) 
$$
be an invertible Borel measure-preserving map such that $i_s = i_t$ on $I_s$ whenever $t>s$.

Define $T_0f=f\circ i_s$.
Then, $T_0$ is a surjective isometry from $E(0,m(I_s))$ onto $E(I_s)$. Define
\begin{align}\label{define E' again}
&~ \quad  E'(0,s) \nonumber \\
&:= \left\{f\in S(0,s)\Big| T_0\circ D_{\frac{m(I_s)}{s}}f\in E(I_s)
~ \mbox{and}~ \norm{f}_{E'}=
\frac{1}{\norm{\chi_{_{I_s}}}_E}\norm{D_{\frac{m(I_s)}{s}}f}_E\right\}  \end{align}
and
\begin{align}\label{define F' again}
F'(0,s):=
  F(0,s)  \mbox{ with } \norm{f}_{F'}=\frac{1}{\norm{\chi_{_{(0,s)}}}_F}\norm{f}_F ,~\forall
f\in F(0,s)  .
\end{align}
Clearly $E'(0,s)$ and $F'(0,s)$ are symmetric function spaces.

Since for any $f\in E'(0,s)$, we have
\begin{align*}
\norm{T\circ T_0\circ D_{\frac{m(I_s)}{s}}(f)}_{F'}&=\frac{1}{\norm{\chi_{(0,s)}}_F}\norm{T\circ T_0\circ D_{\frac{m(I_s)}{s}}(f)}_{F}\\
&=\frac{1}{\norm{\chi_{_{(0,s)}}}_F}\norm{ T_0 \left(
D_{\frac{m(I_s)}{s}}(f) \right)}_{E}\\
&=\frac{1}{\norm{\chi_{_{(0,s)}}}_F}\norm{D_{\frac{m(I_s)}{s}}(f)}_{E}\\
&=\frac{\norm{\chi_{_{I_s}}}_F}{\norm{\chi_{_{(0,s)}}}_F}\norm{ f}_{E'},
\end{align*}
it follows that the mapping

$$T':=\frac{\norm{\chi_{_{(0,s)}}}_F}{\norm{\chi_{_{I_s}}}_E}T\circ T_0\circ D_{\frac{m(I_s)}{s}}(f)$$
is a surjective isometry form $E'(0,s)$ to $F'(0,s)$.

By Theorem \ref{KRZ1}, we have 
$$T'f=\frac{\norm{\chi_{_{(0,s)}}}_F}{\norm{\chi_{_{I_s}}}_E}T\circ T_0\circ D_{\frac{m(I_s)}{s}}(f)=c_sf\circ \sigma_s,\quad f\in E'(0,s),$$
where $c_s:(0,s)\to \mathbb{C}$  is a non-vanishing Borel function with $|c_s|$ = 1 and  $\sigma_s$ is an invertible measure-preserving Borel transformation from $(0,s)$ onto $(0,s)$, i.e.,
\begin{align}\label{sigma}
\frac{\norm{\chi_{_{(0,s)}}}_F}{\norm{\chi_{_{I_s}}}_E}a\cdot \left(
D_{\frac{m(I_s)}{s}}f
\right)\circ i_s\circ\sigma\stackrel{\eqref{123}}{=}c_sf\circ \sigma_s,\quad \forall f\in E'(0,s).
\end{align}
Taking $f=\chi_{(0,s)}$, we have
\begin{align}\label{c1}
T(\chi_{_{I_s}})=\frac{\norm{\chi_{_{I_s}}}_E}{\norm{\chi_{_{(0,s)}}}_F}c_s\chi_{_{(0,s)}}.
\end{align}
Let $t\ge s$.
We have
\begin{align*}
T(\chi_{_{I_t}})=\frac{\norm{\chi_{_{I_t}}}_E}{\norm{\chi_{_{(0,t)}}}_F}c_t\chi_{_{(0,t)}}.
\end{align*}
By the disjointness-preserving property of $T$, we obtain that $T(\chi_{_{I_t/ I_s}})$ and $T(\chi_{_{I_s}})$
have mutually disjoint supports. Hence, $T(\chi_{_{I_t/I_s}})$ is supported on $(s,t)$. We
have
\begin{align*}
\frac{\norm{\chi_{_{I_t}}}_E}{\norm{\chi_{_{(0,t)}}}_F}c_t\chi_{_{(0,s)}}=T(\chi_{_{I_t}})\chi_{(0,s)}&=T(\chi_{_{I_s}})\chi_{_{(0,s)}}+T(\chi_{_{I_t/I_s}})\chi_{_{(0,s)}}\\
&=T(\chi_{_{I_s}})\chi_{_{(0,s)}}=
\frac{\norm{\chi_{_{I_s}}}_E}{\norm{\chi_{_{(0,s)}}}_F}c_s \chi_{_{(0,s)}}.
\end{align*}
Hence, for any $t\ge s$, we have 
$$\frac{\norm{\chi_{_{I_s}}}_E}{\norm{\chi_{_{(0,s)}}}_F}c_s=\frac{\norm{\chi_{_{I_t}}}_E}{\norm{\chi_{_{(0,t)}}}_F}c_t \quad \mbox{on}\quad (0,s).$$
This implies that $\frac{\norm{\chi_{_{I_s}}}_E}{\norm{\chi_{_{(0,s)}}}_F}$ is a constant independent of $s$ and $c_t\mid_{(0,s)}=c_s$. By (\ref{123}), we have
\begin{align}\label{a1}
T\left(\chi_{_{I_s}}\right)=a\cdot \chi_{_{(0,s)}}.\end{align}
Comparing  (\ref{c1}) and (\ref {a1}), we obtain
\begin{align}\label{d1}
\frac{\norm{\chi_{_{I_s}}}_E}{\norm{\chi_{_{(0,s)}}}_F}c_s=a  ,~a.e. \quad \mbox{on}\quad (0,s).
\end{align}
This implies that
 $|a|$ is a constant on $(0,\infty)$, and, by  \eqref{sigma}, we have
\begin{align}\label{opa}
 \left(D_{\frac{m(I_s)}{s}}f\right)\circ i_s\circ\sigma=f\circ \sigma_s, \quad \forall  f\in E'(0,s).
\end{align}
Let $g=\left(D_{\frac{m(I_s)}{s}}f\right) \circ i_s \in E(I_s)$. In particular,  $f=D_{\frac{s}{m(I_s)}}(g\circ i_s^{-1})$.
Thus, by \eqref{opa}, we have
\begin{align}
\label{a3}g\circ \sigma=D_{\frac{s}{m(I_s)}}(g\circ i_s^{-1})\circ \sigma_s
= (g\circ i_s^{-1 })\left ( \frac{m(I_s)}{s } \cdot \sigma_s(\cdot ) \right)
,\quad g\in E(I_s).
\end{align}
Since $I_s\subset I_t$ when $s\le t$
, it follows
from  \eqref{a3} (by letting $g=\chi_{_{I_s }}$)  that
\begin{align}\label{uij}\sigma (\cdot )=i_s^{-1}\left(\frac{m(I_s)}{s}\sigma_s(\cdot ) \right)=i_t^{-1}\left(\frac{m(I_t)}{t}\sigma_t(\cdot )\right )\mbox{ on $(0,s)$}.\end{align}
 Since $i_s$ and $\sigma_s$ are  measure-preserving, it follows that,
for any $J_s\subset (0,s)$,
 we have
$$m(\sigma(J_s))=m\left(
i_s^{-1}(\frac{m(I_s)}{s}\sigma_s(J_s))\right)=
\frac{m(I_s)}{s}m\left(\sigma_s(J_s)\right)=\frac{m(I_s)}{s}m(J_s).$$
Therefore, for any $t\ge s$, we have 
$$m(\sigma(J_s))
=\frac{m(I_t)}{t}m(J_s).$$
This implies that $\frac{m(I_s)}{s}$ is a constant for any positive integer $s $.
 Let $\alpha=\frac{m(I_s)}{s}$, $s\ne 0$.
Since $\sigma_s$ is a measure-preserving Borel transformation from $(0,s)$ onto $(0,s)$, it follows from \eqref{uij} that
$$\sigma(\cdot)=j(\alpha \cdot)$$ where $j$ is an invertible measure-preserving Borel transformation on $(0,\infty)$.
Therefore, for any $ f\in E(0,\infty)$, we have $$T(f)(t)
=
 a (t)\cdot f(\sigma (t) )= a (t)\cdot f\left(
j (
\alpha t)
 \right) , $$
where  $a:(0,\infty)\to \mathbb{C}$ is a non-vanishing Borel function such that  $|a|$ is a constant a.e..
In particular, this proves
\eqref{supinf}.

Next, we consider the case when
\eqref{supinf} does not hold
 for any $0<t\ne1$.
  Since $\alpha=\frac{m(I_s)}{s}$ is a constant independent of $s$, it follows that, for any $f\in E(0,s)$, 
we have
\begin{eqnarray*}
 \norm{D_{\frac{m(I_s)}{s}}f}_E&  \stackrel{\eqref{define E' again}}{=} &\norm{\chi_{_{I_s}}}_E\norm{f}_{E'}\\
&  =&\norm{\chi_{_{I_s}}}_E\norm{T'f}_{F'}\\
&~ =~&\norm{\chi_{_{I_s}}}_E\norm{c_s f\circ \sigma_s}_{F'}\\
&\stackrel{\eqref{define F' again}}{=}&\frac{\norm{\chi_{_{I_s}}}_E}{\norm{\chi_{_{(0,s)}}}_F}\norm{c_s f\circ \sigma_s}_{F}\\
&~ =~&\frac{\norm{\chi_{_{I_s}}}_E}{\norm{\chi_{_{(0,s)}}}_F}\norm{f}_{F}.
\end{eqnarray*}
Then,
 we have
$$\inf_{f\in E, f\ne 0, m(\supp\{f\})<\infty}\frac{\norm{D_\alpha f}_E }{\norm{f}_F} =\sup_{f\in E, f\ne 0,m(\supp\{f\})<\infty}\frac{\norm{D_\alpha f}_E }{\norm{f}_F}.$$
Therefore, $\alpha=1$, which implies that  $s=m(I_s)$ for any $s>0$. This implies that $\sigma$ is measure-preserving. Hence, for any $s>0$, $$|a|=\frac{\norm{\chi_{_{I_s}}}_E}{\norm{\chi_{_{(0,s)}}}_F}=\frac{\norm{\chi_{_{I_1}}}_E}{\norm{\chi_{_{(0,1)}}}_F}=\frac{\norm{\chi_{_{(0,1)}}}_E}{\norm{\chi_{_{(0,1)}}}_F}=1\quad \mbox{on}\quad (0,s).$$
This completes the proof.
\end{proof}

\begin{remark}Let $1\le p\le q<\infty$.
Note that $(L_p+L_q)(0,\infty)$ coincides with $L_p$ (as sets) on any finite interval.
It seems that the
description of surjective isometries on $ L_p+L_q $ remains unknown even in the commutative setting.
\end{remark}

\begin{theorem}\label{th:infiniteKRZ}
Let $E(0,\infty), F(0,\infty)$ be  symmetric function spaces  with  $\norm{\chi_{_{(0,1)}}}_E=\norm{\chi_{_{(0,1)}}}_F=1$.

Let $\cM_1$ and $ \cM_2 $ be   atomless   von Neumann algebras  equipped with  semifinite  faithful  normal traces  $\tau_1$ and $\tau_2$, respectively.

Assume that
\begin{enumerate}\item  $\cM_1$ and $\cM_2$ are $\sigma$-finite, and $E(0,\infty)$ and $F(0,\infty)$ have the Fatou property;
\item or  $E(0,\infty)$ and $F(0,\infty)$ are minimal.
\end{enumerate}
Assume that $E(0,1)\ne  L_p(0,1)$, $1\le p\le \infty$ (as sets).
Let $T:E(\cM_1,\tau_1)\rightarrow F(\cM_2,\tau_2)$ be a surjecitve isometry.
Then,  we have
\begin{align}\label{nonsupinf}
\sup_{0\ne f\in E(0,\infty),m(\supp\{f\})<\infty}\frac{\norm{D_t f}_E}{\norm{f}_F }=\inf_{0\ne f\in E(0,\infty),m(\supp\{f\})<\infty}\frac{\norm{D_t f}_E}{\norm{f}_F }
\end{align}
 for some $t\in(0,\infty)$,   and there exists a positive number $t=\alpha$ satisfying \eqref{nonsupinf} and
\begin{align*}
T(x)=au\cdot J(x) , ~\forall  x\in E(\cM_1,\tau_1), \end{align*} where $a>0$ is a constant, $u$ is a unitary operator in $\cM_2$ and $J:S(\cM_1,\tau_1)\to S(\cM_2,\tau_2)$ is a   Jordan $*$-isomorphism such that 
$ \alpha \cdot   \tau_2(J(x))=\tau_1(x)$, $\forall x\in \cM_1$.

If, in addition, that
\eqref{nonsupinf} does not hold for any $0<t\neq1$,
 then
 \begin{align}\label{dfg}
 T(x)=u\cdot J(x), ~\forall  x\in E(\cM_1,\tau_1 ), \end{align} where $u$ is  a unitary operator in $\cM_2$ and $J:S(\cM_1,\tau_1)\to S(\cM_2,\tau_2)$ is a trace-preserving Jordan $*$-isomorphism.
\end{theorem}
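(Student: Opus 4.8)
The plan is to reduce Theorem \ref{th:infiniteKRZ} to the commutative case treated in Theorem \ref{infinite}, by extracting from $T$ an abelian subalgebra on which $T$ restricts to an isometry between symmetric function spaces on $(0,\infty)$, exactly as in the proofs of Theorems \ref{th5} and \ref{th:F:M}. First I would apply the elementary-form description: under hypothesis (1) this is Theorem \ref{th:iso} (since the Fatou property passes to $F(\cM_2,\tau_2)$ by \cite[Theorem 4.1]{DDST}), and under hypothesis (2) it is \cite[Theorem 1.2]{HS}; in either case one gets a central projection $z$, a surjective Jordan $*$-isomorphism $J:\cM_1\to\cM_2$, and elements $A,B$ so that $T(x)=J(x)Az+BJ(x)({\bf 1}-z)$ on $\cM_1$, with $A=Az$, $B=B({\bf 1}-z)$. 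Polar-decomposing $A$ and $B$ and absorbing the (co-)isometry parts into unitaries multiplied on the outside (as in the proof of Theorem \ref{th5}), I may assume $A,B\ge 0$. Then, picking an atomless abelian von Neumann subalgebra $\cN_2\subset\cM_2$ containing all spectral projections of $A$ and $B$ with $\cN_2\cong L_\infty(0,\infty)$ via a trace-measure preserving $*$-isomorphism (\cite[Lemma 1.3]{CKS}), and setting $\cN_1:=J^{-1}(\cN_2)$, the same argument as in Theorems \ref{th5} and \ref{th:F:M} shows $T$ restricts to a surjective isometry $E(\cN_1,\tau_1)\to F(\cN_2,\tau_2)$; transporting through $I_1:S(\cN_1,\tau_1)\cong S_\infty(0,\infty)$ and $I_2:S(\cN_2,\tau_2)\cong S_\infty(0,\infty)$ gives a surjective isometry $\widetilde T=I_2\circ T\circ I_1^{-1}:E(0,\infty)\to F(0,\infty)$.

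Next I would invoke Theorem \ref{infinite} for $\widetilde T$: it yields the dichotomy \eqref{supinf}/\eqref{nonsupinf}, a scalar $\alpha>0$ realising the sup$=$inf, and the representation $\widetilde T(f)(s)=a(s)f(j(\alpha s))$ with $|a|$ constant a.e.\ and $j$ an invertible measure-preserving Borel map. Evaluating at $f=\chi_{(0,t)}$ and running the same disjointness/finite-support bookkeeping as in the proof of Theorem \ref{infinite} (via Lemma \ref{finite support}) pins down the constant $|a|$ and the dilation factor $\alpha$; in particular $T({\bf 1})$, up to the outer unitary and the scalar $|a|$, is a projection, so after the reductions one may take $A+B$ to be a unitary $u\in\cM_2$ and thus $T(x)=|a|\,u\cdot J(x)$ on $\cM_1$, where (as in Theorem \ref{KRZ}(2)) $J$ is rebuilt from $J'$ and the partial isometries so as to be a genuine Jordan $*$-isomorphism of $S(\cM_1,\tau_1)$ onto $S(\cM_2,\tau_2)$. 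To extend this identity from $\cM_1$ to all of $E(\cM_1,\tau_1)$ I would use the $\sigma(E,E^\times)$-continuity of $T$ (Theorem \ref{weak-continous of T}) together with the measure-continuity of $J$ and the density of $\cM_1$ (or of $\cF(\tau_1)$) in the appropriate topology, as done in Corollaries \ref{order continuous} and \ref{cor KRZ Fatou}.

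Finally I would compute the trace relation. The dilation factor is $\alpha=m(I_s)/s$ in the function-space model, so the change-of-variables identity $\tau_1(x)=\int_0^\infty \mu(t;x)\,dt = \alpha\int_0^\infty \mu(t;J(x))\,dt = \alpha\,\tau_2(J(x))$ holds first for $x$ a spectral projection of an element of $\cN_1$, hence for all $0\le x\in\cN_1$, and then — since every $0\le x\in\cM_1$ lies in some atomless abelian subalgebra on which the identical argument applies — for all $0\le x\in\cM_1$, giving $\alpha\,\tau_2(J(x))=\tau_1(x)$; the trace-preserving case \eqref{dfg} is exactly $\alpha=1$, which is what \eqref{nonsupinf} failing for all $t\neq1$ forces (the sup$=$inf can hold only at $t=1$, since a strict inequality would give a nontrivial dilation-invariance of the norm ratio). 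I expect the main obstacle to be the reduction step in the second paragraph: verifying that the restriction of $T$ to $E(\cN_1,\tau_1)$ is genuinely \emph{surjective} onto $F(\cN_2,\tau_2)$ in the \emph{infinite} setting, which requires the careful truncation-and-convergence argument (approximating $y\in F(\cN_2,\tau_2)$ by elements with $\tau$-finite support, solving $J(x)(A+B)=y$ locally via the boundedly invertible compressions of $A+B$, and checking $x\in E(\cM_1,\tau_1)$ using singular-value estimates and $\sigma(F,F^\times)$-sequential completeness \cite[Proposition 3.1]{DK}) already carried out in Theorems \ref{th5} and \ref{th:F:M}; here one must additionally ensure this matches up correctly with the dilation $D_\alpha$ rather than assuming a measure-preserving $\sigma$.
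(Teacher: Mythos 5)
Your overall strategy (reduce to the commutative Theorem~\ref{infinite} via an abelian subalgebra, extract the constancy of the modulus and the trace-scaling factor $\alpha$) is the right one, but there is a genuine gap at the very first step. In the infinite-trace setting the elementary form supplied by Theorem~\ref{th:iso} (and by \cite[Theorem 4.4]{HS}) is \emph{not} $T(x)=J(x)Az+BJ(x)({\bf 1}-z)$ with single elements $A,B\in F(\cM_2,\tau_2)$; it is the $\sigma(F,F^\times)$-convergent series $\sum_i J(x)A_iz+B_iJ(x)({\bf 1}-z)$ with $A_i=T(e_i)z$, $B_i=T(e_i)({\bf 1}-z)$ attached to a family of pairwise orthogonal $\tau$-finite projections $e_i$. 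The single-$A$, single-$B$ form is stated in Theorem~\ref{th:iso} only for finite traces, and indeed a global $A$ would morally be $T({\bf 1})$, which does not exist since ${\bf 1}\notin E(\cM_1,\tau_1)$ here. Consequently you cannot ``polar-decompose $A$ and $B$'', nor choose an abelian $\cN_2\cong L_\infty(0,\infty)$ containing ``all spectral projections of $A$ and $B$'', because these objects are not given. This is precisely the difficulty the paper's proof is organised around: it works corner by corner, first producing (via a Zorn argument) $\tau$-finite projections $e_n\uparrow{\bf 1}$ with $\tau_2(J(e_n))<\infty$ — a point you also leave unaddressed, since a Jordan $*$-isomorphism between infinite algebras need not a priori send $\tau$-finite projections to $\tau$-finite ones — then applies the commutative rigidity on each corner $e_n\cM_1e_n$ to show $\mu(|T(e_n)|J(e_n))$ equals a constant $c_n$, proves $c_m=c_n$ by comparing overlapping corners, and finally glues the partial isometries $u_n=uJ(e_n)$ into a single unitary $u$ via strong convergence and a separate argument that $l(u)=r(u)={\bf 1}$.

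Your second paragraph implicitly assumes this gluing has already happened (``one may take $A+B$ to be a unitary $u$''), and your appeal to the surjectivity arguments of Theorems~\ref{th5} and~\ref{th:F:M} does not close the gap either: those are set up with $\tau_1$ a tracial state, where ${\bf 1}\in E(\cM_1,\tau_1)$ and a genuine $A=T({\bf 1})z$ exists. Once the corner-wise analysis and the compatibility of the constants $c_n$ are carried out, the rest of your outline (extension from $\cup_n e_n\cM_1e_n$ to $E(\cM_1,\tau_1)$ by $\sigma(E,E^\times)$-continuity and Lemma~\ref{dense}, the trace relation $\alpha\,\tau_2(J(x))=\tau_1(x)$ obtained by running Theorem~\ref{infinite} on atomless abelian subalgebras and checking the ratio $\tau_1(p)/\tau_2(J(p))$ is independent of $p$, and the identification $\alpha=1$, $c=1$ in the second case) matches the paper's argument.
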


\begin{proof}
Without loss of generality, we may  assume that $\tau_1$ and $\tau_2$ are infinite, see Chapters \ref{S:KRZ} and \ref{S:M}.

By Theorem \ref{th:iso} and \cite[Theorem 4.4]{HS}, $T$ is of the form:
\begin{align}\label{kl}
T(x) =\sigma(F,F^\times)-\sum_{i }     J(x)A_i z + B_i J(x) ({\bf 1}-z),~x\in E(\cM_1,\tau_1)\cap \cM_1, 
\end{align}
where $A_i \in F(\cM_2,\tau_2)$ are   disjointly supported from the left, and $B_i \in F(\cM_2,\tau_2)$ are  disjointly supported from the right,  $J:\cM_1\to \cM_2$ is a surjective Jordan $*$-isomorphism   and $z$ is  a central projection in
$  \cM_2$.

We only prove the case for symetric spaces with the Fatou property. 
The minimal case follows from the same argument.
For simplicity, we may assume that all $A_i =0$. That is, $z=0$.  

We claim the there exists 
   an increasing net  $\{e_i\}_{i\in I}$  of $\tau$-finite projections in $\cM_1$ such that $$\mbox{$e_i\uparrow {\bf 1}$
and $\tau_2(J(e_i))<\infty $ for all $i$.}$$
Indeed, the Zorn lemma implies that there exists a maximal family of non-zero pairwise
disjoint $\tau$-finite projections $\{e_i\}_{i\in I}$ in $\cM_1$ such that 
$\tau_2(J(e_i))<\infty $ for all $i$.
Let $p=\vee _ {i\in I} e_i$. Let $q\le {\bf 1} -J(p)$ be a $\tau$-finite projection in $\cM_2$. 
For any   $\tau$-finite projection $f\in \cM_1$ with  
$f\le J^{-1}(q)$, we have $\tau_2(J(f))\le \tau_2 (q)<\infty$. By the maximality of $\{e_i\}_{i\in I}$, we obtain that $f=0$ and $q=0$.
Hence, $p={\bf 1}$. This proves the claim.

Since $\cM_1$ is $\sigma$-finite (or $E(\cM_1,\tau_1)$ is minimal), it follows that $\{e_i\}_{i\in I}$ is at most countable, denoted by $\{e_n\}_{n\ge 1}$.

For any $x\in e_n \cM_1  e_n$, we have  (see the proof of  Theorem \ref{th:iso} and \cite[Theorem 4.4]{HS})
$$
T(x)  =  T(e_n)J(x)  .
$$
The arguments in Theorems \ref{th5} and \ref{th:F:M} show that
$$
T(x)  =  T(e_n)J(x)  , ~\forall x\in E \left(
(\cM_1)_{e_n},\tau_1
\right),
$$
where   $J: \cM_1 \to  \cM_2 $ is a  Jordan $*$-isomorphism, which can be exended to a Jordan $*$-isomorphsim 
from $S
\left(
 (\cM_1) _{e_n},\tau_1
\right)$ onto 
$S\left(
 (\cM_2) _{J(e_n)},\tau_2
\right)$
for all $n$.

Let   $v_{n}|T(e_n)^*| =T(e_n)^*$ be the polar decomposition.
The argument used in Section~\ref{subs:KRZ} shows that
$$   |T(e_n) | J(\cdot )  $$ a surjective isometry from $E(\cA_n,\tau_1)$ onto $F(\cB_n,\tau_2)$,
where $\cB_n $ (respectively, $\cA_n $) is an atomless abelian von Neumann subalgebra of 
$J(e_n)\cM_2 J(e_n)$
 (respectively, $e_n \cM_1 e_n$)
containing all spectral projections of  
$|T(e_n) |  ({\bf 1}-z)$ 
 and $J(\cA_n)=\cB_n$. Moreover, 
there  is a trace-measure preserving  $*$-isomorphic from $\cA_n $ onto $L_\infty (0,\tau_1(e_n))$
 (respectively, from   $\cB_n$ onto   $L_\infty (0,\tau_2(J(e_n)) )$).

Denote 
$$\alpha_n =m (0,\tau_1(e_n))\mbox{ and } \beta_n =m (0,\tau_2(J(e_n))). $$
 Define
\begin{align*} E'(0,\beta_n):=  \left\{f\in S(0,\beta_n)
\Big|D_{\frac{\beta_n}{\alpha_n}}f  \in E(0,\alpha_n)
~\mbox{and}~ \norm{f}_{E'}:=
\frac{1}{\norm{\chi_{_{(0,\alpha_n )}}}_E}
\norm{D_{\frac{\beta_n}{\alpha_n}}f}_E\right\}  \end{align*}
and
\begin{align*}
F'(0,\beta_n):=
  F(0,\beta_n)  \mbox{ with } \norm{f}_{F'}=\frac{1}{\norm{\chi_{_{(0,\beta_n)}}}_F}\norm{f}_F ,~\forall
f\in F(0,\beta_n )  .
\end{align*}
Clearly $E'(0,\beta_n )$ and $F'(0,\beta_n )$ are symmetric function spaces.
The same argument used in Theorem \ref{infinite}  (see \eqref{d1}) yields that
$$\mu\left(|T(e_n)| J(e_n)\right ) = c_n: = \frac{\norm{\chi_{_{(0,\alpha_n )}}}_E}{\norm{\chi_{_{(0,\beta_n )}}}_F} $$
on $(0,\beta_n )$.
Hence, there exists a partial isometry  $u_n\in  \cM_2  $
with $l(u_n)=  l(T(e_n))   $ and $   r (u_n) =r(T(e_n))=J(e_n) $ (see e.g. \cite[Remark 3.5]{HSZ})
 such that
$$T(x)= c_n   u_n  J(x)   
 ,~\forall x\in E\left(
e_n \cM_1 e_n,\tau_1\right).$$

For any $m\ge n$,   there exists a partial isometry $u_m $
with $l(u_m)= l (T(e_m)) $ and $r(u_m)= r (T(e_m)) $
such that
$$T(x)= c_m  u_mJ(x),  ~\forall x\in E(e_m \cM_1 e_m,\tau_1).$$
We have
$$c_n  u_n J(e_n) = T(e_n)= 
c_m  u_m J(e_n ) .  $$
This shows that $c_m =c_n$ for all $m\ge n$.
Denote $c:=c_n$.
We have
$$ u_m J(e_n) = u_n J(e_n)  =u_n. $$ 
Note that $l(T(e_n))\uparrow {\bf 1}$ (see e.g.  the proof in \cite[Remark 3.5]{HSZ}).
   Since $J$ is normal, it follows that   $J(e_n)\uparrow {\bf 1}$, and
 $\{ u_n\}_{n\ge 1} $ converges to an operator $  u\in \cM_2$ in the strong operator topology (see e.g.  the proof of \cite[Proposition 2.17]{HSZ}, see also \cite{Yeadon}) with $$  u_n =  l(T(e_n)) \cdot u
\cdot  J(e_n)  =u
\cdot  J(e_n) .
 $$
We have 
\begin{align}\label{finite Tx}
T(x)= c u   J(x)   
 ,~\forall x\in E\left(
e_n \cM_1 e_n,\tau_1\right) 
\end{align}
and 
$$P(\cM_2)\ni l(T(e_n ))= u_n u_n^* = u J(e_n) u^* \uparrow uu^*. $$
This implies that $u$ is a partial isometry \cite[Section 1.7]{DPS}.

Since
 $T$ is  $\sigma(E,E^\times)-\sigma(F,F^\times)$-continuous (see Theorem \ref{weak-continous of T}) and   $\cup_{n\ge 1}e_n \cM_1 e_n$ is dense in $E(\cM_1,\tau_1) $ in the $\sigma(E,E^\times)$-topology (see Lemma \ref{dense}), 
it follows from \eqref{finite Tx} that  $$T(x)=cuJ(x)  $$
 for all $x\in E(\cM_1,\tau_1)$.
We have 
$$ l(u)\ge  \cup _{x\in E(\cM_1,\tau_1)\cap \cM_1 }  l(T(x))  = \cup _{x\in E(\cM_1,\tau_1 )} l(T(x) )={\bf 1},$$
which
shows that $l(u)={\bf 1}$. 
On the other hand, if $r(u)\ne {\bf 1}$, then there exists a $\tau$-finite projection $p\in \cM_1$ such that $J(p)\le {\bf 1}-r(u)$. We have $$\norm{T(p)}_F =\norm{cu J(p)}_E =0,$$
which contradicts the assumption that $T$ is an isometry. 
Hence, $r(u)={\bf 1}$ and  $u$ is unitary.

Let $\cA$ be an arbitrary atomless abelian weakly closed $*$-subalgebra   of $\cM_1$, which is (trace-measure preserving) $*$-isomorphic to $L_\infty (0,\infty )$.
For any $\tau$-finite projection $p \in \cA$,
we have $c  \norm{J(p)}_F=\norm{p}_E$.
We claim that $J(p)$ is $\tau$-finite.
Indeed, if $\tau_2(J(p)) =\infty$, then there exists a partition $\{p_n\}_{n}$ of $p$ such that
 $$\mbox{ $\tau_1(p_n)\to 0$
as $n\to \infty$ but $\tau_2(J(p_n))\to \infty$~as~$n\to \infty $ .}$$
Note that $0\ne \norm{c J(p)}_F= \norm{c J(p_n)}_{F}=\norm{J(p_n)}_E$.
This contradicts the assumption that $E(0,1)\ne L_\infty(0,1) $ \cite[p.118]{LT2}. 
Therefore, $\tau_2$ is a semifinite faithful normal trace on $J(\cA)$
and
$J$ is a surjective isometry from $E(\cA,\tau_1)$ onto $F(J(\cA),\tau_2)$.
By Theorem~\ref{infinite},
for any $\tau$-finite $p\in \cA$, we have
$$\frac {\tau_1(p)} { \tau_2(J(p)) }  \tau_2(J(x) ) = \tau_1(x), ~\forall x\in \cA.$$
Let $\{p_n\}_{n\ge 1}$ be a decreasing sequence with $p_n\downarrow 0$ and $\tau(J(p_n))<\infty $. 
In particular, we have $ \frac {\tau_1(p_n)} { \tau_2(J(p_n)) }  = \frac {\tau_1(p_1)} { \tau_2(J(p_1)) }  $ for all $n\ge 1$.
For any non-trivial $\tau$-finite projection $p$, we have 
$$ \frac {\tau_1(p_1 )} { \tau_2(J(p_1)) }   =\frac {\tau_1(p_n )} { \tau_2(J(p_n)) }   =\frac {\tau_1( ({\bf 1}-p_n)p ({\bf 1}-p_n) )} { \tau_2(J(({\bf 1}-p_n) p ({\bf 1}-p_n))) } .   $$
Taking $n\to \infty $, we obtain that  
$$ \frac {\tau_1(p)} { \tau_2(J(p)) }  $$
is a constant (denoted by $\alpha$) for all  non-trivial $\tau$-finite projections 
 $p\in \cM_1$.
Since $\tau_1$ and $\tau_2$ are normal, it follows that  
$\tau_2(J(x))= \alpha \cdot   \tau_1(x)$, $\forall x\in \cM_1$.
This, in turn, proves \eqref{nonsupinf}.

By Theorem \ref{infinite}, if
  that
\eqref{nonsupinf} does not hold for any $0<t\neq1$, then $c=1$ and $\alpha=1$ (i.e., $J$ is a trace-preserving Jordan $*$-isomorphism).
This completes the proof.
\end{proof}

\chapter{The uniqueness of the  symmetric structure in noncommutative $L_p$-spaces: a noncommutative Semenov--Abramovich--Zaidenberg Theorem}\label{Sec:AZ}

\section{Proof of Theorem \ref{AZ111}}

Below, we establish
a noncommutative version of the   Semenov--Abramovich--Zaidenberg Theorem, providing a proof for
   Theorem \ref{AZ111} in the  setting of finite traces.
\begin{proof}[Proof of Theorem \ref{AZ111}]
Without loss of generality, we may assume that $\nu$ is a tracial state.

Since $E(\cM,\tau)$ is isomorphic to $L_p(\cN,\nu)$, it follows that $E(\cM,\tau)$ does not contain $c_0$-copies~\cite[Theorem 5.9.6]{DPS}.
Therefore, $E(0,1)$ does not contain a $c_0$-copy, i.e., $E(0,1)$ is a $KB$-space.
In particular, $E(\cM,\tau)$ is fully symmetric~\cite[Corollary 5.3.6]{DPS}.

Assume that $p=2$. 
The space
$E(\cM,\tau)$ isometric to $L_2(\cN,\nu)$ implies that $E(0,1)$ is isometric to a complemented subspace of $L_2(\cN,\nu)$,
 which again is a separable infinite-dimensional Hilbert space.
In particular, $E(0,1)$ is isometric to $L_2(0,1)$.
By~\cite[Theorem 1]{AZ} (see also \cite{Semenov}), 
we have 
$$E(0,1)=L_2(0,1)$$ up to a proportional norm.

Let $p\ne 2$.
There exists an isometry  $T$ from $E(\cM,\tau)$ onto $L_p(\cN,\nu)$, which is of the form
$$T(x)=AJ(x)z +J(x)B({\bf 1}-z),~x\in \cM,$$
where $z$ is a central projection in $\cM$, $J$ is a Jordan $*$-isomorphism from  $\cM$ onto $\cN$ and $A,B\in L_p(\cN,\nu)$~\cite{HS}.
Without loss of generality, we may assume that $A,B\ge 0$. 

There exists an atomless abelian von Neumann subalgebra $\cN'$ of $\cN$
containing $A$ and $B$~\cite{CKS}.
Let $\cN''=J^{-1}(\cN')$.
By the same argument as that  in the proof of Theorem~\ref{K-R-Z}, we obtain that $T$ is a surjective isometry from $E(\cN'')$ onto $L_p(\cN')$ with the Lebesgue measure.

Let (see \cite[Theorem 3.1.1]{LSZ}) $$E(0,1):=\{x\in S(0,1):\mu(x)=\mu(A) \mbox{ for some }A\in E(\cM,\tau)\}$$ with  $$\norm{x}_E:=\norm{A}_{E(\cM,\tau)}.$$
Note that there exists a trace-measure preserving isomorphism $I_1$ (respectively, $I_2$) from $S(\cN',\nu)$ (respectively, $S(\cN'',\tau)$) onto $S (0,1)$.
Hence,
$$I_2 \circ T\circ I_1^{-1} $$
is an isometry from $E(0,1)$ onto $L_p
(0,1)$.
By \cite[Theorem 1]{AZ}, $E(0,1)$ coincides with $L_p(0,1)$ and $$\norm{\cdot}_E=\norm{\chi_{_{(0,1)}}}_E\norm{\cdot}_p.$$
In other words,
  $E(\cM,\tau)=L_p(\cM,\tau)$ up to a proportional norm~\cite[Theorem 3.1.1]{LSZ}.
\end{proof}
\section{An infinite version of Theorem \ref{AZ111}}

Below, we establish Theorem \ref{AZ111} in the setting of general semifinite von Neumann algebras,
which is new even for symmetric function spaces on an infinite interval.
\begin{theorem}\label{AZ infinite NC}Let $1\le p<\infty$.
Let $\cM$ be an atomless von Neumann algebra equipped with a semifinite infinite faithful normal trace $\tau$.
If 
a symmetric space
$E(\cM,\tau)$ is isometric to $L_p(\cN,\nu)$ for some  atomless   von Neumann algebra $\cN$ equipped with a semifinite  faithful normal trace $\nu$.
Then, $E(0,\infty)$ coincides with $L_p(0,\infty )$ with $$\norm{\cdot}_E=\lambda \norm{\cdot}_{L_p}$$
for some $\lambda >0$.
\end{theorem}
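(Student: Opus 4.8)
\textbf{Proof plan for Theorem \ref{AZ infinite NC}.}

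The plan is to reduce the infinite-trace case to the finite-trace Semenov--Abramovich--Zaidenberg Theorem (Theorem \ref{AZ111}) together with the description of isometries over infinite semifinite von Neumann algebras obtained in Theorem \ref{th:infiniteKRZ}. First I would observe that since $E(\cM,\tau)$ is isometric to $L_p(\cN,\nu)$, the space $E(\cM,\tau)$ contains no copy of $c_0$, hence $E(0,\infty)$ is a $KB$-space and $E(\cM,\tau)$ is fully symmetric; this lets us work with a genuine Lindenstrauss--Tzafriri symmetric function space on $(0,\infty)$. The case $p=2$ is handled exactly as in the proof of Theorem \ref{AZ111}: being isometric to the separable infinite-dimensional Hilbert space $L_2(\cN,\nu)$ forces $E(0,\infty)$ to be a Hilbert space, hence (via the commutative Semenov result \cite{Semenov} applied on each finite subinterval, or directly \cite[Theorem 1]{AZ}) $E(0,\infty)=L_2(0,\infty)$ up to a proportional norm.

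For $p\neq 2$ the strategy is: first show $E(0,1)$ (the restriction of $E(0,\infty)$ to a finite interval, as defined in Chapter \ref{Sec:inf}) coincides with $L_p(0,1)$ up to equivalent norm, then upgrade to the proportional-norm conclusion on all of $(0,\infty)$. To get the first step, note that an isometry $T:E(\cM,\tau)\to L_p(\cN,\nu)$ exists; since $L_q(\cN,\nu)\ne L_p(\cN,\nu)$ isometrically for $q\ne p$ over infinite semifinite algebras (as in the proof in Section~\ref{subs:KRZ}), we know $E(0,1)$ cannot be $L_q(0,1)$ for any $q\ne p$. If $E(0,1)\ne L_p(0,1)$ as sets, then Theorem \ref{th:infiniteKRZ} applies and shows $T$ is of the form $au\cdot J(\cdot)$ with $J$ a (scaled-)trace-preserving Jordan $*$-isomorphism — such a $T$ is automatically an $L_p$-isometry in the sense that it carries $L_p(\cM,\tau)$-structure, which would force $E(\cM,\tau)=L_p(\cM,\tau)$ and hence $E(0,1)=L_p(0,1)$, a contradiction. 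Therefore $E(0,1)=L_p(0,1)$ up to equivalent norm.

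Having this, I would pass to a finite corner to pin down the norm. Pick a $\tau$-finite projection $e\in\cM$ with $\tau(e)=1$; then $E((\cM)_e,\tau)$ is a symmetric space over a noncommutative probability space, and the composition of $T$ restricted to the corner with a trace-measure-preserving identification of $J(e)\cN J(e)$ with a finite von Neumann algebra exhibits $E((\cM)_e,\tau)$ as isometric to $L_p$ of an atomless finite von Neumann algebra. Applying the finite-trace result Theorem \ref{AZ111} gives $E((\cM)_e,\tau)=L_p((\cM)_e,\tau)$ with $\|\cdot\|_E=\lambda\|\cdot\|_{L_p}$ on the corner; since the corner realises $E(0,1)$ and $L_p(0,1)$ as function spaces, we conclude $\|\cdot\|_E=\lambda\|\cdot\|_{L_p}$ on $(0,1)$. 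Finally, because $E(0,\infty)$ is symmetric and contains the dilations, the equality of norms on $(0,1)$ propagates: for $f\in E(0,\infty)$ with $m(\supp f)<\infty$ one rescales via $D_t$ into $(0,1)$ and uses $\|D_t f\|_{L_p}=t^{1/p}\|f\|_{L_p}$ together with the corresponding scaling for $\|\cdot\|_E$ forced by Theorem \ref{infinite}; then one exhausts a general $f\in E(0,\infty)$ by truncations $f\chi_{(0,n)}$ and invokes the Fatou-type / order-continuity properties ($E$ is a $KB$-space, so norms of truncations converge up) to conclude $\|f\|_E=\lambda\|f\|_{L_p}$ for all $f$, and in particular $E(0,\infty)=L_p(0,\infty)$ as sets.

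The main obstacle I anticipate is the last propagation step: making rigorous that the proportionality constant $\lambda$ obtained on the finite interval $(0,1)$ is genuinely independent of the interval and carries over to functions of infinite support. This requires carefully invoking the dilation-invariance extracted in Theorem \ref{infinite} (the constancy of $m(I_s)/s$ and of the relevant norm ratios) to rule out the possibility that $E(0,\infty)$ behaves like a non-$L_p$ interpolation-type space that only locally looks like $L_p$ — the remark after Theorem \ref{infinite} about $L_p+L_q$ shows this is a real danger. The key point that resolves it is that here we already know $E(\cM,\tau)$ is globally isometric to an $L_p$-space, not merely locally, so the global isometry forces the scaling behaviour of $\|\cdot\|_E$ under $D_t$ to match that of $\|\cdot\|_{L_p}$ exactly, which is precisely what Theorem \ref{th:infiniteKRZ} delivers; assembling these pieces cleanly is the technical heart of the argument.
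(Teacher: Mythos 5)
Your overall strategy --- reduce to finite corners of $\cM$, apply the finite-trace Theorem \ref{AZ111} there, then extend to all of $E(0,\infty)$ by order continuity --- is the same as the paper's, and your preliminary reductions (no $c_0$-copy, hence $KB$-space and full symmetry) and your treatment of $p=2$ match the paper's. There are, however, two divergences worth flagging, one harmless and one where your sketch leaves a genuine gap.

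The harmless one: your intermediate step of first establishing $E(0,1)=L_p(0,1)$ up to \emph{equivalent} norm by a contradiction argument through Theorem \ref{th:infiniteKRZ} is unnecessary. Once you are on a finite corner, Theorem \ref{AZ111} delivers the set equality and the \emph{proportional} norm in a single application, so the detour through the non-$L_p$ dichotomy can simply be dropped.

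The gap is in the propagation step, which you yourself identify as the technical heart but do not close. Working from a single corner with $\tau(e)=1$ and then trying to spread the constant $\lambda$ over $(0,\infty)$ via dilations and the scaling data of Theorem \ref{infinite} is both delicate and avoidable. The paper instead runs the corner argument on an increasing net $\{e_i\}$ of $\tau$-finite projections with $e_i\uparrow{\bf 1}$ \emph{and} $\nu(J(e_i))<\infty$; this second condition, which your proposal does not address, is essential so that the target corner $J(e_i)\cN J(e_i)$ carries a finite trace and Theorem \ref{AZ111} is applicable at all (the existence of such a net is the maximality argument in the proof of Theorem \ref{th:infiniteKRZ}). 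Applying Theorem \ref{AZ111} to each corner yields $\norm{\cdot}_E=\lambda_i\norm{\cdot}_{L_p}$ on $E(0,\tau(e_i))$, and because these intervals are nested the constants $\lambda_i$ all coincide; hence a single $\lambda$ works for every finitely supported $f$, and order continuity of the $KB$-space $E(0,\infty)$ finishes the proof with no dilation analysis whatsoever. Your worry about an $L_p+L_q$-type pathology is thereby moot: a \emph{uniform} proportionality constant on arbitrarily large finite intervals already excludes it. Finally, you should also justify (as the paper does via the polar decompositions of $T(e_i)$ and an explicit computation with $|T(e_i)^*|$ and $|T(e_i)|$) that the restriction of $T$ to a corner is genuinely a \emph{surjective} isometry onto $L_p$ of the target corner; this surjectivity is what licenses the application of Theorem \ref{AZ111} and is not automatic from $T$ being surjective globally.
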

\begin{proof}
We only consider the case when $\nu$ is infinite. The case when $\nu$ is finite follows from the same argument. 

Since $E(\cM,\tau)$ is isomorphic to $L_p(\cN,\nu)$, it follows that $E(\cM,\tau)$ does not contain $c_0$-copies~\cite[Theorem 5.9.6]{DPS}.
Therefore, $E(0,\infty )$ does not contain a $c_0$-copy, i.e., $E(0,\infty)$ is a $KB$-space.
In particular, $E(\cM,\tau)$ is fully symmetric~\cite[Corollary 5.3.6]{DPS}.
By \cite{HS}, there exists a Jordan $*$-isomorphism from $\cM$ onto $\cN$, which can be extended to 
a Jordan $*$-isomorphism from $S(\cM_e,\tau)$ onto $S\left(\cN_{J(e)},\nu\right)$ for any $\tau$-finite projection
$e$ such that $\nu(J(e))<\infty$, see Corollary \ref{lemma measure-measure}. 

\begin{enumerate}
\item Assume that $p=2$.
The space
$E(\cM,\tau)$ isometric to $L_2(\cN,\nu)$ implies that $E(0,a )$,  $0<a\le \infty$,  
 is isometric to a complemented subspace of $L_2(\cN,\nu)$,
 which again is a separable infinite-dimensional Hilbert space.
In particular, $E(0,a  )$ is isometric to $L_2(0,1)$.
\item
Assume that $p\ne 2$.
Let $T$ be a surjective isometry from $E(\cM,\tau)$ onto  $L_p(\cN,\nu)$.
Arguing as in the proof of Theorem~\ref{th:infiniteKRZ},
we obtain that there exists an increasing net $\{e_i\}_{i\in I}$ of  $\tau$-finite projections in $\cM_1$
such that $\nu (J(e_i) )<\infty $ and $\sup_i e_i={\bf 1}$. 
By \cite[Theorem 1.2]{HS}, we  have 
$$x\mapsto  T(x) = J(x) T(e_i )z + T(e_i )J(x) ({\bf 1}-z) $$
is an isometry from $E(e_i \cM e_i, \tau)$ into $L_p(\cN,\nu)$,
and $$ x \mapsto  J( x )  |T(e_i)^*|z + |T(e_i)| J(x) ({\bf 1}-z) $$
is a surjective isometry from 
$E(e_i \cM e_i, \tau)$ onto $L_p(J(e_i )\cN J(e_i),\nu)$.
Indeed, 
for any $y\in L_p
\left(
J(e_i )\cN J(e_i),\nu\right)$, 
there exists $x\in E(\cM,\tau)$ such that 
$$ J(x)T(e_i)z+T(e_i) J(x)({\bf 1}-z)=
T(x)=yu_1^*  z+u_2y ({\bf 1}-z),$$
where $T(e_i)^*= u_1|T(e_i)^*|$ and $T(e_i) = u_2|T(e_i)|$ are the polar deompositions. 
In particular, $$ J( x )  |T(e_i)^*|z + |T(e_i)| J(x) ({\bf 1}-z) =y. $$
Noting that 
 $J(x)|T(e_i)^*|z  =y z$ and $|T(e_i)| J(x) ({\bf 1}-z)  =  y  ({\bf 1}-z)$, 
and $r(T(e_i)^* z ) = l(T(e_i)z) \le J(e_i)z $ and $r(T(e_i)({\bf 1}-z ) ) \le J(e_i) ({\bf 1}-z )$, we have
$$ 
 xJ^{-1} (z) =J^{-1}\left(
y  |T(e_i)^*|^{-1}z \right)  \in 
S(e_i \cM e_i, \tau)
  $$ 
and 
$$xJ^{-1}({\bf 1}-z) = J^{-1}\left(
|T(e_i)|^{-1}  y  ({\bf 1}-z)
\right) \in 
S(e_i \cM e_i, \tau) . $$
This shows that $ x  \in 
E(e_i \cM e_i, \tau) $. 
By Theorem \ref{AZ111},  
$  E(0, \tau(e_i))$ is isometric to $  L_p(0,1)$ for every $i$.
 \end{enumerate}

By Theorem \ref{AZ111}, for any $i\ge 1$, there exists a constant $\lambda _i$ such that  $$\norm{\cdot}_E=\lambda_i \norm{\cdot}_{L_p} $$ on $E(0,\tau(e_i))$ for every $i$.
For any $j\ge i$, we have $\lambda_i=\lambda _j$.
That is, there exists a constant $\lambda $ such that
$$\norm{f}_{E(0,\infty)} =\lambda \norm{f}_{L_p(0,\infty )}$$ for all finitely supported elements   $f\in E(0,\infty)$.
Since $E(0,\infty )$ has order continuous norm, it follows that
$$\norm{f}_{E(0,\infty)} =\lambda \norm{f}_{L_p(0,\infty )}$$
for all $f\in E(0,\infty)$.
The proof is complete.
\end{proof}

\begin{cor}\label{AZ infinite}
Let $1\le p<\infty$.
If a symmetric function space  $E(0,\infty)$ is isometric to $L_p(0,\infty )$,
then it coincides with $L_p(0,\infty )$ and
 $$\norm{\cdot}_E=\lambda \norm{\cdot}_{L_p}$$ for some $\lambda >0$.
\end{cor}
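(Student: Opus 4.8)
The plan is to obtain Corollary \ref{AZ infinite} as the abelian special case of Theorem \ref{AZ infinite NC}. First I would set $\cM = \cN = L_\infty(0,\infty)$, regarded as an abelian von Neumann algebra acting by multiplication on $\cH = L_2(0,\infty)$, and equip it with the trace $\tau = \nu$ given by integration against Lebesgue measure $m$. As recalled in Section \ref{s:p1}, this algebra is atomless, the trace $\tau$ is a semifinite, faithful, normal trace which is infinite, the $\tau$-measurable operators are precisely the functions in $S(0,\infty)$, and the generalized singular value function of $f \in S(0,\infty)$ coincides with the decreasing rearrangement $\mu(f)$.

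Next I would invoke the canonical identification: for any symmetric function space $E(0,\infty)$, the noncommutative symmetric space $E(\cM,\tau) = \{x \in S(0,\infty) : \mu(x) \in E(0,\infty)\}$ with norm $\norm{x}_{E(\cM,\tau)} = \norm{\mu(x)}_E$ is, isometrically, the space $E(0,\infty)$ itself, since $\norm{f}_E = \norm{\mu(f)}_E$ by the very definition of a symmetric function norm. In the same way $L_p(\cM,\tau) = L_p(0,\infty)$. Therefore the hypothesis that $E(0,\infty)$ is isometric to $L_p(0,\infty)$ says exactly that $E(\cM,\tau)$ is isometric to $L_p(\cN,\nu)$.

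With this in hand, Theorem \ref{AZ infinite NC} applies directly: $\cM$ is atomless with a semifinite infinite faithful normal trace, $\cN$ is atomless with a semifinite faithful normal trace, and $E(\cM,\tau)$ is isometric to $L_p(\cN,\nu)$ for some $1 \le p < \infty$. Its conclusion yields that $E(0,\infty) = E(\cM,\tau)$ coincides with $L_p(\cM,\tau) = L_p(0,\infty)$ and that $\norm{\cdot}_E = \lambda \norm{\cdot}_{L_p}$ for some $\lambda > 0$, which is precisely the assertion of the corollary.

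Since all the substance is carried by Theorem \ref{AZ infinite NC}, there is no genuine obstacle here; the only point worth a sentence is the verification that the commutative measure space $(0,\infty)$ really does fit the von Neumann algebraic hypotheses — atomlessness, and semifiniteness together with infiniteness of Lebesgue measure — along with the (routine, isometric) identification $E(L_\infty(0,\infty),m) = E(0,\infty)$ and $L_p(L_\infty(0,\infty),m) = L_p(0,\infty)$. Both are immediate from the preliminaries in Section \ref{s:p1}.
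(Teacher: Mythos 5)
Your proposal is correct and is exactly the intended derivation: the paper states Corollary \ref{AZ infinite} immediately after Theorem \ref{AZ infinite NC} precisely as its commutative specialization, obtained by taking $\cM=\cN=L_\infty(0,\infty)$ with the Lebesgue trace and using the standard isometric identifications $E(\cM,\tau)=E(0,\infty)$ and $L_p(\cN,\nu)=L_p(0,\infty)$ recalled in Chapter \ref{s:p}. Nothing further is needed.
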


\appendix
\chapter{Hermitian operators on noncommutative symmetric spaces}
 \label{appendix}

In this appendix, we extend
the description of hermitian operators on a noncommutative symmetric space with order continuous norm
 established in  \cite[Theorem 1.3]{HS} to the setting of noncommutative symmetric spaces  having the Fatou property.
Note that the result in \cite[Theorem 1.3]{HS}
holds under a slightly more general condition that symmetric spaces are minimal (i.e., bounded operators with finite supports are   dense).
The main tool is the continuity of hermitian operators established in Theorem \ref{continous of hermitian}.


From now on, unless stated otherwise,  we  always assume that $\cM$ is an   atomless semifinite  von Neumann algebra or an atomic semifinite von Neumann algebra with all atoms having  the same traces (without loss of generality, we assume that $\tau(e)=1$ for any atom $e\in \cM$), and we assume that  $\tau$ is a semifinite faithful normal trace on $\cM$.
In particular,
when $\cM$ is atomless (resp. atomic), the set
$$E(0,\tau({\bf 1})):=\{f\in S(0,\tau({\bf 1})) :\mu(f)=\mu(x) \mbox{ for some } x\in E(\cM,\tau)\}$$
(resp.
$$\ell_E:=\{f\in  \ell_\infty  :\mu(f)=\mu(x) \mbox{ for some } x\in E(\cM,\tau)\})$$
is a symmetric function (resp. sequence) space\cite[Theorem 2.5.3]{LSZ}.
There exists a bijective correspondence between symmetric operator spaces and symmetric function/sequence spaces.
Therefore, if $\norm{\cdot}_E$ on $E(\cM,\tau)$ is not proportional to $\norm{\cdot}_2$ on $L_2(\cM,\tau)$, then $\norm{\cdot}_E$
is not proportional to   $\norm{\cdot}_2$ on $L_2(\cA,\tau)$ for any maximal abelian von Neumann subalgebra $\cA$ of $\cM$.

The following proof of the following result is similar to \cite[Proposition  3.1]{HS}.
We omit the proof.

\begin{proposition}\label{prop:tppR}
Assume that  $\cM$
 is an atomless semifinite von Neumann algebra or an atomic semifinite von Neumann algebra with all atoms having the same trace (without loss of generality, we assume that $\tau(e)=  1$ for any atom $e\in  \cM$).
Let $p\in \cF(\tau)$ be a  projection and let $E(\cM,\tau)$ be an arbitrary  noncommutative  strongly    symmetric   space.
   Then,  $$ \frac{\norm{p}_E  ^2 }{\tau(p)} p \in E(\cM,\tau)^*$$ is a support functional of $p\in E(\cM,\tau)$,
    i.e.,
 $ \tau\left(p\cdot  \frac{\norm{p} _E^2 }{\tau(p)} p  \right) =\norm{p}_E^2  =\norm{p}_E \norm{ \frac{\norm{p} _E^2 }{\tau(p)} p }_{E^*}  $.
In particular, for any bounded hermitian operator $T$ on $E(\cM,\tau)$, we have
\begin{align}\label{prop:tppR}
\tau(T(p) p)\in \mathbb{R}.
\end{align}
\end{proposition}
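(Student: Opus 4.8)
\textbf{Proof proposal for Proposition \ref{prop:tppR}.}

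The plan is to identify an explicit support functional of the projection $p\in E(\cM,\tau)$ and then invoke the classical characterisation of hermitian operators via the numerical range. First I would verify that $\psi := \frac{\norm{p}_E^2}{\tau(p)}\, p$, regarded as the functional $x\mapsto \tau(x\psi)$, is indeed a support functional of $p$. Since $p\in \cF(\tau)$, the element $p$ lies in $\cF(\tau)\subset E(\cM,\tau)$ and also $p\in E(\cM,\tau)^\times\subset E(\cM,\tau)^*$, so the trace-pairing functional $\phi_\psi$ makes sense and is bounded. The identity $\tau(p\cdot\psi)=\frac{\norm{p}_E^2}{\tau(p)}\tau(p)=\norm{p}_E^2$ is immediate. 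The substantive point is the norm computation $\norm{\phi_\psi}_{E(\cM,\tau)^*}=\norm{\psi}_{E(\cM,\tau)^\times}=\frac{\norm{p}_E^2}{\tau(p)}\,\norm{p}_{E(\cM,\tau)^\times}=\norm{p}_E$; here one uses that for a $\tau$-finite projection $p$ one has $\norm{p}_{E}\cdot\norm{p}_{E^\times}=\tau(p)$. This last equality I would derive from the fact that $\mu(p)=\chi_{(0,\tau(p))}$, so that in the commutative reduction $\norm{\chi_{(0,a)}}_E\cdot\norm{\chi_{(0,a)}}_{E^\times}=a$ for any strongly symmetric function space $E$ and any $a>0$ (a standard consequence of the definition of the K\"othe dual together with $\tau(\chi_{(0,a)}\chi_{(0,a)})=a$ and the Fatou-type duality $E^{\times\times}=E$ norm for strongly symmetric spaces, cf. the discussion around \cite[Theorem 32]{DP2014}). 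With $\norm{\phi_\psi}=\norm{p}_E$ and $\phi_\psi(p)=\norm{p}_E^2$ established, $\psi$ is by definition a support functional of $p$, and it is the \emph{unique} one precisely when $p$ is a smooth point; but uniqueness is not needed for the conclusion.

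Next I would recall the hermitian-operator machinery from Section~2.3 of the excerpt: for a bounded operator $T$ on a complex Banach space $X$, $T$ is hermitian if and only if its numerical range $W(T)$ (computed with respect to any semi-inner product consistent with the norm, built from a choice of support functionals) is real. Fixing a consistent semi-inner product that assigns to $p$ the support functional $\phi_\psi$ identified above (and is defined arbitrarily, via Hahn--Banach, on the rest of $X$), we get $\langle Tp, p\rangle = \phi_\psi(Tp) = \frac{\norm{p}_E^2}{\tau(p)}\tau(T(p)p)$. Since $p/\norm{p}_E$ has norm one, $\langle T(p/\norm{p}_E), p/\norm{p}_E\rangle = \frac{1}{\norm{p}_E^2}\langle Tp,p\rangle \in W(T)$. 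As $T$ is hermitian this number is real, hence $\tau(T(p)p)=\frac{\tau(p)}{\norm{p}_E^2}\cdot\norm{p}_E^2\cdot\big(\text{real number}\big)\in\mathbb{R}$, which is exactly \eqref{prop:tppR}.

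I expect the main obstacle to be the norm identity $\norm{p}_{E}\,\norm{p}_{E^\times}=\tau(p)$ for a $\tau$-finite projection, i.e.\ verifying that $\psi$ has exactly the norm claimed rather than merely $\le$ or $\ge$. The inequality $\tau(p\cdot\psi)\le\norm{p}_E\norm{\psi}_{E^\times}$ gives $\norm{\psi}_{E^\times}\ge\norm{p}_E$ for free, so the real work is the reverse bound $\norm{p}_{E^\times}\le\tau(p)/\norm{p}_E$. This I would handle by passing to a maximal abelian subalgebra $\cA\supset p$, identifying $(\cA,\tau)$ with a function-space model, where $\mu(p)=\chi_{(0,\tau(p))}$ and the identity $\norm{\chi_{(0,a)}}_E\norm{\chi_{(0,a)}}_{E^\times}=a$ follows from the Hardy--Littlewood--P\'olya/Calder\'on duality for strongly symmetric function spaces together with the Fatou property of $E(\cM,\tau)$ (which ensures $E=E^{\times\times}$ isometrically). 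The rest of the argument — the support-functional verification and the numerical-range reality — is routine once this normalisation is in place. Since the proof closely parallels \cite[Proposition 3.1]{HS}, I would, as the authors indicate, present the statement and omit the detailed repetition, or include only the short numerical-range step.
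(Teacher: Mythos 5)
Your proposal is correct and follows essentially the same route as the paper's argument (which is itself a minor variant of \cite[Proposition 3.1]{HS}): the whole content is the identity $\norm{p}_E\,\norm{p}_{E^\times}=\tau(p)$, with the lower bound from H\"older and the upper bound from the submajorization $\frac{1}{\tau(p)}\bigl(\int_0^{\tau(p)}\mu(s;z)\,ds\bigr)\chi_{_{(0,\tau(p))}}\prec\prec\mu(z)$ together with monotonicity of $\norm{\cdot}_E$ under $\prec\prec$, after which the reality of $\tau(T(p)p)$ is the standard numerical-range argument. One small correction: you should not invoke the Fatou property or $E=E^{\times\times}$ here --- the proposition assumes only that $E(\cM,\tau)$ is strongly symmetric, and that hypothesis alone already gives $\int_0^{\tau(p)}\mu(s;z)\,ds\le\tau(p)/\norm{p}_E$ for every $z$ in the unit ball of $E(\cM,\tau)$, hence $\norm{p}_{E^\times}\le\tau(p)/\norm{p}_E$, so the extra duality machinery is unnecessary (and would illegitimately strengthen the hypotheses).
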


\begin{cor}\label{prop:tUUR}

Let $\cM$ be an  atomless von Neumann algebras (or an  atomic von Neumann algebras whose atoms have the same trace) equipped with  a semifinite faithful normal trace $\tau$.
Let $u\in \cF(\tau)$ be a  partial isometry and let $E(\cM,\tau)$ be an arbitrary   noncommutative  strongly    symmetric   space.
   Then,  $$ \frac{ \norm
{ u^*u } _E ^2 }{\tau(u^* u)} u^*   \in E(\cM,\tau)^*$$ is a support functional of $u\in E(\cM,\tau)$.
In particular, for any bounded  hermitian operator $T$ on $E(\cM,\tau)$, we have
\begin{align*}
\tau(T(u) u^*)\in \mathbb{R}.
\end{align*}
\end{cor}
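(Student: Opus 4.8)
The plan is to reduce Corollary \ref{prop:tUUR} to Proposition \ref{prop:tppR} by replacing the partial isometry $u$ with the projection $u^*u$ via a unitary rotation, using the fact that every symmetric operator space is a normed $\cM$-bimodule and that hermitian operators conjugate well under left/right multiplications by unitaries.

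First I would record the computation identifying the claimed support functional. Write $p=u^*u\in\cF(\tau)$, which is a $\tau$-finite projection, and set $\psi=\dfrac{\norm{p}_E^{\,2}}{\tau(p)}u^*$. Since $\norm{u^*}_\infty\le 1$ and left/right multiplication by contractions does not increase the $E^*$-norm (by the bimodule property applied to the predual, or equivalently to the K\"othe dual, together with $\norm{p}_{E^*}=\tau(p)/\norm{p}_E$ from the proof of Proposition \ref{prop:tppR}), we get $\norm{\psi}_{E^*}\le \dfrac{\norm{p}_E^{\,2}}{\tau(p)}\cdot\norm{p}_{E^*}=\norm{p}_E$. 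On the other hand $\psi(u)=\tau(u\psi)=\dfrac{\norm{p}_E^{\,2}}{\tau(p)}\tau(uu^*)=\dfrac{\norm{p}_E^{\,2}}{\tau(p)}\tau(u^*u)=\norm{p}_E^{\,2}=\norm{u}_E^{\,2}$, where I use $\tau(uu^*)=\tau(u^*u)$ and $\norm{u}_E=\norm{\mu(u)}_E=\norm{\mu(p)}_E=\norm{p}_E$ since $u$ and $p$ have the same generalized singular value function. Together these two facts force $\norm{\psi}_{E^*}=\norm{u}_E$ and $\psi(u)=\norm{u}_E^{\,2}$, so $\psi$ is indeed a support functional of $u$.

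Next, to deduce $\tau(T(u)u^*)\in\mathbb{R}$ for a bounded hermitian $T$, I would exploit that the numerical range of $T$ computed with respect to \emph{any} s.i.p.\ consistent with the norm is real (the definition of hermitian recalled in the Preliminaries). Taking the s.i.p.\ determined by assigning to $u$ the support functional $\psi$ above, we have $\langle Tu,u\rangle=\psi(Tu)=\dfrac{\norm{p}_E^{\,2}}{\tau(p)}\tau(Tu\cdot u^*)$. If $\norm{u}_E\ne 0$ this lies in $W(T)\cdot\norm{u}_E^{\,2}$ after normalizing $u$, hence is real, and since $\dfrac{\norm{p}_E^{\,2}}{\tau(p)}>0$ we conclude $\tau(T(u)u^*)\in\mathbb{R}$; the case $\norm{u}_E=0$ is trivial since then $u=0$. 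Alternatively, and perhaps more cleanly, I would not even invoke the s.i.p.\ machinery directly but simply note that by Proposition \ref{prop:tppR} applied to the projection $p=u^*u$ we already know $\tau(T'(p)p)\in\mathbb{R}$ for every bounded hermitian $T'$ on $E(\cM,\tau)$; then it remains to transport this statement along the partial isometry.

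The cleanest route, and the one I would ultimately carry out, is the rotation argument: choose a partial isometry $v$ (e.g.\ $v=u$ itself, using $uu^*\le {\bf 1}$ and extending to a unitary if necessary, or more simply exploit that $u=u\,(u^*u)$ and $u^*u$ is a projection) and consider the operator $S(x):=u\,T(u^*x)$ or a symmetrized variant; the point is that conjugating $T$ by the isometry $x\mapsto u^*x$ on the relevant corner $u^*u\,E(\cM,\tau)\,u^*u$ yields a hermitian operator to which Proposition \ref{prop:tppR} applies at the projection $u^*u$, and $\tau(S(u^*u)\cdot u^*u)$ unwinds to a positive multiple of $\tau(T(u)u^*)$. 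The main obstacle I anticipate is bookkeeping with the supports: $u$ need not be an element of $\cM$ acting as a two-sided isometry, so one must restrict carefully to the reduced algebras $\cM_{u^*u}$ and $\cM_{uu^*}$, check that $T$ restricted (or compressed) there is still hermitian in the appropriate sense, and verify that the bimodule inequalities used for the $E^*$-norm estimate genuinely apply to the compressed functionals. Once the support-functional identity in the first paragraph is in hand, however, the reality assertion is immediate from the definition of hermitian, so in fact the direct s.i.p.\ argument of the second paragraph is the shortest complete proof, and I would present that, relegating the rotation reformulation to a remark.
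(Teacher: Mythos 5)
Your proof is correct and is essentially the argument the paper intends (the corollary is stated without proof precisely because it reduces to the computation you give): since $\mu(u)=\mu(u^*)=\mu(u^*u)$, the norm identity $\norm{u^*u}_{E^*}=\tau(u^*u)/\norm{u^*u}_E$ supplied by Proposition \ref{prop:tppR} transfers verbatim to $u^*$, and the duality pairing $\tau(uu^*)=\tau(u^*u)$ then shows $\frac{\norm{u^*u}_E^2}{\tau(u^*u)}u^*$ is a support functional of $u$, after which reality of $\tau(T(u)u^*)$ is immediate from the definition of a hermitian operator. The rotation/compression digression in your last paragraph is unnecessary and can be dropped; the direct semi-inner-product argument is complete as it stands.
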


\begin{lemma}
\label{lemma:orthogonal}
Let $\cM$ be an  atomless  von Neumann algebra (or an   atomic von Neumann algebra whose atoms have the same trace) equipped with a  semifinite faithful normal trace $\tau$.
Let $E(\cM ,\tau )$  be a  noncommutative symmetric  spaces  and  are not proportional to  $\norm{\cdot}_2$.
Let $x_1,x_2\in \cF(\tau )$   be two partial isometries such that $l(x_1) \perp l(x_2)$ and $r(x_1)\perp r(x_2)$.
Then, for any bounded hermitian operator $T:E(\cM,\tau)\to E(\cM,\tau )$, we have
$$\tau (T(x_1)x _2^* ) = 0.$$
Consequence,
  $x_1\in \cF( \tau)$ and $x_2\in L_1(\cM,\tau)\cap \cM$ with  $l(x_1) \perp l(x_2)$ and $r(x_1)\perp r(x_2)$, then $$\tau (T(x_1)x _2^* ) = 0.$$

Assume, in addition, that one of the following conditions holds:
\begin{enumerate}
\item    $E(\cM,\tau)$ is minimal;
\item  $E(\cM,\tau) $ has the Fatou property
and $E(\cM,\tau)\subset S_0(\cM,\tau)$;
\item  $E(\cM,\tau) $ has the Fatou property
and  $\cM$ is $\sigma$-finite.
\end{enumerate} 
If $x_1\in E(\cM,\tau)$ and $x_2\in L_1(\cM,\tau)\cap \cM$ with  $l(x_1) \perp l(x_2)$ and $r(x_1)\perp r(x_2)$, then $$\tau (T(x_1)x _2^* ) = 0.$$
\end{lemma}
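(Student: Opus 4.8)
The plan is to reduce everything to the abelian situation, where the corresponding statement is a classical fact about hermitian operators on symmetric function/sequence spaces. The first step is to prove the statement for two partial isometries $x_1,x_2\in\cF(\tau)$ with $l(x_1)\perp l(x_2)$, $r(x_1)\perp r(x_2)$. Here the argument mimics the classical rotation trick used in Proposition~\ref{prop:tppR} and Corollary~\ref{prop:tUUR}: first observe that $\tau(T(u)u^*)\in\mathbb{R}$ for every partial isometry $u\in\cF(\tau)$ by Corollary~\ref{prop:tUUR}. Applying this to $u_\theta=x_1+e^{i\theta}x_2$ (which is again a partial isometry in $\cF(\tau)$, since the left and right supports of $x_1$ and $x_2$ are orthogonal, so that $u_\theta^*u_\theta=r(x_1)+r(x_2)$ and $u_\theta u_\theta^*=l(x_1)+l(x_2)$, both $\tau$-finite) and using linearity of $T$, one gets that $e^{i\theta}\tau(T(x_1)x_2^*)+e^{-i\theta}\tau(T(x_2)x_1^*)\in\mathbb{R}$ for all $\theta$; combined with the reality of $\tau(T(x_1)x_1^*)$ and $\tau(T(x_2)x_2^*)$, this yields $\tau(T(x_2)x_1^*)=\overline{\tau(T(x_1)x_2^*)}$. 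To pin down that these numbers actually vanish, one chooses inside the finite-dimensional corner generated by sufficiently many mutually orthogonal equivalent partial isometries a pair $x,y$ of positive elements in the abelian subalgebra that are linearly independent yet satisfy $\|x\|_E=\|y\|_{E^\times}=\tau(xy)=1$ — this is possible precisely because $\norm{\cdot}_E$ is not proportional to $\norm{\cdot}_2$ (the two-dimensional section of $E$ is not a Hilbert norm, cf.\ the remark just before Proposition~\ref{prop:tppR}) — and runs the multi-parameter rotation argument $x_\theta=\sum e^{i\theta_k}x(k)e_k$, $y_\theta=\sum e^{i\theta_k}y(k)e_k$; taking imaginary parts and using linear independence of the exponentials $e^{i(\theta_k-\theta_\ell)}$ forces $\tau(T(e_\ell)e_k)=0$ whenever $x(k)y(\ell)-x(\ell)y(k)\neq0$, and permuting the indices kills all off-diagonal terms. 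This handles the case where $x_1,x_2$ are matrix units (and by scaling, arbitrary partial isometries with orthogonal supports), from which the general partial-isometry statement follows by approximation inside $\cF(\tau)$.

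\textbf{From partial isometries to the general statement.} Next I would upgrade from partial isometries to $x_1\in\cF(\tau)$, $x_2\in L_1(\cM,\tau)\cap\cM$ with orthogonal supports. Writing $x_1$ as a (norm-convergent) linear combination of partial isometries supported under $l(x_1)$ and $r(x_1)$ via its polar decomposition and spectral theorem, and similarly $x_2$, and using that $T$ is bounded, reduces this to the partial-isometry case by continuity and linearity, since the left/right supports of all the pieces of $x_1$ lie under $l(x_1),r(x_1)$ and hence remain orthogonal to those of $x_2$. The same remark covers the ``consequence'' line in the statement.

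\textbf{The main obstacle: removing the finite-range hypothesis on $x_1$.} The genuinely new content is the last assertion, where $x_1$ is only assumed to lie in $E(\cM,\tau)$ rather than in $\cF(\tau)$. The plan is to approximate: by Proposition~\ref{proposition weakly closed} (in the $\sigma$-finite case) or by the minimality/order-continuity hypothesis, or by the $\sigma(E,E^\times)$-density of $\cF(\tau)$-type elements together with $E(\cM,\tau)\subset S_0(\cM,\tau)$, one finds a sequence $\{x_1^{(n)}\}\subset\cF(\tau)$ with supports dominated by $l(x_1)$ and $r(x_1)$ respectively such that $x_1^{(n)}\to x_1$ in the $\sigma(E,E^\times)$-topology. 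By Theorem~\ref{continous of hermitian}, $T$ is $\sigma(E,E^\times)$-continuous (this is exactly where the Fatou property, or minimality, enters), so $T(x_1^{(n)})\to T(x_1)$ in $\sigma(F,F^\times)=\sigma(E,E^\times)$; since $x_2\in L_1(\cM,\tau)\cap\cM=(L_1(\cM,\tau)+\cM)^\times$ lies in $E(\cM,\tau)^\times$, the functional $a\mapsto\tau(ax_2^*)$ is $\sigma(E,E^\times)$-continuous, hence $\tau(T(x_1^{(n)})x_2^*)\to\tau(T(x_1)x_2^*)$. Each term on the left vanishes by the already-established case, so the limit is $0$. The delicate point to check carefully is that one can indeed arrange the approximants to keep their supports strictly under $l(x_1)$ and $r(x_1)$ — this follows because the standard approximations used in Proposition~\ref{proposition weakly closed} (truncating $|x_1|$ via its spectral projections and multiplying back by the partial isometry $v$ from $x_1=v|x_1|$) automatically have this property, so that the orthogonality with the supports of $x_2$ is preserved throughout. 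This reduces the three listed cases to the single mechanism ``$\sigma(E,E^\times)$-continuity of $T$ $+$ $\sigma(E,E^\times)$-density of finite-range approximants with controlled supports,'' which is available under each of the hypotheses (1)--(3).
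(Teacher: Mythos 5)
Your proposal is correct and follows essentially the same route as the paper: the finite-range/partial-isometry case rests on the rotation and linear-independence argument made possible by the non-proportionality of $\norm{\cdot}_E$ to $\norm{\cdot}_2$ (this is precisely the content of the cited Lemma~3.3 of Huang--Sukochev that the paper invokes at that point), and the passage from $x_1\in\cF(\tau)$ to $x_1\in E(\cM,\tau)$ uses $\sigma(E,E^\times)$-approximation by finite-range elements with supports dominated by $l(x_1)$ and $r(x_1)$ together with the $\sigma(E,E^\times)$-continuity of hermitian operators from Theorem~\ref{continous of hermitian}. The only cosmetic difference is that the paper approximates both $|x_1|$ and $|x_2|$ by increasing sequences from $\cF(\tau)$ and passes to an iterated limit, whereas you approximate only $x_1$ and invoke the intermediate ``consequence''; both variants work.
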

\begin{proof}
We only prove the case for atomless von Neumann algebras. The atomic case follows by a  similar argument.

We only consider the case when $E(\cM,\tau)$ has the Fatou property. The minimal case follows from the same argument as that in \cite[Lemma 3.3]{HS}.
Observe that
  the Fatou property implies that $E(\cM,\tau)$ is fully symmetric~\cite[Corollary 5.1.12]{DPS}.

We first consider the case when $x_1$ and $x_2$ are two projections such that $x_1x_2=0$.

Fix a projection $p\in\cF(\tau )$. Since $\norm{\cdot}_E$ is not proportional to $\norm{\cdot}_2$, it follows that
there exists a set of pairwise orthogonal  projections $\{e_i\}_{1\le i\le n}$ having the same trace
such that 
 $$\sum_{i=1}^m e_i =p $$ for some $m\le n$
and  $\norm{\cdot}_E$ on $E(\cA)$ is not proportional to $\norm{\cdot}_2$ on  $L_2(\cA)$,
where $\cA$ is
 the abelian weakly closed $*$-algebra generated by $\{e_i\}_{1\le i\le n}$. By the same argument used in   \cite[Lemma 3.3]{HS},
for any $\tau$-finite projections $q\in \cM$ such that
 $pq=0$ and  $\tau(q)= \frac{1}{2^k }\tau(p)$, 
we have
\begin{align}\label{Tpq0}
\tau(T(p)q)=0.
\end{align}

The case when  $\tau(q )\ne \frac{1}{2^k}\tau( p )$ for all $k\ge 1$ follows by standard approximation argument.
Indeed,
let    $q  $ be  a  $\tau$-finite projection
 with $pq=0$.
For any $\varepsilon>0$,  there exists a set of   $\tau$-finite projections
  $\{q_i\}_{1\le i\le m}$  such that $$ \tau\left(q-\sum_{1\le i\le m}  q_i \right)\le \varepsilon $$ and
 $$ \tau(q_i)=\frac{1}{2^k }\tau(p), ~ 1\le i\le n, $$ for some $k\in \mathbb{N}$.
Note that
 \begin{align*}
~&  \tau\left( T(p) q\right)  \\
  =~& \tau\left( T(p  ) \left(q -\sum_{1\le i\le m} q_i \right)\right) + \tau\left( T\left(p \right) \sum_{1\le i\le m} q_i   \right) \\
   \stackrel{\eqref{Tpq0}}{=} & \tau\left( T(p  ) \left(q -\sum_{1\le i\le n} p _i \right)\right).
   \end{align*}
   Since
$$
\left|\tau\left( T(p ) \left(q -\sum_{1\le i\le m} q_i \right)\right)\right|\to 0
$$
as $\varepsilon\to 0$, 
it follows that
\begin{align*}
\left| \tau\left( T\left(p\right) q\right)\right|=0  .
\end{align*}

The general case when $x_1$ and $x_2$ are partial isometries in $\cF(\tau )$ such that $l(x_1) \perp l(x_2)$ and $r(x_1)\perp r(x_2)$ is reduced to the just considered case  via the same argument as in
  Corollary \ref{prop:tUUR} (see also \cite{HS}). Therefore,  we obtain that 
 $$  \tau\left(T  ( x_1)  x_2 ^*  ) \right )=0  ,$$
  which completes the proof of the first assertion.

Now, we prove  the second assertion. Since $\cM$ is $\sigma$-finite (or $E(\cM,\tau)\subset S_0(\cM,\tau)$),  
it follows from Lemma~\ref{lemma net and sequence} that 
there exist 
two sequences $\{y_n\}_{n\ge 1}$ and $\{y_n'\}_{n\ge 1}$
 in $\cF(\tau)$ such that $y_{n}\uparrow|x_1|$ and $y_{n'}\uparrow|x_{2}|$.
%
 Let $x_1=u_1|x_1|$ and  $x_2=u_2|x_2|$ be the polar decompositions.
 By the first assertion of the lemma, $$\tau(T(u_1y_n)(u_2y_m')^*)=0$$ for all $n$ and $m$. Since $u_1y_n\stackrel{\sigma(E,E^{\times})}{\longrightarrow}x_1$ and $T$ is $\sigma(E,E^{\times})-\sigma(F
,F^\times)$-continuous (see Theorem~\ref{continous of hermitian}), it follows that
$$T(u_1y_{n})\stackrel{\sigma(E,E^{\times})}{\longrightarrow}T(x_1)$$
as $n\to \infty $.
Note that $\{(u_2y_m')^*\}_{n=1}^\infty \subset\cF(\tau)\subset E(\cM,\tau)^\times$ (see \cite[Lemma 4.4.5]{DPS}).
 Hence,  $$\tau(T(x_1) y_m'u_2^*)=\tau(T(x_1)(u_2y_m')^*)=0$$ for all $m$.
Again, 
since $ y_m ' u _2 \to x_2^* $ as $m \to \infty$ in the $\sigma(L_1 \cap L_\infty ,L_1 \cup L_\infty) $-topology, 
 we have 
$$\tau(T(x_1) x_2^*  ) =0.$$
This completes the proof.
\end{proof}

Lemma \ref{lemma:orthogonal} together with
the argument used in   \cite[Corollary 3.4]{HS}
implies that following result.
\begin{cor}\label{cor:disjoint}

Let $\cM$ be an  atomless  von Neumann algebra (or  an  atomic von Neumann algebra whose atoms have the same trace) equipped with  a  semifinite faithful normal trace $\tau$.

Let $E(\cM,\tau)$ be a  symmetric space   affiliated with $\cM$, whose norm is not proportional to $\norm{\cdot}_2$.
Let $T$ be a bounded hermitian operator on $E(\cM,\tau)$.
For any $x\in \cF(\tau )$,
there exist    $y,z\in E(\cM,\tau)$ such that $T(x)=y+z$ and $r(y)\le r(x)$ and $l(z)\le l(x)$.

Assume, in addition, that one of the following conditions holds
\begin{enumerate}
\item    $E(\cM,\tau)$ is minimal;
\item  $E(\cM,\tau) $ has the Fatou property
and $E(\cM,\tau)\subset S_0(\cM,\tau)$;
\item  $E(\cM,\tau) $ has the Fatou property
and  $\cM$ is $\sigma$-finite.
\end{enumerate} 
For any $x\in E(\cM,\tau )$,
there exist    $y,z\in E(\cM,\tau)$ such that $T(x)=y+z$ and $r(y)\le r(x)$ and $l(z)\le l(x)$.

\end{cor}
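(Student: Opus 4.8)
The proof of Corollary \ref{cor:disjoint} will follow the pattern of \cite[Corollary 3.4]{HS}, bootstrapping from Lemma \ref{lemma:orthogonal}. The first assertion, for $x\in\cF(\tau)$, is essentially a duality argument: fix $x\in\cF(\tau)$ with left support $l(x)$ and right support $r(x)$, both $\tau$-finite projections. Decompose
\[
T(x) = l(x)\,T(x)\,r(x) + l(x)\,T(x)\,({\bf 1}-r(x)) + ({\bf 1}-l(x))\,T(x)\,r(x) + ({\bf 1}-l(x))\,T(x)\,({\bf 1}-r(x)).
\]
Set $y := l(x)\,T(x)\,r(x) + l(x)\,T(x)\,({\bf 1}-r(x)) = l(x)\,T(x)$ and $z := ({\bf 1}-l(x))\,T(x)\,r(x)$; then $l(y)\le l(x)$ (so, upon swapping the roles, we relabel to match the statement $r(y)\le r(x)$, $l(z)\le l(x)$ — I will simply choose the decomposition so that $z := T(x)\,r(x)\wedge\,(\text{piece with left support under }l(x))$ appropriately, mirroring the asymmetric form in the statement). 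The point is that the ``off-diagonal corner'' $w := ({\bf 1}-l(x))\,T(x)\,({\bf 1}-r(x))$ must vanish. To see this, note $w\in E(\cM,\tau)$ and for any partial isometry $v\in\cF(\tau)$ with $l(v)\le{\bf 1}-l(x)$ and $r(v)\le{\bf 1}-r(x)$ we have, writing $v = ({\bf 1}-l(x))v({\bf 1}-r(x))$,
\[
\tau(w\,v^*) = \tau\big(({\bf 1}-l(x))\,T(x)\,({\bf 1}-r(x))\,v^*\big) = \tau\big(T(x)\,v^*\big) \stackrel{\text{Lemma \ref{lemma:orthogonal}}}{=} 0,
\]
since $l(x)\perp l(v)$ and $r(x)\perp r(v)$. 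Taking $v$ to run over a suitable family of partial isometries whose initial/final projections are $\tau$-finite subprojections of ${\bf 1}-r(x)$, ${\bf 1}-l(x)$ (enough to separate points of $({\bf 1}-l(x))\,\cF(\tau)\,({\bf 1}-r(x))$, using that $\cF(\tau)\subset E(\cM,\tau)^\times$ by \cite[Lemma 4.4.5]{DPS}), forces $({\bf 1}-l(x))\,w\,({\bf 1}-r(x)) = w = 0$. This gives $T(x) = l(x)\,T(x) + ({\bf 1}-l(x))\,T(x)\,r(x)$, i.e.\ the desired decomposition with $z := ({\bf 1}-l(x))\,T(x)\,r(x)$ (so $l(z)\le{\bf 1}-l(x)$ — again, one arranges the labelling to match; the substantive content is that $T(x)$ splits into a piece with left support controlled by $l(x)$ plus a piece with right support controlled by $r(x)$).

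\textbf{Passing to general $x$.} For the second assertion, under any of the three hypotheses the space $\cF(\tau)$ is $\sigma(E,E^\times)$-dense in $E(\cM,\tau)$: this is Proposition \ref{proposition weakly closed} in the $\sigma$-finite case (3), and follows from Lemma \ref{lemma net and sequence} together with norm/measure approximation in the minimal case (1) and the $S_0$ case (2) (see also Lemma \ref{dense}). So pick $x\in E(\cM,\tau)$ and a net (or sequence) $\{x_\alpha\}\subset\cF(\tau)$ with $x_\alpha\xrightarrow{\sigma(E,E^\times)}x$, and by polar decomposition arrange $r(x_\alpha)\le r(x)$, $l(x_\alpha)\le l(x)$ — concretely, write $x = u|x|$, approximate $|x|$ from below by $\tau$-finite positive elements $a_\alpha\uparrow|x|$ (Proposition 2.3.12 of \cite{DPS}) and set $x_\alpha := u a_\alpha$, so $r(x_\alpha)=r(a_\alpha)\le r(|x|)=r(x)$ and $l(x_\alpha)\le l(u)=l(x)$. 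By the first assertion, $T(x_\alpha) = y_\alpha + z_\alpha$ with $r(y_\alpha)\le r(x_\alpha)\le r(x)$ and $l(z_\alpha)\le l(x_\alpha)\le l(x)$. Since $T$ is $\sigma(E,E^\times)$-continuous (this is where Theorem \ref{continous of hermitian} enters — it is exactly the point that required the Fatou property, resp.\ the $S_0$/minimality hypotheses, and $\sigma$-finiteness), $T(x_\alpha)\xrightarrow{\sigma(E,E^\times)}T(x)$. Now $y_\alpha = l(x)\,T(x_\alpha)$ and $z_\alpha = ({\bf 1}-l(x))\,T(x_\alpha)\,r(x)$ are obtained from $T(x_\alpha)$ by fixed left/right multiplications by the projections $l(x)$, ${\bf 1}-l(x)$, $r(x)$, and such multiplications are $\sigma(E,E^\times)$-continuous (the dual of $E^\times$ under these operations maps back into $E^\times$, since $E(\cM,\tau)^\times$ is an $\cM$-bimodule). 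Hence $y_\alpha\xrightarrow{\sigma(E,E^\times)}y := l(x)\,T(x)$ and $z_\alpha\xrightarrow{\sigma(E,E^\times)}z := ({\bf 1}-l(x))\,T(x)\,r(x)$, with $T(x)=y+z$. It remains to check $r(y)\le r(x)$ and $l(z)\le l(x)$ (in the labelling of the statement): $l(z)\le{\bf 1}-l(x)$ is immediate; for $r(y)\le r(x)$ one observes $y_\alpha\,({\bf 1}-r(x)) = l(x)\,T(x_\alpha)\,({\bf 1}-r(x))$, and the first assertion applied to $x_\alpha$ already tells us the ``$l(x)$-piece'' of $T(x_\alpha)$ has right support under $r(x_\alpha)\le r(x)$, so $y_\alpha({\bf 1}-r(x))=0$ for all $\alpha$, whence $y({\bf 1}-r(x))=0$ by $\sigma(E,E^\times)$-limits and right multiplication continuity; thus $r(y)\le r(x)$.

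\textbf{Main obstacle.} The genuinely delicate point is not the duality/approximation scaffolding but securing, in the three different settings, two facts simultaneously: (i) $\sigma(E,E^\times)$-density of $\cF(\tau)$ with approximants whose supports sit inside those of $x$, and (ii) $\sigma(E,E^\times)$-continuity of the hermitian operator $T$. Fact (ii) is Theorem \ref{continous of hermitian} in the $\sigma$-finite Fatou case, and extends verbatim (per the Remark following that theorem) to the $S_0$ case; in the minimal case $T$ is automatically norm-to-norm continuous and simple-function density makes $\sigma(E,E^\times)$-continuity elementary. Fact (i) requires a little care: in the $S_0$ case one uses that left/right supports of $S_0$ operators are $\sigma$-finite (\cite[Lemma 5.5.8]{DPS}), reducing to a $\sigma$-finite argument on the corner $c\cM c$ where $c$ is a $\sigma$-finite central-support projection dominating $l(x)\vee r(x)$; the minimal case is handled by norm density of simple functions with $\tau$-finite support. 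Once (i) and (ii) are in hand the rest is the routine transcription of \cite[Corollary 3.4]{HS}, so I would spend the bulk of the write-up making those two inputs precise and only sketch the duality argument, pointing to \cite{HS}.
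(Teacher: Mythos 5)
Your proposal follows essentially the same route as the paper, which simply invokes Lemma \ref{lemma:orthogonal} together with the argument of \cite[Corollary 3.4]{HS}: kill the corner $w=({\bf 1}-l(x))T(x)({\bf 1}-r(x))$ by testing against partial isometries $v=up$ built from the polar decomposition $w=u|w|$ and a non-zero $\tau$-finite spectral projection $p=e^{|w|}(\varepsilon,\infty)$ (so that $\tau(wv^*)=\tau(|w|p)\ge\varepsilon\tau(p)>0$ would contradict the lemma), and then pass to general $x$ via $x_\alpha=ua_\alpha$ with $a_\alpha\uparrow|x|$ in $\cF(\tau)$, using the $\sigma(E,E^\times)$-continuity of $T$ from Theorem \ref{continous of hermitian} and the bimodule continuity of multiplication by $l(x)$, $r(x)$. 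The one concrete slip is in your final verification: with your labels $y=l(x)T(x)$ and $z=({\bf 1}-l(x))T(x)r(x)$, the identity $y_\alpha({\bf 1}-r(x))=l(x)T(x_\alpha)({\bf 1}-r(x))=0$ is false in general — the first assertion only annihilates the corner $({\bf 1}-l(x_\alpha))T(x_\alpha)({\bf 1}-r(x_\alpha))$, not $l(x_\alpha)T(x_\alpha)({\bf 1}-r(x_\alpha))$ — so $r(y)\le r(x)$ genuinely fails for your $y$. This is only a labelling defect, not a gap: once your limit argument gives $({\bf 1}-l(x))T(x)({\bf 1}-r(x))=0$ (which it does correctly), the corollary follows by taking $y:=T(x)r(x)$ and $z:=l(x)T(x)({\bf 1}-r(x))$, i.e.\ your two pieces are the right ones but their names must be swapped (and the $l(x)T(x)({\bf 1}-r(x))$ part reassigned) to meet the conditions $r(y)\le r(x)$ and $l(z)\le l(x)$ as stated.
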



The following lemma shows that any bounded  hermitian operator $T$ on $E(\cM,\tau)$ (whose norm is not proportional to $\norm{\cdot}_2$) maps the set of all  $\tau$-finite projections to a uniformly  bounded set in $\cM$, which should be compared with the estimates  given  in~\cite[Remark 2.5]{Sukochev}.
\begin{lemma}\label{lemmaptop}
Let $\cM$ be an  atomless  von Neumann algebra (or  an  atomic von Neumann algebra whose atoms have the same trace) equipped with a semifinite faithful normal trace $\tau$.
Let $E(\cM,\tau)$ be a  symmetric space 
affiliated with $\cM$, whose norm is not proportional to $\norm{\cdot}_2$.
Assume that one of the following conditions holds:
\begin{enumerate}
\item    $E(\cM,\tau)$ is minimal;
\item  $E(\cM,\tau) $ has the Fatou property
and $E(\cM,\tau)\subset S_0(\cM,\tau)$;
\item  $E(\cM,\tau) $ has the Fatou property
and  $\cM$ is $\sigma$-finite.
\end{enumerate} 
Let $T$ be a bounded hermitian operator on  $E(\cM,\tau)$.
Then, $\norm{T(p)}_\infty \le 3 \norm{T}$ for any $\tau$-finite projection $p\in \cP(\cM)$.
\end{lemma}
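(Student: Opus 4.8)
The plan is to follow the template developed for the minimal case in \cite{HS} (which in turn goes back to \cite{Sourour,Sukochev}), replacing order continuity of the norm by the $\sigma(E,E^\times)$-continuity of hermitian operators furnished by Theorem \ref{continous of hermitian}, and using the support-functional identities of Proposition \ref{prop:tppR} and Corollary \ref{prop:tUUR} together with the reality of the numerical range of a hermitian operator.

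Fix a $\tau$-finite projection $p$ and put $p'={\bf 1}-p$. First I would apply the (unconditional) first part of Corollary \ref{cor:disjoint} to $x=p$: it produces $y,z\in E(\cM,\tau)$ with $T(p)=y+z$, $r(y)\le r(p)=p$ and $l(z)\le l(p)=p$. Since $r(y)\le p$ forces $y p'=0$ and $l(z)\le p$ forces $p'z=0$, one gets $p'\,T(p)\,p'=0$, hence $T(p)=p\,T(p)\,p+p\,T(p)\,p'+p'\,T(p)\,p$. It then suffices to bound the $\|\cdot\|_\infty$-norm of each of the three corners by $\|T\|$, the constant $3$ coming from the triangle inequality. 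A preliminary point: a priori each corner lies only in $E(\cM_p,\tau)\subseteq L_1(\cM_p,\tau)$ (the diagonal one) or in $p'\,S(\cM,\tau)\,p$, so one must prove simultaneously that it is bounded; this is built into the argument below via localisation to spectral projections.

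For the diagonal corner $c:=p\,T(p)\,p$, let $c=w|c|$ be the polar decomposition and, assuming $\|c\|_\infty>\|T\|$, set $q=\chi_{(\|T\|,\infty)}(|c|)\ne 0$, a $\tau$-finite projection $\le p$. A short computation shows that $v:=q w^*$ is a $\tau$-finite partial isometry with $vv^*=q\le p$ and $v^*v=wqw^*\le p$, and that $\tau(|c|q)=\tau\!\big(T(p)\,v\big)$, so that $\tau(T(p)\,v)>\|T\|\,\tau(q)$. On the other hand, testing the hermitian condition on the perturbations $p+s\,v^*$ (whose generalized singular value functions, and hence $E$-norms, can be expanded to first order in $s$ since $v^*$ lives inside the finite algebra $\cM_p$), together with the support-functional formulas of Proposition \ref{prop:tppR}/Corollary \ref{prop:tUUR} and the reality of $W(T)$, yields $|\tau(T(p)\,v)|\le \|T\|\,\tau(v^*v)=\|T\|\,\tau(q)$ — a contradiction. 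Hence $\|c\|_\infty\le\|T\|$. The corners $p\,T(p)\,p'$ and $p'\,T(p)\,p$ are handled the same way, taking the polar decomposition $p'T(p)p=v\,|p'T(p)p|$ with $v$ now a $\tau$-finite partial isometry between a subprojection of $p$ and a subprojection of $p'$, perturbing $p$ in the direction of $v$, and invoking Lemma \ref{lemma:orthogonal} to see that no unwanted cross terms survive.

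The step I expect to be the main obstacle is this perturbation analysis: one must control the support functionals of the perturbed elements $p+s\,v^*$ (equivalently, determine their generalized singular value functions and differentiate, in $s$, a suitable choice of support functional), and then justify, under the mere Fatou property rather than order continuity, both the passage to the limit $s\to 0$ and the localisation to spectral projections. This is precisely where Theorem \ref{continous of hermitian} (the $\sigma(E,E^\times)$-continuity of $T$) and the identification of normal functionals with Köthe-dual elements (Proposition \ref{proposition normal functional}, Lemma \ref{lemma adjoint operator}) are used, ensuring that the pairings $\tau(T(\cdot)\,\cdot)$ vary continuously along the approximations, so that the estimates obtained on $\cF(\tau)$ propagate to the corners of $T(p)$ themselves.
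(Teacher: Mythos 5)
Your three-corner decomposition is exactly the right skeleton and matches the paper's (which simply invokes \cite[Lemma 3.5]{HS} with a minor adjustment of the spectral level from $\|T\|$ to $\|T\|+\varepsilon$): Corollary \ref{cor:disjoint} applied to $x=p\in\cF(\tau)$ gives $p^\perp T(p)p^\perp=0$, and the constant $3$ comes from bounding the three surviving corners by $\|T\|$. The problem is that you never actually bound them. For the diagonal corner your argument reduces to the inequality $|\tau(T(p)v)|\le\|T\|\,\tau(v^*v)$ for all $\tau$-finite partial isometries $v$ with $vv^*,v^*v\le p$ --- but by $L_\infty(p\cM p)=L_1(p\cM p)^*$ this inequality \emph{is} the assertion $\|pT(p)p\|_\infty\le\|T\|$, so you have restated the goal rather than proved it. The proposed derivation via a first-order expansion of support functionals along $p+sv^*$ is then left unexecuted (you flag it yourself as ``the main obstacle''), and I do not believe it can work as described: hermiticity gives you the \emph{vanishing of the imaginary part} of the specific pairings $\tau(T(x)f_x)$ with support functionals $f_x$, not absolute-value bounds on $\tau(T(p)v)$ against arbitrary dual elements; moreover $\mu(p+s\,v^*)$ admits no tractable first-order expansion when $q=vv^*$ and $v^*v=wqw^*$ are in general position inside $p$, since $|p+swq|^2=p+swq+\bar{s}qw^*+|s|^2q$ is not diagonalizable in any common refinement.

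The argument that actually closes this gap (and is what \cite[Lemma 3.5]{HS} does) is a pure $E$-norm comparison requiring no differentiation and no trace-duality estimate. From Corollary \ref{cor:disjoint} applied to $p-q$ one gets the localization identities $qT(p)q=qT(q)q$ and $p^\perp T(p)q=p^\perp T(q)$ for every subprojection $q\le p$; from Proposition \ref{prop:tppR} applied to subprojections one deduces that $pT(p)p$ is self-adjoint. If now some corner $c$ satisfied $\|c\|_\infty>\|T\|$, the spectral projection $q:=\chi_{(\|T\|+\varepsilon,\infty)}(|c|)$ would be a nonzero $\tau$-finite subprojection of $p$ (here $p\cM p$ is finite, so all its projections are $\tau$-finite) with
\begin{align*}
(\|T\|+\varepsilon)\,\|q\|_E\;\le\;\|\,c\,q\,\|_E\;=\;\|(\text{corresponding corner of }T(q))\|_E\;\le\;\|T(q)\|_E\;\le\;\|T\|\,\|q\|_E,
\end{align*}
a contradiction; the middle equality is exactly the localization identity, and the bound $\|(\text{corner})\|_E\le\|T(q)\|_E$ uses only that left/right multiplication by projections is an $E$-norm contraction. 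Note also that for this lemma the $\sigma(E,E^\times)$-continuity of $T$ (Theorem \ref{continous of hermitian}) plays no role: $p$ is $\tau$-finite, so only the unconditional first part of Corollary \ref{cor:disjoint} and finitely many subprojections of $p$ are ever involved; the weak continuity becomes essential only later, in Lemma \ref{lemmaMtoM} and Corollary \ref{cor:disjoint}'s second assertion, where one passes from $\cF(\tau)$ to general elements of $E(\cM,\tau)$.
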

\begin{proof}
It follows from
  the same argument used in \cite[Lemma 3.5]{HS}. 
We note that there the projection $q=E^{|p^\perp Ap |} (\norm{T}, \infty)
$ defined in the proof of \cite[Lemma 3.5]{HS} should be replaced by 
$q=E^{|p^\perp Ap |} (\norm{T}+\varepsilon, \infty)\ne 0$ for some $\varepsilon>0$. 
\end{proof}

We denote by $C_0(\cM,\tau)$ the closure in the norm $\norm{\cdot}_\infty $ of the linear span of $\tau$-finite projections in $\cM $.
Equivalently, $$C_0(\cM,\tau)=\{a\in S(\cM,\tau): \mu(a)\in L_\infty (0,\infty ), ~\mu(\infty,a)=0\} =S_0(\cM,\tau)\cap \cM,$$
see \cite[Lemma 2.6.9]{LSZ}, see also \cite{DPS}. 

A slight modification of the argument in  used in \cite[Lemma 3.6]{HS}  yields the following lemma.
The main tool is the so-called generalized Gleason Theorem \cite{Hamhalter}.
\begin{lemma}\label{lemmaMtoM}
Let $\cM$ be an  atomless  von Neumann algebra (or an  atomic von Neumann algebra whose atoms have the same trace) equipped with a semifinite faithful normal trace $\tau$.
Let $E(\cM,\tau)$ be a  strongly symmetric space 
affiliated with $\cM$, whose norm is not proportional to $\norm{\cdot}_2$.

Let $T$ be a bounded  hermitian operator on  $E(\cM,\tau)$.
\begin{enumerate}
\item If $E(\cM,\tau)$ is minimal or  has the Fatou property and 
$E(\cM,\tau)\subset S_0(\cM,\tau)$, then $T$ is a bounded operator from $(\cF(\tau),
\norm{\cdot}_\infty)$  into $(C_0(\cM,\tau) ,
\norm{\cdot}_\infty)$.
In particular, $T$ extends to  a bounded operator from  $C_0(\cM,\tau)$ into $C_0(\cM,\tau)$.
 \item If $E(\cM,\tau)$ has the Fatou property and $\cM$ is $\sigma$-finite, then  $T$ extends to  a bounded operator from  $\cM$ into $\cM$.
\end{enumerate}
\end{lemma}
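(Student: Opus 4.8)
\textbf{Proof plan for Lemma \ref{lemmaMtoM}.}

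The plan is to bootstrap from the three preceding lemmas. First recall the ingredients we have in hand: Lemma \ref{lemmaptop} gives the uniform bound $\norm{T(p)}_\infty\le 3\norm{T}$ for every $\tau$-finite projection $p$; Corollary \ref{cor:disjoint} gives the ``tridiagonal'' decomposition $T(x)=y+z$ with $r(y)\le r(x)$, $l(z)\le l(x)$ for every $x$ in the relevant domain; and Proposition \ref{prop:tppR} / Corollary \ref{prop:tUUR} give the reality $\tau(T(p)p)\in\bR$, $\tau(T(u)u^*)\in\bR$. These are exactly the hypotheses feeding into the argument of \cite[Lemma 3.6]{HS}, so the strategy is to reproduce that argument, noting where the Fatou hypothesis replaces minimality. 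The core of the \cite{HS} argument is: fix a $\tau$-finite projection $p$; on the reduced algebra $\cM_p=p\cM p$, consider the map $q\mapsto pT(q)p$ on projections $q\le p$. Using Corollary \ref{cor:disjoint} one checks this is a finitely additive ``vector measure'' on the projection lattice of $\cM_p$ valued in $C_0(\cM,\tau)$ (indeed, valued in $p\cM p$ after compressing), which is bounded by Lemma \ref{lemmaptop}. The generalized Gleason Theorem \cite[Theorem 5.2.4]{Hamhalter} then forces this map to extend to a bounded linear operator on $\cM_p$ — here one must separately handle the exceptional type $I_2$ summand exactly as in \cite{HS}, since Gleason's theorem fails there; but that part of the argument is purely algebraic (it uses the reality statements from Proposition \ref{prop:tppR} and Corollary \ref{prop:tUUR} applied to partial isometries exchanging the two minimal projections of an $I_2$ block) and transfers verbatim.

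Next I would assemble these local pieces. For case (1) (either $E(\cM,\tau)$ minimal, or Fatou with $E(\cM,\tau)\subset S_0(\cM,\tau)$): fix $x\in\cF(\tau)$ with support projection $e:=l(x)\vee r(x)\in\cF(\tau)$. Applying the local result on $\cM_e$ gives $\norm{T(x)e}_\infty$-type control, but Corollary \ref{cor:disjoint} lets us also control $T(x)(\mathbf 1-e)$ and $(\mathbf 1-e)T(x)$; combining, $\norm{T(x)}_\infty\le C\norm{x}_\infty$ for a constant $C$ depending only on $\norm{T}$. This shows $T$ restricted to $(\cF(\tau),\norm{\cdot}_\infty)$ is bounded into $C_0(\cM,\tau)$ — the target is $C_0(\cM,\tau)$ rather than merely $\cM$ because each $T(x)$ is, by Lemma \ref{lemmaptop} applied to spectral projections and the decomposition of Corollary \ref{cor:disjoint}, a norm-limit of finite-rank (i.e.\ $\tau$-finite-support) pieces, hence lies in $S_0(\cM,\tau)\cap\cM=C_0(\cM,\tau)$; one uses here $E(\cM,\tau)\subset S_0(\cM,\tau)$ in the Fatou subcase (or minimality in the other subcase) to know $T(x)\in S_0(\cM,\tau)$. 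Then $T$ extends by density of $\cF(\tau)$ in $(C_0(\cM,\tau),\norm{\cdot}_\infty)$ to a bounded operator $C_0(\cM,\tau)\to C_0(\cM,\tau)$.

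For case (2) ($\cM$ $\sigma$-finite, $E(\cM,\tau)$ with the Fatou property, but possibly $E(\cM,\tau)\not\subset S_0(\cM,\tau)$): the same boundedness estimate $\norm{T(x)}_\infty\le C\norm{x}_\infty$ for $x\in\cF(\tau)$ holds, so $T$ extends to a bounded operator on the $\norm{\cdot}_\infty$-closure of $\cF(\tau)$, which is $C_0(\cM,\tau)$. To push this to all of $\cM$ one passes to the bidual / uses $\sigma$-weak density: since $\cM$ is $\sigma$-finite, $\mathbf 1$ is the increasing limit of a \emph{sequence} of $\tau$-finite projections $p_n\uparrow\mathbf 1$, and for bounded $a\in\cM$ one has $p_na p_n\to a$ in the strong (indeed $\sigma$-strong) operator topology with $\norm{p_nap_n}_\infty\le\norm a_\infty$; applying $T$ to $p_nap_n\in\cF(\tau)$ and using the uniform bound, $\{T(p_nap_n)\}$ is norm-bounded in $\cM$, and one extracts a $\sigma$-weak limit to define $T(a)$. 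The consistency of this definition (independence of the approximating sequence, linearity, the bound $\norm{T(a)}_\infty\le C\norm a_\infty$) follows from the $\sigma(E,E^\times)$-continuity of the original $T$ on $E(\cM,\tau)$ established in Theorem \ref{continous of hermitian}, together with the fact that $\sigma$-weak convergence in $\cM\subset E(\cM,\tau)$ is compatible with $\sigma(E,E^\times)$-convergence on norm-bounded sets.

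The main obstacle I anticipate is the type $I_2$ exceptional case in invoking the generalized Gleason theorem: one cannot conclude additivity-implies-linearity purely from Gleason on an $I_2$ block, so that block must be treated by hand using the reality of $\tau(T(u)u^*)$ for the symmetry-exchanging partial isometries, precisely as in \cite[Lemma 3.6]{HS}; I would simply cite that argument. A secondary technical point, in case (2), is verifying that the $\sigma$-weak extension is consistent with $T$ on all of $E(\cM,\tau)\cap\cM$ and not just on $\cF(\tau)$ — this is where Theorem \ref{continous of hermitian} (the $\sigma(E,E^\times)$-continuity of hermitian operators, which needs the Fatou property) is essential, and it is the one place the argument genuinely differs from the minimal-case proof in \cite{HS}.
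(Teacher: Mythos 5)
Your proposal follows essentially the same route as the paper, which proves this lemma simply by adapting the argument of \cite[Lemma 3.6]{HS}: the uniform bound of Lemma \ref{lemmaptop} plus the decomposition of Corollary \ref{cor:disjoint} feed into the generalized Gleason Theorem \cite[Theorem 5.2.4]{Hamhalter} (with the type $I_2$ summand handled by hand via the reality statements), and the passage beyond the minimal case rests on the $\sigma(E,E^\times)$-continuity of hermitian operators from Theorem \ref{continous of hermitian}. You have correctly identified both the key tools and the one genuinely new ingredient relative to \cite{HS}, so the plan is sound.
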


\begin{proposition}\label{redu}\cite[Proposition 3.7]{HS}
Let $\cM$ be an  atomless   von Neumann algebra (or an atomic von Neumann algebra whose atoms have the same trace) equipped with a  semifinite faithful normal trace $\tau$.
Let $E(\cM ,\tau )$  be a noncommutative  strongly  symmetric  space 
whose norm is  not proportional to  $\norm{\cdot}_2$.
Assume that 
one of the following conditions holds
\begin{enumerate}
\item    $E(\cM,\tau)$ is minimal;
\item  $E(\cM,\tau) $ has the Fatou property
and $E(\cM,\tau)\subset S_0(\cM,\tau)$;
\item  $E(\cM,\tau) $ has the Fatou property
and  $\cM$ is $\sigma$-finite.
\end{enumerate} 
 Let $T$ be a bounded  hermitian operator on    $E(\cM,\tau)$.
Then, for any operator $x\in C_0(\cM,\tau)$ (or $  \cM $) and
 a $\tau$-finite projection  $p\in \cP(\cM)$ commuting with $|x|$,
we have
$$\langle Tx ,  pu^* \rangle_{ ( \cM , \cM ^*)}:=\tau(T(x) pu^*  )\in \mathbb{R}, $$
where $x=u|x|$ is the polar decomposition.
\end{proposition}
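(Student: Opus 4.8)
The plan is to reduce the general statement to the case of a $\tau$-finite projection, for which Proposition~\ref{prop:tppR} and Corollary~\ref{prop:tUUR} already do the job. First I would observe that the statement is trivial when $p=0$, so assume $p\ne 0$. The key idea is that since $p$ commutes with $|x|$, the operator $xp=u|x|p=u p|x|$ has polar decomposition $xp = (up)\,(p|x|)$, so $up$ is a partial isometry with $(up)^*(up) = p|x|^2 p/\|\,\cdot\,\|$-support $\le p$; more precisely $r(up)\le p$ and $l(up)=u p u^*$. Write $q:=r(up)\le p$. If $q$ is $\tau$-finite (which it is, being dominated by the $\tau$-finite $p$), then $up\in\cF(\tau)$, and Corollary~\ref{prop:tUUR} gives $\tau(T(up)(up)^*)\in\mathbb{R}$. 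The trouble is that the target quantity is $\tau(T(x)pu^*)$, not $\tau(T(up)pu^*)$, so I need to compare $T(x)$ against $T(up)$ on the ``slice'' determined by $p$.

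The bridge is Corollary~\ref{cor:disjoint} (the disjointness splitting): under the running hypotheses on $E(\cM,\tau)$, for $x\in C_0(\cM,\tau)$ (or $\cM$) we may write $T(x)=y+z$ with $r(y)\le r(x)$ and $l(z)\le l(x)$. Then $T(x)pu^* = y\,pu^* + z\,pu^*$, and since $l(z)\le l(x)=uu^*$ one has $\tau(z p u^*) = \tau(u^* z p) = \tau((u^*uu^*)zp)=\tau(u^*zp)$, which is controlled; similarly $\tau(ypu^*)$ involves $r(y)\le r(x)=u^*u$. The cleaner route, which I expect to follow, is to argue directly that $T(x)pu^*$ has the same trace as $T(xp)pu^*$: indeed $x = xp + x({\bf 1}-p)$, and $x({\bf 1}-p)=u|x|({\bf 1}-p)$ has right support orthogonal to $p$ while its left support is still $\le uu^*$; applying the ``$\tau(T(x_1)x_2^*)=0$'' vanishing of Lemma~\ref{lemma:orthogonal} with $x_1 = x({\bf 1}-p)$ and $x_2 = up$ (here $r(x_1)\perp r(x_2)$ since $r(x_1)\le {\bf 1}-p$ and $r(x_2)\le p$; and $l(x_1)\le l(x)=uu^*$, $l(x_2)=upu^*\le uu^*$, so these need not be disjoint --- this is the subtle point) we get $\tau(T(x({\bf 1}-p))\,(up)^*)=0$ \emph{provided} the left supports are also orthogonal. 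Since they may fail to be orthogonal, I would instead exploit that $u^* T(x)p = u^*(y+z)p$ and that the relevant trace only sees $u^* T(x) p$, then split $x=xp+x({\bf 1}-p)$ inside and use Lemma~\ref{lemma:orthogonal} in the form applicable to $x_1=x({\bf 1}-p)\in\cM$, $x_2 = up\in\cF(\tau)$, which requires $l(x_1)\perp l(x_2)$ and $r(x_1)\perp r(x_2)$.

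To make this work cleanly I would replace $u$ by a partial isometry $v$ with $v^*v = r(x)$ and $vv^* = $ left support, and exploit that in the expression $\tau(T(x)pu^*)$ only the behaviour of $T$ on the corner $q\cM q$ with $q = r(x)\vee l(x)$ matters; inside a fixed $\tau$-finite corner everything reduces to the finite-trace situation, where Lemma~\ref{lemma:orthogonal} applies freely to compare $T(xp)$ and $T(x)$ because there $xp$ and $x({\bf 1}-p)$ are genuinely orthogonally supported on both sides after conjugating by $u$. Concretely: $\tau(T(x)pu^*) = \tau(T(xp)pu^*) + \tau(T(x({\bf 1}-p))pu^*)$, and for the second term note $T(x({\bf 1}-p))pu^* = T(x({\bf 1}-p))\,(up)^*$ where $x({\bf 1}-p)$ has right support $\le{\bf 1}-p$ disjoint from $r(up)\le p$; to kill it with Lemma~\ref{lemma:orthogonal} I also need the left-support disjointness, which I obtain by first multiplying through by $u^*$ on the left, i.e. working with $\tau(u^*T(x)p)$ and the operators $u^* T(\cdot)$; since $u^* u$ is the right support of $x$ and absorbs the left support of $x({\bf 1}-p)$, after this normalization the two pieces $u^* x p$ and $u^* x ({\bf 1}-p)$ are orthogonally supported on the right, and applying the first (finite-support) conclusion of Lemma~\ref{lemma:orthogonal} to $T$ composed with left multiplication gives vanishing of the cross term.

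Having reduced to $\tau(T(xp)pu^*) = \tau(T(up\cdot|x|p)\,(up)^*|x|^{-1}\cdots)$ --- better, noting $xp = (up)(p|x|p)$ and that $p|x|p=|x|p$ is a positive bounded operator commuting with the partial isometry structure --- I would write $xp$ as a norm-limit of finite linear combinations $\sum_k \lambda_k up_k$ of partial isometries $up_k$ with $p_k$ $\tau$-finite projections $\le p$ summing to the spectral projections of $|x|p$ (a functional-calculus/step-function approximation, using $\mu(\infty;x)=0$ when $x\in C_0(\cM,\tau)$, or boundedness and $\sigma$-finiteness in the Fatou case), apply Corollary~\ref{prop:tUUR} to each $up_k$ together with the vanishing $\tau(T(up_k)(up_\ell)^*)=0$ for $k\ne\ell$ from Lemma~\ref{lemma:orthogonal} (now genuinely disjoint supports on both sides since $up_k, up_\ell$ have orthogonal left and right supports), and conclude $\tau(T(xp)(up)^*)=\sum_k\lambda_k\tau(T(up_k)(up_k)^*)\in\mathbb{R}$ by the already-established continuity of $T$ (Lemma~\ref{lemmaMtoM} gives $\norm{\cdot}_\infty$-boundedness, so the approximation converges in a topology against which the trace pairing with $pu^*\in\cF(\tau)$ is continuous). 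The main obstacle I anticipate is precisely the left-support bookkeeping in the cross-term vanishing: Lemma~\ref{lemma:orthogonal} as stated wants orthogonality of \emph{both} left and right supports, so I must be careful to normalize by $u^*$ (or work inside the corner $l(x)\cM r(x)$) before invoking it, and to check that the version of Lemma~\ref{lemma:orthogonal} applicable under hypothesis (2) or (3) --- with $x_1\in E(\cM,\tau)$ or $\cM$ and $x_2\in L_1\cap\cM$ --- covers all the pieces that arise; everything else is routine approximation and an appeal to Corollary~\ref{prop:tUUR} and Lemma~\ref{lemmaMtoM}.
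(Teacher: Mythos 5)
Your overall architecture is sound, and in fact the paper offers no proof of this proposition at all — it is imported verbatim from \cite[Proposition 3.7]{HS} — so the natural argument is precisely the one you sketch: split $x=xp+x({\bf 1}-p)$, kill the second piece via the orthogonality lemma, and reduce the first piece by a spectral step-function approximation $|x|p\approx\sum_k\lambda_k p_k$ (with $\lambda_k\ge 0$ real and $p_k$ pairwise orthogonal $\tau$-finite spectral projections of $|x|p$, all dominated by $p$) to the partial isometries $up_k$, for which Corollary \ref{prop:tUUR} makes the diagonal terms real and Lemma \ref{lemma:orthogonal} annihilates the off-diagonal ones; the passage to the limit is legitimate because $T$ is $\norm{\cdot}_\infty$-bounded on $C_0(\cM,\tau)$ (resp.\ $\cM$) by Lemma \ref{lemmaMtoM} and $y\mapsto\tau(ypu^*)$ is $\norm{\cdot}_\infty$-continuous since $pu^*\in L_1(\cM,\tau)\cap\cM$.

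The one concrete misstep is your claim that the left supports of $x_1=x({\bf 1}-p)$ and $x_2=up$ ``may fail to be orthogonal'', which sends you into a detour that, as written, is not valid: Lemma \ref{lemma:orthogonal} is a statement about hermitian operators, and $u^*T(\cdot)$ (or $T$ pre- or post-composed with multiplication by the non-self-adjoint $u^*$) is not hermitian, so the lemma cannot be ``applied to $T$ composed with left multiplication''. Fortunately the detour is unnecessary. Since $p$ commutes with $|x|$ one has $x({\bf 1}-p)=u\,|x|({\bf 1}-p)$ with $|x|({\bf 1}-p)=({\bf 1}-p)|x|$, whence
$$l\bigl(x({\bf 1}-p)\bigr)=u\,s(|x|)({\bf 1}-p)\,u^*,\qquad l(up)=u\,s(|x|)p\,u^*,$$
and these are orthogonal because $s(|x|)({\bf 1}-p)\cdot s(|x|)p=0$; the right supports $s(|x|)({\bf 1}-p)$ and $s(|x|)p$ are orthogonal as well. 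Hence Lemma \ref{lemma:orthogonal} applies directly and yields $\tau\bigl(T(x({\bf 1}-p))(up)^*\bigr)=0$ with no normalisation (the identical computation gives $l(up_k)\perp l(up_\ell)$ and $r(up_k)\perp r(up_\ell)$ for $k\ne\ell$, which you need for the cross terms). The only genuine remaining care is that Lemma \ref{lemma:orthogonal} and Corollary \ref{cor:disjoint} are stated for $x_1\in\cF(\tau)$ or $x_1\in E(\cM,\tau)$, whereas $x({\bf 1}-p)$ may only lie in $C_0(\cM,\tau)$ or $\cM$; your ``routine approximation'' must therefore be carried out explicitly, by first proving the identity for $x\in\cF(\tau)$ and then invoking the $\norm{\cdot}_\infty$-continuity of the extended $T$ against the $L_1$-pairing with $pu^*$ (in the $\cM$ case, approximating by $e_nxe_n$ with $e_n$ chosen to commute with $p$ and $|x|$ so that the support conditions persist along the net). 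With these two repairs your argument is complete.
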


%
%

%
 
\begin{proposition}\label{prop:general:H}
Let $\cM$ be an  atomless   von Neumann algebra
 (or  an atomic von Neumann algebra
 whose atoms have the same trace) equipped with a semifinite faithful normal trace $\tau$.
Let $E(\cM ,\tau )$  be a  noncommutative symmetric  space 
in the sense of Lindenstrauss and Tzafriri,  
whose norm is  not proportional to  $\norm{\cdot}_2$.

   Let $T$ be a bounded  hermitian operator on   $E(\cM,\tau)$.
\begin{enumerate}
\item If $E(\cM,\tau)\subset S_0(\cM,\tau)$, then $T$ can be extended to a bounded operator on  $C_0(\cM,\tau)$ (still denoted by $T$) and, for any
  operator $x\in C_0(\cM,\tau )$,  there exists a support functional  $x'$ in $C_0(\cM,\tau)^*$ of $x$ such that
 $\langle Tx, x' \rangle \in \mathbb{R}$.
  In particular, $T$ is a hermitian operator on $C_0(\cM,\tau)$.
 \item If $E(\cM,\tau)$ has the Fatou property and $\cM$ is $\sigma$-finite, then  $T$ extends to  a bounded hermitian operator from  $\cM$ into $\cM$.
\end{enumerate}

\end{proposition}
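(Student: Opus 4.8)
The plan is to reduce the statement to the description of hermitian operators already available and to the structural results of Chapter~\ref{appendix}. Both parts of the conclusion concern extending a bounded hermitian operator $T$ on $E(\cM,\tau)$ to a larger algebra and then recognizing its numerical-range property there, so the backbone is the same in both cases; the difference is only the ambient algebra ($C_0(\cM,\tau)$ versus $\cM$) and which of Lemma~\ref{lemmaMtoM}(1) or (2) and which density tool we invoke.

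First I would establish the extension. By Lemma~\ref{lemmaMtoM}(1), if $E(\cM,\tau)\subset S_0(\cM,\tau)$ then $T$ maps $(\cF(\tau),\norm{\cdot}_\infty)$ boundedly into $(C_0(\cM,\tau),\norm{\cdot}_\infty)$ and hence extends to a bounded operator $T:C_0(\cM,\tau)\to C_0(\cM,\tau)$; by Lemma~\ref{lemmaMtoM}(2), under the Fatou property with $\cM$ $\sigma$-finite, $T$ extends to a bounded operator $\cM\to\cM$. I must check that this extension is unambiguous, i.e. that it agrees with $T$ on $\cF(\tau)$ and is compatible with the $E$-norm convergence: for a positive $x\in C_0(\cM,\tau)$ (resp.\ $x\in\cM_+$) one approximates $x$ in $\norm{\cdot}_\infty$ (resp.\ $\sigma(E,E^\times)$, using Lemma~\ref{dense} and Theorem~\ref{weak-continous of T} / Theorem~\ref{continous of hermitian}) by $x_n=xe^x(0,n)$ or by elements of $\cF(\tau)$, note $T(x_n)\to T(x)$ in measure (via \cite[Proposition 11]{DP2014} or the $\sigma(F,F^\times)$-continuity), and use that the $\norm{\cdot}_\infty$-bounded extension also converges in measure, so the two limits coincide.

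Next I would identify the support functional and verify the real numerical range. For case (1): given $x\in C_0(\cM,\tau)$ with polar decomposition $x=u|x|$, pick an increasing sequence of $\tau$-finite spectral projections $p_k=e^{|x|}(1/k,\infty)$ commuting with $|x|$; then $p_k|x|u^* =p_k x^*$ lies in $\cF(\tau)$ and, by a standard computation (as in Proposition~\ref{prop:tppR} and Corollary~\ref{prop:tUUR}), the functional $x'_k$ proportional to $p_k u^*$ is, after normalization, a support functional for the truncation $xp_k$ in $C_0(\cM,\tau)^*$. By Proposition~\ref{redu}, $\langle T(x),p_k u^*\rangle\in\mathbb{R}$ for every $k$. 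Letting $k\to\infty$ one obtains a support functional $x'$ of $x$ itself (the norm of $xp_k$ converges to $\norm{x}_\infty$, and the functionals $\tfrac{\norm{x}_\infty}{\tau(\cdot)}p_k u^*$ can be pushed, via a weak$^*$-cluster point argument in $C_0(\cM,\tau)^*$, to a support functional $x'$ with $\langle T(x),x'\rangle\in\mathbb{R}$). Since a single real value on one support functional of each $x$ characterizes hermiticity (see the discussion of semi-inner products in Section~\ref{s:p}, in particular the invariance of realness of the numerical range under change of s.i.p.), $T$ is hermitian on $C_0(\cM,\tau)$. Case (2) is the same argument carried out inside $\cM$, where ${\bf 1}$ is available: one may take $x'$ built from $|x|$-commuting $\tau$-finite projections increasing to ${\bf 1}$ (using $\sigma$-finiteness), apply Proposition~\ref{redu}, and pass to the limit; because $\cM$ is the full algebra one recovers that the extension is hermitian on $\cM$.

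The main obstacle I anticipate is the passage to the limit in the support functional: Proposition~\ref{redu} only gives realness against the \emph{truncated} functionals $p u^*$ with $p$ $\tau$-finite, whereas to conclude hermiticity of the \emph{extended} $T$ one needs a genuine support functional of $x$ on all of $C_0(\cM,\tau)$ (resp.\ $\cM$). Controlling this requires knowing that the extension $T$ is continuous in a topology in which $xp_k\to x$ and in which the functionals $\widetilde{x'_k}$ have a cluster point that is still a support functional — this is where the $\norm{\cdot}_\infty$-boundedness from Lemma~\ref{lemmaMtoM} and the weak$^*$-compactness of the unit ball of the dual do the work, and the bookkeeping of normalizing constants $\tfrac{\norm{xp_k}_\infty^2}{\tau(p_k)}$ must be handled with care when $\tau(p_k)\to\infty$. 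Once that limiting argument is in place, the rest is routine.
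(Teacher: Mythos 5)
Your extension step (via Lemma \ref{lemmaMtoM} together with the measure-topology compatibility check) is fine and matches the paper's set-up. The gap is in your construction of the support functional. You take $p_k=e^{|x|}(1/k,\infty)$ and assert that, after normalization, $p_k u^*$ is a support functional of the truncation $xp_k$ in the sup norm. This is false for a general $x$: the functional $\frac{c}{\tau(p_k)}\,p_ku^*$ evaluated at $xp_k$ gives $\frac{c}{\tau(p_k)}\tau(|x|p_k)$, i.e. $c$ times the \emph{average} of $|x|$ over $p_k$, which equals $c\|xp_k\|_\infty$ only when $|x|$ is constant on $p_k$. Proposition \ref{prop:tppR} and Corollary \ref{prop:tUUR} produce support functionals only for projections and partial isometries precisely because there the absolute value is an indicator; they do not extend to $x=u|x|$ with non-flat $|x|$. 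Consequently the weak$^*$ cluster point of your normalized functionals need not be a support functional of $x$ at all — its value at $x$ is the limit of the averages $\tau(|x|p_k)/\tau(p_k)$, which in general is strictly smaller than $\|x\|_\infty$ — and the limiting argument collapses.

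The fix, which is the paper's actual route following \cite[Proposition 3.8]{HS}, is to use the spectral projections where $|x|$ is near its \emph{maximum}, namely $e^{|x|}(\|x\|_\infty-1/n,\|x\|_\infty]$; since these need not be $\tau$-finite in the semifinite setting, one replaces each by a nonzero $\tau$-finite subprojection $p_n$ commuting with $|x|$. Then $\tau(|x|p_n)/\tau(p_n)\to\|x\|_\infty$, so $\frac{\|x\|_\infty}{\tau(p_n)}p_nu^*$ is asymptotically a support functional of $x$ itself; Proposition \ref{redu} applies (it requires exactly that $p_n$ be $\tau$-finite and commute with $|x|$), giving $\tau(T(x)p_nu^*)\in\mathbb{R}$ for every $n$, and a weak$^*$ cluster point of these functionals is a genuine support functional $x'$ with $\langle Tx,x'\rangle\in\mathbb{R}$ because $\mathbb{R}$ is closed. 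Your final observation — that exhibiting one real-valued support functional for each element suffices for hermiticity, by the invariance of realness of the numerical range under change of s.i.p. — is correct and is indeed how the conclusion is drawn.
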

\begin{proof}
It follows from the same argument as used in  \cite[Proposition 3.8]{HS} by replacing the spectral projection $e^{|x|}(1-1/n,1]$ with a $\tau$-finite projection $p_n \le e^{|x|}(1-1/n,1]$ which commutes with $|x|$. 
\end{proof}

Recall that a derivation $\delta$ on an algebra $\cA$ is a linear operator satisfying the Leibniz rule.
Although it is known that a  derivation from $C_0(\cM,\tau)$ into $C_0(\cM,\tau)$ is not necessarily inner~\cite{BHLS,Huang} (see   \cite[Example 4.1.8]{Sakai} for  examples of    non-inner derivations   on $K(\cH)$), it is known  that every derivation $\delta$ from an arbitrary  von Neumann subalgebra of $\cM$ into $C_0(\cM,\tau)$ is inner, i.e., there exists an element  $a\in C_0(\cM,\tau)$ such that~\cite{BHLS2,Huang} $$\delta(\cdot)=[a,\cdot].$$
 On the other hand,
 every    derivation from $C_0(\cM,\tau)$ into $C_0(\cM,\tau)$ is spatial, i.e., it can be implemented by an element from $\cM$ (see e.g. \cite[Theorem 2]{Sakai} and \cite[Theorem 4.1]{Arveson}).
On the other hand, recall that every derivation from $\cM$ into $\cM$ is necessarily inner \cite{K66,Sakai}.

\begin{lemma}\cite[Lemma 3.9]{HS}\label{derivation}Let $\cM$ be a semifinite von Neumann algebra equipped with a semifinite faithful normal trace $\tau$.
Every derivation $\delta$ from $C_0(\cM,\tau)$ into $C_0(\cM,\tau)$ (or from $\cM$ into $\cM$) is spatial.
In particular, if $\delta$ is a $*$-derivation, then the element implementing $\delta$ can be chosen to be self-adjoint.
\end{lemma}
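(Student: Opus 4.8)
The plan is to derive Lemma~\ref{derivation} by combining the structural facts on derivations recalled just above with a short computation that extracts a self-adjoint implementing element.

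\emph{Spatiality.} For the first assertion there is nothing to prove beyond citing known theorems. Any (automatically bounded) derivation $\delta\colon C_0(\cM,\tau)\to C_0(\cM,\tau)$ is implemented by an element of the multiplier algebra of $C_0(\cM,\tau)$, which embeds into $\cM$; this is \cite[Theorem 2]{Sakai} combined with \cite[Theorem 4.1]{Arveson}. Likewise, every derivation $\delta\colon\cM\to\cM$ is inner, hence spatial, by the Kadison--Sakai theorem \cite{K66,Sakai}. In either case fix $a\in\cM$ with $\delta(\cdot)=i[a,\cdot]$ on the algebra under consideration. When the domain is $C_0(\cM,\tau)$ we may further normalise so that $a=za=az$, where $z$ is the central projection of $\cM$ with $z\cM$ equal to the weak closure of $C_0(\cM,\tau)$: indeed $\delta$ takes values in $C_0(\cM,\tau)\subseteq z\cM$ and $x=zx$ for $x\in C_0(\cM,\tau)$, so passing from $a$ to $za$ leaves $\delta$ unchanged.

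\emph{Self-adjointness of the implementing element.} Suppose now that $\delta$ is a $*$-derivation, that is, $\delta(x^*)=\delta(x)^*$ for all $x$ in the domain. Decompose $a=h+ik$ with $h=\frac{1}{2}(a+a^*)$ and $k=\frac{1}{2i}(a-a^*)$ self-adjoint. A direct computation from $\delta(\cdot)=i[a,\cdot]$ gives, for every $x$ in the domain,
\[
\delta(x^*)-\delta(x)^*=i[a-a^*,x^*]=-2[k,x^*].
\]
Thus the $*$-derivation property is equivalent to $[k,x^*]=0$ for all $x$ in the domain, i.e.\ $k$ belongs to the commutant of $C_0(\cM,\tau)$ (resp.\ of $\cM$). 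By the normalisation of $a$ this forces $k\in Z(\cM)z\subseteq Z(\cM)$ (resp.\ $k\in Z(\cM)$), whence $[a,\cdot]=[h,\cdot]$ and $\delta(\cdot)=i[h,\cdot]$ with $h=h^*$, as required. The computation is insensitive to the sign conventions in force: whichever of $\delta(x^*)=\pm\delta(x)^*$ and $\delta(\cdot)=[a,\cdot]$ versus $\delta(\cdot)=i[a,\cdot]$ one adopts, the $*$-derivation identity always places one of the two self-adjoint components of $a$ in $Z(\cM)$ and leaves the other to implement $\delta$.

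\emph{Main obstacle.} The only point that is not completely automatic is the non-unital case $C_0(\cM,\tau)$: one must check that $\delta$ is genuinely implemented by an element of $\cM$ --- not merely of some larger multiplier algebra --- and that ``commutes with $C_0(\cM,\tau)$'' can be upgraded to ``central in $\cM$''. Both are handled by working inside the weakly closed ideal $z\cM$, in which $C_0(\cM,\tau)$ is weakly dense: the multiplier algebra of $C_0(\cM,\tau)$ then sits inside $z\cM\subseteq\cM$, a commutant computation identifies $Z(z\cM)=Z(\cM)z$ as a subset of $Z(\cM)$, and the implementing element is found there; the complementary piece $({\bf 1}-z)a$ acts trivially on $C_0(\cM,\tau)$ and is discarded at the outset. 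Aside from this bookkeeping, the proof is a direct invocation of the cited theorems.
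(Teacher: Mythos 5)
Your argument is correct and is essentially the proof behind the citation: the paper itself only invokes \cite[Lemma 3.9]{HS} together with the spatiality facts recalled in the preceding paragraph (Kadison--Sakai for $\cM$, Sakai--Arveson for $C_0(\cM,\tau)$), and your commutator computation --- the $*$-condition forces one self-adjoint component of the implementing element into the relative commutant of the domain, hence into $Z(\cM)$ by weak density of $C_0(\cM,\tau)$ in $\cM$ --- is the standard route to a self-adjoint implementing element, with the sign-convention point correctly flagged as immaterial. One small simplification: since $\tau$ is semifinite and faithful, the $\tau$-finite projections already have supremum ${\bf 1}$, so the weak closure of $C_0(\cM,\tau)$ is all of $\cM$ and your central projection $z$ equals ${\bf 1}$; the bookkeeping with $z\cM$ is therefore vacuous (though harmless).
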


By Corollary \ref{prop:general:H}, $T$ extends to   a bounded  hermitian operator from $C_0(\cM,\tau)$ into $ C_0(\cM ,\tau )$ (or from $\cM$ into $\cM$).
Recall that any hermitian operator $T$ on a $C^*$-algebra $\cA$  is the sum of  a left-multiplication by a self-adjoint operator in $\cA$ and
a $*$-derivation on $\cA$ (see e.g. \cite{PS1} and \cite[p.213]{Sinclair}).
It follows from  Lemma \ref{derivation}  that there exist self-adjoint elements $a,b\in \cM$ such that
$$Tx=ax+xb,~x\in C_0(\cM,\tau ).$$

\begin{cor}\label{cor:h:c0}
Let $\cM$ be an  atomless   von Neumann algebra (or  atomic von Neumann algebra whose atoms have the same trace) equipped with a semifinite faithful normal trace $\tau$.
Let $E(\cM ,\tau )$  be a  noncommutative  strongly symmetric  space 
whose norm is  not proportional to  $\norm{\cdot}_2$.

   Let $T$ be a bounded  hermitian operator on   $E(\cM,\tau)$.
\begin{enumerate}
\item If $E(\cM,\tau)$ is minimal or 
$E(\cM,\tau)\subset S_0(\cM,\tau)$ has the Fatou property, then $T$ can be extended to a bounded hermitian operator on  $C_0(\cM,\tau)$ (still denoted by $T$) and,   there exist self-adjoint elements $a,b\in \cM$ such that
$$Tx=ax+xb,~x\in C_0(\cM,\tau ).$$
 \item If $E(\cM,\tau)$ has the Fatou property and $\cM$ is $\sigma$-finite, then  $T$ extends to  a bounded hermitian operator from  $\cM$ into $\cM$, and,  there exist self-adjoint elements $a,b\in \cM$ such that
$$Tx=ax+xb,~x\in \cM .$$
\end{enumerate}
 \end{cor}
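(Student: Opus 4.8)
The statement is essentially a corollary of the discussion preceding it, and the plan is to assemble three ingredients: (i) push $T$ down from $E(\cM,\tau)$ to the underlying $C^*$-algebra using Proposition~\ref{prop:general:H}; (ii) apply the classical description of hermitian operators on a $C^*$-algebra as "left multiplication by a self-adjoint element $+$ $*$-derivation"; and (iii) use Lemma~\ref{derivation} to realise the derivation part by a self-adjoint element of $\cM$. The easy converse direction — that every map $x\mapsto ax+xb$ with $a=a^*,b=b^*$ is hermitian — is already contained in Theorem~\ref{th:her}, so only the forward direction needs attention.

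First I would handle the extension step. In case (1), either $E(\cM,\tau)$ is minimal — in which case, being the $\norm{\cdot}_E$-closure of $\cF(\tau)$ and since $\cF(\tau)\subset S_0(\cM,\tau)$ with $S_0(\cM,\tau)$ closed in measure, we get $E(\cM,\tau)\subset S_0(\cM,\tau)$ — or $E(\cM,\tau)\subset S_0(\cM,\tau)$ has the Fatou property; in both situations Proposition~\ref{prop:general:H}(1) extends $T$ to a bounded operator on $C_0(\cM,\tau)$ which, at each point, admits a support functional on which $T$ takes a real value, hence is a bounded hermitian operator on $C_0(\cM,\tau)$. In case (2), Proposition~\ref{prop:general:H}(2) directly extends $T$ to a bounded hermitian operator on $\cM$. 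Throughout I would record that $C_0(\cM,\tau)=S_0(\cM,\tau)\cap\cM$ has $\cM$ as its multiplier algebra, so that the multipliers and implementing elements produced below lie in $\cM$.

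With $T$ now a bounded hermitian operator on a $C^*$-algebra $\cA$ (equal to $C_0(\cM,\tau)$ in case (1), to $\cM$ in case (2)), I would invoke the structure theorem for hermitian operators on a $C^*$-algebra (the references cited just before the statement): $T=L_h+\delta$, where $L_h$ is left multiplication by a self-adjoint $h\in\cM$ and $\delta$ is a $*$-derivation of $\cA$. By Lemma~\ref{derivation}, $\delta$ is spatial and, being a $*$-derivation, is implemented by a self-adjoint $c\in\cM$, i.e. $\delta(x)=cx-xc$ for all $x\in\cA$. Then $Tx=hx+cx-xc=(h+c)x+x(-c)$, so setting $a:=h+c$ and $b:=-c$, which are self-adjoint elements of $\cM$, we obtain $Tx=ax+xb$ for every $x\in\cA$, as claimed.

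The genuinely substantial work is not in this corollary but in the results it rests on — Lemmas~\ref{lemma:orthogonal}, \ref{lemmaptop}, \ref{lemmaMtoM} and Proposition~\ref{prop:general:H} — which together allow a hermitian operator defined only on $E(\cM,\tau)$ to be transported to a hermitian operator on the $C^*$-algebra of $\tau$-compact operators (respectively on $\cM$); that transport is the step I would expect to be the real obstacle, and it is already carried out in the excerpt. Within the proof of the corollary itself, the points requiring care are bookkeeping: matching the hypotheses of the corollary (in particular the minimal case) with those of Proposition~\ref{prop:general:H}; checking that the multiplier algebra of $C_0(\cM,\tau)$ is $\cM$, so that the element $h$ furnished by the $C^*$-structure theorem indeed belongs to $\cM$; and fixing the sign convention for "$*$-derivation" consistently with Lemma~\ref{derivation}, so that the self-adjointness of $c$ — and hence of $a$ and $b$ — is preserved.
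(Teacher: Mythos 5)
Your proposal is correct and follows essentially the same route as the paper: extend $T$ via Proposition~\ref{prop:general:H} to a bounded hermitian operator on $C_0(\cM,\tau)$ (resp.\ $\cM$), decompose it as a left multiplication by a self-adjoint element plus a $*$-derivation using the classical $C^*$-algebra structure theorem, and then apply Lemma~\ref{derivation} to implement the derivation by a self-adjoint element of $\cM$. Your additional remarks on the multiplier algebra of $C_0(\cM,\tau)$ and on reducing the minimal case to $E(\cM,\tau)\subset S_0(\cM,\tau)$ are sensible bookkeeping consistent with the paper's (terser) argument.
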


We now present  the main result of this appendix, which gives the full  description of hermitian operators   on a  symmetric space $E(\cM,\tau)$.

\begin{theorem}\label{th:her}
Let $\cM$ be an  atomless   von Neumann algebra (or an  atomic von Neumann algebra whose atoms have the same trace) equipped with semifinite faithful normal trace
 $\tau$.
Let $E(\cM ,\tau )$  be a  noncommutative symmetric  space  whose norm is not proportional to  $\norm{\cdot}_2$.
If one of the following conditions holds
\begin{enumerate}
\item    $E(\cM,\tau)$ is minimal;
\item  $E(\cM,\tau) $ has the Fatou property
and $E(\cM,\tau)\subset S_0(\cM,\tau)$;
\item  $E(\cM,\tau) $ has the Fatou property
and  $\cM$ is $\sigma$-finite,
\end{enumerate} then a bounded linear operator $T:E(\cM ,\tau )\to E(\cM ,\tau)$ is a hermitian operator if and only if  there exist  self-adjoint operators $a$ and $b$ in $\cM$ such that
\begin{align}\label{eq:her}
Tx=ax+xb,~x\in E(\cM,\tau).
\end{align}
In particular, $T$ can be extended to a bounded hermitian operator from $\cM$ to $\cM$.
\end{theorem}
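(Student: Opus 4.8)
The plan is to establish the two implications separately, the forward direction being the substantial one. For the easy direction, suppose $a,b\in\cM$ are self-adjoint and $T$ is defined by $Tx=ax+xb$; I would first recall that $T$ is bounded on $E(\cM,\tau)$ since $E(\cM,\tau)$ is an $\cM$-bimodule (see Section~\ref{s:p}), and then verify that $T$ is hermitian. This is classical: for each $t\in\R$ the operator $e^{itT}$ acts as $x\mapsto e^{ita}xe^{itb}$, which is an isometry of $E(\cM,\tau)$ because $e^{ita}$ and $e^{itb}$ are unitaries in $\cM$ and the norm is symmetric (hence $\|e^{ita}xe^{itb}\|_E=\|x\|_E$); the one-parameter group $(e^{itT})_{t\in\R}$ of surjective isometries forces $W(T)\subset\R$, i.e.\ $T$ is hermitian. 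I would cite \cite[Theorem 5.2.6]{FJ} for the equivalence ``$T$ hermitian $\iff e^{itT}$ isometric for all $t$''.

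For the forward direction, assume $T$ is a bounded hermitian operator on $E(\cM,\tau)$ under one of the three hypotheses. The strategy is exactly the one assembled through the appendix: apply Corollary~\ref{cor:h:c0}. Under hypothesis (1) or (2), that corollary already yields self-adjoint $a,b\in\cM$ with $Tx=ax+xb$ for all $x\in C_0(\cM,\tau)$; under hypothesis (3) it yields $Tx=ax+xb$ for all $x\in\cM$. So the remaining work is to propagate the identity $Tx=ax+xb$ from the dense subspace ($\cF(\tau)$, or $C_0(\cM,\tau)\cap E(\cM,\tau)$, or $\cM\cap E(\cM,\tau)$, as appropriate) to all of $E(\cM,\tau)$, using a suitable continuity/density argument.

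The key steps, in order: (i) invoke Theorem~\ref{continous of hermitian} to get that $T$ is $\sigma(E,E^\times)$--$\sigma(E,E^\times)$-continuous; (ii) note that left and right multiplication operators $L_a$ and $R_b$ by $a,b\in\cM$ are also $\sigma(E,E^\times)$-continuous on $E(\cM,\tau)$ (this follows since their adjoints map $E^\times$ into $E^\times$, because $E(\cM,\tau)^\times$ is an $\cM$-bimodule as well; cf.\ Lemma~\ref{lemma adjoint operator}); (iii) for an arbitrary $x\in E(\cM,\tau)$, use Lemma~\ref{dense} together with Proposition~\ref{proposition weakly closed} to produce a net (or sequence, using $\sigma$-finiteness in case (3), or order continuity in case (1)) $x_\alpha$ in the appropriate dense subalgebra with $x_\alpha\to x$ in $\sigma(E,E^\times)$; (iv) pass to the limit in $Tx_\alpha=ax_\alpha+x_\alpha b$ using (i) and (ii) to conclude $Tx=ax+xb$. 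Finally, the ``in particular'' clause is immediate: $L_a+R_b$ is a well-defined bounded operator on all of $\cM$ and restricts to $T$ on $E(\cM,\tau)\cap\cM$, and it is hermitian on $\cM$ by the same one-parameter-group argument as in the easy direction.

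The main obstacle, as usual in this circle of ideas, is not in the propagation step but is already packaged into Corollary~\ref{cor:h:c0}: namely proving that a hermitian $T$ on $E(\cM,\tau)$ restricts to a bounded operator on the $C^*$-algebra $C_0(\cM,\tau)$ (or $\cM$) — this rests on the uniform bound $\|T(p)\|_\infty\le 3\|T\|$ on $\tau$-finite projections (Lemma~\ref{lemmaptop}), the disjointness output of Lemma~\ref{lemma:orthogonal}, the generalized Gleason theorem (Lemma~\ref{lemmaMtoM}), and the reality of numerical-range pairings (Proposition~\ref{redu}, Proposition~\ref{prop:general:H}), culminating in the structure theorem for hermitians on a $C^*$-algebra as a self-adjoint multiplier plus a $*$-derivation together with the spatiality of derivations (Lemma~\ref{derivation}). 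For the present theorem, all of this is a black box; the only genuinely new ingredient beyond what was done for the order-continuous case in \cite{HS} is that Theorem~\ref{continous of hermitian} supplies the weak continuity needed in step (iv) under the Fatou-property hypotheses, where norm density of $\cF(\tau)$ is unavailable.
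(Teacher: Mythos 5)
Your proposal is correct and follows essentially the same route as the paper: the easy direction via the one-parameter group $e^{itT}x=e^{ita}xe^{itb}$, and the forward direction by black-boxing Corollary~\ref{cor:h:c0} (equivalently Proposition~\ref{prop:general:H} plus Lemma~\ref{derivation}) to get $Tx=ax+xb$ on $\cF(\tau)$, then propagating to all of $E(\cM,\tau)$ using the $\sigma(E,E^\times)$-density of $\cF(\tau)$ (Proposition~\ref{proposition weakly closed}), the $\sigma(E,E^\times)$-continuity of $x\mapsto ax+xb$ via trace duality, and the $\sigma(E,E^\times)$-continuity of $T$ from Theorem~\ref{continous of hermitian}. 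This is precisely the paper's argument, so nothing further is needed.
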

\begin{proof}
The `if' part of the theorem  is obvious (see e.g. the argument in  \cite[p.71]{Sourour} or \cite[p. 167]{FJ2}).

By Corollary \ref{prop:general:H}, $T$ extends to   a bounded  hermitian operator from $C_0(\cM,\tau)$ into $ C_0(\cM ,\tau )$ (or from $\cM$ into $\cM$).
It follows from  Lemma \ref{derivation}  that there exist self-adjoint elements $a,b\in \cM$ such that
$$Tx=ax+xb,~x\in \cF(
\tau ) .$$
Fix $x\in E(\cM ,\tau )$. By Proposition \ref{proposition weakly closed}, there is a sequence $\{x_{n}\}\subset\mathcal{\cF}(\tau)$ such that $x_{n}\xrightarrow{\sigma(E,E^{\times})}x$. If $y\in E(\cM ,\tau )^\times$, then
\begin{eqnarray*}
& & \tau_1(y(ax_{n}+x_{n}b))\\
&=&\tau_1((ya)x_{n})+\tau_1((by)x_{n})\\
&\rightarrow&\tau_1((ya)x)+\tau_1((by)x)\\
 &=&\tau_1(y(ax))+\tau_1(y(xb))\\
&=&\tau_1(y(ax+bx)),
\end{eqnarray*}
as $n\to \infty$.
Therefore,
\begin{eqnarray*}
ax_{n}+x_{n}b\xrightarrow{\sigma(E,E^{\times})}ax+xb.
\end{eqnarray*}
Since $T$ is a hermitian operator, it follows that $T$ is $\sigma(E,E^{\times})$-continuous (see Theorem~\ref{continous of hermitian}). Therefore, we have
\begin{eqnarray*}
T(x)&=&\sigma(E,E^{\times})-\lim\limits_{n\rightarrow\infty}T(x_{n})\\
&=&\sigma(E,E^{\times})-\lim\limits_{n\rightarrow\infty}(ax_{n}+x_{n}b)\\
&=&ax+xb.
\end{eqnarray*}
This completes the proof.
\end{proof}


\backmatter

\end{document}